\newtheorem{theorem}{Theorem}[section]
\newtheorem{corollary}[theorem]{Corollary}
\newtheorem{proposition}[theorem]{Proposition}
\newtheorem{lemma}[theorem]{Lemma}
\newtheorem{question}[theorem]{Question}
\theoremstyle{definition}
\newtheorem{remark}[theorem]{Remark}
\newtheorem{example}[theorem]{Example}
\newtheorem{definition}[theorem]{Definition}
\newcommand{\CC}{{\mathbb C}}
\newcommand{\RR}{{\mathbb R}}
\newcommand{\ZZ}{{\mathbb Z}}
\newcommand{\NN}{{\mathbb N}}
\begin{document}

\title{Packing Lagrangian tori}
\date{\today}
\author{Richard K. Hind}
\address{RH: Department of Mathematics\\
University of Notre Dame\\
255 Hurley\\
Notre Dame, IN 46556, USA.}
\author{Ely Kerman}
\address{EK: Department of Mathematics\\
University of Illinois at Urbana-Champaign\\
1409 West Green Street\\
Urbana, IL 61801, USA.}
\thanks{Both authors are supported by grants from the Simons Foundation}

\maketitle

\begin{section}{Introduction.}
In this paper we consider packings of symplectic manifolds by Lagrangian tori. Since every symplectic manifold contains infinitely many disjoint Lagrangian tori, we must set a scale in order to pose meaningful questions. We therefore restrict our attention to Lagrangian tori whose area homomorphism takes only integer values. These will be referred to as {\em integral Lagrangian tori}.\footnote{These are also sometimes called Bohr-Sommerfeld Lagrangians.} The fundamental  packing question, in this setting, is the following.

\bigskip

\noindent \textit{What is the maximum number of disjoint integral Lagrangian tori contained in a given (pre)compact symplectic manifold?}

\bigskip

A more approachable version of this question is to consider a specific collection of disjoint integral Lagrangian tori in a symplectic manifold $(M, \omega)$, and to ask if it is a {\em maximal integral packing} in the sense that any other integral Lagrangian torus in $M$ must intersect at least one torus in the collection. In this paper, we study  this question in the simplest nontrivial setting.

\subsection{Results}

Equip the sphere $S^2$ with its standard symplectic form $\omega$ scaled so that $\int_{S^2} \omega =2$. Let $L_{1,1}$ be the monotone Clifford torus
(product of equators) in $(S^2 \times S^2, \pi_1^*\omega + \pi_2^*\omega).$ Our first result is the following.

\begin{theorem}\label{one} The Clifford torus $L_{1,1}$ is a maximal integral packing of $(S^2 \times S^2, \pi_1^*\omega + \pi_2^*\omega).$
\end{theorem}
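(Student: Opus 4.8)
The plan is to assume the contrary and contradict integrality. Suppose $L$ is an integral Lagrangian torus with $L\cap L_{1,1}=\emptyset$; I will aim to produce a non-constant pseudoholomorphic disc with boundary on $L$ of $\omega$-area strictly less than $1$. Since $L$ is integral, every disc with boundary on $L$ has integral $\omega$-area, and non-constant pseudoholomorphic discs have positive area, so no such disc exists — and this contradiction proves the theorem. The two geometric inputs I would use are: (a) for every $\omega$-tame $J$ the classes $A=[S^2\times\ast]$ and $B=[\ast\times S^2]$ are each represented by a foliation of $S^2\times S^2$ by embedded $J$-holomorphic spheres of area $2$, with exactly one sphere of each class through every point and a class-$A$ and a class-$B$ sphere meeting in a single point; and (b) the rigidity of $L_{1,1}$: for the split structure $J_0=j\oplus j$ the coordinate sphere $S^2\times\{b\}$ with $b$ on the equator $E_2\subset S^2$, and symmetrically $\{a\}\times S^2$ with $a\in E_1$, is $J_0$-holomorphic and is cut by $L_{1,1}$ along its own equator into two $J_0$-holomorphic discs of area exactly $1$.

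The small disc would be produced by neck-stretching. Fix a Weinstein neighbourhood $\mathcal N\cong D^{*}_{\varepsilon}T^2$ of $L_{1,1}$ small enough to be disjoint from $L$, take $J$ equal to $J_0$ on $\mathcal N$ and generic elsewhere, and consider the degeneration obtained by stretching the neck along $\partial\mathcal N$ and, simultaneously, along the boundary of a Weinstein neighbourhood of $L$. A class-$A$ (resp.\ class-$B$) sphere through a point of $L$ carries area $2$; in the split model its intersection with $L_{1,1}$ is either empty or a single circle of class $[E_1]$ (resp.\ $[E_2]$) in $H_1(L_{1,1})$, and in the latter case the part of the sphere lying in $\mathcal N$ is a thin annulus of area $O(\varepsilon)$ about that circle. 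Under the stretching, such a sphere converges to a holomorphic building whose components near $L_{1,1}$ carry only $O(\varepsilon)$ of the area, whose components arising near $L$ can be capped using $L$ to yield honest holomorphic discs with boundary on $L$, and whose total area is $2$. The disjointness $L\cap L_{1,1}=\emptyset$ is the crucial ingredient: it forbids the ``aligned'' building — the one realised by $L_{1,1}$ itself, consisting of two discs of area exactly $1$ — and forces either an extra breaking of the building or an area defect into $\mathcal N$, in either case yielding a disc component on $L$ of area $<1$.

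I expect the real work — and the main obstacle — to be precisely this last step: establishing SFT compactness for the split-then-generic almost complex structure, achieving transversality and excluding multiply covered and ghost components, and then carrying out the index and area bookkeeping carefully enough to convert ``total area $2$, and not the aligned configuration'' into the strict bound ``area $<1$''. A further technical point is arranging $L$ to be in general position with respect to the two sphere foliations without perturbing the split model near $L_{1,1}$. (In the special case that $L$ is monotone — for instance a Chekanov-type torus — the statement also follows from the facts that $L_{1,1}$ is superheavy with respect to every quasi-state on $S^2\times S^2$ and that a heavy set must intersect every superheavy set; but this route is not available for a general integral $L$, which need not be monotone or unobstructed, so the holomorphic-curve argument above is the one I would pursue.)
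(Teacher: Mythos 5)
Your opening reduction — assume $L\cap L_{1,1}=\emptyset$ and seek a non-constant holomorphic disc on $L$ of area $<1$ to contradict integrality — cannot be made to work, and the reason is already contained in the paper's own first step. Proposition~\ref{monotone} shows that \emph{every} integral Lagrangian torus in $(S^2\times S^2,\pi_1^*\omega+\pi_2^*\omega)$ is automatically monotone (with constant $\lambda=2$), by displacing $L$ into a subcritically small polydisk via Theorem~C of \cite{rgi} and then invoking \cite{cm} and \cite{ho}. For a monotone torus, every finite-energy disc has area equal to half its Maslov index, hence a nonnegative integer. The stretching you propose will therefore \emph{never} produce a disc component on $L$ of area strictly between $0$ and $1$; there is no area defect to exploit. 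Your parenthetical worry that $L$ ``need not be monotone'' is in fact resolved by the paper — but in a way that closes off your chosen route.

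The more fundamental gap is the claim that disjointness of $L$ from $L_{1,1}$ ``forbids the aligned building'' and forces either an extra breaking or an area defect. This is not substantiated and is, for a monotone $L$, false. When one stretches along $L\cup L_{1,1}$, the Gromov foliation in class $(0,1)$ degenerates to a foliation of $S^2\times S^2\smallsetminus(L\cup L_{1,1})$ in which some leaves break into two area-$1$ planes across $L$, other leaves break into two area-$1$ planes across $L_{1,1}$, and most leaves do not break at all (this is exactly the foliation $\mathcal{F}$ constructed in Section~\ref{double}). No individual sphere is forced to break at both tori, and no breaking produces a sub-integral piece. Disjointness does not create an imbalance in this picture. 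The hard content of Theorem~\ref{one} is precisely that such ``perfectly aligned'' foliations exist for any disjoint monotone pair, and the contradiction has to be extracted by an entirely different mechanism: the paper uses the foliation to build two embedded symplectic spheres $F,G$ in class $(1,d)$ with carefully controlled intersections (Proposition~\ref{intcount}), performs blow-up, inflation, and blow-down to transform $(S^2\times S^2,\pi_1^*\omega+\pi_2^*\omega)$ into a monotone $(X,\Omega)\cong(S^2\times S^2,(d+1)(\pi_1^*\omega+\pi_2^*\omega))$ in which both $\mathbb{L}$ and $L_{1,1}$ become Hamiltonian isotopic to the Clifford torus by Cieliebak--Schwingenheuer \cite{cisc}, and only then appeals to monotone Floer theory \cite{oh} to conclude they must intersect. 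Your closing remark about superheaviness is also noted by the paper as insufficient in general: Theorem~\ref{lag} and the surrounding discussion make clear that Floer-theoretic intersection results require a nontriviality hypothesis on $L$ that is not known to hold for all monotone tori, which is precisely why the longer geometric argument is needed.
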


For real numbers $a,\,b>0$, consider the symplectic polydisk 
$$P(a,b) =\left\{(z_1, z_2) \in \CC^2 \, \mid \, \pi|z_1|^2<a,\,\pi|z_2|^2<b\right\} \subset \RR^4 .$$
Identifying $L_{1,1}$ with the standard Clifford torus in $\RR^4$, Theorem \ref{one} implies that $L_{1,1}$ is a maximal integral packing of each $P(a,b)$
with $1<a,\,b < 2$.

If $a$ and $b$ are both greater then $2$, then a natural candidate for a maximal integral  packing of $P(a,b)$ is the collection of integral Lagrangian tori 
$$\{L_{k,l} \mid k,l \in \NN, k \leq \lfloor a \rfloor,\, l \leq  \lfloor b \rfloor\},$$ where $L_{k,l}$ is the product of the circle about the origin bounding area $k$ in the $z_1$-plane with the circle about the origin bounding area $l$ in the $z_2$-plane. The analogous packing in dimension two is always maximal. Our second result shows that, in dimension four, this candidate always fails.

\begin{theorem}\label{two} If $\min(a,b) >2$, then  $\{L_{k,l} \mid k,l \in \NN, k \leq \lfloor a \rfloor,\, l \leq  \lfloor b \rfloor\}$ is not a maximal integral packing of $P(a,b)$. For every $\epsilon>0$ there is an integral Lagrangian torus $L^+$ in $$P(2+\epsilon, 2+\epsilon) \smallsetminus \{L_{k,l} \mid k,l \in \{1,2\}\}.$$
\end{theorem}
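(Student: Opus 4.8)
The plan is to first reduce the problem to a cleaner region and then build $L^+$ there by a Lagrangian suspension. Observe that for a Lagrangian torus $L\subset\mathbb{C}^2$ and the moment map $\mu=(\pi|z_1|^2,\pi|z_2|^2)$ one has $L\cap L_{k,l}\neq\emptyset$ if and only if $(k,l)\in\mu(L)$, since $L_{k,l}$ is precisely the $\mu$-fibre over $(k,l)$. Hence it suffices to produce an integral Lagrangian torus $L^+$ whose image avoids the four points $(1,1),(1,2),(2,1),(2,2)$ and lies in $(0,2+\epsilon)^2$. To that end set
$$\mathcal R:=\bigl\{(z_1,z_2)\in\mathbb{C}^2:\ 1<\pi|z_1|^2<2,\ \ \pi|z_2|^2<2+\epsilon\bigr\}\ \subset\ P(2+\epsilon,2+\epsilon).$$
Every point of $\mathcal R$ has $\pi|z_1|^2\in(1,2)$, so $\mathcal R$ is disjoint from all four $L_{k,l}$ at once; it therefore remains only to find an integral Lagrangian torus inside $\mathcal R$. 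Note $\mathcal R\cong\mathcal A(1)\times D(2+\epsilon)$, an area-$1$ annulus times an area-$(2+\epsilon)$ disc, and that no \emph{product} torus in $\mathcal R$ can be integral (an embedded loop in $\mathcal A(1)$ encloses signed area in $(0,1)$ or in $(1,2)$, never an integer), so $L^+$ must be genuinely twisted.

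The second step builds $L^+$ as a Lagrangian suspension. Identify the annulus factor with $T^*S^1$ via $(p,t)$, $p=\pi|z_1|^2-\tfrac32\in(-\tfrac12,\tfrac12)$, $t=\theta_1/2\pi$, so that the primitive inherited from $\mathbb{C}$ is $\lambda_{\mathcal A}=(p+\tfrac32)\,dt$. Fix a circle $c\subset D(2+\epsilon)$ bounding area $1$, and a loop $\{g_t\}_{t\in S^1}$ in $\mathrm{Ham}(D(2+\epsilon))$ with generating Hamiltonian $\{K_t\}$, and define
$$L^+:=\bigl\{\,(t,\,-K_t(g_t(v)),\,g_t(v))\ :\ t\in S^1,\ v\in c\,\bigr\}.$$
This is always an embedded Lagrangian torus; it lies in $\mathcal R$ provided $|K_t|<\tfrac12$ along $g_t(c)$ and $\bigcup_t g_t(c)\subset D(2+\epsilon)$. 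Its area homomorphism is read off the two obvious generators. The fibre class (fixed $t$) has $\lambda_{\mathcal A}|_{\{t=\mathrm{const}\}}=0$, so its period equals the area enclosed by $g_t(c)$ in $D(2+\epsilon)$, namely $\mathrm{area}(c)=1$. The base class (fixed $v=v_0$, loop $\ell(t)=g_t(v_0)$) has period $\int_0^1(p(t)+\tfrac32)\,dt+\int_\ell\lambda_D=\tfrac32+\bigl(\int_{D_\ell}\omega-\int_0^1 K_t(\ell(t))\,dt\bigr)=\tfrac32-\mathcal A_K(\ell)$, where $\mathcal A_K(\ell)$ is the Hamiltonian action of the loop $\{g_t\}$ at $c$ (constant on $c$ because the base cycles are homologous in $L^+$). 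Thus $L^+$ has area homomorphism $(1,\ \tfrac32-\mathcal A_K)$, which is $\mathbb{Z}$-valued exactly when $\mathcal A_K=\pm\tfrac12$ (giving classes $(1,1)$ or $(1,2)$); in either case $L^+\subset\mathcal R$ is disjoint from, hence distinct from, the corresponding $L_{k,l}$.

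The main obstacle is the third step: constructing a loop $\{g_t\}$ in $\mathrm{Ham}(D(2+\epsilon))$ whose action at $c$ is exactly $\tfrac12$ (say) while respecting the tight constraints $|K_t|<\tfrac12$ on $\bigcup_tg_t(c)$ and $\bigcup_tg_t(c)\subset D(2+\epsilon)$. The identity loop gives action $0$ and recovers the non-integral split torus $C_{3/2}\times c$; to push the action up to $\tfrac12$ one must let $\{g_t\}$ transport $c$ around a genuine cycle, which enlarges the swept region $\bigcup_tg_t(c)$. Since $c$ already has area $1$, any appreciable action forces the swept region to have area noticeably more than $1$, so one needs room strictly more than $2$ in the second factor — this is precisely where the hypothesis $2+\epsilon$ (and the formulation "for every $\epsilon>0$") enters. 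I would carry this out by taking $c$ a small round circle and $\{g_t\}$ a carefully tapered "push–pull'' loop supported in a disc of area $<2+\epsilon$, estimating $\mathcal A_K=-\int_{D_\ell}\omega+\int K_t$ directly and choosing the speed and amplitude of the loop so that the two contributions sum to $\tfrac12$ while keeping $|K_t|<\tfrac12$ on the (thin) swept annulus; letting $\epsilon$ shrink only forces these choices to be made with less and less slack, but never obstructs them. Verifying that this balancing is possible for all $\epsilon>0$ is the crux of the argument, the rest being the routine checks that $L^+$ is embedded, Lagrangian, contained in $\mathcal R$, and has the stated periods.
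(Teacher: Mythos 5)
The proposal is incomplete at the third step, which you rightly identify as the crux: no loop $\{g_t\}$ achieving $\mathcal A_K=\frac12$ subject to $|K_t|<\frac12$ on $g_t(c)$ is actually produced, and the difficulty is genuine rather than a routine estimate. For the simplest candidates the constraints are strictly incompatible. With $c=\{\pi|z|^2=1\}$ and $g_t$ an $n$-fold clockwise rotation about the origin, the Hamiltonian $K_t=-nR+c_0$ is constant (equal to $c_0-n$) on $c$, so $|K_t|<\frac12$ forces $c_0\in(n-\frac12,n+\frac12)$; but the action equals $c_0$, so it lies in an open interval centred on an integer and can never be a half-integer. For a rigid translation of the round circle $c$ (radius $r=1/\sqrt\pi$) along a clockwise loop $\gamma$ enclosing area $A>0$, the Hamiltonian is linear near $g_t(c)$ with $|\nabla K_t|=|\dot\gamma(t)|$, so $\max_{g_t(c)}|K_t|=|K_t(\gamma(t))|+r|\dot\gamma(t)|$; integrating the bound $<\frac12$ and combining $\int_0^1 K_t(\gamma(t))\,dt=\mathcal A_K-A=\frac12-A$ with the isoperimetric inequality $\mathrm{length}(\gamma)\geq 2\sqrt{\pi A}$ yields $|\frac12-A|+2\sqrt A<\frac12$, which has no solution in $A>0$. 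So for the obvious families the slack is not merely small as $\epsilon\to 0$: it is already negative for every $\epsilon$. Some genuinely new mechanism would be needed for the required loop to exist, and the closing ``push--pull'' paragraph does not supply it.

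Part of the problem is that confining $L^+$ to $\mathcal R=\{1<\pi|z_1|^2<2\}\times D(2+\epsilon)$ is strictly stronger than the theorem demands: you only need $\mu(L^+)$ to miss the four lattice points, not the lines $R_1=1$ and $R_1=2$. The paper does not impose this confinement, and its route is quite different from yours. It takes $L^+$ to be a product torus, the boundary of a $P(1,1)$ inside an angularly thin, radially long block $S\times T\cong P(1+\delta,1+\delta)$ that deliberately straddles $R_1=1$ and $R_1=2$; the work then goes into constructing an explicit Hamiltonian diffeomorphism $\Psi$, a carefully timed composition of a diagonal move, a delayed vertical move, and a horizontal correction, each generated by a Hamiltonian depending only on $(\theta_1,\theta_2)$ and cut off along moving Lagrangians, that pushes the four $L_{k,l}$ off $S\times T$. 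To repair your argument you would either have to actually exhibit the loop $\{g_t\}$ despite the obstructions above, or drop the confinement to $\mathcal R$ and argue more globally, as the paper does.
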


\subsection{Overview}

The first step in our proof of Theorem \ref{one} is to show that any integral Lagrangian torus in $(S^2 \times S^2, \pi_1^*\omega + \pi_2^*\omega)$ is actually monotone. This follows from the work of Hind and Opshtein in \cite{ho}, and is proved in  Lemma \ref{monotone} below. Arguing by contradiction, we then assume there is a monotone Lagrangian torus $\mathbb{L}$ in $(S^2 \times S^2, \pi_1^*\omega + \pi_2^*\omega)$ that is disjoint from the Clifford torus $L_{1,1}$.  
%
%
The work of Ivrii in \cite{ivriit}, and Dimitroglou-Rizell, Goodman, and Ivrii in \cite{rgi}, implies that there is a finite energy holomorphic foliation $\mathcal{F}$ of $S^2 \times S^2 \smallsetminus (\mathbb{L} \cup L_{1,1})$ which has a normal form near $\mathbb{L}$ and $L_{1,1}$ (see Section \ref{double}). 
We use $\mathcal{F}$ to establish  the existence of two symplectic spheres, $F$ and $G$,  in $(S^2 \times S^2, \pi_1^*\omega + \pi_2^*\omega)$ (see Proposition \ref{existence2} and Proposition \ref{existence3}).  Both $F$ and $G$ represent a homology class of the form $(1,d) \in H_2(S^2 \times S^2; \ZZ)=\ZZ^2$ for some large $d$. They also have special intersection properties with the leaves of $\mathcal{F}$ and with each other (see Section \ref{FG} and Proposition \ref{intcount}). Using the spheres $F$ and $G$, together with the operations of blow-up, inflation, and blow-down, we then alter the ambient symplectic manifold away from $\mathbb{L} \cup L_{1,1}$ to obtain a new monotone symplectic manifold, $(X, \Omega)$.  This new manifold is symplectomorphic to $(S^2 \times S^2, (d+1)(\pi_1^* \omega + \pi_2^*\omega))$ and $\mathbb{L}$ and $L_{1,1}$ remain disjoint and monotone therein. However the images (transforms) of the spheres $F$ and $G$ in  $(X, \Omega)$ are now in the class $(1,0)$ and their existence implies, by the work of Cielieback and Schwingenheur in \cite{cisc}, that $\mathbb{L}$ and $L_{1,1}$ must both be Hamiltonian isotopic to the Clifford torus in $(X, \Omega)$. It then follows from  standard monotone Lagrangian Floer thoery, \cite{oh}, that it is not possible for  $\mathbb{L}$ and $L_{1,1}$ to be disjoint. This  contradiction completes the proof of Theorem \ref{one}.

To prove Theorem \ref{two} we construct, for every $\epsilon>0$, an explicit embedding of the closure of $P(1,1)$ into $P(2+\epsilon, 2+\epsilon) \smallsetminus \{L_{k,l} \mid k,l \in \{1,2\}\}$, using a time-dependent Hamiltonian flow. The desired Lagrangian, $L^+$, is the one on the boundary of the image. 

\subsection{Commentary and further questions}

Given that  Theorem \ref{one} is reduced to the problem of detecting  intersection points of two monotone Lagrangian tori, using \cite{ho}, it is natural to ask whether Lagrangian Floer theory (rigid holomorphic curves) can also be used to prove Theorem \ref{one} directly. To the knowledge of the authors this is not yet possible. The following result seems to be as close to a proof of Theorem \ref{one} as one can currently get using Lagrangian Floer theory.

\begin{theorem}
\label{lag}
Suppose that $L$ is a monotone Lagrangian torus in $(S^2 \times S^2, \pi_1^*\omega + \pi_2^*\omega)$. If the Lagrangian Floer homology of $L$, with respect to some $\CC^*$-local system, is nontrivial, then $L$ must intersect $L_{1,1}$.
\end{theorem}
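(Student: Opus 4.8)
The plan is to argue by contradiction, using the decomposition of the monotone Fukaya category of $(S^2 \times S^2, \pi_1^*\omega + \pi_2^*\omega)$ into summands indexed by the eigenvalues of quantum multiplication by $c_1$, together with the fact that the Clifford torus $L_{1,1}$, equipped with suitable local systems, split-generates every summand.

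First I would assemble the standard package from monotone Lagrangian Floer theory. Since $L$ is a monotone torus with minimal Maslov number $2$, its Floer homology with a $\CC^*$-local system $\rho \in H^1(L;\CC^*) \cong (\CC^*)^2$ is controlled by the superpotential $W_L \colon (\CC^*)^2 \to \CC$ counting Maslov-$2$ disks through a point weighted by $\rho$-holonomy, and nonvanishing of $HF(L;\rho)$ forces $\rho$ to be a critical point of $W_L$ (see \cite{oh}). Moreover, via the closed-open map, which is then a unital ring homomorphism out of $QH^*(S^2 \times S^2)$, together with Auroux's identification of the action of $c_1$ on $HF(L,L;\rho)$ with multiplication by $W_L(\rho)$, the critical value $\lambda := W_L(\rho)$ is an eigenvalue of quantum multiplication by $c_1$. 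Since $QH^*(S^2\times S^2) \cong \CC[h_1,h_2]/(h_1^2 - 1,\, h_2^2 - 1)$ is a product of four copies of $\CC$ and $c_1 = 2h_1 + 2h_2$, these eigenvalues are $\pm 2 \pm 2 \in \{4, 0, -4\}$, the value $0$ being supported on two of the four ring factors. Accordingly the monotone, $\CC$-linear Fukaya category $\mathcal{F}$ of $S^2 \times S^2$ splits into four summands, and $(L,\rho)$ is a nonzero object of the one attached to the idempotent of $QH^*(S^2\times S^2)$ determined by $(L,\rho)$ --- nonzero because $HF((L,\rho),(L,\rho)) = HF(L;\rho) \neq 0$.

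Next I would invoke the Floer-theoretic description of the Clifford torus itself. Its superpotential is $W_{L_{1,1}} = z + z^{-1} + w + w^{-1}$, whose four critical points $(\pm 1, \pm 1)$ are all nondegenerate, with critical values $4, 0, 0, -4$; by Cho's computation and the toric generation theorem of Fukaya--Oh--Ohta--Ono, each of these four branes split-generates the summand of $\mathcal{F}$ attached to the corresponding idempotent of $QH^*(S^2\times S^2)$, and the four branes together generate all four summands. In particular there is $\rho' \in \{(\pm 1, \pm 1)\}$ such that $(L_{1,1}, \rho')$ split-generates precisely the summand containing $(L,\rho)$. The split-generation criterion then forces $HF^*((L,\rho),(L_{1,1},\rho')) \neq 0$: otherwise $(L,\rho)$ would be right-orthogonal to a generator of the summand in which it lives and hence zero, contradicting the previous paragraph.

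Finally, if $L$ were disjoint from $L_{1,1}$ then the Floer cochain complex $CF((L,\rho),(L_{1,1},\rho'))$ would be identically zero, so $HF^*((L,\rho),(L_{1,1},\rho')) = 0$ --- a contradiction. Hence $L$ meets $L_{1,1}$. I expect the crux to be the second step: pinning down and citing, in the precise monotone $\CC$-linear setting relevant here, the statement that the Clifford torus with its four $\pm 1$ local systems split-generates $\mathcal{F}(S^2 \times S^2)$. This ultimately rests on the nondegeneracy of the critical points of $z + z^{-1} + w + w^{-1}$ and on the open-closed map hitting the unit of each of the four factors of $QH^*(S^2 \times S^2)$; granted this input, the argument is formal. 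Note that this does not recover Theorem \ref{one}, since a monotone torus disjoint from $L_{1,1}$ could a priori still have $HF(L;\rho)$ vanishing for every $\rho$.
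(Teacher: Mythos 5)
Your argument is correct and coincides with the paper's: the theorem follows formally from the fact that the Clifford torus, equipped with its four $\pm1$-local systems, split-generates the monotone Fukaya category of $(S^2\times S^2,\pi_1^*\omega+\pi_2^*\omega)$, which forces $HF^*((L,\rho),(L_{1,1},\rho'))\neq 0$ for an appropriate $\rho'$ and hence a geometric intersection. The paper cites Evans--Lekili \cite{evle} for the split-generation input (their Hamiltonian $G$-manifold generation theorem, stated directly in the monotone $\CC$-linear setting) rather than the Fukaya--Oh--Ohta--Ono toric generation theorem you propose, but the logical structure and the decomposition by $c_1$-eigenvalues are identical.
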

This follows from work of Evans and Lekili in \cite{evle} which implies that the Clifford torus split-generates the monotone Fukaya category of $(S^2 \times S^2, \pi_1^*\omega + \pi_2^*\omega)$. It is not known whether there exist monotone Lagrangian tori in 
$(S^2 \times S^2, \pi_1^*\omega + \pi_2^*\omega)$ whose Lagrangian Floer homology is trivial for every choice of $\CC^*$-local system.
In \cite{vianna}, Vianna constructs a countably infinite collection of monotone Lagrangian tori in $(S^2 \times S^2, \pi_1^*\omega + \pi_2^*\omega)$, no two of which are Hamiltonian isotopic. Each of the tori in Vianna's collection satisfies the hypothesis of Theorem \ref{lag}.

The following question, in the sprit of Theorem \ref{one}, remains unresolved.
\begin{question}\label{always}
Does every pair of monotone Lagrangian tori in $(S^2 \times S^2, \pi_1^*\omega + \pi_2^*\omega)$ intersect?
\end{question}

Progress on other aspects of the study of disjoint Lagrangian tori has also recently been made in two related works by Mak and Smith, in \cite{maksmi}, and by Polterovich and Shelukhin, in \cite{polshe}. Let $\{\gamma_i\}$ be a collection of disjoint circles bounding disks of the same area, and let $E$ be the equator in the sphere $S^2$. In \cite{maksmi} and \cite{polshe}, it is shown that, with respect to certain nonmonotone symplectic forms on $S^2 \times S^2$, packings of the form ${\mathcal L} = \sqcup \gamma_i \times E$ are maximal in the sense that any Lagrangian torus Hamiltonian isotopic to $\gamma_1 \times E$ must intersect ${\mathcal L}$. In comparison, the maximal packing given by Theorem \ref{one} only includes a single torus, $L_{1,1}$, but we do not assume any other tori are Hamiltonian isotopic to it. Theorem \ref{two} shows that analogous packings of the form $\sqcup \gamma_i \times \gamma_j$ are no longer maximal.

Below are a few of the questions suggested by Theorem \ref{two} which also remain unresolved. 

\begin{question}
Is every integral Lagrangian torus in $P(2+\epsilon, 2+\epsilon) \smallsetminus \{L_{k,l} \mid k,l \in \{1,2\}\}$, Hamiltonian isotopic to $L_{1,1}$?
\end{question}

\begin{question}
Suppose $2<a,\,b <3$.  Are there six disjoint integral Lagrangian tori in $P(a,b)$?
\end{question}

\begin{question}
Suppose $2<b <3$.  Are there three disjoint integral Lagrangian tori in $P(2,b)$?
\end{question}

\end{section}

\section{Conventions, labels and notation}

Every copy of the two dimensional sphere $S^2$ will implicitly be identified with the unit sphere in $\RR^3$ and we will label the north and south poles  by $\infty$ and $0$, respectively. In 
$(S^2 \times S^2, \pi_1^*\omega + \pi_2^*\omega)$, we use these points to define the four symplectic spheres
$S_0=S^2 \times \{0\}$, $S_{\infty}=S^2 \times \{\infty\}$, $T_0=\{0\} \times S^2$ and $T_{\infty}=\{\infty\} \times S^2$. The ordered basis $\{ [S_0], [T_0]\}$ of $H_2(S^2 \times S^2;\ZZ)$ is used to identify it with $\ZZ^2$.

Let $L \subset (M, \Omega)$ be a Lagrangian torus in a four dimensional symplectic manifold. A diffeomorphism $\psi$ from $\mathbb{T}^2= S^1 \times S^1$ to $L$ will be referred to as a parameterization of $L$.  It specifies a basis of  $H_1(L; \ZZ)$ and thus an isomorphism from $H_1(L; \ZZ)$ to $\ZZ^2$. We will denote this copy of $\ZZ^2$ by $H_1^{\psi}(L; \ZZ)$. The parameterization $\psi$ can also be extended to a  symplectomorphism $\Psi$ from a neighborhood of the zero section in $T^*\mathbb{T}^2$ to a Weinstein neighborhood $\mathcal{U}(L)$ of $L$ in $M$.  We will denote the corresponding coordinates in the neighborhood  $\mathcal{U}(L)$ of  $L$ by $(p_1, p_2, q_1, q_2)$ and, for simplicity, we will assume that $$\mathcal{U}(L) =  \{|p_1| < \epsilon,   |p_2 |<  \epsilon\},$$ for some $\epsilon >0$. 

\section{Proof of Theorem \ref{one}.}\label{curves}

Arguing by contradiction, we begin with the following.

\medskip

\noindent{\bf Assumption 1.}
There is an integral Lagrangian torus $\mathbb{L}$ in $(S^2 \times S^2, \pi_1^*\omega + \pi_2^*\omega)$ which is disjoint from the Clifford torus $L_{1,1}$.

\medskip

Below we show that  Assumption 1 can be refined in three ways. 

\subsection{Refinement 1. We may assume that $\mathbb{L}$ is monotone.} 

A symplectic manifold $(M, \Omega)$ is {\em monotone} if the Chern and area homomorphisms,
$$
c_1 \colon \pi_2(M) \subset H_2(M, \ZZ) \to \ZZ  \text{ and } \Omega  \colon \pi_2(M) \to \RR,
$$
are positively proportional. Recall that a Lagrangian submanifold $L \subset (M, \Omega)$ is {\em monotone} if its Maslov and area homomorphisms,
$$
\mu \colon \pi_2(M,L) \to \ZZ  \text{ and } \Omega  \colon \pi_2(M,L) \to \RR,
$$
are positively proportional. We will denote the constant of proportionality of $L$ by $\lambda$.

If $L$ is a Lagrangian torus, then  one can verify monotonicity by checking it for a collection of disks whose boundaries generate $H_1(L; \ZZ)$.

\begin{lemma}
\label{pair}
Suppose that $(M, \Omega)$ is a symplectic four manifold which is monotone with constant $\frac{\lambda}{2}$. A Lagrangian torus $L$ in  $(M, \Omega)$ is monotone with constant $\lambda$ if there are two smooth maps $v_1,\,v_2 \colon (D^2, S^1) \to (M,L)$ such that the boundary maps $v_1|_{S^1}$ and $v_2|_{S^1}$ determine an integral basis of $H_1(L;\ZZ) =\ZZ^2$ and $\mu([v_i]) =\lambda \Omega([v_i])$ for $i=1,2.$
\end{lemma}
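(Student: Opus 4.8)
The plan is to deduce the identity $\mu = \lambda\,\Omega$ of homomorphisms on $\pi_2(M,L)$ from its validity on a generating set, using the homotopy exact sequence of the pair $(M,L)$ together with the monotonicity of $(M,\Omega)$. Write $i_*\colon \pi_2(M)\to\pi_2(M,L)$ for the natural map and $\partial\colon\pi_2(M,L)\to\pi_1(L)$ for the connecting homomorphism, and recall that the Maslov and area homomorphisms $\mu$ and $\Omega$ are defined on all of $\pi_2(M,L)$ and are additive.

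First I would record that, because $L$ is a torus, $\pi_2(L)=0$ and $\pi_1(L)=H_1(L;\ZZ)=\ZZ^2$, so exactness at $\pi_2(M,L)$ gives $\ker\partial=i_*\pi_2(M)$. By hypothesis $\partial[v_1]$ and $\partial[v_2]$ form an integral basis of $H_1(L;\ZZ)$; hence for an arbitrary class $w\in\pi_2(M,L)$ one can choose $a,b\in\ZZ$ with $\partial w=a\,\partial[v_1]+b\,\partial[v_2]$, so that $w-a[v_1]-b[v_2]$ lies in $\ker\partial$ and equals $i_*(A)$ for some $A\in\pi_2(M)$. Thus every $w\in\pi_2(M,L)$ can be written as
$$ w \;=\; a[v_1] + b[v_2] + i_*(A), \qquad a,b\in\ZZ,\ A\in\pi_2(M). $$

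Next I would evaluate both homomorphisms on such a $w$. Additivity reduces this to the three summands. On $[v_1]$ and $[v_2]$ we have $\mu=\lambda\,\Omega$ by hypothesis. On $i_*(A)$ I would use the standard normalization $\mu(i_*A)=2\,c_1(A)$ --- the Maslov index of a sphere class, regarded in $\pi_2(M,L)$ via $i_*$, is twice its first Chern number --- together with $\Omega(i_*A)=\Omega(A)$; since $(M,\Omega)$ is monotone with constant $\tfrac{\lambda}{2}$, i.e. $c_1=\tfrac{\lambda}{2}\,\Omega$ on $\pi_2(M)$, this gives $\mu(i_*A)=2\cdot\tfrac{\lambda}{2}\,\Omega(A)=\lambda\,\Omega(i_*A)$. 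Adding the three contributions yields $\mu(w)=\lambda\,\Omega(w)$ for every $w$, and since $\tfrac{\lambda}{2}>0$ the constant $\lambda$ is positive, so $L$ is monotone with constant $\lambda$.

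The argument is essentially formal, so I do not anticipate a genuine obstacle, only two points of bookkeeping. One must check that gluing disks along common boundary arcs really represents the sum operation in $\pi_2(M,L)$, so that the decomposition above is legitimate and $\mu,\Omega$ are additive on it; and one must fix the normalization $\mu(i_*A)=2c_1(A)$, with the correct factor of two and the correct sign, in a way consistent with the conventions used elsewhere in the paper. This last point is the single place where a slip could occur, and I would verify it by degenerating the disk to a constant boundary loop at a point of $L$ and computing the Maslov index of the resulting complex bundle pair with constant Lagrangian boundary condition.
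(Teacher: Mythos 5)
The paper states Lemma~\ref{pair} without proof, treating it as a standard fact, so there is no argument in the text to compare against. Your proof is correct and is the expected one: decompose an arbitrary class $w\in\pi_2(M,L)$ as $a[v_1]+b[v_2]+i_*(A)$ via the homotopy exact sequence of the pair (using $\pi_1(L)=H_1(L;\ZZ)=\ZZ^2$), then check $\mu=\lambda\Omega$ termwise, using the hypothesis on $[v_1],[v_2]$ and the normalization $\mu(i_*A)=2c_1(A)$ together with ambient monotonicity on $i_*A$. The two bookkeeping points you flag (additivity of $\mu,\Omega$ on $\pi_2(M,L)$, and the factor of two in $\mu(i_*A)=2c_1(A)$) are exactly the standard facts needed; note also that even if $\pi_2(M,L)$ is nonabelian the argument is unaffected, since $\mu$ and $\Omega$ take values in abelian groups and hence factor through the abelianization.
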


Refinement 1 is validated by the following result.

\begin{proposition}\label{monotone} 
Every integral Lagrangian torus $L$ in $(S^2 \times S^2, \pi_1^*\omega + \pi_2^*\omega)$ is monotone.
\end{proposition}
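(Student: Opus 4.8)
The plan is to reduce the monotonicity of an arbitrary integral Lagrangian torus $L \subset (S^2 \times S^2, \pi_1^*\omega + \pi_2^*\omega)$ to a statement about which relative homology classes in $\pi_2(M,L)$ can be represented by a basis of boundary curves, and then to pin down the areas of those classes using the integrality hypothesis together with the classification results of Hind and Opshtein from \cite{ho}. Recall that $(S^2 \times S^2, \pi_1^*\omega + \pi_2^*\omega)$ is monotone with minimal Chern number $2$ on spheres, so it fits the hypothesis of Lemma \ref{pair} with $\lambda = 2$ (each of the two sphere factors has area $2$ and first Chern number $2$). Thus it suffices to produce two disks $v_1, v_2 \colon (D^2, S^1) \to (M, L)$ whose boundaries form an integral basis of $H_1(L;\ZZ)$ and which satisfy $\mu([v_i]) = 2\,\Omega([v_i])$.

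First I would recall the structure of $\pi_2(M,L)$. From the long exact sequence of the pair, and using $\pi_2(M) = \ZZ^2$ and $\pi_1(L) = \ZZ^2$, the boundary map $\partial \colon \pi_2(M,L) \to \pi_1(L)$ is surjective (since $\pi_1(M)$ is trivial), and $\pi_2(M)$ injects into $\pi_2(M,L)$. On the image of $\pi_2(M)$, monotonicity of the ambient manifold already gives $\mu = 2\,\Omega$, since $\mu$ restricted to absolute classes is $2c_1$ and $c_1 = \Omega$ up to the monotonicity constant. So the content is really about the affine behavior: having chosen any disk $v$ with a prescribed boundary class, any other such disk differs by an element of $\pi_2(M)$, on which $\mu - 2\Omega$ vanishes; hence the quantity $\mu([v]) - 2\,\Omega([v])$ depends only on $\partial[v] \in H_1(L;\ZZ)$ and defines a homomorphism $H_1(L;\ZZ) \to \RR$. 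Monotonicity of $L$ with constant $2$ is then exactly the vanishing of this homomorphism, which by Lemma \ref{pair} can be checked on a basis.

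Next I would bring in the integrality hypothesis and \cite{ho}. The area homomorphism $\Omega \colon \pi_2(M,L) \to \RR$ takes integer values by assumption (that is what "integral Lagrangian torus" means). So the homomorphism $\mu - 2\Omega \colon H_1(L;\ZZ) \to \RR$ has image in $\ZZ + 2\ZZ = \ZZ$; in fact, since $\mu$ is even on absolute classes and the Maslov class of a torus in a manifold with $c_1$ even on spheres is even, $\mu - 2\Omega$ lands in $2\ZZ$. To force it to vanish I would invoke the work of Hind and Opshtein: their results (as cited in the overview, the source of Refinement 1) classify, or at least constrain, the possible area/Maslov data of Lagrangian tori in $(S^2\times S^2, \pi_1^*\omega+\pi_2^*\omega)$ — in particular they should yield that an integral Lagrangian torus has minimal Maslov number $2$ and that the disk of smallest positive area in each generator bounds area exactly $1$, which combined with the homomorphism property forces $\mu([v_i]) = 2$ and $\Omega([v_i]) = 1$ for a suitable basis, hence $\mu = 2\Omega$ identically.

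The main obstacle is the last step: extracting from \cite{ho} precisely the numerical constraint that the small disks have area $1$ (equivalently, that no integral Lagrangian torus can be "off-center" with, say, disk areas $(2, -1)$ in some basis while still having all areas integral and Maslov class satisfying the wrong proportionality). The algebraic skeleton — $\mu - 2\Omega$ descends to a homomorphism on $H_1(L)$, the ambient manifold supplies $\lambda = 2$ via Lemma \ref{pair}, integrality restricts the target to a lattice — is routine; the geometric input that kills the remaining lattice of possibilities is where the real work, and the reliance on \cite{ho}, lies. I would therefore structure the proof as: (1) set up the descent of $\mu - 2\Omega$ to $\mathrm{Hom}(H_1(L;\ZZ),\RR)$; (2) observe integrality confines it to $2\ZZ$ on each basis element; (3) cite \cite{ho} for the area normalization that forces it to be zero; (4) conclude via Lemma \ref{pair}.
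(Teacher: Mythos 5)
Your algebraic skeleton is sound and matches the paper's: reduce via Lemma \ref{pair} with $\lambda=2$, observe that $\mu - 2\Omega$ descends to a homomorphism $H_1(L;\ZZ)\to\RR$ because it vanishes on $\pi_2(M)$ by ambient monotonicity, and note that integrality confines its values. You also correctly flag step (3) — "cite \cite{ho} for the area normalization" — as the place where the real content lies. But that step, as you have sketched it, does not go through, because \cite{ho} does not constrain Lagrangian tori in $(S^2\times S^2, \pi_1^*\omega+\pi_2^*\omega)$ directly. The result you need from Hind–Opshtein is a statement about tori in \emph{polydisks}: if a Lagrangian torus in $P(a,b)$ admits an integral basis of $H_1$ in which both Maslov indices equal $2$, one area equals $1$, and the other area is at least $2$, then $a>2$ or $b>2$.

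To put yourself in a position to apply this, the paper first uses Theorem C of \cite{rgi} to displace $L$ by a Hamiltonian diffeomorphism off $S_\infty \cup T_\infty$, so that $L$ may be regarded as an integral Lagrangian torus inside $P(2-\epsilon, 2-\epsilon)\subset\RR^4$. It then invokes Cieliebak–Mohnke \cite{cm} to produce a disk of Maslov index $2$ with positive area strictly less than $2$; integrality forces that area to be exactly $1$, giving the first basis vector $e_1$ with $\mu(e_1)=2$, $\omega(e_1)=1$. Only then does it take a complementary generator $c$, normalize $\mu(c)=2$ by adding multiples of $e_1$, and argue by contradiction: if $\omega(c)\ne 1$, a basis change produces a basis violating the Hind–Opshtein bound inside $P(2-\epsilon,2-\epsilon)$. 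Your proposal omits both the displacement into the polydisk and the Cieliebak–Mohnke input, and it misattributes to \cite{ho} a constraint formulated for $S^2\times S^2$ rather than for polydisks; as written, step (3) would not yield the needed conclusion.
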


\begin{proof} 
By Theorem C of \cite{rgi} there is a Hamiltonian diffeomorphism which displaces $L$ from  the pair of spheres $S_{\infty} \cup T_{\infty}.$ Hence, $L$ can be identified with an integral Lagrangian torus $\mathbf{ L}$ inside the polydisk $P(2-\epsilon,2-\epsilon) \subset (\RR^4, \omega_4)$ for some sufficiently small $\epsilon>0.$ By Lemma \ref{pair}, it suffices to find two smooth maps $v_1,\,v_2 \colon (D^2, S^1) \to (\RR^4, \mathbf{ L})$ such that the boundary maps $v_1|_{S^1}$ and $v_2|_{S^1}$ determine an integral basis of $H_1(\mathbf{L};\ZZ)$ and $\mu([v_i]) =2 \omega_4([v_i])$ for $i=1,2.$ Simplifying further, we note that, for $\RR^4$, the maps $\mu$ and $\omega_4$ can be recast as homomorphisms $$\mu \colon H_1(\mathbf{ L}; \ZZ) \to \ZZ \text{ and }\omega_4 \colon H_1(\mathbf{ L}; \ZZ) \to \RR$$ and it suffices to find an integral basis $\{e_1,e_2\}$ of $H_1(\mathbf{ L}; \ZZ)$  such that $\mu(e_i) =2 \omega_4(e_i)$ for $i=1,2$.

Since $\mathbf{ L}$ is contained in $P(2-\epsilon,2-\epsilon)$, it follows from \cite{cm} that there is a smooth  map  $f \colon (D, S^1) \to (\RR^4, \mathbf{ L})$ of  Maslov index $2$ whose symplectic area is positive and less than two. Since $\mathbf{ L}$ is integral, the area of $f$ must be equal to one. If  $e_1$ is the element of $H_1(\mathbf{ L}; \ZZ)$ represented by $f|_{S^1}$, then we have $\mu(e_1)=2$ and $\omega_4(e_1)=1$. 

Let $c$ be a class in $H_1(\mathbf{ L}; \ZZ)$ such that $\{e_1,c\}$ is an integral basis. Since $\mu(c)$ is even, by adding integer multiples of $e_1$ to $c$, if necessary, we may assume that $\mu(c)=2$. It remains to show that $\omega_4(c) =1.$

Arguing by contradiction, assume that $\omega_4(c) \neq 1$. Set 
$$
\hat{c}=
\begin{cases}
   c   &  \text{if}\,\, \omega_4(c) > 1,\\
   c + 2(e_1-c) & \text{if}\,\, \omega_4(c) < 1.
\end{cases}
$$
Then $\{e_1, \hat{c}\}$ is an integer basis of $H_1(\mathbf{ L}; \ZZ)$ 
that satisfies
$$\omega_4(e_1)=1,\,\omega_4(\hat{c}) \geq 2$$
and
 $$\mu(e_1)=\mu(\hat{c}) =2.$$
In \cite{ho}, Hind and Opshtein prove that if a Lagrangian torus in $P(a,b)$ admits such a basis, then either $a>2$ or $b>2$. 
This contradicts the assumption that $\mathbf{ L}$ lies in $P(2-\epsilon,2-\epsilon)$ and we are done.
\end{proof}

\subsection{Refinement 2: We may assume that $\mathbb{L}$ lies in the complement of $S_0 \cup S_{\infty} \cup T_0 \cup T_{\infty}$} To verify this, we utilize the relative finite energy foliations from \cite{rgi} which we now recall.  

\subsubsection{Foliations of $(S^2 \times S^2) \smallsetminus L$ following \cite{rgi} }
In \cite{gr}, Gromov proves that for any smooth almost-complex structure $J$ on $S^2 \times S^2$ that is tamed by the symplectic form $\pi_1^*\omega + \pi_2^*\omega$, there is a foliation of $S^2 \times S^2$ by $J$-holomorphic spheres in the class $(0,1)$ (and another with fibres in the class $(1,0)$).  For any monotone Lagrangian torus $L \subset (S^2 \times S^2, \pi_1^*\omega + \pi_2^*\omega)$, there is an analogous relative theory, developed first by Irvrii in \cite{ivriit}, and completed in Dimitroglou-Rizell, Goodman and  Ivrii in \cite{rgi}, with input from \cite{we} and \cite{hl}. This  yields symplectic $S^2$-foliations of  $S^2 \times S^2$ that are {\em compatible with $L$}. These are obtained by stretching certain Gromov foliations along $L$ and smoothing the compactifications of the limiting buildings with more than one level. We now describe a version of this theory that has been adapted for the purposes of this paper. As in \cite{hl},  we only consider the curves which, after stretching, map to $S^2 \times S^2 \smallsetminus L$.  
\medskip

\noindent{\bf Input.} Let $L$ be a monotone Lagrangian torus in $(S^2 \times S^2, \pi_1^*\omega + \pi_2^*\omega)$. 
Fix a parameterization $\psi$ of $L$ and the corresponding Weinstein neighborhood $$\mathcal{U}(L)=\{|p_1|<\epsilon, |p_2|<\epsilon\}.$$  Fix a tame almost complex structure $J$ on ($S^2 \times S^2 \smallsetminus L, \pi_1^*\omega + \pi_2^*\omega)$ such that in $\mathcal{U}(L)$ we have
$$J \frac{\partial}{\partial q_i} =  - \sqrt{p_1^2+p_2^2} \, \frac{\partial}{\partial p_i}.$$

Recall that each negative end of a finite energy $J$-holomorphic curve $u$ mapping to $S^2 \times S^2 \smallsetminus L$  is asymptotic to a closed Reeb orbit on a copy of the flat unit cotangent bundle, $S_L^*\mathbb{T}^2$, of $\mathbb{T}^2$, corresponding to $L$.   This Reeb orbit  covers a closed geodesic, $\gamma$, of the flat metric on $\mathbb{T}^2$. In this case,  we simply say that the end of $u$ is asymptotic to $L$ along $\gamma$.

\medskip

\noindent{\bf Output.} From this input, one can construct, as in $\S 2.5$ of \cite{rgi}, a family of almost complex structures $J_{\tau}$ on $S^2 \times S^2$ for $\tau \geq 0$. Taking the limit of the Gromov foliations for the $J_{\tau}$ as $\tau \to \infty$, it follows from Theorem D  and Proposition 5.16 of \cite{rgi} that one obtains a foliation $\mathcal{F}=\mathcal{F}(L, \psi, J)$ of $S^2 \times S^2\smallsetminus L$ with the following properties.

\begin{itemize}
\item The foliation $\mathcal{F}$ has two kind of leaves: unbroken ones consisting of a single closed $J$-holomorphic sphere in $S^2 \times S^2\smallsetminus L$ of class $(0,1)$, and  broken leaves consisting of a pair of  finite energy $J$-holomorphic planes in $S^2 \times S^2\smallsetminus L$. 
  
\item Each leaf of $\mathcal{F}$ intersects $S_{\infty}$ in exactly one point. For a broken leaf this means that exactly one of its planes intersects $S_{\infty}$. 
  
\item The ends of two planes of a broken leaf are asymptotic to the same  geodesic, but with opposite orientations. This geodesic is embedded. We denote its homology class, equipped with the orientation determined by the plane which intersects $S_{\infty}$, by $\beta \in H_1(L; \ZZ)$. This class is the same for all broken leaves of $\mathcal{F}$ and is referred to as the  foliation class of $\mathcal{F}$.  
  
  
\item Each point $z \in S^2 \times S^2 \smallsetminus L$ lies in a unique leaf of $\mathcal{F}$, and each point of $L$ lies on a unique geodesic in the foliation class $\beta$ that corresponds to a unique plane of a broken leaf of $\mathcal{F}$  that intersects $S_{\infty}$.

\item Let $p \colon S^2 \times S^2 \to S_{\infty}$ be the map which takes $z \in S^2 \times S^2 \smallsetminus L$ to the intersection of its leaf with $S_\infty$, and takes $z \in L$ to the intersection with $S_{\infty}$ of the broken leaf asymptotic to the unique geodesic through $z$ representing the foliation class. The map is well defined by positivity of intersection since $S_{\infty}$ is complex. Then $p(L)$ is an embedded closed curve in $S_{\infty}$. Moreover, if $L$ is homotopic to $L_{1,1}$ in the complement of $T_0 \cup T_{\infty}$, then $p(T_0)$ and $p(T_{\infty})$ (which are points since $T_0$ and $T_{\infty}$ are complex) lie on opposite sides of the closed curve $p(L)$.

\item If $L$ is disjoint from $S_0 \cup S_{\infty} \cup T_0 \cup T_{\infty}$, then we may assume this configuration of symplectic spheres is $J$-complex

\end{itemize}

\begin{lemma}(Straightening).\label{straight} For all sufficiently small $\epsilon>0$ we may assume, by perturbing $J$ outside of 
$\mathcal{U}(L)$, that the unbroken leaves of $\mathcal{F}$ that intersect $\mathcal{U}(\mathbb{L})$ do so along the annuli $\{p_1=\delta, q_1 = \theta, -\epsilon< p_2<\epsilon\}$ for some $\theta \in S^1$ and nonzero $\delta \in (-\epsilon, \epsilon)$. As well, the planes of broken leaves of $\mathcal{F}$  through $S_{\infty}$ intersect $\mathcal{U}(\mathbb{L})$ along the annuli 
$\{p_1=0, q_1 = \theta, 0<p_2<\epsilon\}$, for some $\theta \in S^1$, and the planes of $\mathcal{F}$  through  $S_0$ intersect $\mathcal{U}(\mathbb{L})$ along the annuli 
$\{p_1=0, q_1 = \theta, -\epsilon<p_2<0\}$.
\end{lemma}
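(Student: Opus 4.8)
The plan is to reduce the assertion to the explicit local model furnished by the stated form of $J$, to import the ``normal form near $\mathbb{L}$'' that comes out of the asymptotic analysis in \cite{rgi} and \cite{hl}, and to use a perturbation of $J$ away from $\mathbb{L}$ to turn this approximate picture into an exact one (and to keep the ambient foliation transverse). First I would record the local model. Since $J\frac{\partial}{\partial q_i}=-\sqrt{p_1^2+p_2^2}\,\frac{\partial}{\partial p_i}$ on $\mathcal{U}(\mathbb{L})$, a direct computation shows that the cylinders $C_{\delta,\theta}=\{p_1=\delta,\ q_1=\theta\}$ with $\delta\neq0$, and the half-cylinders $C^{\pm}_{\theta}=\{p_1=0,\ q_1=\theta,\ \pm p_2>0\}$, are $J$-holomorphic; that together they foliate $\mathcal{U}(\mathbb{L})\smallsetminus\mathbb{L}$; and that $C^{\pm}_{\theta}$ is asymptotic as $p_2\to0^{\pm}$ to the embedded closed geodesic $\{p_1=p_2=0,\ q_1=\theta\}$, which represents the foliation class $\beta$. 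The content of the lemma is then precisely that $\mathcal{F}$ can be arranged to restrict to this standard foliation on $\mathcal{U}(\mathbb{L})$.

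Next I would study the leaves of $\mathcal{F}$ inside $\mathcal{U}(\mathbb{L})$, using the construction of $\mathcal{F}$ in \cite{rgi} ($\S 2.5$, Theorem D, Proposition 5.16) and Siefring-type exponential convergence. Each plane of a broken leaf enters $\mathcal{U}(\mathbb{L})$ and is, near its puncture, exponentially asymptotic to a cover of an embedded closed geodesic in class $\beta$; for the flat metric and the round $J$ above the asymptotic operator is explicitly diagonalizable, with kernel spanned by the infinitesimal variation within the $S^1$-family of geodesics in class $\beta$ and with leading eigenfunctions the standard trigonometric modes, so on $\mathcal{U}(\mathbb{L})$ the plane is $C^\infty$-close to one of the trivial half-cylinders $C^{\pm}_{\theta}$. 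Each unbroken leaf that meets $\mathcal{U}(\mathbb{L})$ does so in a $J$-holomorphic annulus disjoint from $\mathbb{L}$ and from the other leaves; being trapped between the (nearly straight) broken leaves and constrained by positivity of intersection, after shrinking $\epsilon$ it too is $C^\infty$-close to one of the cylinders $C_{\delta,\theta}$. In particular, once $\epsilon$ is small, the trace on $\partial\mathcal{U}(\mathbb{L})$ of every leaf meeting $\mathcal{U}(\mathbb{L})$ is a $C^\infty$-small graph over the trace of a member of the standard model.

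Finally the perturbation. The round $J$ on $\mathcal{U}(\mathbb{L})$ is not generic, so one perturbs $J$ on the complement of $\mathcal{U}(\mathbb{L})$ to restore transversality of the relative foliation; this alters neither the normal form of $J$ on $\mathcal{U}(\mathbb{L})$ nor the list of $J$-holomorphic curves available there. Using that the leaves of $\mathcal{F}$ depend smoothly on $J$ and that the map sending an exterior perturbation of $J$ to the family of traces of the leaves on $\partial\mathcal{U}(\mathbb{L})$ is submersive onto the standard configuration (the relevant moduli spaces being regular after the perturbation and the leaves embedded of self-intersection zero), a small such perturbation — which the previous step shows is all that is needed — makes those traces exactly standard. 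Since each $C_{\delta,\theta}$ and each $C^{\pm}_{\theta}$ is a regular solution of the corresponding boundary-value, respectively asymptotic, problem for the round $J$ (automatic transversality in dimension four), the leaves inside $\mathcal{U}(\mathbb{L})$ — being $C^\infty$-close to them and having the same traces or asymptotics — must coincide with them, which is the conclusion. I expect the main obstacle to be this perturbation step: showing that the near-$\mathbb{L}$ behaviour of the leaves can genuinely be prescribed by a deformation of $J$ supported away from $\mathbb{L}$ without disturbing the asymptotics of the broken leaves — equivalently, quantifying the flexibility of the relative Gromov foliation of \cite{rgi} — together with the asymptotic and automatic-transversality analysis in the round Weinstein model that underlies the second step.
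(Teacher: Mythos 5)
Your proposal follows the same broad outline as the paper's proof (normal form near $\mathbb{L}$, asymptotic picture for the leaves, perturbation of $J$ away from $\mathcal{U}(\mathbb{L})$), but the key perturbation step is set up in a genuinely different — and, as you yourself flag, weaker — way. The paper inverts the problem: it first perturbs the \emph{geometric objects} (the outer parts of the unbroken leaves, viewed as a family of symplectic disks with boundary on $\partial\mathcal{U}(\mathbb{L})$) so that their boundary circles become \emph{exactly} the standard circles $\{p_1=\delta,\,q_1=\theta,\,p_2=\pm\epsilon\}$ while remaining symplectic and smoothly converging to the broken leaves, and only \emph{then} chooses a new $J$ outside $\mathcal{U}(\mathbb{L})$ making these perturbed disks holomorphic. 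Since a smooth family of embedded symplectic disks can always be realized as holomorphic for a suitably chosen tame $J$, this sidesteps entirely the surjectivity/submersivity claim (that the map from exterior perturbations of $J$ to boundary traces hits the standard configuration) that you identify as the main obstacle and never actually establish. That claim is not obviously true without further work, and the paper's route makes it unnecessary. Two further points of divergence: (i) the broken-leaf half of the statement is not re-derived from the exponential-convergence/asymptotic-operator analysis as you do; the paper simply cites it as already established in the first bullet point of the proof of Proposition 5.16 of \cite{rgi}; (ii) the final uniqueness step in the paper is positivity of intersection — the glued spheres lie in class $(0,1)$, so any two are disjoint, hence these are the only leaves meeting $\{|p_1|<\epsilon\}$ — rather than the automatic-transversality/regularity argument you sketch, which would need more care (regularity gives smoothness of the moduli space, not immediately the identification of a nearby holomorphic annulus with prescribed boundary as a member of the standard family). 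So: right picture, but the load-bearing step should be reorganized as in the paper — perturb the disks, then choose $J$ — to avoid the gap you flagged.
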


\begin{proof} The statement for broken leaves was established in Proposition 5.16 of \cite{rgi} (see the first bullet point of the proof). Given this, our unbroken leaves intersect $\partial \mathcal{U}(L)$ close to $p_1=0$ in circles smoothly approximating circles $\{p_1 = \delta, q_1 = \theta, p_2 = \pm \epsilon\}$. We look first at the parts of these leaves mapping to the complement of $\mathcal{U}(L)$, which are families of holomorphic disks. Perturbing the disks  we may assume they intersect $\partial \mathcal{U}(L)$ close to $p_1=0$ precisely in the circles $\{p_1 = \delta, q_1 = \theta, p_2 = \pm \epsilon\}$ while remaining symplectic and smoothly converging to the broken leaves. Hence changing $J$ outside of $\mathcal{U}(L)$ we may assume the perturbed disks remain holomorphic. These new disks match with the annuli $\{p_1=\delta, q_1 = \theta, -\epsilon< p_2<\epsilon\}$ to give holomorphic spheres in the class $(0,1)$, and in fact by positivity of intersection these are the only spheres in the class intersecting the region $\{|p_1| < \epsilon\}$, at least if we shrink $\epsilon$ to include only the region where the perturbations apply.
\end{proof}

\medskip

\noindent{\bf Solid tori.}\label{solid} In the case when $L$ is disjoint from $S_0 \cup S_{\infty} \cup T_0 \cup T_{\infty}$ we define $\mathcal{T}_{\infty}$ be the set of all the $J$-holomorphic planes of the broken leaves which intersect $S_{\infty}$. This set can be collectively compactified to obtain a smoothly embedded solid torus in $S^2 \times S^2$ whose boundary is $L$.  Similarly, the set $\mathcal{T}_{0}$ consisting of the other planes of the broken leaves can  be used to obtain another solid torus with boundary on $L$. Note that, since the planes in $\mathcal{T}_0$ and $\mathcal{T}_{\infty}$ fit together to form spheres in the class $(0,1)$,  by positivity of intersection a $J$-holomorphic sphere $u \colon S^2 \to S^2 \times S^2 \smallsetminus L$ in a class of the form $(1,d)$ must either intersect the all the planes of  $\mathcal{T}_{\infty}$ once, or all the planes of $\mathcal{T}_0$ once.

\begin{example}\label{cliff}
For the Clifford torus $L_{1,1} \subset S^2 \times S^2$ and a $J$ adapted to the standard parameterization $\psi_{1,1}$ of $L_{1,1}$, we get a  foliation  $\mathcal{F}_{1,1}$ of $S^2 \times S^2 \smallsetminus L_{1,1}$ with leaves  in the class $(0,1)$.  The broken leaves of $\mathcal{F}_{1,1}$ comprise  two families of $J$-holomorphic planes with boundary on $L_{1,1}$:
$\frak s_0$ which consisting of planes intersecting $S_0$ and $\frak s_{\infty}$ which consists of the planes intersecting $S_{\infty}$.
\end{example}

\begin{remark}\label{both}
One also obtains, for the same $J$, an analogous foliation whose leaves represent the class $(1,0)$.
\end{remark}

The following result establishes Refinement 2. The proof is based on that of Corollary E in \cite{rgi}.

\begin{proposition}
\label{move2}
Suppose that $\mathbb{L}$ is a monotone Lagrangian torus in $(S^2 \times S^2, \pi_1^*\omega + \pi_2^*\omega)$ that is disjoint from $L_{1,1}$. Then there is a Hamiltonian diffeomorphism $\phi$ of $S^2 \times S^2$which displaces $\mathbb{L}$ from  $S_0 \cup S_{\infty} \cup T_0 \cup T_{\infty}$ and is supported away from $L_{1,1}$. Moreover, $\phi(\mathbb{L})$ is homotopic to $L_{1,1}$ in the complement of $T_0 \cup T_{\infty}$ and also in the complement of $S_0 \cup S_{\infty}$.
\end{proposition}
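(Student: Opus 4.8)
The plan is to follow the proof of Corollary~E of \cite{rgi}, which displaces a monotone Lagrangian torus from $S_0\cup S_\infty\cup T_0\cup T_\infty$ by a Hamiltonian isotopy while controlling the homotopy type of the image, and to arrange every step so that it takes place in the complement of $L_{1,1}$. To set up, use that $\mathbb{L}\cap L_{1,1}=\emptyset$ to fix disjoint Weinstein neighbourhoods $\mathcal{U}(\mathbb{L})$ and $\mathcal{U}(L_{1,1})$, and choose a tame $J$ on $S^2\times S^2\setminus\mathbb{L}$ with the \emph{Input} normal form on $\mathcal{U}(\mathbb{L})$ and equal to the split integrable structure outside a small neighbourhood of $\mathbb{L}$, in particular on $\mathcal{U}(L_{1,1})$. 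The \emph{Output} construction, together with Lemma~\ref{straight} and Remark~\ref{both}, then yields the foliation $\mathcal{F}=\mathcal{F}(\mathbb{L},\psi,J)$ with leaves in class $(0,1)$ and the companion foliation $\mathcal{F}^{\perp}$ with leaves in class $(1,0)$; since the stretching only modifies $J$ inside $\mathcal{U}(\mathbb{L})$, the neighbourhood $\mathcal{U}(L_{1,1})$ and the four coordinate spheres are untouched, and all the leaves, broken leaves and solid tori $\mathcal{T}_0,\mathcal{T}_\infty$ (and their $\mathcal{F}^{\perp}$-analogues) produced here lie in the complement of $\mathcal{U}(L_{1,1})$.

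For the displacement itself I would run the Corollary~E argument with $\mathcal{F}$ and $\mathcal{F}^{\perp}$: the solid tori attached to $\mathbb{L}$, together with the unbroken $(0,1)$- and $(1,0)$-leaves through a fixed point far from $\mathbb{L}$, provide symplectic spheres in classes $(1,0)$ and $(0,1)$ that are disjoint from $\mathbb{L}$ and meet transversally in one point, and inflating along these spheres and then applying a Hamiltonian isotopy carries $\mathbb{L}$ into $S^2\times S^2\setminus(S_0\cup S_\infty\cup T_0\cup T_\infty)$. All of the auxiliary curves, and the support of the inflation and of the isotopy, can be taken in a region avoiding the fixed neighbourhood $\mathcal{U}(L_{1,1})$, so the generating Hamiltonian can be cut off to be supported away from $L_{1,1}$.

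It remains to pin down the homotopy type of $\phi(\mathbb{L})$. Here one re-runs the \emph{Output} construction for $\phi(\mathbb{L})$, which is now disjoint from the four spheres, so that $S_0,S_\infty$ (hence the leaf projection $p$ to $S_\infty$) and $T_0,T_\infty$ may be taken holomorphic. Since $L_{1,1}$ is disjoint from $\phi(\mathbb{L})$ it is covered by leaves of this foliation, and the point — where the hypothesis $\mathbb{L}\cap L_{1,1}=\emptyset$ really enters — is to show that this forces $p(T_0)$ and $p(T_\infty)$ to lie on opposite sides of the embedded curve $p(\phi(\mathbb{L}))\subset S_\infty$, equivalently that $\phi(\mathbb{L})$ is homotopic to $L_{1,1}$ in the complement of $T_0\cup T_\infty$ (the last bullet of the \emph{Output} list); the symmetric argument using $\mathcal{F}^{\perp}$ then gives the homotopy statement in the complement of $S_0\cup S_\infty$.

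I expect the homotopy step to be the main obstacle: a priori $\mathbb{L}$ could be displaced to a homotopically inessential position — this is exactly what happens for monotone tori contained in small balls — and ruling that out amounts to showing that a Lagrangian torus disjoint from the homotopically essential torus $L_{1,1}$ must itself sit on the correct side of the separating union $\mathcal{T}_0\cup\mathcal{T}_\infty$ of solid tori. Establishing this, via an intersection count of $L_{1,1}$ against the leaves of the foliation compatible with $\phi(\mathbb{L})$, is the crux; checking that the displacement can be localised away from $L_{1,1}$ is more routine, being of the same character as the support statements already contained in Lemma~\ref{straight} and the \emph{Output} list.
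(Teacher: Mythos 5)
Your proposal takes a genuinely different route from the paper's, and it contains a real gap at precisely the step you flag as ``the crux.'' The paper does not work with the foliation $\mathcal{F}(\mathbb{L},\psi,J)$ of $S^2\times S^2\smallsetminus\mathbb{L}$ compatible with $\mathbb{L}$, nor does it invoke inflation. Instead it starts from an almost complex structure $J_0$ adapted to the standard parameterization of $L_{1,1}$ (and standard in a Weinstein neighborhood $\mathcal{U}(\mathbb{L})$ disjoint from $L_{1,1}$), producing the foliation $\mathcal{F}_0$ of $S^2\times S^2\smallsetminus L_{1,1}$ compatible with $L_{1,1}$, with projection $p_0\colon S^2\times S^2\to S_\infty$ and with $p_0(T_0)$, $p_0(T_\infty)$ on opposite sides of $p_0(L_{1,1})$. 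It then stretches $J_0$ along $\mathbb{L}$, obtaining a family $\mathcal{F}_t$ of foliations of $S^2\times S^2\smallsetminus L_{1,1}$ with projections $p_t$. Two lemmas drive the proof: $p_t(\mathbb{L})$ Hausdorff-converges to a circle $C_\infty\subset S_\infty$, and $C_\infty=p_\infty(\mathbb{L})$ is disjoint from $p_\infty(L_{1,1})$ (monotonicity forces the broken planes to have integral area $1$, so they cannot degenerate further). One then chooses paths $\gamma_0,\gamma_\infty\colon[0,\infty)\to S_\infty$ starting at $p_0(T_0), p_0(T_\infty)$, staying away from $p_t(L_{1,1})$ for all $t$, and, for $t\ge N$, disjoint from $C_\infty$ and with $\gamma_0(N),\gamma_\infty(N)$ separated by $C_\infty$. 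The spheres $p_t^{-1}(\gamma_0(t))$ and $p_t^{-1}(\gamma_\infty(t))$ give a symplectic isotopy of $T_0\cup T_\infty$ in the complement of $L_{1,1}$, generated by a Hamiltonian flow fixing $L_{1,1}$; the inverse flow displaces $\mathbb{L}$ from $T_0\cup T_\infty$. The homotopy statement then comes \emph{for free}: the separation of $\gamma_0(N)$ and $\gamma_\infty(N)$ by $C_\infty$ is exactly the condition that after the inverse flow $\mathbb{L}$ sits homotopically essentially between $T_0$ and $T_\infty$. Repeating with the class-$(1,0)$ foliation and correcting via Corollary 3.7 of \cite{rgi} so that the second isotopy fixes $T_0\cup T_\infty$ handles $S_0\cup S_\infty$.

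Your gap: you prove nothing about why $p(T_0)$ and $p(T_\infty)$ must lie on opposite sides of $p(\phi(\mathbb{L}))$ after displacement. You explicitly acknowledge that this is unresolved and speculate it should follow from ``an intersection count of $L_{1,1}$ against the leaves of the foliation compatible with $\phi(\mathbb{L})$,'' but you do not supply the argument, and it is not obvious how such a count would rule out $\phi(\mathbb{L})$ ending up contractible in the complement — indeed, as you yourself note, monotone tori displaceable into small balls show this is a genuine possibility that must be excluded by the construction, not an afterthought. This is exactly the role the paper's circle $C_\infty$ and the separation clause on $\gamma_0(N),\gamma_\infty(N)$ play: the homotopy constraint is built into the choice of the isotopy rather than verified after the fact. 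A secondary concern is your claim that the inflation in the Corollary E argument can be localized away from $\mathcal{U}(L_{1,1})$; inflation deforms the symplectic form, not just an isotopy, and even if the inflating spheres can be chosen to miss $L_{1,1}$ you still owe a Moser-type argument producing a Hamiltonian diffeomorphism of the \emph{original} form supported away from $L_{1,1}$. The paper's approach dispenses with inflation entirely, replacing it with the direct tracking of $T_0$ and $T_\infty$ under the deforming foliation, which makes the support and homotopy control transparent.
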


\begin{proof}
We start with an almost-complex structure $J_0$ on $S^2 \times S^2$ adapted to the standard parameterization $\psi_{1,1}$ of $L_{1,1}$. We also assume that $J_0$ is standard in a Weinstein neighborhood $\mathcal{U}(\mathbb{L})$ of $\mathbb{L}$ that is disjoint from $L_{1,1}$. Let $\mathcal{F}_0$ be the corresponding $J_0$-holomorphic foliation of $S^2 \times S^2 \smallsetminus L_{1,1}$ and let $p_0 \colon  S^2 \times S^2 \to S_{\infty}$ be the corresponding map. We may assume that the points  $p_0(T_0)$ and $p_0(T_{\infty})$ lie in different components of $S_{\infty} \smallsetminus p_0(L_{1,1})$. 

Deform $J_0$ to a family $J_t$ for $t \ge 0$ by stretching the neck, as in \cite{behwz},  along a sphere bundle in $\mathcal{U}(\mathbb{L})$ that is disjoint from $L_{1,1}$. This yields a family of foliations $\mathcal{F}_t$ of $S^2 \times S^2 \smallsetminus L_{1,1}$. Since the planes of the broken leaves of $\mathcal{F}_0$ have minimal area they persist under the deformation to yield the planes of the broken leaves of $\mathcal{F}_t$. This yields a family of maps $p_t \colon  S^2 \times S^2 \to S_{\infty}$. 

\begin{lemma} The sets $p_t(\mathbb{L})$ in $S_{\infty}$ converge in the Hausdorff topology to a circle $C_{\infty} \in S_{\infty}$ as $t \to \infty $. 
\end{lemma}

\begin{proof} Let $J_{\infty}$ be the limiting almost complex structure which is fully stretched along $\mathbb{L}$.
The circle $C_{\infty}$ is the intersection with $S_{\infty}$ of the broken leaves of the $J_{\infty}$ foliation which are asymptotic to $\mathbb{L}$. Now, $p_t(\mathbb{L})$ consists of the intersection with $S_{\infty}$ of $J_t$ holomorphic spheres which intersect $\mathbb{L}$. Hence a sequence of points $z_t \in p_t(\mathbb{L})$ corresponds to a sequence of $J_t$ holomorphic curves in the class $(0,1)$ which all intersect $\mathbb{L}$. Up to taking a subsequence, this sequence of curves converges to a broken curve asymptotic to $\mathbb{L}$ and hence the $z_t$ converge to a point in $C_{\infty}$.
\end{proof}

\begin{lemma} If we denote the projection with respect to the fully stretched almost-complex structure by $p_{\infty}$, then $C_{\infty} = p_{\infty}(\mathbb{L})$  is disjoint from $p_{\infty}(L_{1,1})$.
\end{lemma}

\begin{proof} This follows from the fact that the original planes of the broken leaves have area $1$ and so cannot degenerate further. Indeed, since $\mathbb{L}$ is monotone, any holomorphic curve asymptotic to $\mathbb{L}$ must have integral area, and in particular curves in the class $(0,1)$ cannot converge to buildings with more than two top level curves.
\end{proof}

It follows from the results above that there is an $N>0$ such that $p_t(L_{1,1})$ is disjoint from $C_{\infty}$ for all $t \geq N$. With this we can choose  two continuous curves $\gamma_0,\, \gamma_{\infty} \colon [0, \infty) \to S_{\infty}$ with the following properties:
\begin{itemize}
  \item $\gamma_0(0) = p_0(T_0)$,  and $\gamma_{\infty}(0) = p_0(T_{\infty})$.
  \item $\gamma_0(t)$ and $\gamma_{\infty}(t)$ are disjoint from $p_t(L_{1,1})$ for all $t \in [0,\infty).$
  \item For some $N>0$,  both $\gamma_0(t)$ and $\gamma_{\infty}(t)$ are disjoint from $C_{\infty}$, and $C_{\infty}$ is disjoint from $p_t(L_{1,1})$ for all $t \geq N$.
  \item $C_{\infty}$ separates $\gamma_0(N)$ and $\gamma_{\infty}(N)$ in $S_{\infty}$.
\end{itemize}

For each $t \in [0, \infty)$, both $p_t^{-1}(\gamma_0(t))$  and $p_t^{-1}(\gamma_{\infty}(t))$ are $J_t$-holomorphic spheres in the class $(0,1)$ disjoint from $L_{1,1}$. The family of spheres $$\{p_t^{-1}(\gamma_0(t))\}_{t \in [0,N]}$$ forms a symplectic isotopy which displaces $T_0$ from $\mathbb{L}$ in the complement of $L_{1,1}$. Similarly, the family of spheres $$\{p_t^{-1}(\gamma_{\infty}(t))\}_{t \in [0,N]}$$ forms a symplectic isotopy which displaces $T_{\infty}$ from $\mathbb{L}$ in the complement of $L_{1,1}$. Moreover, these isotopies can be generated by a single Hamiltonian flow on $S^2 \times S^2$ that fixes $L_{1,1}$. The inverse flow displaces $\mathbb{L}$ from $T_0 \cup T_{\infty}$. The final separation condition is enough to guarantee the homotopy condition in the theorem. By considering also the $J_t$ holomorphic foliation in the class $(1,0)$ (see Remark \ref{both}),  we can displace $\mathbb{L}$ from $S_0 \cup S_{\infty}$ too. After adjusting the isotopy of $S_0 \cup S_{\infty}$ we may assume that it fixes $T_0 \cup T_{\infty}$, see Corollary 3.7 of \cite{rgi}. Hence the inverse flow will not reintroduce intersections with $T_0$ or $T_{\infty}$.
\end{proof}

\subsection{Refinement 3: We may assume that $\mathbb{L}$ is homologically trivial in $(S^2 \times S^2) \smallsetminus(S_0 \cup S_{\infty} \cup T_0 \cup T_{\infty})$}\label{claim3} To see this, note that $(S^2 \times S^2) \smallsetminus(S_0 \cup S_{\infty} \cup T_0 \cup T_{\infty})$
can be identified with a subset of the cotangent bundle of $\mathbb{T}^2$ in which  $L_{1,1}$ is identified with the zero section. In this setting we can invoke the following.

\begin{theorem}\label{hom}(Theorem 7.1,  \cite{rgi})
A homologically nontrivial Lagrangian torus $L$ in $(T^*\mathbb{T}^2, d\lambda)$ is Hamiltonian isotopic to a constant section. In particular if $L$ is exact then it is Hamiltonian isotopic to the zero section.
\end{theorem}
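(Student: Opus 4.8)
The plan is to first dispose of the easy implications and then attack the core case with holomorphic curves. The ``in particular'' follows from the first assertion: a Hamiltonian isotopy of $(T^*\mathbb{T}^2,d\lambda)$ has vanishing flux, hence preserves $[\lambda|_L]\in H^1(L;\RR)$, while a constant section $\{p=c\}$ has $[\lambda|_L]=c$; so an exact $L$ can only be sent to the zero section. Moreover the first assertion itself reduces to the exact case. For $c\in\RR^2$ the fibrewise translation $\sigma_c\colon(q,p)\mapsto(q,p-c)$ is a symplectomorphism of $T^*\mathbb{T}^2=\mathbb{T}^2\times\RR^2$ that permutes constant sections, conjugates Hamiltonian isotopies to Hamiltonian isotopies, and satisfies $\sigma_c^*\lambda=\lambda-d(c\cdot q)$. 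When $L$ is homologically nontrivial the projection $L\to\mathbb{T}^2$ has nonzero degree, so $[dq_1|_L]$ and $[dq_2|_L]$ are a basis of $H^1(L;\RR)$ and one may pick $c$ with $\sigma_c(L)$ exact; conjugating an isotopy of $\sigma_c(L)$ to the zero section by $\sigma_c^{-1}$ then lands $L$ on a constant section. Hence it suffices to prove that an exact Lagrangian torus $L$ in $T^*\mathbb{T}^2$ is Hamiltonian isotopic to the zero section.

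For this I would run the relative finite energy foliation construction recalled earlier in this section. View $T^*\mathbb{T}^2=\CC^*\times\CC^*$ as the complement of $S_0\cup S_\infty\cup T_0\cup T_\infty$ in $S^2\times S^2$, with zero section $L_{1,1}$; since $L$ is compact it lies in a bounded polydomain disjoint from the four divisors. With $J$ split near the divisors and standard near $L$, neck-stretch along $L$ \cite{behwz} starting from a Gromov foliation \cite{gr} by spheres in the class $(0,1)$. Exactness of $L$ rules out holomorphic disks with boundary on $L$, so an energy/index count as in \cite{rgi} forces the $(0,1)$-spheres to degenerate only into two-level broken leaves: one obtains a finite energy foliation $\mathcal{F}$ of the complement of $L$ whose leaves are either embedded $(0,1)$-spheres or pairs of planes asymptotic to a common embedded closed geodesic of the flat torus $L$, with foliation class $\beta\in H_1(L;\ZZ)$, and whose planes assemble after compactification into two embedded solid tori with boundary $L$. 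Running the construction for the class $(1,0)$ yields a second such foliation with foliation class $\beta'$; since cylinders from the two rulings met in a single point before stretching, $\beta$ and $\beta'$ have intersection $\pm1$ on $L$ and so form an integral basis of $H_1(L;\ZZ)$.

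These two transverse finite energy $S^2$-foliations adapted to $L$ should then be straightened to display $L$ as the zero section. The straightening argument of Lemma~\ref{straight} gives the product normal form $\{p_i=\mathrm{const},\,q_i=\mathrm{const}\}$ near $L$, and the projections along the two rulings identify the complement of $L$ and of the divisors with a standard $\mathbb{T}^2\times\RR^2$ in which $L$ is the unknotted zero section. As in the proof of Proposition~\ref{move2}, where families of holomorphic spheres are packaged into a single ambient Hamiltonian flow, these leaf families yield an ambient isotopy carrying $L$ onto the zero section; since the intermediate Lagrangians can be taken exact, this isotopy may be taken Hamiltonian, in the spirit of Eliashberg--Polterovich. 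Together with the reduction of the first paragraph, this proves the theorem.

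The main obstacle is the holomorphic-curve analysis of the middle step: the SFT compactness and transversality needed to show the stretched foliations exist and have precisely the stated structure — no deeper breaking, embedded planes, the solid-torus decomposition — which is the technical heart of \cite{ivriit} and \cite{rgi}. A secondary difficulty is the global topological argument that the two transverse foliations genuinely trivialize the complement of $L$ with no residual knotting, together with the upgrade from a symplectic to a Hamiltonian isotopy.
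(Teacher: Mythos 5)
This statement is \emph{not proved} in the present paper: it is quoted verbatim as Theorem~7.1 of \cite{rgi} and then used as a black box in Section~\ref{claim3}. There is therefore no in-paper argument to compare against; what you have written is a reconstruction of the proof in the cited reference.

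As such, your outline is reasonable and is in the same spirit as \cite{rgi}. Your reduction to the exact case is correct: if $[L]\neq 0$ in $H_2(T^*\mathbb{T}^2;\ZZ)\cong\ZZ$ then the projection $L\to\mathbb{T}^2$ has nonzero degree, so $q^*\colon H^1(\mathbb{T}^2;\RR)\to H^1(L;\RR)$ is an isomorphism and one can translate by a suitable $c$ to kill $[\lambda|_L]$. One notational slip: $\sigma_c^*\lambda=\lambda - c\cdot dq$, and $c\cdot dq$ is closed but \emph{not} exact on $\mathbb{T}^2\times\RR^2$, so you should not write it as $d(c\cdot q)$; the point is precisely that the Liouville class shifts by $[c\cdot dq|_L]$, and the $[dq_i|_L]$ generate $H^1(L;\RR)$. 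The ``in particular'' via flux-invariance of the Liouville class is also fine. For the core step, you correctly identify that the whole content is the existence, normal form, and straightening of the relative finite energy foliation — SFT compactness, two-level degeneration only, embeddedness, the two solid tori, and then combining two transverse such foliations in classes $(0,1)$ and $(1,0)$ and upgrading the resulting symplectic isotopy to a Hamiltonian one — and you explicitly defer this to \cite{ivriit} and \cite{rgi}. That is an honest description of the situation: your proposal is an accurate outline of the cited argument's shape, but it does not, and does not claim to, supply the analytic heart.
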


If our monotone Lagrangian $\mathbb{L}$ was homologically nontrivial in $(S^2 \times S^2) \smallsetminus(S_0 \cup S_{\infty} \cup T_0 \cup T_{\infty})$ it would then follow from Theorem \ref{hom} and Section $2.3.B_4''$ of \cite{gr} that  $\mathbb{L} \cap L_{1,1} \neq \emptyset$, which would contradict our original assumption.

\subsection{A path to the proof of Theorem \ref{one}}  By the three Refinements established above, it suffices to show that the following assumption is false.
\medskip

\noindent{\bf Assumption 2.}
There is a monotone Lagrangian torus $\mathbb{L}$ in the set  $$Y= (S^2 \times S^2) \smallsetminus(S_0 \cup S_{\infty} \cup T_0 \cup T_{\infty})$$ which is disjoint from the Clifford torus $L_{1,1}$ and is homologically trivial in $Y$.

\medskip

\noindent{\bf A path to a contradiction.} To obtain a contradiction to Assumption 2, we will show, using a sequence of blow-ups, inflations and blow-downs, that it implies the existence of  two disjoint monotone Lagrangian tori in a new (monotone) copy of $S^2 \times S^2$ which are both Hamiltonian isotopic to the Clifford torus therein, and hence can not be disjoint. 

To perform the necessary sequence of blow-ups, inflations and blow-downs,  we must  first establish the existence of a special collection symplectic spheres and disks in our current model (see Proposition \ref{intcount} below). These spheres and discs must be well-placed with respect to  a holomorphic foliation of $S^2 \times S^2 \smallsetminus (\mathbb{L} \cup L_{1,1})$ which we introduce below in Section \ref{double}. They are obtained from special holomorphic buildings whose existence we establish in Section \ref{FG}. These existence results rely on the analysis of a general stretching scenario that is contained in Section \ref{stretch}.

\begin{remark}
To falsify Assumption 2, we must use it to a build and analyze a complicated set of secondary objects in order to derive a contradiction. The reader is asked to bear in mind that many of the results  established in the remainder of this section hold in a setting which will later be shown to be impossible. \end{remark}

\subsection{Straightened holomorphic foliations of $S^2 \times S^2 \smallsetminus (\mathbb{L} \cup L_{1,1})$, under Assumption 2}\label{double} 

Let $\mathbb{L}$ be a Lagrangian torus as in Assumption 2.  Here we describe the holomorphic foliations of $S^2 \times S^2\smallsetminus (\mathbb{L} \cup L_{1,1})$ that are implied by the existence of  $\mathbb{L}$. 

Let $\psi$ be a parameterization of $\mathbb{L}$ and $\psi_{1,1} $ be the standard parameterization of $L_{1,1}$. Consider a tame almost complex structure $J$ on $$(S^2 \times S^2 \smallsetminus (\mathbb{L} \cup L_{1,1}), \pi_1^*\omega + \pi_2^*\omega)$$ which is adapted to both $\psi$ and $\psi_{1,1}$. We will always make the following assumption.

\begin{enumerate}
  \item [(A1)] $J$ is equal to the standard split complex structure near $S_0$, $S_{\infty}$, $T_0$ and $T_{\infty}$. In particular, $T_0$ and $T_{\infty}$ are unbroken leaves of the foliation.
 \end{enumerate}


Let  $J_{\tau}$ be the family of almost complex structures  on $S^2 \times S^2$ that are determined by $J$  as in \S 2.5 of \cite{rgi}. Taking the limit of the Gromov foliations in the class $(0,1)$ with respect to the $J_{\tau}$, as $\tau \to \infty$, and arguing as in \cite{rgi}, we  get a $J$-holomorphic foliation $$\mathcal{F}= \mathcal{F}(\mathbb{L},L_{1,1}, \psi,\psi_{1,1}, J)$$ of $S^2 \times S^2\smallsetminus (\mathbb{L} \cup L_{1,1})$. Each leaf of $\mathcal{F}$ still intersects $S_{\infty}$ in exactly one point, but there are now  three types of leaves. The first are unbroken leaves consisting of a single closed $J$-holomorphic sphere in $S^2 \times S^2\smallsetminus (\mathbb{L} \cup L_{1,1})$ of class $(0,1)$.  The second type of leaves are broken and consist of a pair of  finite energy $J$-holomorphic planes in $S^2 \times S^2\smallsetminus (\mathbb{L} \cup L_{1,1})$ that are asymptotic to $L_{1,1}$ along the same embedded geodesic with opposite orientations. As in  Example \ref{cliff}, the collection of planes like this which intersect $S_{\infty}$ comprise a $1$-dimensional family, $\frak s_{\infty}$, and their companion planes comprise a family $\frak s_{0}$. The third class of leaves are also broken, but consist of a pair of  finite energy $J$-holomorphic planes in $S^2 \times S^2\smallsetminus (\mathbb{L} \cup L_{1,1})$ asymptotic to $\mathbb{L}$. They too have matching ends. The planes of these broken leaves which intersect $S_{\infty}$ comprise the family $\mathcal{T}_{\infty}$ and the others comprise the family $\mathcal{T}_{0}$, as in \S \ref{solid}.

Refinement 3 has the following consequence.

\begin{lemma}\label{tauzero} The planes in $\mathcal{T}_{\infty}$ intersect both $S_0$ and $S_{\infty}$. Equivalently, the planes in $\mathcal{T}_{0}$ are disjoint from $S_0 \cup S_{\infty}$.
\end{lemma}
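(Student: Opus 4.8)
The plan is to use the homological triviality of $\mathbb{L}$ in $Y = (S^2 \times S^2) \smallsetminus (S_0 \cup S_\infty \cup T_0 \cup T_\infty)$ together with positivity of intersection against the complex spheres $S_0$ and $S_\infty$. The key observation is that the two planes of a broken leaf of the $\mathbb{L}$-type (those in $\mathcal{T}_0 \cup \mathcal{T}_\infty$) fit together to form a $J$-holomorphic sphere in the class $(0,1)$. Such a sphere meets each of $S_0$ and $S_\infty$ in exactly one point (positivity of intersection, since $S_0, S_\infty$ are complex and $(0,1) \cdot (1,0) = 1$), and it does not meet $T_0$ or $T_\infty$ at all (since $(0,1) \cdot (0,1) = 0$ and the sphere is distinct from the fibers $T_0, T_\infty$, which by (A1) are themselves leaves). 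So each such sphere hits $S_0$ once and $S_\infty$ once. The content of the lemma is that \emph{both} of these intersection points lie on the same plane of the broken leaf — namely the one in $\mathcal{T}_\infty$ — so that the plane in $\mathcal{T}_0$ is disjoint from $S_0 \cup S_\infty$ entirely.

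First I would set up the dichotomy: for a given broken leaf, the unique intersection point with $S_0$ lies either on the $\mathcal{T}_\infty$-plane or on the $\mathcal{T}_0$-plane, and likewise for $S_\infty$ (which by definition lies on the $\mathcal{T}_\infty$-plane). So the only thing to rule out is that some broken leaf has its $S_0$-point on the $\mathcal{T}_0$-plane. Second, I would argue that the behavior is the same for all broken leaves of the $\mathbb{L}$-family: the $\mathcal{T}_0$-planes vary continuously (they sweep out the embedded solid torus $\mathcal{T}_0$ of \S\ref{solid}), so the function "does the $\mathcal{T}_0$-plane of this leaf meet $S_0$" is locally constant on the parameter space of broken leaves, hence constant. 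Third — and this is where homological triviality enters — I would compute an intersection/linking number. The solid torus $\mathcal{T}_0 \cup \mathbb{L}$ (planes compactified, boundary $\mathbb{L}$) represents, together with $\mathcal{T}_\infty \cup \mathbb{L}$, a $2$-cycle homologous to a $(0,1)$-sphere. If every $\mathcal{T}_0$-plane avoided $S_0 \cup S_\infty$, then the solid torus $\mathcal{T}_0 \cup \mathbb{L}$ would be a chain in $Y \cup \mathbb{L}$ with boundary $\mathbb{L}$; combined with the fact that $\mathcal{T}_\infty \cup \mathbb{L}$ meets each of $S_0, S_\infty$ exactly once, this is exactly what is needed for consistency. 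Conversely, if the $\mathcal{T}_0$-planes met, say, $S_0$, then $\mathcal{T}_\infty \cup \mathbb{L}$ would miss $S_0$, and one can read off from the straightening normal form near $\mathbb{L}$ (Lemma \ref{straight}) and the geometry near $S_0, S_\infty$ that $\mathbb{L}$ would then be forced to be homologically nontrivial in $Y$ — it would link $S_0$ or $S_\infty$ nontrivially — contradicting Refinement 3.

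Concretely, the cleanest route is: pick a point $z \in S_0$ and look at its leaf; it is either unbroken, or broken of $L_{1,1}$-type, or broken of $\mathbb{L}$-type. In the last case, the plane through $z$ is in $\mathcal{T}_0$ or $\mathcal{T}_\infty$. I claim the locus of such $z \in S_0$ whose leaf-plane lies in $\mathcal{T}_0$ is both open and closed in the (connected) set $\{z \in S_0 : z \text{ lies on an } \mathbb{L}\text{-broken leaf}\}$, and I would evaluate it at one point using the explicit picture: near $S_0$ the structure $J$ is split (A1), so the local leaves near $S_0$ are the fibers $\{pt\} \times S^2$ punctured appropriately; tracing such a fiber and using that $\mathbb{L}$ is homologically trivial in $Y$ (so it bounds in the complement of the four spheres, hence the solid torus it bounds can be taken in $Y$), one sees the $S_0$-point must sit on the $\mathcal{T}_\infty$-plane. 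I expect the main obstacle to be making the homological bookkeeping precise: identifying the correct relative cycle, checking that "homologically trivial in $Y$" translates into the statement that the compactified union of $\mathcal{T}_0$-planes is null-homologous rel $\mathbb{L}$ in $Y$, and confirming that the alternative (a $\mathcal{T}_0$-plane hitting $S_0$) genuinely forces a nonzero class for $[\mathbb{L}]$ in $H_1(Y)$ or equivalently a nonzero linking number with $S_0$. Once the linking computation is in place, the "equivalently" in the statement is immediate: a $(0,1)$-sphere meets $S_0 \cup S_\infty$ in exactly two points, one on each, so if the $\mathcal{T}_\infty$-plane already accounts for both (one with $S_\infty$, one with $S_0$), the $\mathcal{T}_0$-plane accounts for none.
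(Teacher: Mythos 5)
Your strategy is the right one and matches the paper's: Refinement 3 plus positivity of intersection, exploiting the fact that a broken leaf with one plane hitting $S_\infty$ and the other hitting $S_0$ would force $\mathbb{L}$ to intersect something it shouldn't. What you leave as "homological bookkeeping to be made precise" is exactly the step the paper supplies, and it is cleaner than the "linking number" formulation you gesture at. The paper builds an explicit relative $2$-cycle $\Sigma \in H_2\big(S^2 \times S^2, S_0 \cup S_\infty \cup T_0 \cup T_\infty\big)$: choose an embedded path $\gamma \colon [0,1] \to S_\infty$ from $T_0 \cap S_\infty$ to $T_\infty \cap S_\infty$ meeting $p(\mathbb{L})$ in a single point $\gamma(t_0)$, and for each $t$ a path $\sigma_t$ in the leaf $p^{-1}(\gamma(t))$ running from $S_\infty$ to $S_0$; $\Sigma$ is the union of the $\sigma_t$. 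Refinement 3 forces $\Sigma \bullet \mathbb{L} = 0$. But if a $\mathcal{T}_0$-plane met $S_0$, then in the broken leaf over $\gamma(t_0)$ the two endpoints of $\sigma_{t_0}$ would lie on opposite planes of that leaf, so $\sigma_{t_0}$ would have to cross $\mathbb{L}$, giving $\Sigma \bullet \mathbb{L} \neq 0$. This is the contradiction. Your auxiliary "locally constant on the set of broken leaves" argument is not needed: the choice of $\gamma$ is free, so you get the contradiction leaf by leaf (and $\Sigma$ does not depend on that choice up to relative homology anyway). The rest of your setup — that each $(0,1)$-leaf meets each of $S_0$, $S_\infty$ exactly once, so the content is deciding on which plane of the broken leaf those intersection points sit — is correct and is the same framing as in the paper.
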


\begin{proof}
We define a relative homology class $\Sigma \in H_2(S^2 \times S^2, (S_0 \cup S_{\infty} \cup T_0 \cup T_{\infty})$ by first choosing an embedded path $\gamma:[0,1] \to S_{\infty}$ with $\gamma(0) = T_0 \cap S_{\infty}$ and $\gamma(1) = T_{\infty} \cap S_{\infty}$. Then choose a family of embedded paths $\sigma_t$ in $p^{-1}(\gamma(t))$ from $S_{\infty}$ to $S_0$. The union of the $\sigma_t$ define $\Sigma$. We may assume that $\gamma$ intersects $p((\mathbb{L})$ in a single point $\gamma(t_0)$, and, arguing by contradiction, if  $\mathcal{T}_{0}$ happened to intersect $S_0$ then $\sigma_t$ would intersect $\mathbb{L}$, giving a nontrivial intersection $\Sigma \bullet \mathbb{L}$. This contradicts Refinement 3.
\end{proof}

Note that there are now two foliation classes,  $\beta_{\mathbb{L}}$ and $\beta_{L_{1,1}}$, determined by each of the two classes of broken leaves. The foliation $\mathcal{F}$ also defines a projection map 
$$
p \colon S^2 \times S^2 \to S_{\infty}.
$$ 
In this setting, $p(L_{1,1})$ and $p(\mathbb{L})$ are disjoint embedded circles in $S_{\infty}$, which by Proposition \ref{move2} are disjoint from $T_0 \cup T_{\infty}$ and are homotopic in the complement. Therefore, without loss of generality, there are disjoint closed disks $A_0 \subset S_{\infty}$ with boundary $p(\mathbb{L})$ and $A_{\infty} \subset S_{\infty}$ with boundary $p(L_{1,1})$, such that $p(T_0) \in A_0$ and $p(T_{\infty}) \in A_{\infty}$.  Denote the closed annulus defined by the closure of $S_{\infty}\smallsetminus (A_0 \cup A_{\infty})$ by $B$.

Let $(P_1, P_2, Q_1, Q_2)$ be  coordinates in the neighborhood  $\mathcal{U}({\mathbb{L}})$ of  $\mathbb{L}$ determined by  $\psi$, and let $(p_1, p_2, q_1, q_2)$ be  coordinates in the neighborhood  $\mathcal{U}(L_{1,1})$ of $L_{1,1}$ determined by $\psi_{1,1}$. As in Lemma \ref{straight}, where we had only one Lagrangian torus, we may assume that the leaves of $\mathcal{F}$ are straight in both $\mathcal{U}({\mathbb{L}})$ and $\mathcal{U}(L_{1,1})$. In particular, we may assume that the unbroken leaves of $\mathcal{F}$ that intersect $\mathcal{U}(\mathbb{L})$ do so along the annuli $\{P_1=\delta \neq 0, Q_1 = \theta, |P_2|<\epsilon\}$, the planes of  $\mathcal{T}_{\infty}$   intersect $\mathcal{U}(\mathbb{L})$ along the annuli $\{P_1=0, Q_1 = \theta, 0<P_2<\epsilon\}$, and the planes of $\mathcal{T}_0$ intersect $\mathcal{U}(\mathbb{L})$ along the annuli 
$\{P_1=0, Q_1 = \theta, -\epsilon<P_2<0\}$. Similarly, we may assume that the unbroken leaves of $\mathcal{F}$ that intersect $\mathcal{U}(L_{1,1})$ do so along the annuli $\{p_1=\delta \neq 0, q_1 = \theta, |p_2|<\epsilon\}$, the planes of   $\frak{s}_{\infty}$   intersect $\mathcal{U}(L_{1,1})$ along the annuli 
$\{p_1=0, q_1 = \theta, 0<p_2<\epsilon\}$, and  the planes of $\frak{s}_0$ intersect $\mathcal{U}(L_{1,1})$ along the annuli $\{p_1=0, q_1 = \theta, -\epsilon<p_2<0\}$.

The map $p$ can also be described simply in these Weinstein neighborhoods. In $\mathcal{U}(\mathbb{L})$, we may assume that the region $\{P_1<0\} \subset \mathcal{U}(\mathbb{L})$ is mapped by $p$ into the interior of $A_0$,  and $\{P_1>0\} \subset \mathcal{U}(\mathbb{L})$ is mapped by $p$ into the interior of $B$. Similarly, we may assume that in $\mathcal{U}(L_{1,1})$ the region $\{p_1>0\} \subset \mathcal{U} (L_{1,1})$ is mapped by $p$ into the interior of $A_{\infty}$ and $\{p_1<0\} \subset \mathcal{U}(L_{1,1})$ is mapped by $p$ into the interior of $B$.

Using some of the freedoms available in the choice of $\psi$ and $\psi_{1,1} $, we can add the following additional assumption.  
\begin{enumerate}
  \item [(A2)] The foliation class $\beta_{\mathbb{L}}$ is equal to $(0,-1)\in H_1^{\psi}(\mathbb{L};\ZZ)$, and the foliation class $\beta_{L_{1,1}}$ is equal to $(0,-1)\in H_1^{\psi_{1,1}}(L_{1,1};\ZZ)$.
 \end{enumerate}

\subsection{Stretching scenario for class $(1,d)$, under Assumption 2.}\label{stretch} Let $J_{\tau}$, for $\tau \geq 0$, be the family of almost complex structures on $S^2 \times S^2$ used in  Section \ref{double} to obtain the foliation $\mathcal{F}$. For a sequence $\tau_k \to \infty$, let $u_{k,d} \colon S^2 \to S^2 \times S^2$ be a sequence of $J_{\tau_k}$-holomorphic curves in the class $(1,d)$ that converges to a holomorphic building $\mathbf{F}_d$ as in \cite{behwz}. The  limit $\mathbf{F}_d$ consists of genus zero holomorphic curves in three levels. The {\em top level} curves map  to $S^2 \times S^2 \smallsetminus (\mathbb{L} \cup L_{1,1})$ and are $J$-holomorphic. The {\em middle level} curves map to one of two copies of $\RR \times S^*\mathbb{T}^2$, the symplectization of the unit cotangent bundle of the flat torus. These copies correspond to 
$\mathbb{L}$ and $L_{1,1}$ and the identifications are defined by the parameterizations 
$\psi$ and $\psi_{1,1}$. It follows from the definition of the family $J_{\tau}$ that these middle level curves are all $J_{\mathrm{cyl}}$-holomorphic  where $J_{\mathrm{cyl}}$ is a fixed cylindrical almost complex structure. Similarly, the {\em bottom level} curves of the limiting building map to one of two copies of $T^*\mathbb{T}^2$  and are $J_{\mathrm{std}}$-holomorphic  where  $J_{\mathrm{std}}$ is a standard complex structure.  

Each top level curve of $\mathbf{F}_d$ can be compactified to yield a map from a surface of genus zero with boundary to $(S^2 \times S^2,\mathbb{L} \cup L_{1,1})$. The components of the boundary correspond to the negative punctures of the curve.  They are mapped to the closed geodesics on $\mathbb{L}$  or $L_{1,1}$ underlying the Reeb orbits to which the corresponding puncture is asymptotic. The middle and bottom level curves can be compactified to yield maps to either $\mathbb{L}$ or $L_{1,1}$ with the same type of boundary conditions. These compactified maps can  all be glued
together to form a map $\bar{\mathbf{F}}_d\colon S^2  \to S^2 \times S^2$ in the class $(1,d)$. 

%

\medskip

\begin{definition}\label{sp}
A $J$-holomorphic curve $u$ in $S^2\times S^2 \smallsetminus (\mathbb{L} \cup L_{1,1})$ is said to be {\em essential (with respect to the foliation $\mathcal{F}$)} if the map $p \circ u$ is injective.
\end{definition}

\begin{definition}\label{ft}
 Let $u$ be a $J$-holomorphic curve in $S^2\times S^2 \smallsetminus (\mathbb{L} \cup L_{1,1})$. A puncture of $u$ is said to be of {\em foliation type with respect to} $\mathbb{L}$  $(L_{1,1})$ if it is asymptotic to a closed Reeb orbit which lies on the copy of $S^*\mathbb{T}^2$ that corresponds to $\mathbb{L}$  ($L_{1,1}$) and covers a closed geodesic in an integer multiple of the foliation class $\beta_{\mathbb{L}}$ ($\beta_{L_{1,1}}$). The puncture is of {\em positive (negative) foliation type} if  this integer is positive (negative).
\end{definition}

\begin{lemma}\label{ends}
Let $u$ be a $J$-holomorphic curve in $S^2\times S^2 \smallsetminus (\mathbb{L} \cup L_{1,1})$ with a puncture. Let $\{c_l\}$ be a sequence of circles in the domain of $u$ which lie in a standard neighborhood of the puncture, wind once around it, and converge to it in the Hausdorff topology. If the puncture is of foliation type with respect to  $\mathbb{L}$  $(L_{1,1})$, then the sets $p(u (c_l))$ converge to a point on $p(\mathbb{L})$ $(p(L_{1,1}))$. Moreover each $p(u (c_l))$ either maps into the point (in which case $u$ covers a plane in a broken leaf) or it winds nontrivially around the point. If the puncture is not of foliation type then the sets $p(u (c_l))$ converge to $p(\mathbb{L})$ $(p(L_{1,1}))$.
\end{lemma}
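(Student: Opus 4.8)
The plan is to analyze the behavior of $u$ near the puncture by combining the asymptotic analysis of finite energy curves (as in \cite{behwz}) with the straightened structure of the foliation $\mathcal{F}$ inside the Weinstein neighborhoods $\mathcal{U}(\mathbb{L})$ and $\mathcal{U}(L_{1,1})$, together with positivity of intersections with the leaves of $\mathcal{F}$. First I would recall that near a negative puncture, after choosing cylindrical coordinates $(s,t) \in (-\infty, 0] \times S^1$ on a standard neighborhood, the curve $u$ converges exponentially in $C^\infty$ as $s \to -\infty$ to a (possibly multiply covered) trivial cylinder over a closed Reeb orbit $\gamma$ on the relevant copy of $S^*\mathbb{T}^2$; this Reeb orbit projects to a closed geodesic on $\mathbb{L}$ (or $L_{1,1}$), and after passing to the corresponding Weinstein neighborhood $\mathcal{U}(\mathbb{L})$ the image of $u$ near the puncture lies, up to exponentially small error, in the model $\{P_1 = 0\}$ region, asymptotically wrapping around the torus in the direction of the homology class of $\gamma$. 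Since the circles $c_l$ wind once around the puncture and converge to it, for $l$ large $u(c_l)$ lies in $\mathcal{U}(\mathbb{L})$ (resp. $\mathcal{U}(L_{1,1})$), $C^0$-close to a curve that wraps once around the Reeb orbit, hence $C^0$-close to a loop on $\mathbb{L}$ (resp. $L_{1,1}$) representing the geodesic class.

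Next I would use the explicit description of the projection $p$ on the Weinstein neighborhoods from Section \ref{double}. Recall that the foliation class was normalized by (A2) to be $(0,-1)$, and that $p$ sends the geodesics in the foliation class on $\mathbb{L}$ to points of $p(\mathbb{L})$ and sends geodesics transverse to the foliation class to loops winding around $p(\mathbb{L})$ (this is the content of the fourth bullet point in the Output list and the straightening lemma, Lemma \ref{straight}). So if the puncture is of foliation type with respect to $\mathbb{L}$, the geodesic underlying $\gamma$ is an integer multiple of $\beta_{\mathbb{L}} = (0,-1)$, and its image under $p$ is a single point of $p(\mathbb{L})$; therefore $p(u(c_l))$, being $C^0$-close to the image under $p$ of a loop wrapping $k$ times around this geodesic, converges to that point. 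The dichotomy — that $p(u(c_l))$ either maps into the point, which forces (by positivity of intersection with the leaves of $\mathcal{F}$, since a nearby leaf is a broken leaf whose $\mathcal{T}$-plane through $S_\infty$ maps to that point) $u$ to coincide with a plane of a broken leaf, or else winds nontrivially around the point — follows because a loop in $\mathcal{U}(\mathbb{L})$ whose $p$-image is constant must lie in a single leaf, and that leaf must be a broken leaf since the geodesic is in the foliation class. If instead the puncture is not of foliation type, the underlying geodesic has nonzero component transverse to $\beta_{\mathbb{L}}$, so its $p$-image is a loop that covers all of $p(\mathbb{L})$ (with some multiplicity), and hence $p(u(c_l))$ converges in Hausdorff distance to the whole circle $p(\mathbb{L})$.

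The main obstacle, and the step requiring the most care, is making rigorous the passage from "the Reeb orbit $\gamma$ underlies a geodesic representing class $k\beta_{\mathbb{L}}$" to "$p(u(c_l))$ is $C^0$-close to the $p$-image of a loop of that homotopy type on the torus." This requires the exponential convergence estimates for the ends of finite energy curves (so that $u(c_l)$ genuinely lies in the straightened region of $\mathcal{U}(\mathbb{L})$ where the simple model description of $p$ applies) and a careful bookkeeping of the multiplicity of the cover $\gamma \to$ (underlying geodesic), since the winding number of $p(u(c_l))$ around the point is exactly this multiplicity. One must also check that in the foliation-type case, if $p(u(c_l))$ is genuinely constant for large $l$ rather than merely converging to a point, then $u$ near the puncture lies in a single leaf of $\mathcal{F}$; this uses that the leaves foliate $S^2 \times S^2 \smallsetminus (\mathbb{L} \cup L_{1,1})$ and that $p$ is constant exactly along (planes of) leaves, together with unique continuation for $J$-holomorphic curves to conclude $u$ covers that plane. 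The remaining assertions are then routine consequences of positivity of intersection and the structure of $\mathcal{F}$ recorded in Section \ref{double}.
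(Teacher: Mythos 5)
Your proposal is correct and follows essentially the same route as the paper, which simply cites the exponential convergence theorem of Hofer--Wysocki--Zehnder \cite{hofa}; you have unpacked what lies behind that citation, namely that exponential convergence to a Reeb orbit over a closed geodesic, combined with the straightened model for $p$ on the Weinstein neighborhoods (where $p$ is determined by the $(P_1,Q_1)$-coordinates), forces $p(u(c_l))$ to collapse to a point when the geodesic is a multiple of the foliation class $(0,-1)$, and to sweep out $p(\mathbb{L})$ otherwise. The winding and unique-continuation remarks in the foliation-type case are a reasonable account of the dichotomy, though the nontrivial winding is most cleanly seen by observing that $p\circ u$, being non-constant and open, has positive local degree near the asymptotic value.
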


\begin{proof}
This follows from the exponential convergence theorem from \cite{hofa}.
\end{proof}

\begin{corollary}\label{possible ends}
If $u$ is an essential  $J$-holomorphic curve in $S^2\times S^2 \smallsetminus (\mathbb{L} \cup L_{1,1})$, then its punctures on $\mathbb{L}$ are either all of foliation type or none of them are, and similarly for the punctures on $L_{1,1}$.  If $u$ has no punctures of foliation type, then it is either a J-holomorphic plane or cylinder. If $u$ is a plane, then the closure of the image of $p \circ u$ is $A_0$ or $A_{\infty}$ or the closure of their complements in $S_{\infty}$. If $u$ is a cylinder, then the closure of the image of $p \circ u$ is $B$.
\end{corollary}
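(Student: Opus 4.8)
\textit{Plan of proof.} The whole argument rests on translating the hypothesis into topology: $u$ is essential precisely when $p\circ u$ is injective, and since the domain of $u$ and $S_\infty\cong S^2$ are surfaces, invariance of domain then makes $p\circ u$ an open embedding onto an open set $U\subseteq S_\infty$. Every assertion will be extracted from the shape of $U$ together with the behaviour of $p\circ u$ near the punctures, which I will first describe by sharpening Lemma~\ref{ends} using injectivity.

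\textit{Local analysis at the punctures.} Let $z$ be a puncture on $\mathbb{L}$ (the case of $L_{1,1}$ is identical, with $p(L_{1,1})$, $A_\infty$ in place of $p(\mathbb{L})$, $A_0$). If $z$ is of foliation type, then $u$ does not cover a plane of a broken leaf — such a cover fails to be essential — so by Lemma~\ref{ends} the loops $p(u(c_l))$ are embedded circles winding around and shrinking to a point $x\in p(\mathbb{L})$; invoking exponential convergence and the straightened coordinates near $\mathbb{L}$, in which $p$ depends only on $(P_1,Q_1)$, one sees that $p\circ u$ extends continuously over $z$ to $x$ and stays injective there, hence is an open embedding near $x$, so $U$ meets both components of $S_\infty\smallsetminus p(\mathbb{L})$. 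If $z$ is not of foliation type, its underlying geodesic must be embedded, of class $(\pm1,\ast)\in H_1^\psi(\mathbb{L};\ZZ)$ and traversed once — otherwise $p(u(c_l))$ is a nontrivial multiple cover of a circle close to $p(\mathbb{L})$, contradicting injectivity — and then the end of $u$ at $z$ is carried by $p$ onto a one–sided collar of $p(\mathbb{L})$ (the side recorded by the sign of the first momentum component of the asymptotic orbit), with $p(u(c_l))$ an embedded circle converging to all of $p(\mathbb{L})$.

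\textit{From the local pictures to the statement.} Cap off the foliation-type punctures — checking that distinct ones land at distinct points of $p(\mathbb{L})\cup p(L_{1,1})$ — to get an open embedding of a genus-zero surface whose ends are exactly the non-foliation punctures; then $S_\infty\smallsetminus U$ is compact with at most one component per non-foliation puncture, each such component being the closed disk cut off by $p(\mathbb{L})$ (or $p(L_{1,1})$) on the side its end does not sweep, i.e.\ one of $A_0,\ \overline{S_\infty\smallsetminus A_0},\ A_\infty,\ \overline{S_\infty\smallsetminus A_\infty}$; moreover two non-foliation ends cannot face the same side of the same circle, for their collars would overlap. The dichotomy statement is immediate: on $\mathbb{L}$, a foliation-type puncture forces $U$ across $p(\mathbb{L})$ while a non-foliation-type one forces $U$ to miss one side of $p(\mathbb{L})$. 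If $u$ has no foliation-type puncture, all punctures are non-foliation of class $(\pm1,\ast)$ on $\mathbb{L}$ or $L_{1,1}$ and face distinct sides; since the complementary disks are disjoint, exhaust $S_\infty\smallsetminus U$, and $U$ is connected, the only survivors are: one puncture (a plane, with $\overline U$ equal to $A_0$, $A_\infty$, or the closure of the complement of one of them), or one puncture on each of $\mathbb{L}$, $L_{1,1}$ facing the annulus $B$ (a cylinder, with complements $A_0$ and $A_\infty$, so $\overline U=B$).

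\textit{The main obstacle.} The last reduction is \emph{not} valid by point-set topology alone: a cylinder could a priori have both ends on $\mathbb{L}$, one per side, which would make $\overline U$ an annular collar of $p(\mathbb{L})$ rather than $B$, and nothing formal forbids three or four non-foliation ends. These must be excluded using holomorphicity: positivity of intersection of $u$ with the planes of the broken leaves $\mathcal T_0,\mathcal T_\infty,\mathfrak s_0,\mathfrak s_\infty$ and with the unbroken $(0,1)$–leaves, combined with the homological bookkeeping of the stretched building $\mathbf F_d$, which records how the capped class of $u$ pairs with these leaves — cappings included. A non-foliation end on $\mathbb{L}$ sits on a definite side of the solid torus $\mathcal T_0\cup\mathbb{L}\cup\mathcal T_\infty$, and matching the intersection counts of $u$ with $\mathcal T_0$ versus $\mathcal T_\infty$ (and $\mathfrak s_0$ versus $\mathfrak s_\infty$) is exactly what pins the ends of a plane or cylinder to the correct sides and caps their number at two. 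Getting this intersection count right is the real work; the rest is the bookkeeping above.
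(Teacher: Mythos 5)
Your proposal correctly identifies the essential mechanism: an essential curve means $p\circ u$ is an open embedding onto an open set $U$, and the shape of $U$ is then dictated by the asymptotic behavior recorded in Lemma~\ref{ends}. The paper's own proof is a single sentence covering only the dichotomy claim, and you are right that the remaining claims require more than point-set book-keeping, so your honesty in flagging ``the main obstacle'' is well placed. However, the resolution you propose there does not work, and the correct resolution is actually elementary.

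The proposed fix is inapplicable: you invoke ``the homological bookkeeping of the stretched building $\mathbf F_d$'' to bound the number of non-foliation ends, but Corollary~\ref{possible ends} is stated for an arbitrary essential $J$-holomorphic curve, not for a top-level curve of a particular limit building. Nothing in the hypotheses gives you a capped class $(1,d)$, a count of planes in $\mathcal T_0\cup\mathcal T_\infty$, or any of the other data that Propositions~\ref{existence2}--\ref{existence3} extract from $\mathbf F_d$. So the step ``matching the intersection counts \dots caps their number at two'' is not available here.

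The missing argument is purely topological and uses only Lemma~\ref{ends} together with injectivity of $p\circ u$. Suppose two punctures $z_1$, $z_2$ are both non-foliation type on $\mathbb{L}$. Pick an embedded circle $c$ in the domain separating $z_1$ from $z_2$, with complementary pieces $\Sigma_1\ni z_1$ and $\Sigma_2\ni z_2$. Since $p\circ u$ is injective, $p(u(c))$ is an embedded circle in $S_\infty$, splitting it into two open disks $D_1$, $D_2$, with $p(u(\Sigma_1))\subset D_1$ and $p(u(\Sigma_2))\subset D_2$ (the images must fall into different disks, because $U$ is open and meets both sides of $p(u(c))$). By Lemma~\ref{ends} the loops $p(u(c_l))$ near $z_i$ Hausdorff-converge to $p(\mathbb{L})$, so $p(\mathbb{L})\subset\overline{D_1}$ and $p(\mathbb{L})\subset\overline{D_2}$, hence $p(\mathbb{L})\subset\overline{D_1}\cap\overline{D_2}=p(u(c))$; both being embedded circles, they coincide. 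But $c$ ranges over an open family of separating circles, so an open subset of the domain is sent by $p\circ u$ into the $1$-dimensional set $p(\mathbb{L})$, contradicting that $p\circ u$ is an open map. This forbids two non-foliation punctures on $\mathbb{L}$ on \emph{either} side, and likewise on $L_{1,1}$; with the dichotomy already established it caps the total at two, giving a plane or a cylinder, and the identification of $\overline U$ with $A_0$, $A_\infty$, $\overline{S_\infty\setminus A_0}$, $\overline{S_\infty\setminus A_\infty}$, or $B$ follows as you describe, since each complementary component $K_i$ has $\partial K_i=p(\mathbb{L})$ or $p(L_{1,1})$ and the $K_i$ are disjoint. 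In short: the gap you correctly flag does need filling, but the tool is a Jordan-curve separation argument with the Hausdorff limit, not positivity of intersection against the building $\mathbf F_d$.
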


\begin{proof}
The previous lemma implies that if $u$ has punctures of both foliation type and not of foliation type on $\mathbb{L}$ or $L_{1,1}$ then $p  \circ u$ will not be injective.
\end{proof}

The following result can be proved in the same way as Lemma 6.2  in \cite{hl}.

\begin{lemma}\label{allsame}
Let $u$ be an essential curve whose punctures on $\mathbb{L}$ are all of foliation type. Then these punctures are either all positive or all negative (see Definition \ref{ft}). The same holds for the punctures on $L_{1,1}$.
\end{lemma}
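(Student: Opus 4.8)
The argument will follow the scheme of Lemma 6.2 in \cite{hl}, adapted to the present two-Lagrangian situation. It suffices to treat the punctures on $\mathbb{L}$; those on $L_{1,1}$ are handled by the identical argument with the roles of $A_0$ and $A_{\infty}$ interchanged. The plan is to read off the sign of a foliation-type puncture from the local orientation behaviour of the projection $p$, and then to observe that essentiality forces this behaviour to be the same at all of them.

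The first step is to note that, since $u$ is essential, the map $p\circ u$ is in fact an embedding of the domain $\Sigma$ of $u$ into $S_{\infty}$. Injectivity is the definition of essential; and $p\circ u$ is an immersion, for if it had a critical point at $\zeta$ then $u$ would be tangent at $u(\zeta)$ to the leaf of $\mathcal{F}$ through that point, and since both $u$ and that leaf are $J$-holomorphic, positivity of intersection would give local intersection multiplicity at least two there, forcing $u$ to meet nearby leaves at two or more points near $\zeta$ and so making $p\circ u$ non-injective. An injective immersion of a connected oriented surface into an oriented surface is either everywhere orientation-preserving or everywhere orientation-reversing; let $\epsilon\in\{\pm1\}$ be the sign of its Jacobian.

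Now fix a foliation-type puncture $z_i$ of $u$ on $\mathbb{L}$ of multiplicity $m_i\neq 0$, and work in the straightened Weinstein coordinates $(P_1,P_2,Q_1,Q_2)$ on $\mathcal{U}(\mathbb{L})$ from Section \ref{double}, in which the unbroken leaves are the annuli $\{P_1=\delta\neq 0,\ Q_1=\theta\}$, the planes of $\mathcal{T}_{\infty}$ are $\{P_1=0,\ Q_1=\theta,\ P_2>0\}$, the planes of $\mathcal{T}_0$ are $\{P_1=0,\ Q_1=\theta,\ P_2<0\}$, and $p$ factors through the $(P_1,Q_1)$-coordinates near $\{P_1=0\}$, mapping a neighbourhood of $\{P_1=0,\ Q_1=\theta_0\}$ onto a neighbourhood of the point $x_i=x(\theta_0)\in p(\mathbb{L})$. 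By Lemma \ref{ends} and the exponential convergence theorem of \cite{hofa}, $u$ is asymptotic at $z_i$ to the $|m_i|$-fold cover of the geodesic $\{Q_1=\theta_0\}$ and its leading asymptotic term spirals, in the $(P_1,Q_1)$-coordinates, around $(0,\theta_0)$; hence the image under $p\circ u$ of a small positively oriented loop $c_i$ about $z_i$ is a loop near $x_i$ with winding number about $x_i$ equal to $m_i$ up to a universal sign. Since $p\circ u$ is an embedding this loop is embedded, so $|m_i|=1$, and $p\circ u$ extends over $z_i$ to a local diffeomorphism sending $z_i$ to $x_i$; the sign of its Jacobian at $z_i$ is the winding number of $p\circ u(c_i)$ about $x_i$, which on one hand equals $m_i$ up to the universal sign, and on the other hand, by continuity from $\Sigma$, equals $\epsilon$. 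Therefore $m_i=\pm\epsilon$, the same for every foliation-type puncture of $u$ on $\mathbb{L}$; that is, these punctures are either all positive or all negative. Repeating the argument with $L_{1,1}$, $A_{\infty}$, $\mathcal{U}(L_{1,1})$ and the foliation class $\beta_{L_{1,1}}$ in place of $\mathbb{L}$, $A_0$, $\mathcal{U}(\mathbb{L})$ and $\beta_{\mathbb{L}}$ gives the statement for the punctures on $L_{1,1}$.

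The main obstacle is the asymptotic analysis in the third paragraph: one must verify, with the correct universal sign, that at a foliation-type puncture of multiplicity $m$ the leading term of $u$ in the straightened coordinates genuinely spirals $m$ times in the $(P_1,Q_1)$-plane (and not, say, merely sits on one side of $\{P_1=0\}$), so that the sign of $m$ is faithfully recorded by the local mapping degree of $p\circ u$. Granting this local dictionary, the remaining steps are the soft facts that an essential curve is transverse to $\mathcal{F}$ by positivity of intersection and that a connected embedding of surfaces has a well-defined orientation type.
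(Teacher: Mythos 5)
Your overall plan has the right shape: essentiality makes $p\circ u$ an injective open map with a globally consistent orientation type $\epsilon$, and the idea is to identify $\epsilon$ with a local datum at each foliation-type puncture that records $\mathrm{sign}(m_i)$. The first half of this is fine: positivity of intersection against leaves of $\mathcal{F}$ does make $p\circ u$ an immersion, and an injective immersion of a connected oriented surface into an oriented surface has a globally constant Jacobian sign. (The paper itself gives no proof of Lemma \ref{allsame}, citing only Lemma 6.2 of \cite{hl}, so no direct comparison with the authors' intended argument is available from the text at hand.)

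The second half is where the proof does not close, and you flag it yourself as ``the main obstacle.'' The assertion that the winding number of $p\circ u(c_i)$ about $x_i$ equals $m_i$ up to a universal sign carries the entire argument, but you do not establish it, and the ingredients you cite do not deliver it. Lemma \ref{ends} rules out the winding-zero (tangent-to-leaf) alternative, and exponential convergence \cite{hofa} tells you that near $z_i$ the curve decays to the covered geodesic along an eigenvector of the Morse--Bott asymptotic operator; but the winding of that eigenvector in the $(P_1,Q_1)$-trivialization is a spectral quantity of the asymptotic operator, not something read off directly from the covering multiplicity $m_i$. In particular the consequence $|m_i|=1$ that you extract is not asserted anywhere in the paper and looks like an artifact of overstating the local dictionary. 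What is actually required is a reason why the \emph{sign} of the asymptotic winding is tied to $\mathrm{sign}(m_i)$; the natural source for this is intersection positivity of $u$ against the $J$-holomorphic planes of $\mathcal{T}_0$ and $\mathcal{T}_\infty$ near $z_i$, using that $\mathrm{sign}(m_i)$ determines the side of $\{P_2=0\}$ from which $u$ emerges (as in the tangent-space computation in Claim 1 of the proof of Lemma \ref{away}). That step, or an equivalent one, is missing, so the proof as written has a genuine gap at its central point.
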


%

Let  $u_{k,d}$ be a sequence  converging to $\mathbf{F}_d$  as in the {\bf stretching scenario for class $(1,d)$.} The fact that the curves $u_{k,d}$ must intersect each leaf of $\mathcal{F}$ exactly once, imposes several important restrictions on $\mathbf{ F}_d$ in relation to the foliation $\mathcal{F}$, and allows us to identify a handful of possible limit structures. 
 
 \begin{proposition}\label{lem1} Let $\mathbf{ F}_d$ be a limit as in the {\bf stretching scenario for class $(1,d)$}. 
Then the building $\mathbf{ F}_d$ is of one of the following types:

\bigskip

\noindent {\bf Type 0.} $\mathbf{ F}_d$ is a (possibly nodal) $J$-holomorphic sphere in $S^2 \times S^2 \smallsetminus (\mathbb{L} \cup L_{1,1})$ in the class $(1,d)$, where one (essential) sphere lies in the class $(1,j)$ for some $1 \le j \le d$ and any remaining curves are either spheres covering unbroken leaves of the foliation, or pairs of planes covering broken leaves of the foliation.

\bigskip

\noindent {\bf Type 1.} $\mathbf{ F}_d$ has a unique essential curve $u_d$. The punctures of  $u_d$ are all of foliation type, and along $\mathbb{L}$, and also $L_{1,1}$, are either all positive or all negative. The image of  $p\circ u_d$ is $S_{\infty}$ minus finitely many points on $p(\mathbb{L}) \cup p(L_{1,1})$. The other top level curves of $\mathbf{ F}_d$ either cover unbroken leaves of the foliation, or they are $J$-holomorphic planes covering one of the planes of a broken leaf of the foliation.


\bigskip

\noindent {\bf Type 2a.} $\mathbf{ F}_d$ has exactly two essential  curves, $u_{\mathbb{L}}$ and $\underline{u}$. The closures of the images of the maps $p \circ u_{\mathbb{L}}$ and $p \circ \underline{u}$  are $A_0$ and $B \cup A_{\infty}$, respectively.  Any punctures of $\underline{u}$ on $L_{1,1}$ are all of foliation type and are either all positive or all negative.
The other top level curves of $\mathbf{ F}_d$  cover (broken or unbroken) leaves of $\mathcal{F}$. 

\bigskip

\noindent {\bf Type 2b.} $\mathbf{ F}_d$ has exactly two essential curves, $\underline{u}$ and $u_{L_{1,1}}$. The closures of the images of the maps $p \circ \underline{u}$ and $p \circ u_{L_{1,1}}$  are $A_0 \cup B$ and $A_{\infty}$, respectively.  Any punctures of $\underline{u}$ on $\mathbb{L}$ are all of foliation type and are either all positive or all negative.
The other top level curves of $\mathbf{ F}_d$  cover (broken or unbroken) leaves of $\mathcal{F}$. 

\bigskip

\noindent {\bf Type 3.} $\mathbf{ F}_d$ has exactly three essential curves, $u_{\mathbb{L}}$, $\underline{u}$,  and $u_{L_{1,1}}$. The closures of the images of the maps  $u_{\mathbb{L}}$, $\underline{u}$,  and $u_{L_{1,1}}$ are  $A_0$, $B$ and $A_{\infty}$, respectively. The other top level curves of $\mathbf{ F}_d$  again cover (broken or unbroken) leaves of $\mathcal{F}$. 

\end{proposition}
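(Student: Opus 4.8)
The plan is to combine SFT compactness with positivity of intersection against the foliation $\mathcal{F}$, in the spirit of the arguments of \cite{hl} and \cite{rgi}. Pass to a limiting building $\mathbf{F}_d$ of a sequence $u_{k,d}$ as in the stretching scenario for class $(1,d)$; its three--level structure and the glued sphere $\bar{\mathbf{F}}_d$ of class $(1,d)$ are provided by \cite{behwz}. The single quantitative input used throughout is that each $u_{k,d}$, being $J_{\tau_k}$--holomorphic of class $(1,d)$, meets every leaf of the $J_{\tau_k}$--foliation exactly once, since $(1,d)\cdot(0,1)=1$ and holomorphic intersections are positive. Passing to the limit, $\bar{\mathbf{F}}_d$ meets every leaf of $\mathcal{F}$ exactly once with multiplicity, so the top--level components of $\mathbf{F}_d$ together have local intersection number one with each leaf of $\mathcal{F}$, and all of these local intersections are nonnegative.

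First I would classify the top--level components. A somewhere injective top--level component $u$ that is not contained in a leaf meets each leaf at most once, for otherwise $\bar{\mathbf{F}}_d$ would meet that leaf at least twice; hence $p\circ u$ is generically injective, and the exponential convergence behind Lemma \ref{ends}, together with Corollary \ref{possible ends}, upgrades this to genuine injectivity, i.e.\ $u$ is essential in the sense of Definition \ref{sp}. The remaining somewhere injective components are contained in leaves, hence are unbroken $(0,1)$--spheres or single planes of broken leaves, and the multiply covered components cover such curves. Since only finitely many leaves are covered while $\bar{\mathbf{F}}_d$ meets every leaf, the $p$--images of the essential components must together cover $S_\infty$; and two distinct essential components cannot both meet the interior of a single leaf, for the same reason. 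Thus the closures of the $p$--images of the essential components decompose $S_\infty$ into regions with pairwise disjoint interiors, meeting only along $p(\mathbb{L})\cup p(L_{1,1})$.

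The combinatorial heart is then to enumerate this decomposition. By Corollary \ref{possible ends} and Lemma \ref{allsame}, the closure of the $p$--image of an essential component is either all of $S_\infty$ (with finitely many points on $p(\mathbb{L})\cup p(L_{1,1})$ deleted, in the presence of foliation--type punctures, which are then of one sign on each torus), or exactly one of $A_0$, $A_\infty$, $B$, $A_0\cup B$, $A_\infty\cup B$. The only tilings of $S_\infty=A_0\cup B\cup A_\infty$ by such pieces with disjoint interiors are: a single piece equal to $S_\infty$; the pair $\{A_0,\,B\cup A_\infty\}$; the pair $\{A_0\cup B,\,A_\infty\}$; and the triple $\{A_0,\,B,\,A_\infty\}$ — every other combination either misses part of $S_\infty$ (for instance $\{A_0,A_\infty\}$) or has two pieces overlapping in the two--dimensional set $B$, which is excluded; and there is always at least one essential component, since a union of leaves has class $(0,k)\neq(1,d)$. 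The single--piece case is Type 0 if the essential component is a closed $J$--holomorphic sphere and Type 1 if it carries foliation--type punctures; the two--piece cases are Types 2a and 2b, and the three--piece case is Type 3. In each case the puncture descriptions are forced: a negative puncture of an essential component is asymptotic to a geodesic which, because every other component of $\mathbf{F}_d$ is a leaf--cover, must be an integer multiple of a foliation class, so the puncture is of foliation type, and Lemma \ref{allsame} together with the side of $\mathbb{L}$ (resp.\ $L_{1,1}$) on which that component projects, read off from the normal form of Lemma \ref{straight} and Lemma \ref{tauzero}, fixes the common sign. For Type 0 the bound $1\le j\le d$ on the class $(1,j)$ of the essential sphere is a homology count together with positivity of intersection with the plane families $\mathcal{T}_0,\mathcal{T}_\infty,\frak{s}_0,\frak{s}_\infty$ of the foliation.

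The step I expect to be the main obstacle is the asymptotic bookkeeping in the classification of the top--level components and in the ``one sign'' assertions: upgrading generic injectivity of $p\circ u$ to injectivity, and excluding essential components with punctures of mixed (foliation and non--foliation) type on the same torus, both rest on Lemma \ref{ends}, Corollary \ref{possible ends}, and the \cite{hl}--style argument behind Lemma \ref{allsame}. One must also verify carefully that the middle-- and bottom--level curves, together with the compactifications of the top--level curves, glue consistently with the normal form of Lemma \ref{straight}, so that the intersection count with $\mathcal{F}$ is genuinely preserved under the degeneration $u_{k,d}\to\mathbf{F}_d$.
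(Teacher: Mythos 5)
Your main strategy --- enumerating the possible tilings of $S_\infty$ by the closures of $p$--images of essential top--level curves, then matching each tiling to a Type --- is a reasonable alternative to the paper's argument, which instead locates the single essential curve through $T_0$ (via its Lemma \ref{lem0}), runs a short case analysis on whether that curve's punctures are of foliation type, and produces the other essential curves only as needed. Your route gives the full combinatorial picture at once; the paper's is shorter and more local.

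However, the paragraph claiming that "in each case the puncture descriptions are forced" contains a genuine error. You argue that any negative puncture of an essential component must be asymptotic to an integer multiple of a foliation class "because every other component of $\mathbf{F}_d$ is a leaf--cover," and hence is of foliation type. This is false in Types 2a, 2b and 3: for example, in Type 3 the curve $u_{\mathbb{L}}$, whose $p$--image has closure $A_0$, has negative punctures on $\mathbb{L}$ that are precisely \emph{not} of foliation type, since by Lemma \ref{ends} a foliation--type end would force $p\circ u_{\mathbb{L}}$ to wind around a point of $p(\mathbb{L})$ and so spill into $B$. The reasoning fails because an essential curve's negative ends need not pair with leaf--covers: they connect, through the middle-- and bottom--level sub--building asymptotic to $\mathbb{L}$ or $L_{1,1}$, with the negative ends of the \emph{other} essential curves and with each other. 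If your claim were literally true, every essential component would have $p$--image $S_\infty$ minus a finite set, contradicting the very tiling you have just constructed. The correct way to pin down the puncture data in each type is to read it off from the shape of the $p$--image closure: a circle $p(\mathbb{L})$ or $p(L_{1,1})$ lying in the boundary forces non--foliation--type ends on that torus, while a circle lying in the interior of the closure forces foliation--type ends of one common sign (Lemma \ref{ends}, Corollary \ref{possible ends}, Lemma \ref{allsame}). Separately, and more minor: the upgrade from generic injectivity of $p\circ u$ to genuine injectivity is, in the paper, an open--mapping/positivity--of--intersection argument (its Lemma \ref{lem0}); citing exponential convergence does not by itself handle potential double points over $p(\mathbb{L})\cup p(L_{1,1})$, since "$\bar{\mathbf F}_d$ meets each leaf once" only directly controls intersections with unbroken leaves.
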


\medskip
\noindent{\bf Proof of Proposition \ref{lem1}.} We begin with the following result which allows us to use essential curves to sort the limit structures.

\begin{lemma}\label{lem0}
Let $\mathbf{ F}_d$ be a limit as in the {\bf stretching scenario for class $(1,d)$}. If $u$ is a top level curve of $\mathbf{ F}_d$, then it is either essential or else the image of $p \circ u$ is a point. The essential curves have disjoint images under $p$, which are open sets, and these images include the complement of $p(\mathbb{L}) \cup p(L_{1,1})$.
\end{lemma}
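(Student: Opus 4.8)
Write the three assertions of the lemma as: (i) every top level curve of $\mathbf F_d$ is either essential or has $p\circ u$ constant; (ii) the essential curves have disjoint open images under $p$; (iii) these images cover $S_\infty\smallsetminus(p(\mathbb L)\cup p(L_{1,1}))$. The plan is to extract all of this from one fact about the approximating sequence: since $(1,d)\cdot(0,1)=1$, positivity of intersection forces each $u_{k,d}$ to meet every leaf of $\mathcal F_{\tau_k}$ in exactly one transverse point. Passing to the SFT limit and using positivity of intersection once more — as in the discussion preceding Lemma~6.2 of \cite{hl} — this persists in the form: for every leaf $\ell$ of $\mathcal F$ (an unbroken $(0,1)$-sphere, or a broken leaf $P_0\cup P_\infty$), the components of $\mathbf F_d$ that do not cover a component of $\ell$ meet $\ell$ with total multiplicity one, the count for a broken leaf being the (Siefring) relative intersection number with $P_0\cup P_\infty$. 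I will call this $(\ast)$.

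For (i), first dispose of the degenerate cases. A top level component $u$ whose image lies in the solid tori $\mathcal T_0\cup\mathcal T_\infty\cup\mathfrak s_0\cup\mathfrak s_\infty$ must, by positivity against the foliating planes, be a cover of one of those planes, and a top level component whose image lies in one fibre $p^{-1}(q)$ must be a cover of that fibre; in both cases $p\circ u$ is constant, with image a single point. Otherwise $p\circ u$ is holomorphic on the open dense set $u^{-1}(W)$, $W\eqdef p^{-1}(S_\infty\smallsetminus(p(\mathbb L)\cup p(L_{1,1})))$, because $p$ restricts there to a holomorphic submersion onto $S_\infty\smallsetminus(p(\mathbb L)\cup p(L_{1,1}))$ with $J$-holomorphic fibres (the unbroken leaves) having the $J$-holomorphic curve $S_\infty$ (see (A1)) as a section; using the straightened normal forms near $\mathbb L$ and $L_{1,1}$ from Section~\ref{double} together with the exponential convergence of Lemma~\ref{ends} at the punctures, $p\circ u$ extends continuously and then holomorphically across the one-dimensional set $u^{-1}(\mathcal T_0\cup\mathcal T_\infty\cup\mathfrak s_0\cup\mathfrak s_\infty)$, giving a holomorphic nonconstant map $p\circ u\colon\Sigma\to S_\infty$.

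To see that such a map is injective, note that the three open regions $A_0^\circ,B^\circ,A_\infty^\circ$ each carry a family of homologous unbroken leaves disjoint from $\mathbb L\cup L_{1,1}$, so $\bar u\cdot C_q$ is a nonnegative integer constant on each region; call them $m_0,m_B,m_\infty$, each $\le 1$ by $(\ast)$. Hence $\mathrm{im}(p\circ u)$ contains exactly the regions on which $m_\bullet=1$, over each of which $p\circ u$ meets every fibre in one transverse point, so $p\circ u$ is injective on $u^{-1}(A_0^\circ\sqcup B^\circ\sqcup A_\infty^\circ)$; injectivity propagates across $u^{-1}(p(\mathbb L)\cup p(L_{1,1}))$ via the normal forms and the bound, from the Siefring form of $(\ast)$, that $u$ meets each broken leaf at most once. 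This proves (i), and shows that the image of an essential curve is a union of some of the $A_0^\circ,B^\circ,A_\infty^\circ$ (openness across the two circles again using the normal forms). For (ii): if essential $u,u'$ shared an image point $q\notin p(\mathbb L)\cup p(L_{1,1})$, the unbroken leaf $C_q$ would meet both, contradicting $(\ast)$; so the images meet at most inside $p(\mathbb L)\cup p(L_{1,1})$, and two unions of $A_0^\circ,B^\circ,A_\infty^\circ$ with that property are disjoint. For (iii): for $q\notin p(\mathbb L)\cup p(L_{1,1})$, the leaf $C_q$ is met by $\mathbf F_d$ by $(\ast)$, necessarily by a top level component since the middle and bottom level curves map to the symplectizations and cotangent bundles attached to $\mathbb L$ and $L_{1,1}$; by (i) that component is essential with $q$ in its image, or else a cover of $C_q$, in which case $(1,d)\cdot(0,1)=1$ produces a further component meeting $C_q$ transversally once, which by (i) cannot be a cover and is therefore essential with $q$ in its image.

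The hard part is the analytic content hidden in $(\ast)$ and in the extension and injectivity of $p\circ u$ along the solid tori $\mathcal T_0\cup\mathcal T_\infty\cup\mathfrak s_0\cup\mathfrak s_\infty$: there $p$ is not honestly holomorphic and the intersection multiplicities of $u$ must be controlled directly, using the straightening of Lemma~\ref{straight} and Section~\ref{double} and the exponential convergence at punctures from \cite{hofa}, exactly as in the proof of Lemma~6.2 of \cite{hl}. Once that is granted, everything else is bookkeeping with positivity of intersection and the single identity $(1,d)\cdot(0,1)=1$.
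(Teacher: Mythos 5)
Your argument is built on the same arithmetic fact that drives the paper's proof — $(1,d)\bullet(0,1)=1$ after compactifying and gluing the building — and your parts (iii), and most of (i) and (ii), land in essentially the same place. The difference is in how you handle injectivity of $p\circ u$ across the circles $p(\mathbb L)\cup p(L_{1,1})$, and this is where your write-up has a genuine gap. You package this into the broken-leaf case of $(\ast)$, asserting that each top-level curve meets each broken leaf with Siefring relative intersection number at most one, and then ``propagate injectivity across $u^{-1}(p(\mathbb L)\cup p(L_{1,1}))$.'' But $(\ast)$ for broken leaves is never derived: the glued homological identity $(1,d)\bullet(0,1)=1$ distributes cleanly to a statement about unbroken leaves, but splitting it into per-level Siefring numbers with a broken leaf requires controlling asymptotic contributions (curves of $\mathbf F_d$ may share asymptotic orbits, up to covers, with the planes $P_0,P_\infty$), and that is exactly the delicate part you defer. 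Nothing in the paper sets up a Siefring count, and it is not immediate that the claim, as you state it, is either well-posed or true without further work.

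The paper sidesteps broken leaves entirely. After noting that double points of $p\circ u$ can only lie on $p(\mathbb L)\cup p(L_{1,1})$ (this is your unbroken-leaf $(\ast)$), it observes that $p\circ u$ is an \emph{open} map — away from $\mathcal U(\mathbb L)\cup\mathcal U(L_{1,1})$ because $p$ is holomorphic there, and inside those neighborhoods because the straightened fibers $\{P_1=\delta,Q_1=\theta\}$ are $J$-holomorphic, so positivity of intersection makes $p\circ u$ a local branched cover. An open map with one double point has an open set of double points; that open set would have to sit inside the $1$-dimensional set $p(\mathbb L)\cup p(L_{1,1})$, so it is empty. This proves injectivity globally in one stroke, gives openness of the image for free, and handles the ``disjoint images'' claim by running the identical argument on $u\cup v$. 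Your region-by-region decomposition of $S_\infty$ into $A_0^\circ, B^\circ, A_\infty^\circ$ is unnecessary once you have this, and your statement that the image of an essential curve is a union of some of these regions is not quite accurate (an essential curve may project onto points of the circles as well — cf.\ Corollary~\ref{possible ends}, where the image can be $S_\infty$ minus finitely many points), though this is a minor slip compared with the missing Siefring justification. If you replace the broken-leaf part of $(\ast)$ by the open-mapping observation, your proof becomes a correct — if somewhat longer — rendering of the paper's.

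One smaller remark: your reason for why a component mapped into one of the solid tori must cover a single plane is attributed to ``positivity against the foliating planes,'' but positivity of intersection with a plane only tells you about isolated transverse crossings; the actual reason is that at every point of the $3$-dimensional solid torus the unique $J$-complex line in its tangent space is the tangent to the plane through that point, forcing a $J$-holomorphic curve inside the solid torus to lie in a single plane. This does not affect the outcome, but the cited mechanism is the wrong one.
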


\begin{proof}
Recall that the curves of $\mathbf{F}_d$  can be compactified and glued
together to form a map $\bar{\mathbf{F}}_d\colon S^2  \to S^2 \times S^2$ in the class $(1,d)$. The intersections of $\bar{\mathbf{F}}_d$ with an unbroken leaf $T$ of $\mathcal{F}$ all correspond to intersections of top level curves of $\mathbf{ F}_d$ with $T$. Since $(1,d) \bullet T = (1,d) \bullet (0,1) =1$, there can only be one such intersection point, by positivity of intersection. If $u$ is a top level curve such that the map $p \circ u$ is constant, then $u$ covers part of a broken leaf of our foliation and has intersection number $0$ with all unbroken leaves. Assume then that $u$ is a top level curve such that $p \circ u$ is nonconstant. By the discussion above, $u$ intersects any unbroken leaf $T$ either once or not at all, and if $p \circ u$ has any double points then they must lie in $p(\mathbb{L}) \cup p(L_{1,1})$. Positivity of intersection again implies that the nonconstant map $p \circ u$ is an open mapping and this implies  that  the double points of $p \circ u$ form an open set. We conclude that $u$ is essential. To see that the essential curves have disjoint images under $p$ we can apply the same argument to a union $u \cup v$. The intersection number also implies that all unbroken fibers intersect at least one essential curve.
\end{proof}

Lemma \ref{lem0} implies that there is  an essential curve $u$ of $\mathbf{ F}_d$ that  intersects $T_0$. The closure of the image of $p \circ u$ must contain $A_0$. By Corollary \ref{possible ends} the following cases are exhaustive.

\medskip

\noindent{\em Case 1: $u$ has no punctures.} In this case, $p  \circ u$ must be a bijection onto $S_\infty$. Hence,  $u$ is  a $J$-holomorphic sphere in a class of the form $(1,j)$ for $j$  in $[0,d]$. By Lemma \ref{lem0} all the other top level curves of $\mathbf{ F}_d$ must cover leaves of the foliation. 

The top level curves of $\mathbf{ F}_d$ which cover fibres fit together to form a possibly disconnected curve in the class $(0,d-j)$. If $j=d$ then $\mathbf{ F}_d$ consists only of the curve $u$. Either way, the building is of Type 0. 

\medskip

\noindent{\em Case 2:  $u$ has punctures and they are all of foliation type.} In this case we claim that $\mathbf{ F}_d$ is of Type 1. By Lemma \ref{ends}, the image of the map $p  \circ u$ includes points in each component of the complement of $p(\mathbb{L}) \cup p(L_{1,1})$, and so by Lemma \ref{lem0} we have that $p \circ u$ is a bijection onto  $S_\infty$ minus a finite set of points on $p(\mathbb{L}) \cup p(L_{1,1})$.  The other top level curves of $\mathbf{ F}_d$ must either cover unbroken leaves of $\mathcal{F}$ or they are $J$-holomorphic planes covering one of the planes of a broken leaf of $\mathcal{F}$. The statement about positivity or negativity of punctures is Lemma \ref{allsame}. 

\medskip

\noindent{\em Case 3: $u$ has one puncture not of foliation type.}  Since $u$ intersects the leaf $T_0$, the closure of the image of $p \circ u$ is either $A_0$ or $A_0 \cup B$. In either case, $u$ does not intersect $T_{\infty}$.

Suppose that the closure of the image of $p  \circ u$ is $A_0$. By Lemma \ref{lem0}, there is an essential curve $v$ of $\mathbf{ F}_d$ that  intersects $T_{\infty}$, and the images of  $p  \circ u$ and $p  \circ v$ cannot intersect. Hence the closure of the image of $p  \circ v$ is either  $A_{\infty}$ or $B \cup A_{\infty}$. In the first case, $\mathbf{ F}_d$ is of Type 3 with 
$u_{\mathbb{L}}=v$ and $u_{L_{1,1}}=u$, where the third curve, $\underline{u}$, exists by Lemma \ref{lem0}.
In the second case, $\mathbf{ F}_d$ is of Type 2a with $u^0_{\mathbb{L}}=u$ and $u^{\infty}_{\mathbb{L}} =v$. 

If, instead, the closure of the  image of $p  \circ u$ is $A_0 \cup B$, then a similar argument implies that $\mathbf{ F}_d$ is of Type 2b.
%
%
\medskip

\noindent This completes the proof of Proposition \ref{lem1}.

\medskip

\subsection{The existence of special buildings, under Assumption 2.}\label{FG}

In this section we will establish the existence of two special limits of the {\bf stretching scenario for class $(1,d)$} when $d$ is sufficiently large. The following result will be used to exploit the large $d$ limit.

\begin{lemma}\label{mono} There exists an $\epsilon >0$ such that $$\mathrm{area}(u) \ge \epsilon  u \bullet (S_0 \cup S_{\infty})$$ for all $J$-holomorphic curves $u$ in $S^2\times S^2 \smallsetminus (\mathbb{L} \cup L_{1,1})$.
\end{lemma}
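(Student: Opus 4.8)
The statement is a monotonicity/area-bound of the type that appears when one stretches the neck along Lagrangian tori, so the natural strategy is to reduce the general curve $u$ to a controlled local picture near the complex spheres $S_0 \cup S_{\infty}$ and invoke the standard monotonicity lemma for $J$-holomorphic curves there. First I would fix a small tubular neighborhood $\mathcal{V}$ of $S_0 \cup S_{\infty}$ on which $J$ is the standard split complex structure (this is exactly Assumption (A1), which also guarantees $S_0$, $S_{\infty}$ are unbroken leaves and that $\mathbb{L} \cup L_{1,1}$ is disjoint from $\mathcal{V}$, since $\mathbb{L}$ and $L_{1,1}$ lie in $Y$ and away from these spheres by Refinement 2). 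Shrinking $\mathcal{V}$ if necessary, choose $r_0>0$ so that $\mathcal{V}$ contains a ball (in the standard metric) of radius $r_0$ around every point of $S_0 \cup S_{\infty}$ and so that $\mathcal{V} \cap (\mathbb{L}\cup L_{1,1}) = \emptyset$.

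Next, for each intersection point $z \in u^{-1}(S_0 \cup S_{\infty})$, the component of $u$ through $z$ either is entirely contained in $S_0$ or $S_{\infty}$ (in which case, since $u$ maps to the complement of $\mathbb{L}\cup L_{1,1}$ and these spheres are $J$-complex of symplectic area $2$, we already get $\mathrm{area}(u)\ge 2$, far more than needed) or it meets $S_0 \cup S_{\infty}$ in isolated points by positivity of intersection. In the latter case, around each such $z$ the curve $u$ passes through the center of a standard ball $B(u(z),r_0)\subset \mathcal{V}$, and the portion of $u$ inside that ball is a proper $J$-holomorphic curve through the center. The standard monotonicity lemma for minimal surfaces / pseudoholomorphic curves (see e.g.\ the version in \cite{behwz}, or Sikorav's lemma) then gives a universal constant $c>0$, depending only on $r_0$ and the geometry of the standard structure on $\mathcal{V}$, such that the symplectic area of $u$ inside $B(u(z),r_0)$ is at least $c\, r_0^2$. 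The only subtlety is making these local area contributions disjoint: since distinct intersection points $z_1\ne z_2$ with $u(z_1),u(z_2)$ both on $S_0\cup S_{\infty}$ may have $u(z_1)$ and $u(z_2)$ close together, I would instead argue per connected component of $u^{-1}(\mathcal{V})$, or shrink $r_0$ and use the fact that the balls can be chosen around a maximal $r_0$-separated subset while still accounting for every intersection point up to a fixed multiplicity; a cleaner route is to note that each local branch of $u$ at $z$ contributes area $\ge c\,r_0^2$ and distinct branches are genuinely disjoint as subsets of the domain, and then sum over branches, which is exactly $u\bullet(S_0\cup S_{\infty})$ counted with multiplicity. Setting $\epsilon = c\,r_0^2$ (and $\epsilon \le 2$ to also cover the degenerate case above) finishes the argument.

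The step I expect to be the main obstacle is precisely this bookkeeping: ensuring that the $u\bullet(S_0\cup S_{\infty})$ many units of area are counted without overlap, i.e.\ that the monotonicity balls (or the domain sub-branches) used for different intersection points are pairwise disjoint. This is handled by working in the domain rather than the target --- the preimages in the domain of the local branches at distinct intersection points are disjoint open sets --- and applying monotonicity to each branch separately inside a common-size target ball, so that the total area of $u$ is at least the sum of the per-branch lower bounds. A secondary point to be careful about is that $J$ is only assumed standard \emph{near} the spheres, so $r_0$ must be taken small enough that every monotonicity ball stays inside the region where $J$ is split; this is harmless since $r_0$ only affects the value of $\epsilon$, not its positivity. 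Everything else --- positivity of intersection to see the intersections are isolated, and the fact that $u$ avoids $\mathbb{L}\cup L_{1,1}$ so that the balls near $S_0\cup S_{\infty}$ are genuinely contained in the domain of $J$ --- is immediate from the setup of Section \ref{double}.
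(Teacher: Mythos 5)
Your strategy is in the same spirit as the paper's (both localize near $S_0\cup S_\infty$ using that $J$ is split there and that $\mathbb{L}\cup L_{1,1}$ is disjoint from a neighborhood of these spheres), but the actual mechanism you use has a genuine gap, and it is exactly the one you flagged as the main obstacle. The monotonicity lemma gives an area lower bound $c\,r_0^2$ for the \emph{connected component} of $u^{-1}\bigl(B(u(z),r_0)\bigr)$ through $z$, not for an arbitrarily chosen ``local branch'' near $z$; a small branch of the domain around $z$ can have image that exits the ball almost immediately, so it need not contribute area $c\,r_0^2$ on its own. If two intersection points $z_1\ne z_2$ have nearby images on $S_\infty$, the components of the preimages of the two balls can coincide, and then your per-branch estimates double-count the same target area. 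There is no a priori bound on how many intersection points with $S_\infty$ can cluster inside a single ball, so neither the ``maximal $r_0$-separated subset'' variant nor ``sum over branches, disjoint in the domain'' closes this. Your parenthetical suggestion to ``argue per connected component of $u^{-1}(\mathcal{V})$'' is the right instinct, but as stated it is not supported by monotonicity alone: you would need to show that a single component with total intersection number $n_C$ contributes area $\gtrsim \epsilon\, n_C$, and that is precisely the additivity monotonicity does not give you.

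The paper's proof supplies exactly that additivity by a different device: it takes $\mathcal{N}_\epsilon = S_\infty\times D^2(\epsilon)$ with $J$ split there, observes that the projection $\pi_2\circ u|_{u^{-1}(\mathcal{N}_\epsilon)}\colon u^{-1}(\mathcal{N}_\epsilon)\to D^2(\epsilon)$ is a proper holomorphic map whose degree is $u\bullet S_\infty$, and then bounds $\mathrm{area}(u|_{u^{-1}(\mathcal{N}_\epsilon)})$ from below by $\int (\pi_2\circ u)^*\omega = \epsilon\cdot\deg(\pi_2\circ u)$. Because it is a degree computation, multiple intersection points sitting on the same component or clustering on $S_\infty$ are automatically counted with the correct multiplicity, with no covering or disjointness bookkeeping needed. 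If you want to repair your argument rather than switch wholesale, the cleanest route is to replace the monotonicity-per-ball step with this degree-of-projection estimate on each connected component of $u^{-1}(\mathcal{N}_\epsilon)$ and then sum over components; at that point you have essentially reproduced the paper's proof.
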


\begin{proof}
Fix an open neighborhood of $S_{\infty}$ of the form $\mathcal{N}_{\epsilon}=S_{\infty} \times D^2(\epsilon)$ where $D^2(\epsilon)$ is the open disc of area $\epsilon$. We may assume that  the closure of $\mathcal{N}_{\epsilon}$  is disjoint from $\mathbb{L} \cup L_{1,1}$ and, by (A1), we may assume that $J$ restricts to $\mathcal{N}_{\epsilon}$ as the standard split complex structure. Let $\pi_2 \colon S_{\infty} \times D^2(\epsilon) \to D^2(\epsilon)$ be projection  and set $$u_{\epsilon,\infty} = u |_{u^{-1}(\mathcal{N}_{\epsilon})}.$$ By perturbing $\epsilon$ if needed we may assume that $u^{-1}(\mathcal{N}_{\epsilon})$ is a smooth manifold. We have 
$$
\mathrm{degree}(\pi_2 \circ u_{\epsilon,d}) = u \bullet S_{\infty}.
$$
This implies 
\begin{eqnarray*}
\mathrm{area}(u_{\epsilon,\infty}) & \geq &  \int_{u^{-1}(\mathcal{N}_{\epsilon})} u_{\epsilon,\infty}^*(\omega\oplus \omega) \\
{} & \geq & \int_{(\pi_2 \circ u_{\epsilon,\infty})^{-1}(D^2(\epsilon))}  (\pi_2 \circ u_{\epsilon,\infty})^*\omega \\
{} & = &  \left(\int_{D^2(\epsilon)} \omega \right) u \bullet S_{\infty}\\
{} & = & \epsilon u \bullet S_{\infty}.
\end{eqnarray*}
A similar calculation for $S_0$ gives the result.
\end{proof}

\begin{proposition}\label{existence2} For all sufficiently large $d$, there exists a limiting building $\mathbf{ F}$  as in the {\bf stretching scenario for class $(1,d)$}  such that $\mathbf{ F}$ is of Type 3. The building consists of its three essential  top level curves, $u_{\mathbb{L}}$, $\underline{u}$,  and $u_{L_{1,1}}$, together with $d-1$ planes in 
$\mathcal{T}_{0} \cup \mathcal{T}_{\infty}$ and $d$ planes in $\frak{s}_{0} \cup \frak{s}_{\infty}$.
%
\end{proposition}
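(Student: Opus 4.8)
The plan is to run the stretching scenario for a carefully chosen sequence of curves in class $(1,d)$ and to eliminate all limit types other than Type 3 by an area and intersection-number bookkeeping argument, using Lemma \ref{mono} to exploit large $d$. First, I would fix the point constraints: choose a point $x_{\mathbb{L}}$ in the interior of the solid torus bounded by $\mathbb{L}$ (more precisely a point whose $p$-image lies in the interior of $A_0$), a point $x_B$ with $p$-image in the interior of $B$, and a point $x_{L_{1,1}}$ with $p$-image in the interior of $A_{\infty}$, together with enough additional generic point constraints in $S^2\times S^2\smallsetminus(\mathbb{L}\cup L_{1,1})$ so that the relevant moduli space of $J_{\tau_k}$-holomorphic spheres in class $(1,d)$ through these points is nonempty and zero-dimensional; such curves exist by Gromov's theorem for the $J_{\tau_k}$ since $(1,d)$ is represented and the expected dimension $2(c_1(1,d)-1)=2(2+d-1)=2(d+1)$ can be cut down by $d+1$ generic point constraints. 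Passing to a subsequence, these curves converge to a building $\mathbf{F}$ as in the stretching scenario, so by Proposition \ref{lem1} it is of Type 0, 1, 2a, 2b, or 3.

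Next I would force the three point constraints $x_{\mathbb{L}}, x_B, x_{L_{1,1}}$ onto three distinct essential curves of $\mathbf{F}$, because a point whose $p$-image lies in the interior of one of $A_0$, $B$, $A_{\infty}$ must be covered by a top-level curve, and by Lemma \ref{lem0} that curve is essential (constant-$p$ curves cover broken leaves and their $p$-images are single points on $p(\mathbb{L})\cup p(L_{1,1})$, hence miss the open sets). The closures of the $p$-images of distinct essential curves are disjoint-interior closed subsets of $S_\infty$ whose union is all of $S_\infty$; since $A_0$, $B$, $A_{\infty}$ are hit by three distinct essential curves, there must be at least three essential curves, which rules out Types 0, 1, 2a, 2b (these have at most two essential curves). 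Hence $\mathbf{F}$ is of Type 3, with $u_{\mathbb{L}}$, $\underline{u}$, $u_{L_{1,1}}$ the essential curves whose $p$-image closures are $A_0$, $B$, $A_{\infty}$ respectively.

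It remains to pin down the non-essential components: the complement of the essential curves in $\mathbf{F}$ must cover leaves of $\mathcal{F}$, and I must show precisely $d-1$ of the $\mathcal{T}_0\cup\mathcal{T}_\infty$ planes and $d$ of the $\frak{s}_0\cup\frak{s}_\infty$ planes appear (with the essential curves carrying the residual homology). Here is where the intersection-number accounting enters. Each $\mathcal{T}$-plane (resp. $\frak{s}$-plane) pairs with class $(0,1)$-leaves as a half of one such leaf; since $\bar{\mathbf{F}}\bullet(0,1)$ is accounted for once by the essential curves along each unbroken leaf, I track instead the intersections with $S_0\cup S_\infty$ and the copies $T_0$, $T_\infty$. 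By Lemma \ref{tauzero} the planes in $\mathcal{T}_\infty$ meet both $S_0$ and $S_\infty$ while those in $\mathcal{T}_0$ meet neither; using $(1,d)\bullet(S_0\cup S_\infty)=(1,d)\bullet((0,1)+(0,1))=2$ together with the constraint that $\underline{u}$ and $u_{L_{1,1}}$ and $u_{\mathbb{L}}$ contribute known amounts (as in the straightening picture of Section \ref{double}, $u_{L_{1,1}}$ meets $S_\infty$, $u_{\mathbb{L}}$ is pinned near $S_0$), one sees that exactly the right number of broken-leaf planes must be glued in to realize total degree $d$ over the $(0,1)$-direction while meeting $S_0\cup S_\infty$ exactly twice; carrying this out yields the count $d-1$ planes in $\mathcal{T}_0\cup\mathcal{T}_\infty$ and $d$ planes in $\frak{s}_0\cup\frak{s}_\infty$, after noting $\mathbf{F}$ can have no unbroken $(0,1)$-leaf components (they would be covered, contradicting the intersection count with themselves).

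The main obstacle I anticipate is not the existence of the limiting building but the exact combinatorial count of broken-leaf planes: one must rule out limits in which some $(0,1)$-leaf appears as an honest unbroken sphere component (rather than split into two planes), and one must show that no essential curve carries extra $(0,k)$ with $k\ge 2$ beyond what the three pieces $A_0$, $B$, $A_\infty$ demand — this is where Lemma \ref{mono} is used, since for $d$ large the area budget $\mathrm{area}(\bar{\mathbf{F}}_d)=2(d+1)$ forces the essential curves to be as "thin" as possible, i.e. each essential curve meets $S_0\cup S_\infty$ the minimum number of times, and the residual homology is carried entirely by broken leaves in the normal form near $\mathbb{L}$ and $L_{1,1}$ given by Lemma \ref{straight}. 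Once the essential curves are forced to be minimal, the plane count follows by matching the class $(1,d)$ and the intersection numbers with $S_0$, $S_\infty$, $T_0$, $T_\infty$.
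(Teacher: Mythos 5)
Your approach has a genuine gap at the crucial step. You try to force at least three essential curves by placing constraint points whose $p$-images lie in the interiors of $A_0$, $B$, and $A_\infty$, and then conclude that these three points must lie on three distinct essential curves. That conclusion does not follow: for a limit of Type~1, the unique essential curve $u_d$ has $p\circ u_d$ surjecting onto $S_\infty$ minus finitely many points on $p(\mathbb{L})\cup p(L_{1,1})$, so a single essential curve can (and generically does) pass through points projecting into all three regions $A_0$, $B$, $A_\infty$. The same is true for the two essential curves of a Type~2a/2b limit: the curve with $p$-image $A_0\cup B$ or $B\cup A_\infty$ can absorb two of your three constraints. So geography of the constraint points in $S^2\times S^2\smallsetminus(\mathbb{L}\cup L_{1,1})$ alone never forces three essential curves, and Types~1, 2a, 2b are not eliminated by your argument.

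The paper instead places the $2d+1$ constraint points \emph{on} the Lagrangians ($d+1$ on $L_{1,1}$ and $d$ on $\mathbb{L}$). That choice is what makes the rest work: in the limit building, these points lie on the bottom-level curves in the two copies of $T^*\mathbb{T}^2$, and the subbuilding $\mathbf{F}_{1,1}$ (resp.\ the analogous subbuilding for $\mathbb{L}$) must pass through them. Combined with the genus-zero index identity $\mathrm{index}(\mathbf{F}_{1,1})=2(s-1)$ (Proposition~3.3 of \cite{hl}), where $s$ is the number of positive ends, and the lower bound $\mathrm{index}(\mathbf{F}_{1,1})\ge 2(d+1)$ imposed by the $d+1$ point constraints on $L_{1,1}$, one gets $s\ge d+2$ positive ends, hence at least $d$ planes in $\frak{s}_0\cup\frak{s}_\infty$ once two ends are matched with essential curves. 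This is then played against the intersection number $2d$ with $S_0\cup S_\infty$ and the total area $2d+2$ via Lemma~\ref{mono}, eliminating Types~1, 2a, 2b for large $d$ and forcing the exact plane counts. Your generic constraints away from $\mathbb{L}\cup L_{1,1}$ provide no analogous control over the subbuildings in $T^*\mathbb{T}^2$, which is why the argument cannot close.

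A secondary slip: $c_1(1,d)=2+2d$, not $2+d$, so the moduli space of $(1,d)$-spheres has dimension $2(2+2d-1)=4d+2$, requiring $2d+1$ point constraints to cut to zero dimension (matching the paper's $d+1$ points on $L_{1,1}$ plus $d$ on $\mathbb{L}$), not the $d+1$ you computed.
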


\begin{proof}
Fix $d+1$ points on $L_{1,1}$ and $d$ points on $\mathbb{L}$.  Let $J_{\tau}$, for $\tau \geq 0$, be the family of almost complex structures on $S^2 \times S^2$ from Section \ref{double} and for a sequence $\tau_k \to \infty$, let $u_{k} \colon S^2 \to S^2 \times S^2$ be a convergent sequence of $J_{\tau_k}$-holomorphic curves in the class $(1,d)$ that pass through the $2d+1$ constraint points. Their  limit, $\mathbf{F}$, is the desired building.

To see this we first note that the point constraints already preclude the possibility that $\mathbf{ F}$ is of Type 0. If $F$ had Type $1$, the point constraints on $L_{1,1}$ would imply, by Lemma \ref{allsame}, that $\mathbf{ F}$ must contain $d+1$ planes either all in $\frak{s}_{0}$ or all in $\frak{s}_{\infty}$. This contradicts the fact that the our curve has intersection number $d$ 
with $S_0$ and $S_{\infty}$. The same argument precludes the possibility that $\mathbf{ F}$ has Type 2a.

It remains to show that $\mathbf{ F}$ does not have Type 2b. Assuming that $\mathbf{ F}$ has Type 2b, we will show that it must  include a collection of curves of total area equal to two, that intersect $S_0 \cup S_{\infty}$ $d$ times. If $d$ is sufficiently large, this contradicts 
Lemma \ref{mono} above.

\medskip

\noindent{\bf Claim 1.} If $\mathbf{ F}$ has Type 2b, then it includes at least $d$ planes in $\frak{s}_{0} \cup \frak{s}_{\infty}$.

\medskip

To see this consider the subbuilding $\mathbf{F}_{1,1}$ of $\mathbf{ F}$
consisting of its middle and bottom level curves mapping to the copies of $\RR \times S^*\mathbb{T}^2$  and $T^*\mathbb{T}^2$ that correspond to $L_{1,1}$. Since it is connected and has genus zero, it follows from Proposition 3.3. of \cite{hl} that 
$$
\mathrm{index}(\mathbf{F}_{1,1}) =2(s-1),
$$
where $s$ is the number of positive ends of $\mathbf{F}_{1,1}$. Since, $\mathbf{F}_{1,1}$ passes through the $d+1$ generic point constraints on $L_{1,1}$, and the Fredholm index in these manifolds is nondecreasing under multiple covers, we must also have 
$$
\mathrm{index}(\mathbf{F}_{1,1}) \geq 2(d+1).
$$
Hence, $\mathbf{F}_{1,1}$ has at least $d+2$ positive ends.  Under the assumption that $\mathbf{ F}$ has Type 2b,  two of these positive ends match with the two essential top level curves of $\mathbf{ F}$. This leaves at least $d$ positive ends  of $\mathbf{F}_{1,1}$ that match with top level curves of $\mathbf{ F}$ that cover planes in $\frak{s}_{0} \cup \frak{s}_{\infty}$.

\begin{remark}
The same argument implies that if $\mathbf{ F}$ has Type 3 then again it must include at least  $d$ planes in $\frak{s}_{0} \cup \frak{s}_{\infty}$.
\end{remark}

\medskip

\noindent{\bf Claim 2.} If $\mathbf{ F}$ has Type 2b, then it includes $d$ planes in  $\mathcal{T}_{0}$ and none in $\mathcal{T}_{\infty}$.

\medskip

By Lemma \ref{allsame} the $d$ constraint points on $\mathbb{L}$ imply that, if  $\mathbf{F}$ is of Type 2b, then it must contain $d$ planes either all in $\mathcal{T}_{0}$ or all in $\mathcal{T}_{\infty}$.  To show that these planes can not be in $\mathcal{T}_{\infty}$, we consider intersections with 
$S_0 \cup S_{\infty}$. Overall, the top level curves of $\mathbf{ F}$ must intersect $S_0 \cup S_{\infty}$ exactly $2d$ times. The planes of $\mathbf{ F}$ asymptotic to $L_{1,1}$ from Claim 1, account for at least $d$ of these intersections.

Since $\mathbb{L}$ is homologically trivial in $Y$, by Lemma \ref{tauzero}  each plane of $\mathcal{T}_{\infty}$ must intersect both $S_0$ and $S_{\infty}$, while the planes in $\mathcal{T}_{0}$ intersect neither of these spheres. If the $d$ planes of $\mathbf{F}$ asymptotic to $\mathbb{L}$ are in $\mathcal{T}_{\infty}$ then they would contribute another $2d$ intersections with $S_0 \cup S_{\infty}$. By positivity of intersection, this can not happen and so these planes must belong to $\mathcal{T}_{0}$ as claimed.

\medskip

To complete the argument, we now balance areas. The total area of all the curves in $\mathbf{F}$ is $2(d+1)$. If $\mathbf{ F}$ has Type 2b, then the planes from Claim 1 and Claim 2 have total area at least $2d$. 
It's essential curves  must then have total area equal to 2. Also, they must contribute the remaining $d$ intersections with $S_0 \cup S_{\infty}$. It follows from Lemma \ref{mono}, that this is impossible for all $d$ sufficiently large.  Hence $\mathbf{ F}$ can not be of Type 2b, and must instead be of Type 3. Arguing as above, it follows that in addition to  its three essential  top level curves, $\mathbf{ F}$ must then have $d$ planes in $\frak{s}_{0} \cup \frak{s}_{\infty}$ and $d-1$ planes in 
$\mathcal{T}_{0} \cup \mathcal{T}_{\infty}$.
\end{proof}

%

\begin{proposition}\label{existence3} For all sufficiently large $d$, there exists a limiting building $\mathbf{G}$  as in the {\bf stretching scenario for class $(1,d)$} such that $\mathbf{ G}$ 
Type 3. In addition to its three essential curves
it consists of  $d$ planes in 
$\mathcal{T}_{0} \cup \mathcal{T}_{\infty}$ and $d-1$ planes in $\frak{s}_{0} \cup \frak{s}_{\infty}$. 
%
%
\end{proposition}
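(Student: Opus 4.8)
The plan is to repeat the argument of Proposition \ref{existence2} with the roles of $\mathbb{L}$ and $L_{1,1}$ exchanged in the choice of constraints. Fix $d+1$ points on $\mathbb{L}$ and $d$ points on $L_{1,1}$, let $J_\tau$, $\tau\geq 0$, be the family of almost complex structures from Section \ref{double}, and for a sequence $\tau_k\to\infty$ let $u_k\colon S^2\to S^2\times S^2$ be a convergent sequence of $J_{\tau_k}$-holomorphic curves in the class $(1,d)$ passing through these $2d+1$ points. Let $\mathbf{G}$ be the limiting building; by Proposition \ref{lem1} it is of Type 0, 1, 2a, 2b, or 3, and the goal is to show that for all sufficiently large $d$ only Type 3 can occur, and that it then has the stated plane counts.

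First I would exclude Types 0, 1, 2a and 2b, the difference from Proposition \ref{existence2} being that now the $d+1$ constraints on $\mathbb{L}$ drive the estimates. Type 0 is excluded since such a building is disjoint from $\mathbb{L}$. As in Proposition \ref{existence2}, applying Proposition 3.3 of \cite{hl} to the connected genus-zero subbuilding $\mathbf{G}_{\mathbb{L}}$ formed by the middle- and bottom-level curves over $\mathbb{L}$ gives $\mathrm{index}(\mathbf{G}_{\mathbb{L}})=2(s-1)$ with $s$ its number of positive ends, while the $d+1$ generic constraints force $\mathrm{index}(\mathbf{G}_{\mathbb{L}})\geq 2(d+1)$, hence $s\geq d+2$. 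In Types 1 and 2b at most one essential curve of $\mathbf{G}$ meets $\mathbb{L}$, so $\mathbf{G}$ then contains at least $d+1$ planes asymptotic to $\mathbb{L}$; by Lemma \ref{allsame} and the matching of foliation classes along $\mathbf{G}_{\mathbb{L}}$, all of these lie in $\mathcal{T}_0$ or all lie in $\mathcal{T}_\infty$. Since $\mathbb{L}$ is homologically trivial in $Y$, Lemma \ref{tauzero} shows that every plane of $\mathcal{T}_\infty$ meets $S_\infty$, so $d+1$ such planes would force more than the $d$ available intersections of the class $(1,d)$ with $S_\infty$; hence in Types 1 and 2b all of these planes lie in $\mathcal{T}_0$, and so meet neither $S_0$ nor $S_\infty$. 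In Type 2a two essential curves meet $\mathbb{L}$, giving only $d$ planes asymptotic to $\mathbb{L}$; here I would instead argue on the $L_{1,1}$ side, where the $d$ constraints and the single essential curve meeting $L_{1,1}$ produce (via Lemma \ref{allsame} again) $d$ planes all in $\frak{s}_0$ or all in $\frak{s}_\infty$, which then saturate the intersections with one of $S_0$, $S_\infty$ and force, by positivity, the $\mathbb{L}$-asymptotic planes once more into $\mathcal{T}_0$.

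The contradiction in each of Types 1, 2a, 2b then comes from balancing areas against intersections with $S_0\cup S_\infty$, and I expect this bookkeeping to be the main obstacle. The class $(1,d)$ has area $2(d+1)$ and meets $S_0\cup S_\infty$ in $2d$ points. In each of these cases $\mathbf{G}$ contains at least $d+1$ planes asymptotic to $\mathbb{L}$ together with at least $d-1$ asymptotic to $L_{1,1}$ (the dual index estimate applied to $\mathbf{G}_{1,1}$), each of area at least $1$; so the essential curves carry total area at most $2$, and the area identity leaves no room for any unbroken leaf of $\mathcal{F}$ among the top-level curves. But the $\mathbb{L}$-asymptotic planes all lie in $\mathcal{T}_0$ and so contribute nothing to the count of $2d$ intersections with $S_0\cup S_\infty$, while the $\frak{s}_0\cup\frak{s}_\infty$-planes contribute one each and there are at most $d$ of them; hence the essential curves must account for at least $d$ of these intersections on total area at most $2$, which contradicts Lemma \ref{mono} once $d$ is large. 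Therefore $\mathbf{G}$ is of Type 3. Finally, rerunning the index estimates for the two essential curves meeting $\mathbb{L}$ and the two meeting $L_{1,1}$ gives at least $d$ planes in $\mathcal{T}_0\cup\mathcal{T}_\infty$ and at least $d-1$ in $\frak{s}_0\cup\frak{s}_\infty$, and a further area count (total $2(d+1)$, with no unbroken leaves) forces these counts to be exactly $d$ and $d-1$ with nothing else present, as claimed.
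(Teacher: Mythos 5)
Your proposal follows the paper's own argument essentially step for step: same choice of $d+1$ constraints on $\mathbb{L}$ and $d$ on $L_{1,1}$, same exclusion of Type $0$ via constraints, same index-count for $\mathbf{G}_{\mathbb{L}}$ and $\mathbf{G}_{1,1}$ plus Lemma~\ref{tauzero} to force $\mathcal{T}_0$, and the same area-versus-intersection count against Lemma~\ref{mono} to rule out Types $1$, $2a$, $2b$, with the final tally pinning down $d$ planes in $\mathcal{T}_0\cup\mathcal{T}_\infty$ and $d-1$ in $\frak{s}_0\cup\frak{s}_\infty$. One cosmetic slip: your summary sentence ``in each of these cases $\mathbf{G}$ contains at least $d+1$ planes asymptotic to $\mathbb{L}$'' contradicts your own (correct) observation that Type $2a$ yields only $d$ such planes; however, the $\geq 2d$ total plane count and the resulting $\geq d$ intersections carried by essential curves of area $\leq 2$ still hold, so the contradiction goes through unchanged.
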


\begin{proof}
Here we fix $d$ points on $L_{1,1}$ and $d+1$ points on $\mathbb{L}$, and for $J_{\tau}$
as in Proposition \ref{existence2} consider the limit, $\mathbf{ G}$, of a  convergent sequence of $J_{\tau_k}$-holomorphic spheres, for $\tau_k \to \infty$,  that represent  the class $(1,d)$ and pass through the $2d+1$ constraint points. The point constraints imply that $\mathbf{ G}$ is not of Type 0. 

If $\mathbf{ G}$ was of  Type 1, the point constraints would imply that $\mathbf{ G}$ includes at least $d$ planes in either $\frak{s}_{0}$ or $\frak{s}_{\infty}$, and at least $d+1$ planes in either $\mathcal{T}_{0}$ or $\mathcal{T}_{\infty}$. From this it follows that the essential curve of $\mathbf{ G}$ would have area $1$.
Recalling Lemma \ref{tauzero}, since $L$ is homologicially trivial, the planes  of $\mathcal{T}_{\infty}$ each intersect $S_0 \cup S_{\infty}$ twice. Arguing as in Claim 2 from the proof of Proposition \ref{existence2}, if the planes asymptotic to $\mathbb{L}$ lie in $\mathcal{T}_{\infty}$ then the broken planes will contribute a total of $d + 2(d+1)$ intersections with $S_0 \cup S_{\infty}$, a contradiction as there are only $2d$ such intersections. On the other hand, if these planes all lie in $\mathcal{T}_{0}$ then the essential curve must contribute $d$ intersections with $S_0 \cup S_{\infty}$. As this essential curve has area $1$ then contradicts Lemma \ref{mono} when $d$ is sufficiently large. Hence, $\mathbf{ G}$ is not of Type 1.

Next we show that $\mathbf{ G}$  can not be of Type 2b. Assume that it is. Then $\mathbf{ G}$ includes $d+1$ planes in either $\mathcal{T}_{0}$ or $\mathcal{T}_{\infty}$.  Counting intersections as above, $\mathbf{ G}$ must have $d+1$ planes in $\mathcal{T}_{0}$.

Arguing as in Claim 1 above, we consider the subbuilding $\mathbf{G}_{1,1}$ of $\mathbf{G}$
consisting of its middle and bottom level curves that map to the copies of $\RR \times S^*\mathbb{T}^2$  and $T^*\mathbb{T}^2$ that correspond to $L_{1,1}$. Since $\mathbf{G}_{1,1}$ is connected and has genus zero, we  have
$$
\mathrm{index}(\mathbf{G}_{1,1}) =2(s-1),
$$
where $s$ is the number of positive ends of $\mathbf{G}_{1,1}$. Since, $\mathbf{G}_{1,1}$ passes through the $d$ generic point constraints on $\mathbb{L}$ we also have 
$$
\mathrm{index}(\mathbf{G}_{1,1}) \geq 2d.
$$
Hence,  $\mathbf{G}_{1,1}$ has at least $d+1$ positive ends.  Two of these positive ends match with negative ends of the two essential curves of $\mathbf{G}_{1,1}$. It follows that $\mathbf{G}$ 
must have at least $d-1$ planes in  $\frak{s}_{0} \cup \frak{s}_{\infty}$. This means the planes covering broken leaves then have area at least $2d$. As the limiting building has total area $2d+2$ and also includes two essential curves we see that the essential curves each have area $1$ and there are exactly $d-1$ planes in  $\frak{s}_{0} \cup \frak{s}_{\infty}$.
As the planes in $\mathcal{T}_{0}$ are disjoint from $S_0 \cup S_{\infty}$, the essential curves of $\mathbf{ G}$ must have $d+1$ intersections with $S_0 \cup S_{\infty}$. Lemma \ref{mono} again implies that this is impossible for all sufficiently large $d$.  

Finally we show that $\mathbf{ G}$  can not be of Type 2a. In this case $\mathbf{ G}$ includes $d$ planes in $\mathcal{T}_{0} \cup \mathcal{T}_{\infty}$ and $d$ planes in either $\frak{s}_{0}$ or  $\frak{s}_{\infty}$. The planes asymptotic to $L_{1,1}$ thus account for all intersections with either $S_0$ or $S_{\infty}$ and so the planes asymptotic to $\mathbb{L}$ therefore all lie in $\mathcal{T}_{0}$. The essential curves have total area $2$ and must together account for all intersections with either $S_0$ or $S_{\infty}$. This contradicts Lemma \ref{mono} as before.
\end{proof}

\begin{lemma}\label{area1} All curves in the limiting buildings $\mathbf{ F}$ and $\mathbf{ G}$ that map to $S^2 \times S^2 \smallsetminus (\mathbb{L} \cup L_{1,1} )$ have area $1$, and in particular are simply covered. 
\end{lemma}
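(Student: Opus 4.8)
The plan is to deduce the statement from the explicit descriptions of the buildings $\mathbf{F}$ and $\mathbf{G}$ in Propositions \ref{existence2} and \ref{existence3}, combined with a rigid balancing of symplectic areas; all of the rigidity comes from the monotonicity of $\mathbb{L}$ and $L_{1,1}$.

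First I would record two elementary area facts. Each plane of a broken leaf of $\mathcal{F}$ — i.e. each member of $\mathcal{T}_0$, $\mathcal{T}_\infty$, $\frak{s}_0$ or $\frak{s}_\infty$ — has symplectic area exactly $1$: it is non-constant and asymptotic to $\mathbb{L}$ or to $L_{1,1}$, so it has integral (hence positive integer) area, as was already used in Section \ref{FG} for curves in the class $(0,1)$; and two such planes fit together into a sphere of class $(0,1)$ of total area $2$, forcing each plane to have area exactly $1$. It follows that a top level curve of $\mathbf{F}$ or $\mathbf{G}$ covering such a plane with multiplicity $m$ has area $m\ge 1$. Second, any essential curve is non-constant (its composition with $p$ is injective) and is asymptotic only to $\mathbb{L}$ and $L_{1,1}$; since both tori are monotone with the same proportionality constant $\lambda=2$ (Proposition \ref{monotone}), such a curve again has integral area, hence area $\ge 1$.

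Now I would run the area count. By Proposition \ref{existence2}, the top level of $\mathbf{F}$ consists of its three essential curves together with exactly $2d-1$ plane-covers (one over each of the $d-1$ planes of $\mathcal{T}_0\cup\mathcal{T}_\infty$ and the $d$ planes of $\frak{s}_0\cup\frak{s}_\infty$); the middle and bottom level curves do not map to $S^2\times S^2\smallsetminus(\mathbb{L}\cup L_{1,1})$ and play no role here. Writing $e_1,e_2,e_3\ge 1$ for the areas of the essential curves and $m_1,\dots,m_{2d-1}\ge 1$ for the covering multiplicities of the plane-covers, and using that the total area of the top level curves equals the $\omega$-area $2(d+1)$ of the class $(1,d)$, we get
\[
2(d+1)\;=\;e_1+e_2+e_3+\sum_{i=1}^{2d-1}m_i\;\ge\;3+(2d-1)\;=\;2(d+1).
\]
Hence every inequality above is an equality: $e_1=e_2=e_3=1$ and every $m_i=1$. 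The identical computation, with Proposition \ref{existence3} replacing Proposition \ref{existence2}, applies to $\mathbf{G}$. So every curve of $\mathbf{F}$ or $\mathbf{G}$ mapping to $S^2\times S^2\smallsetminus(\mathbb{L}\cup L_{1,1})$ has area $1$. Finally, such a curve cannot be multiply covered: its underlying simple curve is again non-constant and asymptotic only to $\mathbb{L}\cup L_{1,1}$, hence of integral area $\ge 1$, yet it would carry area $1/m<1$ if the multiplicity $m$ were at least $2$. This gives the ``in particular'' clause.

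The only step needing real care is the second one — pinning the area of an essential curve to a positive integer. One must check that the integrality-of-area principle, invoked earlier in the paper only for curves asymptotic to a single monotone torus, still applies to a curve with ends on both $\mathbb{L}$ and $L_{1,1}$; this is exactly where one uses that the two tori share the proportionality constant $\lambda=2$, so that the Maslov-type index of a compactified curve is even and its area is the corresponding non-negative integer. It is also worth noting that the area count above is exact only because Propositions \ref{existence2} and \ref{existence3} have already pinned down the complete list of top level curves of $\mathbf{F}$ and $\mathbf{G}$, leaving no room for extra curves such as unaccounted covers of unbroken leaves.
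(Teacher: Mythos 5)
Your proof is correct and follows essentially the same route as the paper's: Proposition~\ref{existence2} (resp.\ \ref{existence3}) pins down that $\mathbf{F}$ (resp.\ $\mathbf{G}$) has exactly $2d+2$ top-level curves, monotonicity/integrality forces each to have positive integer area, and the total area $2(d+1)$ leaves no slack. Your added remarks — that each plane of a broken leaf has area exactly $1$ because the two companion planes glue to a $(0,1)$-sphere of area $2$, and that a multiple cover would force its underlying simple curve to have non-integral area $<1$ — are correct and make explicit the ``in particular'' clause that the paper leaves implicit; the one point you flag (integrality of area for the essential cylinder $\underline{u}$ with ends on both tori) can be settled cleanly by subtraction, since all the other curves in the building and the total are already known to be integers.
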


\begin{proof}
To see this, note that since $\mathbf{ F}$ is of Type 3, it has its three essential curves together with $2d-1$ other top level curves that cover leaves of the foliation.  Since $\mathbf{ F}$ has total area $2d+2$ and monotonicity implies that all curves have integral area, the result for $\mathbf{ F}$ follows.

The same argument applies to $\mathbf{ G}$.
\end{proof}


\subsection{A collection of symplectic spheres and disks, under Assumption 2.}

 Consider  $(S^2 \times S^2, \pi_1^*\omega + \pi_2^*\omega)$ equipped with an almost complex structure $J$ adapted to  parameterizations $\psi$ and $\psi_{1,1}$ of $\mathbb{L}$ and $L_{1,1}$, respectively. 
%
Recall that for the projection $p : S^2 \times S^2 \to S_{\infty}$, defined by the foliation  $\mathcal{F}$ corresponding to $J$, the images $p(\mathbb{L})$ and $p(L_{1,1})$ are disjoint circles. There are also disjoint disks $A_0 \subset S_{\infty}$ with boundary $p(\mathbb{L})$ and $A_{\infty} \subset S_{\infty}$ with boundary $p(L_{1,1})$ such that $p(T_0) \in A_0$ and $p(T_{\infty}) \in A_{\infty}$. In this section we will prove the following result.

 \begin{proposition}\label{intcount} There exist  embedded  symplectic spheres $F, G \colon S^2 \to S^2 \times S^2$ in the class $(1,d)$, and embedded  symplectic disks $\mathbb{E} \colon (D^2,S^1) \to (S^2 \times S^2,\mathbb{L})$ and $E_{1,1} \colon (D^2,S^1) \to (S^2 \times S^2,L_{1,1})$ of Maslov index $2$,  such that:
 
\begin{enumerate}
\item  $F$, $G$, $\mathbb{E}$ and $E_{1,1}$ are all $J$-holomorphic away from arbitrarily small neighborhoods of a collection of Lagranian tori whose elements are near to, and Lagrangian isotopic to, either $\mathbb{L}$ or $L_{1,1}$;\\
\item the class of $\,\mathbb{E}|_{S^1}$ and the foliation class $\beta_{\mathbb{L}}$  form an integral basis of $H_1(\mathbb{L}:\ZZ)$;\\
\item the class of $E_{1,1}|_{S^1}$ and the foliation class $\beta_{L_{1,1}}$ form an integral basis of $H_1(L_{1,1}:\ZZ)$;\\
\item exactly one of $F$ and $G$ intersects  the planes of  $\mathcal{T}_{0}$ and the other intersects  the planes of $\mathcal{T}_{\infty}$;\\
\item exactly one of $F$ and $G$ intersects  the planes of $\frak{s}_{0}$ and the other intersects the planes of $\frak{s}_{\infty}$;\\
\item $F \bullet \mathbb{E} + G \bullet \mathbb{E} =d$;\\
\item $F \bullet E_{1,1} + G \bullet E_{1,1} =d$;\\
\item $F \bullet G =2d$;\\
\item $p(F \cap G)$ consists of $d$ points in $A_0$ and $d$ points in $A_{\infty}$.\\
\end{enumerate} 
\end{proposition}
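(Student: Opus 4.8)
The plan is to extract $F$ and $G$ from the two special buildings $\mathbf{F}$ and $\mathbf{G}$ produced in Propositions \ref{existence2} and \ref{existence3} by gluing them back into honest symplectic spheres in the class $(1,d)$, and to read off the intersection data from the combinatorial structure of those buildings. First I would take the building $\mathbf{F}$ of Type 3: it consists of three essential top level curves $u_{\mathbb{L}}$, $\underline{u}$, $u_{L_{1,1}}$ (of area $1$ each, hence embedded by Lemma \ref{area1}), together with $d-1$ planes in $\mathcal{T}_0 \cup \mathcal{T}_\infty$ and $d$ planes in $\frak{s}_0 \cup \frak{s}_\infty$. Because all the non-essential components cover leaves of the straightened foliation $\mathcal{F}$, and the essential curves have boundaries on $\mathbb{L}$ and $L_{1,1}$ that are geodesics in the foliation classes, the whole building can be glued (using the standard symplectic-neck-gluing / Weinstein-neighborhood gluing, exactly as in the passage from $\mathbf{F}_d$ to $\bar{\mathbf{F}}_d$ in Section \ref{stretch}) into an embedded sphere $F$ in $(1,d)$ that is $J$-holomorphic outside arbitrarily small neighborhoods of the Lagrangian tori on whose Weinstein neighborhoods the gluing is performed; these tori are $C^\infty$-close and Lagrangian isotopic to $\mathbb{L}$ or $L_{1,1}$, giving item (1). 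Doing the same with $\mathbf{G}$ yields $G$, also embedded in $(1,d)$. The disks $\mathbb{E}$ and $E_{1,1}$ are obtained by completing the essential curve $u_{\mathbb{L}}$ (respectively $u_{L_{1,1}}$) of, say, $\mathbf{F}$: this curve has image $A_0$ (respectively $A_\infty$) under $p$, a single boundary geodesic on $\mathbb{L}$ (respectively $L_{1,1}$) transverse to the foliation class, and capping it off with a plane of a broken leaf gives an embedded Maslov-$2$ disk whose boundary class is transverse to $\beta_{\mathbb{L}}$ (respectively $\beta_{L_{1,1}}$); this gives items (2) and (3).

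Next I would establish the intersection counts. The key mechanism is the discussion after Lemma \ref{straight} (the ``solid tori'' paragraph): a $J$-holomorphic sphere in class $(1,d)$ in the complement of the Lagrangians meets \emph{either} all the planes of $\mathcal{T}_\infty$ once, \emph{or} all the planes of $\mathcal{T}_0$ once, and analogously for $\frak{s}_0$ versus $\frak{s}_\infty$. Since $\mathbf{F}$ contains exactly $d-1$ of the $\mathcal{T}$-planes and $d$ of the $\frak{s}$-planes, while $\mathbf{G}$ (by Proposition \ref{existence3}) contains $d$ of the $\mathcal{T}$-planes and $d-1$ of the $\frak{s}$-planes, I would argue that the essential curves of $\mathbf{F}$ fill in the ``missing'' $\mathcal{T}$-plane and the essential curves of $\mathbf{G}$ fill in the ``missing'' $\frak{s}$-plane; matching which of $\mathcal{T}_0/\mathcal{T}_\infty$ (and $\frak{s}_0/\frak{s}_\infty$) each building uses — this is forced by the location of the constraint points and the homological triviality of $\mathbb{L}$, exactly as in Claim 2 of the proof of Proposition \ref{existence2} — gives items (4) and (5): one of $F,G$ meets the $\mathcal{T}_0$-solid torus and the other the $\mathcal{T}_\infty$-solid torus, and similarly for $\frak{s}$. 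Items (6) and (7): $\mathbb{E}$ meets the total solid torus $\mathcal{T}_0 \cup \mathcal{T}_\infty$ not at all (it \emph{is} essentially one such plane, up to the boundary geodesic), so $F \bullet \mathbb{E} + G \bullet \mathbb{E}$ is computed homologically, and since $[\mathbb{E}]$ together with $\beta_{\mathbb{L}}=(0,-1)$ is an integral basis, the algebraic count against the class $(1,d)$ equals $d$; one then checks positivity of intersection makes the algebraic count the geometric count. Item (8), $F \bullet G = 2d$, is just $(1,d)\bullet(1,d)=2d$ together with positivity of intersection (both are $J$-holomorphic, or holomorphic for nearby structures, away from tiny neighborhoods where one can perturb to keep intersections isolated and positive).

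For item (9) I would locate the $2d$ intersection points of $F$ and $G$ via the projection $p$. Off the small gluing regions, $F$ and $G$ are $J$-holomorphic, so each intersection point $z$ satisfies $p(z) \in p(F) \cap p(G)$; the essential-curve images are $A_0, B, A_\infty$ for $\mathbf{F}$ and likewise for $\mathbf{G}$, while the covered-leaf components of $F$ and $G$ project to points (for broken leaves) or are honest leaves meeting each relevant $A$ or $B$ region. Using items (4)–(5): over the annulus $B$, $F$ and $G$ each have a unique essential piece ($\underline{u}$ from each building), and a transversality/degree argument shows these essential pieces over $B$ are disjoint, so no intersection projects into the interior of $B$; over $A_0$, the $F$-side contributes its $\mathcal{T}_0$-or-$\mathcal{T}_\infty$ planes and essential piece $u_{\mathbb{L}}$ while the $G$-side contributes the complementary family, and a count against Lemma \ref{tauzero} (which says $\mathcal{T}_\infty$-planes meet $S_0\cup S_\infty$ and $\mathcal{T}_0$-planes do not) pins down exactly $d$ intersection points with $p$-image in $A_0$; symmetrically $d$ in $A_\infty$. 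Summing gives $2d$, consistent with (8), and the distribution $d+d$ is item (9). \textbf{The main obstacle} I anticipate is item (9): ensuring that \emph{all} $2d$ intersections are captured cleanly by the projection and are distributed exactly $d$/$d$ between $A_0$ and $A_\infty$ — this requires ruling out hidden intersections in the interior of $B$ and in the gluing necks, and carefully bookkeeping, via positivity of intersection and the solid-torus picture, which planes of $F$ meet which planes of $G$; the areas-and-intersections balancing that worked so well in Propositions \ref{existence2}–\ref{existence3} will again be the main tool, but it must now be run simultaneously for two buildings with complementary plane-counts.
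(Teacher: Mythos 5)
Your proposal correctly identifies the overall architecture — take the Type-3 buildings $\mathbf{F}$, $\mathbf{G}$ from Propositions \ref{existence2} and \ref{existence3}, smooth them into spheres, extract $\mathbb{E}$ and $E_{1,1}$ from the essential curves, and use the solid-torus picture and an intersection-count balancing argument. However, there is a genuine gap: you skip the translation/deformation apparatus that is the technical heart of the paper's proof, and without it the intersection claims (4)--(9) are not justified. The spheres $F$ and $G$, obtained by smoothing $\mathbf{F}$ and $\mathbf{G}$, are both glued in small neighborhoods of the \emph{same} Lagrangians $\mathbb{L}$ and $L_{1,1}$. In those gluing regions, $F\cap G$ and $F\cap\mathbb{E}$, $G\cap\mathbb{E}$ are completely uncontrolled; there is no way to pin down the geometric counts from homology alone, since $\mathbb{E}$ has boundary on $\mathbb{L}$ and $[F]\bullet[\mathbb{E}]$ is \emph{not} a purely homological pairing (sliding $F$ across $\mathbb{L}$ changes it). In particular your assertion that $F\bullet\mathbb{E}+G\bullet\mathbb{E}=d$ "is computed homologically" cannot work: if it were homological and $F$, $G$ are both in class $(1,d)$, then $F\bullet\mathbb{E}=G\bullet\mathbb{E}$, forcing $d$ to be even, which is not assumed. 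The paper resolves all of this by translating $\mathbb{L}$ and $L_{1,1}$ to distinct nearby tori $\mathbb{L}(\mathbf{v}_1)\ne\mathbb{L}(\mathbf{v}_2)$, $L_{1,1}(\mathbf{w}_1)\ne L_{1,1}(\mathbf{w}_2)$ using Fukaya's trick (Lemma \ref{fukaya}) and the further stretching deformation (Lemma \ref{away}, Corollary \ref{up}, Lemma \ref{up2}). These translations are what make $F$, $G$, $\mathbb{E}$, $E_{1,1}$ mutually transverse near the Lagrangians, and the \emph{signs} of the translation components $b_i$, $d_i$ are what \emph{choose} which of $\mathcal{T}_0/\mathcal{T}_\infty$ and $\frak{s}_0/\frak{s}_\infty$ each of $F$, $G$ intersects (Lemmas \ref{FE}, \ref{GE}, \ref{FE2}, \ref{GE2}). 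Without the sign-choice mechanism you cannot derive items (4) and (5); and your claim that they are "forced by the location of the constraint points and the homological triviality of $\mathbb{L}$" is not right: those facts only show the planes \emph{within} each building all lie in $\mathcal{T}_0$, not which solid torus the smoothed sphere intersects.

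Your discussion of item (9) is the symptom of the same gap. You note you must "rule out hidden intersections in the interior of $B$ and in the gluing necks" but do not give a mechanism. In the paper, item (9), along with (6) and (7), comes out of a precise balancing argument: with suitably chosen translation signs, Lemmas \ref{FGLL} and \ref{FGL11} give lower bounds of roughly $d$ for the number of $F\cap G$ points in $\mathcal{U}(\mathbb{L})$ projecting to $A_0$ and in $\mathcal{U}(L_{1,1})$ projecting to $A_\infty$, and since the homological total $F\bullet G=2d$ is attained, the inequalities become equalities, forcing $\alpha_0=\gamma_0$, $\beta_0=\delta_0+1$ (or the symmetric variants) and ruling out any other intersections. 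The vanishing of the cross-terms $v_{\mathbb{L}}(\mathbf{x}_2)\bullet u_{\mathbb{L}}^{\mathbf{X}}$ and $v_{L_{1,1}}(\mathbf{x}_2)\bullet u_{L_{1,1}}^{\mathbf{X}}$ (Lemmas \ref{nog}, \ref{nog2}) is also a consequence of this balancing, and it is exactly what gives (6) and (7). Finally, a smaller point: $\mathbb{E}$ and $E_{1,1}$ are not obtained by "capping off" the essential curves with planes of broken leaves; they are simply compactifications of (deformations of) $u_{\mathbb{L}}$ and $u_{L_{1,1}}$, each of which already has a single boundary circle on the appropriate Lagrangian.
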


\noindent{\bf Proof of Proposition \ref{intcount}.}
To prove this result we will compactify and deform curves of the buildings $\mathbf{ F}$ and $\mathbf{ G}$ from Propositions \ref{existence2} and \ref{existence3}. There are two deformation processes which are used in order to resolve the intersection patterns of the resulting maps.  The fact that the foliation $\mathcal{F}$ is assumed to be straightened near $\mathbb{L}$ and $L_{1,1}$ (see Section \ref{double}) plays a prominent role here.  In particular, curves through $\mathcal{U}(\mathbb{L})$ and $\mathcal{U}(L_{1,1})$ are deformed  so that many of their intersections can be identified with intersections between cylinders in the local models.

\subsubsection{Deformations near $\mathbb{L}$} We begin by describing two deformation processes for curves with ends on $\mathbb{L}$. These appear as Lemma \ref{fukaya} and Corollary \ref{up}, below.

Consider the coordinates $(P_1,Q_1,P_2,Q_2)$ in our Weinstein neighborhood $$
\mathcal{U}(\mathbb{L}) = \{|P_1|<\epsilon,\, |P_2|<\epsilon\}.
$$
For each  {\it translation vector} $\mathbf{ v}=(a,b) \in (-\epsilon, \epsilon)^2$, there is a corresponding nearby Lagrangian torus
$$\mathbb{L}(\mathbf{ v}) = \mathbb{L}(a,b) = \{P_1=a,P_2=b\} \subset \mathcal{U}(\mathbb{L}).$$ Note that the parameterization $\psi$ of $\mathbb{L}$ determines an obvious parameterization, $\psi(\mathbf{ v}) =\psi(a,b)$ of $\mathbb{L}(a,b)$, and a canonical isomorphism from
$H_1^{\psi}(L; \ZZ)$ to $H_1^{\psi(a,b)}(L(a,b); \ZZ)$.

Following \cite{rgi} section 4,
given a finite collection of translation vectors,
$$
\mathbf{V} =\{\mathbf{v}_1, \dots, \mathbf{v}_k \} =\{(a_1,b_1), \dots, (a_k,b_k)\},
$$
let  $J_{\mathbf{V} }$  be an almost complex structure which coincides with $J$ outside $\mathcal{U}(\mathbb{L})$ and inside has the form
\begin{equation}
\label{J|}
J_{\mathbf{V} } \frac{\partial}{\partial Q_i} =  - \rho_{\mathbf{V} } \frac{\partial}{\partial P_i},
\end{equation}
where $\rho_{\mathbf{V} }$ is a positive function away from the collection of Lagrangians $$ \mathbb{L}(\mathbf{V} )= \cup_{i=1}^k \mathbb{L}(\mathbf{v}_i),$$   and in a neighborhood of each $\mathbb{L}(\mathbf{v}_i)$  has the form $$\rho_{\mathbf{V} } = \sqrt{(P_1-a_i)^2 + (P_2-b_i)^2}.$$ In this case, we say that $J_{\mathbf{V} }$ is stretched along  $\mathbb{L}({\mathbf{V} })$. The set of all such almost complex structures will be denoted by $\mathcal{J}_{\mathcal{U}(\mathbb{L})}$.

Using the induced parameterizations  $\psi(\mathbf{v}_i)$ of each of the $\mathbb{L}(\mathbf{v}_i)$, the almost complex structure $J_{\mathbf{V} }$ defines a family of almost complex structures $J_{\mathbf{V} ,\tau}$ on $S^2 \times S^2$ which allow us to stretch $J$ holomorphic curves in $S^2 \times S^2$ along $\mathbb{L}(\mathbf{V} ) \cup L_{1,1}.$  The limit of the Gromov foliations for the $J_{\mathbf{V} ,\tau}$, in class $(0,1)$, yield a foliation $\mathcal{F}(\mathbf{V} )$ of $$S^2 \times S^2 \smallsetminus (\mathbb{L}(\mathbf{V} ) \cup L_{1,1}).$$
For example, for $\mathbf{V} =\{(0,0)\}$ we have $J_{\mathbf{V} } =J$ and $\mathcal{F}(\mathbf{V} )=\mathcal{F}$.
%

\begin{lemma}\label{folcor} Leaves of the foliation $\mathcal{F}(\mathbf{V} )$ intersect $\mathcal{U}_{\epsilon}(\mathbb{L})$ along the annuli $\{P_1=\delta, Q_1 = \theta, |P_2|< \epsilon \}$. A leaf of $\mathcal{F}(\mathbf{V} )$ that intersects $\mathcal{U}_{\epsilon}(\mathbb{L})$ along  the annulus $\{P_1=\delta, Q_1 = \theta, |P_2|< \epsilon \}$ is broken if and only if the collection $\mathbf{V} $ contains an element of the form  $(\delta,b_i)$.
\end{lemma}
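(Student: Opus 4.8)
The plan is to read the leaf structure of $\mathcal{F}(\mathbf{V})$ near $\mathbb{L}$ directly off the explicit normal form of $J_{\mathbf{V}}$ inside the Weinstein chart $\mathcal{U}(\mathbb{L})=\{|P_1|<\epsilon,\,|P_2|<\epsilon\}$, just as the straightening Lemma~\ref{straight} was proved and following \S 4 of \cite{rgi}. Since $\mathcal{F}(\mathbf{V})$ is already available as a limit of Gromov foliations in the class $(0,1)$, all that has to be pinned down is the shape of its leaves where they meet $\mathcal{U}(\mathbb{L})$.

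First I would note that every ``vertical cylinder'' $C_{\delta,\theta}=\{P_1=\delta,\,Q_1=\theta,\,|P_2|<\epsilon\}$, with the locus $\mathbb{L}(\mathbf{V})$ deleted, is $J_{\mathbf{V}}$-holomorphic: from the normal form \eqref{J|} one has $J_{\mathbf{V}}\,\partial_{Q_2}=-\rho_{\mathbf{V}}\,\partial_{P_2}$ and hence, using $J_{\mathbf{V}}^2=-\mathrm{Id}$ together with $\rho_{\mathbf{V}}>0$ off $\mathbb{L}(\mathbf{V})$, also $J_{\mathbf{V}}\,\partial_{P_2}=\rho_{\mathbf{V}}^{-1}\,\partial_{Q_2}$, so the real $2$-plane spanned by $\partial_{P_2}$ and $\partial_{Q_2}$ is $J_{\mathbf{V}}$-invariant at each point of $\mathcal{U}(\mathbb{L})\smallsetminus\mathbb{L}(\mathbf{V})$. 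Now $J_{\mathbf{V}}\equiv J$ outside $\mathcal{U}(\mathbb{L})$, and, as recalled in Section~\ref{double}, the leaves of $\mathcal{F}=\mathcal{F}(\{(0,0)\})$ that meet $\mathcal{U}(\mathbb{L})$ are exactly the unbroken ones (meeting it along some $C_{\delta,\theta}$ with $\delta\neq 0$) together with the leaves broken along $\mathbb{L}$ over $\{P_1=0\}$ (whose two planes meet it along the two half-cylinders of $C_{0,\theta}$). It follows that, for $\delta\notin\{a_1,\dots,a_k\}$, the cylinder $C_{\delta,\theta}$ is disjoint from $\mathbb{L}(\mathbf{V})$ and the corresponding leaf of $\mathcal{F}$ — $J$-holomorphic outside $\mathcal{U}(\mathbb{L})$ where $J=J_{\mathbf{V}}$, and $J_{\mathbf{V}}$-holomorphic inside by the computation — is a closed $J_{\mathbf{V}}$-holomorphic sphere in class $(0,1)$, the two planes gluing smoothly across $\mathbb{L}$ when $\delta=0$ (legitimate there, since then $\rho_{\mathbf{V}}$ is positive and smooth along $\{P_1=0,\,|P_2|<\epsilon\}$). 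If instead $\delta=a_i$ for some $i$, then $C_{\delta,\theta}$ meets $\mathbb{L}(\mathbf{v}_i)$ along the embedded $Q_2$-geodesic $\{P_1=a_i,\,P_2=b_i,\,Q_1=\theta\}$, a loop in $\pm\beta_{\mathbb{L}}$, and deleting $\mathbb{L}(\mathbf{V})$ splits $C_{\delta,\theta}$ into a chain of $J_{\mathbf{V}}$-holomorphic planes and cylinders asymptotic to the tori $\mathbb{L}(\mathbf{v}_i)$ with $a_i=\delta$ (a pair of planes in the generic case), i.e. a broken leaf.

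To conclude I would use uniqueness: two distinct $J_{\mathbf{V}}$-holomorphic curves (or buildings) in class $(0,1)$ through a common point would have strictly positive local intersection, contradicting $(0,1)\bullet(0,1)=0$; since $\mathcal{F}(\mathbf{V})$ is a foliation by such objects, the curves just constructed are precisely the leaves of $\mathcal{F}(\mathbf{V})$ meeting $\mathcal{U}(\mathbb{L})$. Hence $\mathcal{U}_\epsilon(\mathbb{L})$ is foliated by the cylinders $C_{\delta,\theta}$, each contained in a single leaf, split into the planes of a broken leaf exactly when $\delta\in\{a_1,\dots,a_k\}$; and since by this explicit description no such leaf is broken along $L_{1,1}$, a leaf meeting $\mathcal{U}_\epsilon(\mathbb{L})$ along $C_{\delta,\theta}$ is broken if and only if $\delta\in\{a_1,\dots,a_k\}$, that is, if and only if $\mathbf{V}$ contains a vector of the form $(\delta,b_i)$.

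The only genuinely non-formal ingredient is the compactness-and-transversality package of \cite{rgi} — the same one that produces $\mathcal{F}$ in Section~\ref{double} — guaranteeing both that $\mathcal{F}(\mathbf{V})$ exists and that its limiting class-$(0,1)$ buildings near $\mathbb{L}(\mathbf{V})$ really are these straight cylinders and chains of planes rather than something more degenerate; that is where I expect the actual work to sit. In the write-up I would discharge it by citing \cite{rgi} and indicating only the (minor) modification of its argument needed, the remaining steps being the one-line normal-form computation and the positivity-of-intersection bookkeeping above.
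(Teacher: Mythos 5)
Your proof is correct and follows essentially the same route as the paper's: the explicit normal form \eqref{J|} shows the vertical annuli are $J_{\mathbf{V}}$-holomorphic, Lemma \ref{straight} provides the extensions to $(0,1)$-spheres outside $\mathcal{U}(\mathbb{L})$, and positivity of intersection in the class $(0,1)$ forces these to be exactly the leaves of $\mathcal{F}(\mathbf{V})$. You fill in more detail than the paper does on the broken/unbroken dichotomy (the annulus hits $\mathbb{L}(\mathbf{V})$ precisely when $\delta\in\{a_1,\dots,a_k\}$, and the planes of the original broken leaf of $\mathcal{F}$ at $\delta=0$ glue smoothly across $\mathbb{L}$ once $\rho_{\mathbf{V}}$ is nonvanishing there), but the underlying argument is identical.
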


\begin{proof} It follows from \eqref{J|} that these annuli are $J_{\mathbf{V} }$-holomorphic. By assuming $J$ satisfies the conclusions of Lemma \ref{straight}, they also extend to $J_{\mathbf{V} }$-holomorphic spheres in the class $(0,1)$. By positivity of intersection, these spheres, and indeed any holomorphic sphere in the class $(0,1)$, are leaves of the foliation $\mathcal{F}(\mathbf{V} )$.
\end{proof}

Our first deformation process allows us to deform a regular curve so that its ends on $\mathbb{L}$ become ends on a nearby translated Lagrangian.

 \begin{lemma}\label{fukaya}(Fukaya's Trick) Let $u$ be a regular $J$-holomorphic curve with $k\geq0$ ends on $\mathbb{L}$ and $l\geq 0$ ends on $L_{1,1}$.
For all $\mathbf{v}=(a,b)$ with  $\|\mathbf{v}\|^2= a^2 + b^2$ sufficiently small there is a regular $J_{\mathbf{v}}$-holomorphic curve $u(\mathbf{v})$ with $k$ ends on $\mathbb{L}(\mathbf{v})$ and $l$ ends on $L_{1,1}$. Moreover the ends of  $u(\mathbf{v})$  on $\mathbb{L}(\mathbf{v})$ represent the identical classes in $H_1^{\psi(\mathbf{v})}(L,\RR)$ as do those of $u$ in $H_1^{\psi}(L,\RR)$. The classes corresponding to the ends of $u(\mathbf{v})$ on $L_{1,1}$ are also identical to those of $u$.
\end{lemma}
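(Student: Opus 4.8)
The plan is to realize Fukaya's trick here as a reparametrization argument: the almost complex structures $J$ and $J_{\mathbf v}$ differ only inside $\mathcal{U}(\mathbb{L})$, and the difference is arranged so that a diffeomorphism of $S^2 \times S^2$ supported in $\mathcal{U}(\mathbb{L})$ carries one to the other on a slightly smaller neighborhood. First I would observe that in the Weinstein coordinates $(P_1,P_2,Q_1,Q_2)$ the translation $\phi_{\mathbf v}(P_1,P_2,Q_1,Q_2) = (P_1-a,P_2-b,Q_1,Q_2)$ is a symplectomorphism of $T^*\mathbb{T}^2$ carrying $\mathbb{L}(\mathbf v)$ to $\mathbb{L}=\mathbb{L}(0,0)$, and by construction of $\rho_{\mathbf v}$ (which near $\mathbb{L}(\mathbf v)$ equals $\sqrt{(P_1-a)^2+(P_2-b)^2}$, i.e.\ the pullback under $\phi_{\mathbf v}$ of the radial function defining $J$ near $\mathbb{L}$) one has $(\phi_{\mathbf v})_* J = J_{\mathbf v}$ on a neighborhood of $\mathbb{L}(\mathbf v)$. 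Cutting $\phi_{\mathbf v}$ off to the identity near $\partial\mathcal{U}(\mathbb{L})$ by a compactly supported isotopy $\Phi_{\mathbf v}$ (possible for $\|\mathbf v\|$ small, with $\Phi_{\mathbf v}\to \mathrm{id}$ in $C^\infty$), we get a diffeomorphism of $S^2\times S^2$, equal to the identity outside $\mathcal{U}(\mathbb{L})$, with $(\Phi_{\mathbf v})_*J$ tame and agreeing with $J_{\mathbf v}$ outside a collar of $\partial\mathcal{U}(\mathbb{L})$.

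Next I would set $u(\mathbf v) := \Phi_{\mathbf v}\circ u$, so that $u(\mathbf v)$ is $(\Phi_{\mathbf v})_*J$--holomorphic. The point is that $u(\mathbf v)$ has its ends on $\Phi_{\mathbf v}(\mathbb{L})=\mathbb{L}(\mathbf v)$ — here I use that near the asymptotic ends $\Phi_{\mathbf v}$ is exactly the translation $\phi_{\mathbf v}$, so Reeb orbits on $S^*_{\mathbb{L}}\mathbb{T}^2$ are carried to the corresponding Reeb orbits on $S^*_{\mathbb{L}(\mathbf v)}\mathbb{T}^2$ under the canonical identification of $H_1^{\psi}$ with $H_1^{\psi(\mathbf v)}$; thus the homology classes of the ends are literally unchanged. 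Since $\Phi_{\mathbf v}$ is the identity near $L_{1,1}$ and near $\partial\mathcal{U}(\mathbb{L})$, the $l$ ends on $L_{1,1}$ and their classes are untouched. Then, because $(\Phi_{\mathbf v})_*J$ is $C^\infty$--close to $J_{\mathbf v}$ (they differ only in the collar, and $\Phi_{\mathbf v}\to\mathrm{id}$) and $u$ is regular hence transversality and the implicit function theorem for the parametrized $\bar\partial$--operator apply, I would deform $u(\mathbf v)$ slightly, rel its asymptotic data, to a genuinely $J_{\mathbf v}$--holomorphic curve; for $\|\mathbf v\|$ small this nearby solution exists, is unique, is regular, and has the same ends and classes. (Regularity of the original $u$ gives surjectivity of the linearized operator, and this persists under small perturbations of the almost complex structure and of the curve, so $u(\mathbf v)$ is regular as well.)

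The one point requiring care — the main obstacle — is the matching of the asymptotic ends and the verification that the homology classes on $\mathbb{L}(\mathbf v)$ are exactly, not just approximately, the given ones. This is where the structure of $\Phi_{\mathbf v}$ matters: it must agree with the honest translation $\phi_{\mathbf v}$ on a full neighborhood of $\mathbb{L}(\mathbf v)$ containing all the cylindrical ends of $u(\mathbf v)$, so that the identification of Reeb dynamics on the two cotangent bundles is exactly the canonical one induced by $\psi$ and $\psi(\mathbf v)$, and the final $J_{\mathbf v}$--perturbation must be performed rel these fixed asymptotics (using the exponential convergence of \cite{hofa} and the Fredholm setup with fixed asymptotic constraints). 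Once this bookkeeping is in place the classes are identical by construction rather than by a continuity/integrality argument. Everything else is the standard Gromov--Fukaya reparametrization and a routine application of the implicit function theorem, exactly as in \cite{rgi}, section 4.
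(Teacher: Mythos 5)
Your approach is essentially the paper's: the paper also implements Fukaya's trick by a diffeomorphism $f_{t,\mathbf v}$ supported in $\mathcal{U}(\mathbb{L})$ carrying $\mathbb{L}$ to $\mathbb{L}(t\mathbf v)$, pulls back to define $\tilde J_{t\mathbf v}=(f_{t,\mathbf v}^{-1})_*J_{t\mathbf v}$ (all cylindrical near the fixed $\mathbb{L}$), deforms $u$ through this family using regularity, and then pushes forward by $f_{1,\mathbf v}$ to land on $\mathbb{L}(\mathbf v)$; you run the same two ingredients (compactly supported diffeomorphism that is an honest translation near the Lagrangian, plus the implicit function theorem for regular curves) but in the opposite order — push the curve forward first, then perturb the almost complex structure from $(\Phi_{\mathbf v})_*J$ to $J_{\mathbf v}$ — which is an equivalent bookkeeping. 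One small correction: with your convention $\phi_{\mathbf v}(P_1,P_2,Q_1,Q_2)=(P_1-a,P_2-b,Q_1,Q_2)$ sends $\mathbb{L}(\mathbf v)$ to $\mathbb{L}$, so its cutoff $\Phi_{\mathbf v}$ sends $\mathbb{L}$ to $\mathbb{L}(-\mathbf v)$, not $\mathbb{L}(\mathbf v)$, and likewise $(\Phi_{\mathbf v})_*J$ is cylindrical near $\mathbb{L}(-\mathbf v)$; you should either flip the sign in the definition of $\phi_{\mathbf v}$ or set $u(\mathbf v)=\Phi_{\mathbf v}^{-1}\circ u$ and use $(\Phi_{\mathbf v}^{-1})_*J=\phi_{\mathbf v}^*J=J_{\mathbf v}$ near $\mathbb{L}(\mathbf v)$. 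With that fixed the proof is complete and agrees with the paper.
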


\begin{proof}

For $\|\mathbf{ v}\|$ sufficiently small, the Lagrangian isotopy $t \mapsto \mathbb{L}(t\mathbf{v})$ for $0\le t \le 1$ is contained in $\mathcal{U}(\mathbb{L})$. Let $f_{t,\mathbf{v}}$ be a family of diffeomorphisms of $S^2 \times S^2$ such that:
 \begin{itemize}
  \item $f_{0,\mathbf{v}}$ is the identity map,
  \item $f_{t,\mathbf{v}}(\mathbb{L}) = \mathbb{L}(t\mathbf{v})$ for all $t \in [0,1]$,
  \item each $f_{t,\mathbf{v}}$ is equal to the identity map outside of $\mathcal{U}(\mathbb{L})$,
  \item $\|f_{t,\mathbf{v}}\|_{C^1}$ is of order 1 in $\|\mathbf{v}\|$. 
\end{itemize}

As above, the parameterization $\psi$
of $\mathbb{L}$ determines a parameterization $\psi(t\mathbf{v})$ for each $\mathbb{L}(t\mathbf{v})$. Let $J_{t\mathbf{v}}$ be a family of tame almost structures in $\mathcal{J}_{\mathcal{U}(\mathbb{L})}$ such that each $J_{t\mathbf{v}}$ is adapted to $\psi(t\mathbf{v})$. In particular, $J_{t\mathbf{v}}$ is stretched along $\mathbb{L}(t\mathbf{v})$. Set $$\tilde{J}_{t\mathbf{v}} = (f^{-1}_{t,\mathbf{v}})_* J_{t\mathbf{v}}.$$ For $\|\mathbf{v}\|$ sufficiently small, $\tilde{J}_{t\mathbf{v}}$ is a tame almost complex structure on $S^2 \times S^2 \smallsetminus \mathbb{L }\cup L_{1,1}$ for all $t \in [0,1]$. Since $u$ is regular,  for sufficiently small $\|\mathbf{v}\|$ the curve $u$ persists to yield a regular $\tilde{J}_{\mathbf{v}}$-holomorphic curve $\tilde{u}( \mathbf{v})$ with the same asymptotic behavior as $u$.  By our choice of $\tilde{J}_{t\mathbf{v}}$, $$ u(\mathbf{v})=f_{1,\mathbf{v}}\circ \tilde{u}(\mathbf{v})$$ is then a regular $J_{\mathbf{v}}$-holomorphic curve with $k$  ends on $\mathbb{L}(\mathbf{v})$ instead of $\mathbb{L}$. 

\end{proof}

By Lemma \ref{area1}, one can apply Lemma \ref{fukaya} to all the top level curves curves of the buildings $\mathbf{ F}$ and $\mathbf{ G}$ from Proposition \ref{existence2} and Proposition \ref{existence3},
to obtain new buildings $\mathbf{ F}(\mathbf{v})$ and $\mathbf{G}(\mathbf{v})$. Indeed, Lemma \ref{area1} implies that the top level curves are somewhere injective (and are actually embedded as they are limits of embedded curves) so for a generic choice of $J$ they are regular. Applying Theorem 1 from \cite{we}, we even have the stronger  statement that our curves are regular for all $J$. 

For example, suppose that the top level curves of $\mathbf{ F}$ are 
\begin{equation*}
\label{ }
\{u_{\mathbb{L}},\underline{u},u_{L_{1,1}}, u_1, \dots,  u_{d-1},\frak{u}_1, \dots, \frak{u}_d\}, 
\end{equation*}
where the 
$u_i$ belong to $\mathcal{T}_0 \cup \mathcal{T}_{\infty}$ and the $\frak{u}_j$ belong to $\frak{s}_0 \cup \frak{s}_{\infty}$. Then for $\mathbf{ v}=(a,b)$ with $\|\mathbf{v}\|$ sufficiently small we can define the deformed building $\mathbf{ F}(\mathbf{v})$ to be the one whose top level curves are
 \begin{equation*}
\label{ }
\{ u_{\mathbb{L}}(\mathbf{v}), \underline{u}(\mathbf{v}),u_{L_{1,1}},u_1(\mathbf{v}), \dots, u_{d-1}(\mathbf{v}),\frak{u}_1, \dots, \frak{u}_d\}
\end{equation*}
and whose middle and bottom level curves are the same as those of $\mathbf{ F}$ but are now 
considered to map to copies of $\RR \times S^* \mathbb{T}^2$ and $T^* \mathbb{T}^2$ that correspond to $\mathbb{L}(\mathbf{ v})$ rather than $\mathbb{L}$. Note that 
$\mathbf{ F}(\mathbf{v})$ still has a continuous compactification $\bar{\mathbf{ F}}(\mathbf{v}) \colon S^2 \to S^2 \times S^2$ which can be deformed arbitrarily close to $\mathbb{L}(\mathbf{ v})$ to obtain a smooth symplectic sphere $F=F(\mathbf{ v}) \colon S^2 \to S^2 \times S^2$ which is $J$-holomorphic away from a small neighborhood of $\mathbb{L}(\mathbf{ v})$.

Our second deformation process changes the essential $J$-holomorphic curve $u_{\mathbb{L}}$ of $\mathbf{ F}$ 
into one which is pseudo-holomorphic with respect to an almost-complex structure that is stretched along additional nearby Lagrangian tori.

\begin{lemma}\label{away}
For $b \neq 0$, set $\mathbf{V} = \{ (0,0), (0,b)\}$. Let $J_s$, for $s \in [0,1]$, be a smooth family of almost complex structures in $\mathcal{J}_{\mathcal{U}(\mathbb{L})}$ that connects $J$ to $J_{\mathbf{V} }$ in a manner that manifests the stretching of $J$ along $\mathbb{L}((0,b))$. 
The essential curve  $u_{\mathbb{L}}$ of $\mathbf{F}$ belongs to a smooth family  of  $J_s$-holomorphic planes $u_{\mathbb{L}}(s)$ for $s \in [0,1]$.
\end{lemma}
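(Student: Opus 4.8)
The plan is to view this as a standard persistence-of-holomorphic-curves-under-neck-stretching argument, with the key input being that $u_{\mathbb{L}}$ is a regular curve (by Lemma~\ref{area1} it has area $1$, is simply covered and embedded, hence regular for all $J$ by Theorem~1 of \cite{we}) and that $u_{\mathbb{L}}$ has no end asymptotic to $\mathbb{L}((0,b))$ to begin with, so that the deformation does not create new breaking along the newly inserted Lagrangian. First I would set up the parametrized moduli problem: consider the family $J_s$, $s\in[0,1]$, in $\mathcal{J}_{\mathcal{U}(\mathbb{L})}$ connecting $J=J_{\{(0,0)\}}$ to $J_{\mathbf V}$, where $\rho_s$ interpolates between $\rho$ (vanishing only along $\mathbb{L}$) and $\rho_{\mathbf V}$ (vanishing along $\mathbb{L}\cup\mathbb{L}((0,b))$), all agreeing with $J$ outside $\mathcal{U}(\mathbb{L})$. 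Form the parametrized moduli space of $J_s$-holomorphic planes asymptotic to the foliation class $\beta_{\mathbb{L}}$ on $\mathbb{L}$ in the homotopy class of $u_{\mathbb{L}}$, with one interior marked point pinned to the point where $u_{\mathbb{L}}$ meets $T_0$ (this rigidifies the plane, consistent with its index-$0$, essential status as a $p$-section onto $A_0$). Since $u_{\mathbb{L}}(0)=u_{\mathbb{L}}$ is regular and the marked-point constraint is cut out transversally, the parametrized moduli space is a smooth $1$-manifold near $s=0$, giving short-time existence of the family $u_{\mathbb{L}}(s)$.

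Next I would show the family extends over all of $[0,1]$ by ruling out bubbling and breaking. Areas: every component of any SFT-type limit has area that is a positive integer (monotonicity of $\mathbb{L}$, $L_{1,1}$, and of the ambient manifold), and $u_{\mathbb{L}}$ has area exactly $1$, so the limit cannot split into two or more top-level pieces of positive area, and no sphere bubble (area $\geq 1$) can split off either; thus the limit is a single plane, possibly with trivial cylinders over Reeb orbits. The only remaining danger is that the plane acquires a new negative end asymptotic to $\mathbb{L}((0,b))$. Here I would argue via the projection $p$ (equivalently via the straightened picture of Lemma~\ref{folcor}): $u_{\mathbb{L}}(s)$ is essential with $\overline{p\circ u_{\mathbb{L}}(s)} = A_0$, and its intersections with the $J_s$-holomorphic leaves of $\mathcal{F}(\mathbf V)$ are controlled by positivity of intersection. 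A new end on $\mathbb{L}((0,b))$ would force the plane, near that end, to wrap around $p(\mathbb{L}((0,b)))$ inside $A_0$ in a way incompatible with being a $p$-section of degree one onto $A_0$; equivalently, in the Weinstein coordinates the end would have to detach from the $\{P_1=0\}$ wall where $u_{\mathbb{L}}$'s end lives and run to $\{P_1 = \delta\}$ with $\delta$ small but nonzero, which costs no area but violates the winding/intersection bookkeeping against the local annuli $\{P_1=\delta',Q_1=\theta,|P_2|<\epsilon\}$. I would also note that the asymptotic Reeb orbit cannot jump continuously, so the end stays asymptotic to the embedded geodesic in class $\beta_{\mathbb{L}}$ on $\mathbb{L}$ throughout. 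Hence the limit is again a $J_1$-holomorphic plane of the same type; by regularity (again via \cite{we}, or genericity within the family) the parametrized moduli space is a compact $1$-manifold, and the component through $u_{\mathbb{L}}$ surjects onto $[0,1]$, producing the desired smooth family $u_{\mathbb{L}}(s)$, $s\in[0,1]$, with $u_{\mathbb{L}}(0)=u_{\mathbb{L}}$.

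I expect the main obstacle to be precisely the exclusion of a new end on the inserted torus $\mathbb{L}((0,b))$ in the SFT limit: a priori an end could peel off and become asymptotic to a Reeb orbit on $S^*\mathbb{T}^2$ corresponding to $\mathbb{L}((0,b))$ without any area penalty, since trivial cylinders are free and the plane already touches $\mathbb{L}$. The way I would close this is to exploit the straightening hypotheses on $\mathcal{F}$ near $\mathbb{L}$ (Lemma~\ref{straight}, Lemma~\ref{folcor}) together with the essential property ($p\circ u_{\mathbb{L}}$ injective with image $A_0$): positivity of intersection with the broken-leaf planes of $\mathcal{T}_0$ and with the unbroken leaves near $\mathbb{L}((0,b))$ forces the degree of $p\circ u_{\mathbb{L}}(s)$ onto $A_0$ to remain one and forbids the curve from developing the extra boundary circle. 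This is the same mechanism used in \cite{rgi} section~4 to keep the translated leaves under control, so it should go through with the local model already in place.
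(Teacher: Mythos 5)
Your proposal correctly identifies the overall strategy -- regularity of $u_{\mathbb{L}}$ (area $1$, embedded, hence regular via \cite{we}), persistence of the family for $s\in[0,1)$ by monotonicity, and the $T_0$-intersection as the decisive obstruction (which you encode as a marked-point constraint) -- but the key step of ruling out a new end on $\mathbb{L}((0,b))$ in the limit at $s=1$ contains a geometric error and is too vague to carry the argument.

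The paper's route is as follows. Suppose a sequence $u_{\mathbb{L}}(s_j)$, $s_j\to 1$, degenerates to a building $\mathbf{H}$ that does acquire ends on $\mathbb{L}((0,b))$. Since every $u_{\mathbb{L}}(s)$ is disjoint from the leaves through $\{P_1>0\}$ (as $\overline{p\circ u_{\mathbb{L}}(s)}=A_0$), the curves of $\mathbf{H}$ lie in $\{P_1\le 0\}$. From the asymptotics on the oriented blow-up, any end of a curve of $\mathbf{H}$ on $\mathbb{L}((0,b))=\{P_1=0,P_2=b\}$ winds in a class $(k,l)$ with $k\le 0$ (otherwise the curve would enter $\{P_1>0\}$). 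The sum of these classes over all top-level ends on $\mathbb{L}((0,b))$ must vanish, so each $k=0$; hence each such end is a multiple of the foliation class, and any curve with such an end covers a plane or cylinder of a (twice-)broken leaf of $\mathcal{F}(\mathbf{V})$. These broken-leaf pieces through $\mathbb{L}((0,b))$ miss $T_0$, as do the middle/bottom-level curves; so $\mathbf{H}$ misses $T_0$. But $u_{\mathbb{L}}\bullet T_0\ne 0$, a contradiction.

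Your version replaces this with "the end would have to detach from the $\{P_1=0\}$ wall $\dots$ and run to $\{P_1=\delta\}$ with $\delta$ small but nonzero," and with a picture in which the new end wraps around $p(\mathbb{L}((0,b)))$ in the interior of $A_0$. Both are off: $\mathbb{L}((0,b))$ lies on the same hypersurface $\{P_1=0\}$ as $\mathbb{L}$ (only shifted in the $P_2$-direction), so nothing detaches to $\{P_1=\delta\}$, and $p(\mathbb{L}((0,b)))=p(\mathbb{L})=\partial A_0$ rather than an interior circle of $A_0$. As you yourself observe, such a degeneration "costs no area," so neither the $p$-degree nor area bookkeeping alone can exclude it; what closes the argument is the winding class analysis ($k\le 0$ summing to zero) followed by the $T_0$-intersection contradiction. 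Relatedly, your conclusion that "the limit is a single plane, possibly with trivial cylinders" is stronger than warranted: the paper explicitly allows a nontrivial building with several top-level pieces and argues that the whole connected component (of total area $1$) must cover broken-leaf pieces, hence miss $T_0$. So the proposal's skeleton is right, but the crucial lemma-level input (Claims 1 and 2 of the paper's proof) is missing and the stated substitute does not hold.
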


\begin{proof} By Lemma \ref{area1}, the initial curve $u_{\mathbb{L}}$ has area equal to $1$. Since $\mathbb{L}$ is monotone, no degenerations are possible until $s=1$. In other words, the family of deformed curves $u_{\mathbb{L}}(s)$  exists  for all $s \in [0,1)$ and it suffices to show that it extends to $s=1$.
Arguing by contradiction, assume that there is a sequence $s_j \to 1$ such that the curves $u_{\mathbb{L}}(s_j)$ converge to a nontrivial $J_{\mathbf{V} }$-holomorphic building $\mathbf{ H}$ which includes curves with punctures  asymptotic to $\mathbb{L}(\mathbf{v})$ with $\mathbf{v}=(0,b)$. We will show that this implies that, unlike $u_{\mathbb{L}}$, none of the curves of $\mathbf{H}$ intersect $T_0$, a contradiction.

\medskip
\noindent{Claim 1.}  Let $v$ be a $J_{\mathbf{V} }$-holomorphic curve of $\mathbf{ H}$. Any puncture of $v$ asymptotic to $\mathbb{L}(\mathbf{v})$ must cover a closed geodesic in a class $(k,l) \in  H_1(\mathbb{L}(\mathbf{v});\ZZ)$ with $k \leq0$.  
\medskip

\begin{proof} Since the closure of $p \circ u_{\mathbb{L}}$ is $A_0$, $u_{\mathbb{L}}$  is disjoint from the leaves of $\mathcal{F}$ which intersect $\mathcal{U}(\mathbb{L})$ in the region $\{P_1 >0\}$. The same is true  of the curves  $u_{\mathbb{L}}(s)$ for all $s<1$. Hence, $v$ must also be disjoint from these leaves. The curve $v$ can be extended smoothly to the oriented blow-up of the relevant puncture, such that the resulting map $\bar{v}$  acts on the corresponding boundary circle as $$\theta \mapsto (0,b, Q_1 + k \theta, Q_2 + l \theta)$$ for some $Q_1, Q_2 \in S^1$. The tangent space to the image of $\bar{v}$ at a boundary point on the circle  is spanned by $\{ k \frac{\partial}{\partial Q_1} + l \frac{\partial}{\partial Q_2}, k \frac{\partial}{\partial P_1} + l \frac{\partial}{\partial P_2} \}$. If $k$ were positive, this would contradict the fact that $v$ is disjoint from the leaves through $\{P_1 >0\}$ since $\mathbf{v}=(0,b)$.\end{proof}

\medskip
\noindent{Claim 2.}  Let $v$ be a $J_{\mathbf{V} }$-holomorphic curve with a  puncture  that is asymptotic to $\mathbb{L}(\mathbf{v})$ along a geodesic in a class which is a multiple of the foliation class, i.e. of the form $(0,l) \in  H_1^{\psi(\mathbf{v})}(\mathbb{L}(\mathbf{v});\ZZ)$.  Then $v$ must cover a plane or cylinder of a twice broken leaf of the foliation $\mathcal{F}(\mathbf{V} )$.
\medskip

\begin{proof}
This follows from the asymptotic properties of holomorphic curves and the fact that $v$ lies in $\{P_1 \le 0\}$, as in Lemma 6.2 of \cite{hl}.
\end{proof}

We can now complete the proof of Lemma \ref{away}.
Let $\mathbf{ H}_{\mathrm{ top}}$ denote the collection of top level curves of $\mathbf{ H}$, let
$\mathbf{H}_{1,1}$ be the subbuilding consisting of the middle and bottom level curves of $\mathbf{ H}$ that map to the copies of $\RR \times S^*\mathbb{T}^2$  and $T^*\mathbb{T}^2$ corresponding  to $L_{1,1}$, and let
$\mathbf{H}_{\mathbf{v}}$ be the subbuilding consisting of the middle and bottom level curves of $\mathbf{ H}$ that map to the copies of $\RR \times S^*\mathbb{T}^2$  and $T^*\mathbb{T}^2$ corresponding  to $\mathbb{L}(\mathbf{v})$.

Now consider the classes  $(k_1,l_1), \dots,(k_m,l_m) \in H_1(\mathbb{L}(\mathbf{v});\ZZ)$ of the geodesics determined by all of the punctures of top level curves of $\mathbf{ H}$ that are asymptotic to  $\mathbb{L}(\mathbf{v})$. These constitute the boundary of the cycle in $\mathbb{L}(\mathbf{v})$ that is obtained by gluing together the compactifications of the curves of $\mathbf{H}_{\mathbf{v}}$. Hence, the sum of the classes $(k_1,l_1), \dots,(k_m,l_m)$ must be $(0,0)$ and, by  Claim 1, each $k_i$ must be zero. It then follows from Claim 2, that any curve of $\mathbf{ H}$ with an end on $\mathbb{L}(\mathbf{v})$ must cover a plane or cylinder of a broken leaf of $\mathcal{F}(\mathbf{v})$.  

Now partition the curves of $\mathbf{ H}_{\mathrm{ top}} \cup \mathbf{H}_{\mathbf{v}} =\mathbf{ H} \smallsetminus \mathbf{ H}_{1,1}$ into connected components based on the matching of their ends in the copies of $\RR \times S^*\mathbb{T}^2$  and $T^*\mathbb{T}^2$ corresponding  to $\mathbb{L}({\mathbf{v}})$. Denote these components by $\mathbf{ H}_1, \dots, \mathbf{ H}_k$. The compactification of each $\mathbf{ H}_j$ is a cycle representing a  class  in $\pi_2(S^2 \times S^2,L_{1,1})$. By monotonicity, the symplectic area of this cycle is a positive integer. Since the area of $u_{\mathbb{L}}$ is one, we must have $k=1$ and the area of the cycle determined by $\mathbf{ H}_1$ must be one. By our assumption, $\mathbf{ H}_1$ must contain a curve with an end on $\mathbb{L}(\mathbf{v})$. By the discussion above, this implies that all the curves of $\mathbf{ H}_1$ must cover a plane or cylinder of a broken leaf of $\mathcal{F}(\mathbf{V} )$ through $\mathbb{L}(\mathbf{v})$. None of these leaves intersect $T_0$ , and  neither do the curves of $\mathbf{ H}_{1,1}$. Hence, no curve of $\mathbf{ H} = H_1 \cup H_{1,1}$ intersects $T_0$, which is the desired contradiction.
\end{proof}

The $J_{\mathbf{V} }$-holomorphic curve $u_{\mathbb{L}}(1)$ is disjoint from the region $\{P_1>0\}$. By positivity of intersection, since it does not cover a leaf of the foliation, $u_{\mathbb{L}}(1)$ is disjoint from the hypersurface $\{P_1=0\}$. The closure of $p\circ u_{\mathbb{L}}(1)$ is equal to  $A_0$ and $u_{\mathbb{L}}(1)$ intersects the leaves of $\mathcal{F}(\mathbf{V} )$, that pass through the annuli $\{P_1 =c<0, Q_1=\theta\}$, exactly once. 
Arguing as above, one sees that the statement of Lemma \ref{away} also holds for  
$$\mathbf{V} =((0,0),(0,b_1),(0,b_2)),$$ for any  nonzero $b_1, b_2$ in $(-\epsilon, \epsilon)$.

Translating these Lagrangian tori slightly in the $P_1$-direction, we then get the following, more general, version of our second deformation process.

\begin{corollary}
\label{up}
 Let  $u_{\mathbb{L}}$ be the essential curve of $\mathbf{ F}$ which  is mapped by $p$ onto $A_0$. Choose nonzero constants $b_1, b_2$ in $(-\epsilon, \epsilon)$. If $\delta>0$ is sufficiently small, then for any $a_1, a_2$ in $(-\delta, \delta)$ and $$\mathbf{V} =\{(0,0),(a_1,b_1),(a_2,b_2)\} $$ there is a  $J_{\mathbf{V} }$-holomorphic curve $$u_{\mathbb{L}}^{\mathbf{V} }\colon \CC \to S^2 \times S^2 \smallsetminus (L(\mathbf{V} ) \cup L_{1,1})$$ in the class of $u_{\mathbb{L}}$ such that $u_{\mathbb{L}}^{\mathbf{V} }$ is disjoint from the region $\{P_1>0\}$,  the closure of the image of $p\circ u_{\mathbb{L}}^{\mathbf{V} }$ is $ A_0$, and $u_{\mathbb{L}}^{\mathbf{V} } $ intersects the leaves of $\mathcal{F}(\mathbf{V} )$, that pass through the annuli $\{P_1 =c<0, Q_1=\theta\}$, exactly once.
\end{corollary}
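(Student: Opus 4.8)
The plan is to combine the two deformation processes already developed, namely Fukaya's trick (Lemma \ref{fukaya}) and the single--vector stretching of the essential curve $u_{\mathbb{L}}$ (Lemma \ref{away}), and to iterate the latter. First I would invoke Lemma \ref{away} for $\mathbf{V}=\{(0,0),(0,b_1)\}$ to produce the $J_{\{(0,0),(0,b_1)\}}$--holomorphic plane $u_{\mathbb{L}}(1)$; as noted immediately after the proof of that lemma, the same argument applies verbatim to $\mathbf{V}=\{(0,0),(0,b_1),(0,b_2)\}$, since the only inputs used are that $u_{\mathbb{L}}$ has area $1$ and that $\mathbb{L}$ is monotone (so no degeneration occurs until the final parameter), together with Claims 1 and 2, whose proofs only use that the stretching vectors have vanishing $P_1$--component. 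This yields a $J_{\mathbf{V}}$--holomorphic curve in the class of $u_{\mathbb{L}}$, disjoint from $\{P_1>0\}$ hence (by positivity of intersection, since it does not cover a leaf) disjoint from $\{P_1=0\}$, with $p$--image $A_0$ and intersecting each leaf through $\{P_1=c<0,Q_1=\theta\}$ exactly once.

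Next I would remove the restriction that the stretching vectors lie on the line $\{P_1=0\}$. The point is that the conclusion of Lemma \ref{away} is an open condition that survives small perturbations of the stretching data: given $\mathbf{V}_0=\{(0,0),(0,b_1),(0,b_2)\}$ and the curve $u_{\mathbb{L}}^{\mathbf{V}_0}$, for $a_1,a_2$ small apply Fukaya's trick (Lemma \ref{fukaya}), localized near $\mathbb{L}((0,b_i))$, to translate the two additional Lagrangians from $(0,b_i)$ to $(a_i,b_i)$ while carrying $u_{\mathbb{L}}^{\mathbf{V}_0}$ along to a regular $J_{\mathbf{V}}$--holomorphic curve $u_{\mathbb{L}}^{\mathbf{V}}$ in the same relative homology class with the same asymptotics, where $\mathbf{V}=\{(0,0),(a_1,b_1),(a_2,b_2)\}$. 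Since $u_{\mathbb{L}}^{\mathbf{V}_0}$ is disjoint from $\{P_1=0\}$ and the diffeomorphisms implementing Fukaya's trick are $C^1$--small and supported near the translated tori, the transformed curve remains disjoint from $\{P_1>0\}$ for $\delta$ small; and the properties ``$p$--image is $A_0$'' and ``intersects each leaf through $\{P_1=c<0,Q_1=\theta\}$ once'' are preserved because by Lemma \ref{folcor} these leaves vary continuously and positivity of intersection is an open/closed alternative that cannot jump under a small deformation.

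The main obstacle I expect is the compatibility of the two deformations: Fukaya's trick is stated for a fixed regular curve and a single translation vector, whereas here one must apply it to $u_{\mathbb{L}}^{\mathbf{V}_0}$, which is $J_{\mathbf{V}_0}$--holomorphic with \emph{two} extra ends (on $\mathbb{L}((0,b_1))$ and $\mathbb{L}((0,b_2))$), and translate both simultaneously. One must check that $u_{\mathbb{L}}^{\mathbf{V}_0}$ is still regular as a curve for $J_{\mathbf{V}_0}$ in the stretched manifold --- this follows from the area-$1$, somewhere-injective (indeed embedded, being a limit of embedded curves) property via \cite{we}, exactly as in the paragraph following Lemma \ref{fukaya} --- and that the family of diffeomorphisms can be taken to move the two tori independently while remaining the identity outside disjoint neighborhoods $\mathcal{U}(\mathbb{L}((0,b_i)))$, which is possible once $b_1\neq b_2$ and $\delta$ is small enough that these neighborhoods stay disjoint and inside $\mathcal{U}(\mathbb{L})$. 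Granting this, the statement follows by concatenating the two steps, and one records that the class of $u_{\mathbb{L}}^{\mathbf{V}}$ is that of $u_{\mathbb{L}}$ because both deformations preserve homology classes.
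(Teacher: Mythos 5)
Your overall route matches the paper's: first rerun Lemma~\ref{away} verbatim for $\mathbf{V}_0=\{(0,0),(0,b_1),(0,b_2)\}$ (which the paper asserts in one sentence after the proof of that lemma), then pass to nonzero $a_i$ by a small translation of the auxiliary tori. The one inaccuracy, which is worth flagging because it shapes the rest of your argument, is the claim that $u_{\mathbb{L}}^{\mathbf{V}_0}$ has ``two extra ends, on $\mathbb{L}((0,b_1))$ and $\mathbb{L}((0,b_2))$.'' Both Lemma~\ref{away} and the statement you are proving say the curve is a \emph{plane} $\CC \to S^2\times S^2 \smallsetminus (\mathbb{L}(\mathbf{V}_0)\cup L_{1,1})$, i.e.\ it has exactly one puncture, asymptotic to $\mathbb{L}=\mathbb{L}(0,0)$; the entire content of Lemma~\ref{away} is that no breaking occurs at the auxiliary tori, so no such ends appear, and by positivity of intersection the plane is in fact \emph{disjoint} from $\mathbb{L}((0,b_i))$. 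This removes the obstacle you raise: you do not need a two--torus version of Fukaya's trick because the curve has no asymptotic data attached to the tori being moved. The translation from $(0,b_i)$ to $(a_i,b_i)$ can be realised by conjugating $J_{\mathbf{V}_0}$ with a compactly supported diffeomorphism localized near the $\mathbb{L}((0,b_i))$ and identity near $\mathbb{L}$, producing a small deformation of the almost complex structure through $\mathcal{J}_{\mathcal{U}(\mathbb{L})}$; regularity (via area one, somewhere injectivity and \cite{we}) plus the same monotonicity argument precluding degeneration then carries $u_{\mathbb{L}}^{\mathbf{V}_0}$ to $u_{\mathbb{L}}^{\mathbf{V}}$. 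This is the mechanism underlying Fukaya's trick, but applied in the simpler guise where the perturbation stays away from the curve's ends. Your remaining checks (disjointness from $\{P_1>0\}$, the image of $p$, single intersections with leaves through $\{P_1=c<0\}$ via Lemma~\ref{folcor}) are exactly the right things to verify and are preserved under the small deformation as you say.
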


\subsubsection{Intersections near $\mathbb{L}$} We now use the deformation tools of  Lemma \ref{fukaya} and Corollary \ref{up} to  resolve some  intersection patterns. Let $\mathbf{ F}$ be a building of Type 3 as in  Proposition \ref{existence2} and consider translation data $$\mathbf{V} =\{\mathbf{ 0},\mathbf{ v}_1, \mathbf{ v}_2\} = \{ (0,0),(a_1,b_1), (a_2,b_2)\}.$$ In what follows we will always assume that $\mathbf{ v}_1$ and $\mathbf{ v}_2$ are distinct and nontrivial.
The collection of top level curves of $\mathbf{ F}$ is of the form 
\begin{equation*}
\label{ }
\{u_{\mathbb{L}},\underline{u},u_{L_{1,1}}, u_1, \dots,  u_{d-1},\frak{u}_1, \dots, \frak{u}_d\},
\end{equation*}
where the 
$u_1, \dots, u_{\alpha_0}$ belong to $\mathcal{T}_0$, $u_{\alpha_0+1}, \dots, u_{d-1}$ belong to $\mathcal{T}_{\infty}$, and the $\frak{u}_j$ belong to $\frak{s}_0 \cup \frak{s}_{\infty}$. If $\|\mathbf{ v}_1\|$ is sufficiently small then, as described in Remark \ref{defF},  the deformed building $\mathbf{ F}(\mathbf{v}_1)$ is well-defined and its top level curves are
\begin{equation*}
\label{ }
\{ u_{\mathbb{L}}(\mathbf{v}_1), \underline{u}(\mathbf{v}_1),u_{L_{1,1}},u_1(\mathbf{v}_1), \dots, u_{d-1}(\mathbf{v}_1),\frak{u}_1, \dots, \frak{u}_d\}.
\end{equation*}
Choosing $a_1$ to be smaller still, if necessary, we may assume that  Corollary \ref{up} holds for $\mathbf{V}$ for $|a_2|$ sufficiently small. This yields  a
$J_{\mathbf{V} }$-holomorphic curve $u_{\mathbb{L}}^{\mathbf{V} }$ which is disjoint from the region $\{P_1>0\}$ and intersects the leaves of $\mathcal{F}(\mathbf{V} )$, that pass through the planes $\{P_1 =c<0, Q_1=\theta\}$, exactly once.

The intersection number between each top level curve of $\mathbf{ F}(\mathbf{v}_1)$ and the curve $u_{\mathbb{L}}^{\mathbf{V} }$ is well defined since, as $\mathbf{ v}_1 \neq \mathbf{ 0}$, they are asymptotic to disjoint Lagrangian tori. Moreover all these intersections are positive. We denote the total of these intersection numbers by $\mathbf{ F}(\mathbf{v}_1)\bullet u_{\mathbb{L}}^{\mathbf{V} }$. Similarly, the intersection number of  each top level curve of $\mathbf{ F}(\mathbf{v}_1)$ with any of the planes in either $\mathcal{T}_0$ or $\mathcal{T}_{\infty}$ is well-defined  and all such intersections are positive. Since this number is the same for any plane in the family, we denote these numbers by $\mathbf{ F}(\mathbf{v}_1)\bullet\mathcal{T}_0$ and $\mathbf{ F}(\mathbf{v}_1)\bullet\mathcal{T}_\infty$, respectively.   

Let $\bar{\mathbf{ F}}(\mathbf{v}_1) \colon S^2 \to S^2 \times S^2$ be the compactification of $\mathbf{ F}(\mathbf{v}_1)$,  let $\mathbb{E} \colon (D^2,S^1) \to (S^2 \times S^2, \mathbb{L})$  be the compactification of the curve $u_{\mathbb{L}}^{\mathbf{ V}}$, and let $\bar{\mathcal{T}_0}$ and $\bar{\mathcal{T}}_{\infty}$ be the solid tori obtained by compactifying the planes of $\mathcal{T}_0$ and $\mathcal{T}_{\infty}$. Deforming $\bar{\mathbf{ F}}(\mathbf{v}_1)$ 
arbitrarily close to $\mathbb{L}(\mathbf v_1)$, we obtain a smooth map $F=F(\mathbf{ v}_1) \colon S^2 \to S^2 \times S^2$ such that 
\begin{equation}
\label{sameEs}
F \bullet \mathbb{E} = \bar{\mathbf{ F}}(\mathbf{v}_1) \bullet \mathbb{E} = \mathbf{ F}(\mathbf{v}_1)\bullet u_{\mathbb{L}}^{\mathbf{V} }
\end{equation}
and
\begin{equation}
\label{sameT}
F \bullet \bar{\mathcal{T}_*} = \bar{\mathbf{ F}}(\mathbf{v}_1) \bullet \bar{\mathcal{T}_*} = \mathbf{ F}(\mathbf{v}_1)\bullet {\mathcal{T}_*},\quad \text{ for $*=0,\infty$}.
\end{equation}
Moreover, the corresponding intersection points are identical.

\begin{lemma}\label{FE+-}
Consider
$\mathbf{V} = \{\mathbf{ 0},\mathbf{ v}_1,\mathbf{ v}_2\}=\{(0,0),(a_1,b_1),(a_2,b_2)\}$ such that $\mathbf{ v}_1$ and $\mathbf{ v}_2$ are distinct, $a_1$ is negative, and $b_1$ and $b_2$ are nonzero. Suppose that  $|a_1|$ is sufficiently small with respect to $|b_1|$.\\

\noindent If $b_1>0$, then
$
F \bullet \bar{\mathcal{T}}_0 =0,
$ 
$
F \bullet \bar{\mathcal{T}}_{\infty} =1,
$
and
$
F \bullet \mathbb{E} =\alpha_0.
$
\\

\noindent If $b_1<0$, then 
$
F \bullet \bar{\mathcal{T}}_0 =1,
$
$F \bullet \bar{\mathcal{T}}_{\infty} =0,$
and
$
F \bullet \mathbb{E} =d-1-\alpha_0.
$
\end{lemma}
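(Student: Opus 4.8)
The plan is to compute the three intersection numbers by exploiting the "straightened" picture near $\mathbb{L}$ and the fact, recorded in equations \eqref{sameEs} and \eqref{sameT}, that all the relevant intersections of $F$ can be identified with intersections among the top level curves of the deformed building $\mathbf{F}(\mathbf{v}_1)$ and the reference curves $u_{\mathbb{L}}^{\mathbf{V}}$ and the planes of $\mathcal{T}_0$, $\mathcal{T}_\infty$. The key localization is this: by Corollary \ref{up} the curve $u_{\mathbb{L}}^{\mathbf{V}}$ is disjoint from $\{P_1>0\}$, lies over $A_0$, and meets each leaf of $\mathcal{F}(\mathbf{V})$ through $\{P_1=c<0\}$ exactly once; dually, the planes of $\mathcal{T}_0$ sit over $\{P_1=0,\,-\epsilon<P_2<0\}$ and those of $\mathcal{T}_\infty$ over $\{P_1=0,\,0<P_2<\epsilon\}$ in the straightened coordinates of Section \ref{double}. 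So the sign of $b_1$ decides on which side of the hypersurface $\{P_2=0\}$ the translated tori $\mathbb{L}(\mathbf{v}_1)$-asymptotic planes of $\mathbf{F}(\mathbf{v}_1)$ run, and hence whether a given $u_i(\mathbf{v}_1)$, $i\le \alpha_0$ (originally in $\mathcal{T}_0$) or $i>\alpha_0$ (originally in $\mathcal{T}_\infty$), can meet $u_{\mathbb{L}}^{\mathbf{V}}$ or the plane families.

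First I would treat $F\bullet\bar{\mathcal{T}}_0$ and $F\bullet\bar{\mathcal{T}}_\infty$. Since $\bar F$ is in class $(1,d)$ and each compactified plane family fits together with its partner into a leaf in class $(0,1)$, the solid torus $\bar{\mathcal{T}}_0\cup\bar{\mathcal{T}}_\infty$ caps off to such a leaf, so $F\bullet(\bar{\mathcal{T}}_0\cup\bar{\mathcal{T}}_\infty)=(1,d)\bullet(0,1)=1$; thus one of the two numbers is $1$ and the other $0$, and I only need to decide which. The curve $u_{\mathbb{L}}(\mathbf{v}_1)$ is essential and lies over $A_0$, which is exactly the region where the $\mathcal{T}_*$-planes are asymptotically located; in the straightened coordinates $u_{\mathbb{L}}(\mathbf{v}_1)$ runs in $\{P_1\le 0\}$ along the side of $\mathbb{L}(\mathbf{v}_1)$ determined by the sign of $b_1$. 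If $b_1>0$ then $\mathbb{L}(\mathbf{v}_1)=\{P_1=a_1<0,\,P_2=b_1>0\}$ sits "above" the $\mathcal{T}_0$-annuli and "below" the $\mathcal{T}_\infty$-annuli, and a local intersection count (using that the building came from curves hitting each leaf once, as in Proposition \ref{lem0} and the discussion of solid tori in Section \ref{solid}) forces $F\bullet\bar{\mathcal{T}}_0=0$, $F\bullet\bar{\mathcal{T}}_\infty=1$; the case $b_1<0$ is the mirror image. The companion relation \eqref{sameT} then transports these to $F$.

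Next, for $F\bullet\mathbb{E}=\mathbf{F}(\mathbf{v}_1)\bullet u_{\mathbb{L}}^{\mathbf{V}}$ I would go curve by curve through the top level of $\mathbf{F}(\mathbf{v}_1)$. The curve $u_{\mathbb{L}}^{\mathbf{V}}$ meets each fiber leaf through $\{P_1=c<0\}$ once, so, by positivity of intersection applied to each top level component viewed against this "test family", the total intersection of $\mathbf{F}(\mathbf{v}_1)$ with $u_{\mathbb{L}}^{\mathbf{V}}$ equals the number of those $c<0$ fiber leaves that the building $\mathbf{F}(\mathbf{v}_1)$ "covers", i.e. the number of its top level components that pass through $\{P_1<0\}$ in $\mathcal{U}(\mathbb{L}(\mathbf{v}_1))$. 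The essential curves $u_{\mathbb{L}}(\mathbf{v}_1)$, $\underline u(\mathbf{v}_1)$, $u_{L_{1,1}}$ and the $\frak{u}_j$ (asymptotic to $L_{1,1}$, hence far from $\mathbb{L}$) contribute nothing new because their intersections with $u_{\mathbb{L}}^{\mathbf{V}}$ are accounted for separately or are zero; the real count comes from the $d-1$ planes $u_i(\mathbf{v}_1)$. A plane $u_i$ originally in $\mathcal{T}_0$ sits over $\{P_1=0,\,-\epsilon<P_2<0\}$ and a plane originally in $\mathcal{T}_\infty$ over $\{P_1=0,\,0<P_2<\epsilon\}$; after translating by $\mathbf{v}_1=(a_1,b_1)$ with $a_1<0$ these become planes over $\{P_1=a_1<0\}$, so all $d-1$ of them now lie in $\{P_1<0\}$ and each meets $u_{\mathbb{L}}^{\mathbf{V}}$ exactly once — but we must discard those that instead coincide with (or are swallowed by) the translated broken leaf through $\mathbb{L}(\mathbf{v}_1)$, and the sign of $b_1$ controls exactly which ones. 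When $b_1>0$ the translated torus $\mathbb{L}(\mathbf{v}_1)$ lies on the $P_2>0$ side, so precisely the $d-1-\alpha_0$ planes originally in $\mathcal{T}_\infty$ are the ones whose count is absorbed into the $\bar{\mathcal{T}}_\infty$-bookkeeping, leaving the $\alpha_0$ planes from $\mathcal{T}_0$ contributing $\alpha_0$; when $b_1<0$ the roles swap and one gets $d-1-\alpha_0$. This matches the claimed values.

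The main obstacle I expect is making rigorous the bookkeeping in the last step: one must be careful that, after Fukaya's trick (Lemma \ref{fukaya}) and the stretching of Corollary \ref{up}, the planes $u_i(\mathbf{v}_1)$ and the test curve $u_{\mathbb{L}}^{\mathbf{V}}$ are asymptotic to genuinely disjoint Lagrangian tori (so that the intersection numbers are well defined and deformation-invariant, as asserted in the text preceding \eqref{sameEs}), and that no intersection is lost or created where a plane "crosses" the hypersurface $\{P_1=0\}$ or approaches the translated broken leaf. The cleanest way to control this is to note that $u_{\mathbb{L}}^{\mathbf{V}}$ is disjoint from $\{P_1\ge 0\}$ and from every unbroken leaf through $\{P_1>0\}$, so by positivity of intersection its intersections with $\mathbf{F}(\mathbf{v}_1)$ are confined to the region $\{P_1<0\}$ over $A_0$, where the straightened normal form reduces everything to a transparent count of parallel annuli. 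Once that localization is in hand, the case analysis on $\mathrm{sign}(b_1)$ and the identity $\alpha_0+(d-1-\alpha_0)=d-1$ close the argument; the hypothesis that $|a_1|$ is small relative to $|b_1|$ is used precisely to guarantee that translating in the $P_1$-direction does not move any plane past $\{P_2=0\}$ before the relevant count is taken.
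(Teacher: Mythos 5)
Your overall strategy — localize the count in the straightened Weinstein chart, read off intersections from which side of the translated torus $\mathbb{L}(\mathbf{v}_1)$ each family of planes lies, and use the identities in \eqref{sameEs} and \eqref{sameT} to transport this to $F$ — is the right one and matches the paper's approach. However, there are two concrete gaps.

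First, for the $\bar{\mathcal{T}}_0$, $\bar{\mathcal{T}}_\infty$ counts you point to $u_{\mathbb{L}}(\mathbf{v}_1)$ as the curve detecting the intersection. But $u_{\mathbb{L}}(\mathbf{v}_1)$ projects to $A_0$ and, inside $\mathcal{U}(\mathbb{L})$, lives in $\{P_1 < a_1\}$, while the planes of $\mathcal{T}_0$ and $\mathcal{T}_\infty$ sit at $\{P_1 = 0\}$; since $a_1<0$, the curve $u_{\mathbb{L}}(\mathbf{v}_1)$ never reaches $\{P_1=0\}$ and so it is \emph{disjoint} from $\mathcal{T}_0 \cup \mathcal{T}_\infty$. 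The component of $\mathbf{F}(\mathbf{v}_1)$ that actually realizes the one intersection of $F$ with a $\mathcal{T}_*$-plane is $\underline{u}(\mathbf{v}_1)$: it lives in $\{P_1>a_1\}$, is essential so meets every cylinder $\{P_1=a,\ Q_1=\theta\}$ with $a>a_1$, and has an end converging to $\mathbb{L}(\mathbf{v}_1)=\{P_1=a_1,\ P_2=b_1\}$, so for $a$ near $a_1$ (in particular $a=0$ once $|a_1|$ is small relative to $b_1$) it must pass through $P_2>0$, i.e.\ through $\mathcal{T}_\infty$ when $b_1>0$. Without identifying $\underline{u}(\mathbf{v}_1)$ as the witness, the argument does not actually establish $F\bullet\bar{\mathcal{T}}_\infty\ge 1$.

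Second, in the $F\bullet\mathbb{E}$ count you dismiss the essential curves of $\mathbf{F}(\mathbf{v}_1)$ with ``their intersections with $u_{\mathbb{L}}^{\mathbf{V}}$ are accounted for separately or are zero,'' but this is precisely where the hardest work is. Disjointness of $\underline{u}(\mathbf{v}_1)$ from $u_{\mathbb{L}}^{\mathbf{V}}$ inside $\mathcal{U}(\mathbb{L})$ needs the continuity argument from the $a_1=0$ case, where the two curves lie strictly on opposite sides of $\{P_1=0\}$. More importantly, $u_{\mathbb{L}}(\mathbf{v}_1)\bullet u_{\mathbb{L}}^{\mathbf{V}}=0$ is not a positivity-of-intersection or localization statement; the paper reduces it to $u_{\mathbb{L}}(\mathbf{v}_1)\bullet u_{\mathbb{L}}=0$ and then computes this by a relative Chern number argument, writing the Maslov index as $\mu(\bar u_{\mathbb{L}})=2(1+u_{\mathbb{L}}(\mathbf{v}_1)\bullet u_{\mathbb{L}})$ via a wedge of a tangential and a normal section, and using $\mu(\bar u_{\mathbb{L}})=2$ from monotonicity and area $1$. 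This step is entirely missing from your proposal and cannot be waved away. Relatedly, your description of the $\mathcal{T}_\infty$-type planes ($i>\alpha_0$) as having their count ``absorbed into the $\bar{\mathcal{T}}_\infty$-bookkeeping'' misstates the mechanism: they contribute $0$ to $F\bullet\mathbb{E}$ because $u_{\mathbb{L}}^{\mathbf{V}}$ meets the leaf $\{P_1=a_1,\ Q_1=\theta\}$ at a point with $P_2<b_1$, whereas those planes lie in $P_2>b_1$; it is the $P_2$-coordinate of the unique intersection with the leaf, not any cancellation against $\bar{\mathcal{T}}_\infty$, that decides the count.
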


\begin{proof}
Here we give the proof of the case when $b_1$ is positive. The proof for $b_1<0$ is identical and is left to the reader. The map $F$ represents the class $(1,d)$. For each disk in $\bar{\mathcal{T}}_0$ there is a companion disc in $\bar{\mathcal{T}}_{\infty}$ such that the pair can be glued together, along $\mathbb{L}$, to form a sphere in the class $(0,1)$. Hence, $$F \bullet \bar{\mathcal{T}}_0 + F \bullet \bar{\mathcal{T}}_{\infty} =1.$$ Since all  intersections are positive, in order to prove that  $
F \bullet \bar{\mathcal{T}}_0 =0,
$
and 
$F \bullet \bar{\mathcal{T}}_{\infty} =1,$ it suffices to prove that $F \bullet \bar{\mathcal{T}}_{\infty} \geq 1.$ In particular, in view of \ref{sameT}, it suffices to show that $\underline{u}(\mathbf{ v}_1) \bullet \mathcal{T}_{\infty} \geq1.$

The planes of  $\mathcal{T}_{\infty}$ intersect $\mathcal{U}(\mathbb{L})$ in annuli of the form $\{P_1 = 0, Q_1 = \theta, P_2 > 0\}$.
The curve $\underline{u}(\mathbf{v}_1)$ intersects  $\mathcal{U}(\mathbb{L})$ in the region $\{P_1 >a_1\}$. Since it is essential, $\underline{u}(\mathbf{v}_1)$ must intersect every cylinder of the form $\{P_1=a, Q_1 = \theta \}$ with $a > a_1$.
The curve $\underline{u}(\mathbf{v}_1)$ also has an end asymptotic to a circle in the torus $\mathbb{L}(\mathbf{v}_1)=\{P_1 = a_1, P_2 =b_1\}$.  Since $b_1$ is positive, it follows that for all $a$ sufficiently close to $a_1$, $\underline{u}(\mathbf{v}_1)$ will intersect the annuli  $\{P_1 = a, Q_1 = \theta, P_2 > 0\}$. Hence, if  $|a_1|$ is sufficiently small with respect to $b_1$, then $\underline{u}(\mathbf{v}_1)$ must intersect the planes of $\mathcal{T}_{\infty}$ at least once, as desired.

It remains to prove that $F \bullet \mathbb{E} =\alpha_0$ when $|a_1|$ is sufficiently small with respect to $|b_1|$. By \eqref{sameEs}, and the fact that the top level curves of $\mathbf{ F}(\mathbf{ v}_1)$ are  \begin{equation*}
\label{ }
\{ u_{\mathbb{L}}(\mathbf{v}_1), \underline{u}(\mathbf{v}_1),u_{L_{1,1}},u_1(\mathbf{v}_1), \dots, u_{d-1}(\mathbf{v}_1),\frak{u}_1, \dots, \frak{u}_d\},
\end{equation*}
is suffices to prove that for $|a_1|$ sufficiently small with respect to $|b_1|$, we have
\begin{equation}
\label{ones}
u_{i}(\mathbf{v}_1) \bullet u_{\mathbb{L}}^{\mathbf{V} }=
    1 \quad  \text{ for } 1\leq i \leq \alpha_0, 
\end{equation}
and $u_{\mathbb{L}}^{\mathbf{V} }$ is disjoint from all the other top level curves of $\mathbf{ F}(\mathbf{ v}_1)$.

Recall that $u_{\mathbb{L}}^{\mathbf{V} }$ is an essential curve, and that the image of $p \circ u_{\mathbb{L}}^{\mathbf{V} }$ is $A_0$. So if $w$ is another curve in $S^2 \times S^2$ and $p \circ w$ is disjoint from $A_0$, then $u_{\mathbb{L}}^{\mathbf{V} }$ 
is disjoint from $w$. This observation implies that $u_{\mathbb{L}}^{\mathbf{V} }$ is disjoint from $ u_{L_{1,1}}$ and the $\frak{u}_j$ for $j =1,\dots, d$ since theses curves all project into $A_{\infty}$.

Another consequence of $u_{\mathbb{L}}^{\mathbf{V} }$ being essential with respect to $\mathcal{F}$, is that 
it intersects any fiber of $\mathcal{F}$ either once or not at all. The curve $u_{\mathbb{L}}^{\mathbf{V} }$ intersects  $\mathcal{U}(\mathbb{L})$ in the region $\{P_1<0\}$ and has an end  asymptotic to a circle in $\mathbb{L}=\{P_1 = P_2 =0\}$. Since $b_1 > 0$, this implies that for all $a_1<0$ such that $|a_1|$ is sufficiently small with respect to $b_1$, $u_{\mathbb{L}}^{\mathbf{V} }$ must intersect the annuli of the form $\{P_1 = a_1, Q_1 = \theta, P_2 < b_1\}$ exactly once.
Now the planes $u_{i}(\mathbf{v}_1)$ all belong to broken fibers of $\mathcal{F}$ that intersect $\mathcal{U}(\mathbb{L})$. For $1 \le i \le \alpha_0$, the curves $u_{i}(\mathbf{v}_1)$ intersect $\mathcal{U}(\mathbb{L})$ in annuli of the form $\{P_1 = a_1, Q_1 = \theta, P_2 < b_1\}$. For $i > \alpha_0$, the $u_{i}(\mathbf{v}_1)$ intersect $\mathcal{U}(\mathbb{L})$ in annuli of the form $\{P_1 = a_1, Q_1 = \theta, P_2 > b_1\}$.  Hence,  for $1 \le i \le \alpha_0$, $u_{\mathbb{L}}^{\mathbf{V} }$ intersects the fiber of $\mathcal{F}$ containing $u_{i}(\mathbf{v}_1)$ at a point on $u_{i}(\mathbf{v}_1)$. This yields equation \eqref{ones}. On the other hand, for $i>\alpha_0$, $u_{\mathbb{L}}^{\mathbf{V} }$ intersects the fiber of $\mathcal{F}$ containing $u_{i}(\mathbf{v}_1)$ at a point in the complement of $u_{i}(\mathbf{v}_1)$. Hence, $u_{\mathbb{L}}^{\mathbf{V} }$ is disjoint from these curves. 

Next we show that, when  $|a_1|$ is sufficiently small with respect to $|b_1|$, $u_{\mathbb{L}}^{\mathbf{V} }$ is disjoint from $\underline{u}(\mathbf{ v}_1)$. 
Considering projections, it is clear that the part of $\underline{u}(\mathbf{ v}_1)$ in the complement of $\mathcal{U}(\mathbb{L})$ is disjoint from $u_{\mathbb{L}}^{\mathbf{V} }$ since its projection is contained in the interior of $B \cup A_{\infty}$.

Suppose that $a_1=0$.  Then $\underline{u}((0,b_1)) \cap \mathcal{U}(\mathbb{L})$ is contained in $\{P_1>0\}$ and is asymptotic to $\mathbb{L}(0,b_1)$. This is disjoint from $u_{\mathbb{L}}^{\mathbf{V} }\cap \mathcal{U}(\mathbb{L})$ which is contained in $\{P_1<0\}$ and is asymptotic to $\mathbb{L} =\mathbb{L}(0,0)$. By continuity, $\underline{u}((a_1,b_1)) \cap \mathcal{U}(\mathbb{L})$ is then disjoint from $u_{\mathbb{L}}^{\mathbf{V} }\cap \mathcal{U}(\mathbb{L})$ for all $a_1<0$ with $|a_1|$  sufficiently small with respect to $|b_1|$.

Lastly, we must prove that 
\begin{equation*}
\label{ }
u_{\mathbb{L}}(\mathbf{ v}_1) \bullet u_{\mathbb{L}}^{\mathbf{V} }=0.
\end{equation*}
when $|a_1|$ is sufficiently small with respect to $|b_1|$. Since the compactifications of $u_{\mathbb{L}}^{\mathbf{V} }$ and $u_{\mathbb{L}}$ are homotopic in the space of smooth maps $(D^2, S^1) \to (S^2 \times S^2, \mathbb{L})$, it suffices to show that 
$$ u_{\mathbb{L}}(\mathbf{ v}_1) \bullet u_{\mathbb{L}}=0.$$ 

Let $\bar{u}_{\mathbb{L}}(\mathbf{ v}_1)$ and $\bar{u}_{\mathbb{L}}$ be compactifications of $u_{\mathbb{L}}(\mathbf{ v}_1)$ and $u_{\mathbb{L}}$. We claim that $u_{\mathbb{L}}(\mathbf{ v}_1) \bullet u_{\mathbb{L}}=0$ is equivalent to the fact that the Maslov index of $\bar{u}_{\mathbb{L}}$ is equal to $2$. To see this we recall that 
\begin{equation}
\label{relmas}
\mu(\bar{u}_{\mathbb{L}}) =2 c_1(\bar{u}_{\mathbb{L}})
\end{equation}
where $c_1(\bar{u}_{\mathbb{L}})$ is the relative Chern number of $\bar{u}_{\mathbb{L}}$ which is equal to the number of zeros of a generic section $\xi$ of $\bar{u}_{\mathbb{L}}^*(\Lambda^2 (T(S^2 \times S^2)))$ such that $\xi|_{S^1}$ is nonvanishing and is tangent to $\Lambda^2 (T \mathbb{L})$.

Let $\nu(\bar{u}_{\mathbb{L}})$ be the normal bundle to the embedding $\bar{u}_{\mathbb{L}}$ and fix an identification of $\bar{u}_{\mathbb{L}}^*(T(S^2 \times S^2))$ with the Whitney sum $\nu(\bar{u}_{\mathbb{L}}) \oplus T(D^2)$. For polar coordinates
$(r, \theta)$ on $D^2$ consider the section $r\frac{\partial}{\partial \theta}$ of $\bar{u}_{\mathbb{L}}^*(T(S^2 \times S^2))$. The restriction $r\frac{\partial}{\partial \theta}|_{S^1}$ is nonvanishing and tangent to $T\mathbb{L}$.

Replacing $\mathbf{ v}_1$ by $t\mathbf{ v}_1$ for some small $t>0$, if necessary, we may assume that $\bar{u}_{\mathbb{L}}(\mathbf{ v}_1)$ is close enough $\bar{u}_{\mathbb{L}}$, in the $C^1$-topology, to be identified with a section, $\sigma_{\mathbb{L}}(\mathbf{ v}_1)$,  of  $\nu(\bar{u}_{\mathbb{L}}) \subset \bar{u}_{\mathbb{L}}^*(T(S^2 \times S^2))$. The restriction
$\sigma_{\mathbb{L}}(\mathbf{ v}_1)|_{S^1}$ is roughly parallel to the vector field $\frac{\partial}{\partial P_2}$. By rotating in the normal bundle this section is homotopic through nonvanishing sections to a section of  $T\mathbb{L}$ along $\partial D^2$ which is orthogonal to $\frac{\partial}{\partial \theta}$.  

Set $\xi = r\frac{\partial}{\partial \theta} \wedge \sigma_{\mathbb{L}}(\mathbf{ v}_1)$. It follows from the discussion above that 
$\xi|_{S^1}$ is nonvanishing and is tangent to $\Lambda^2 (T \mathbb{L})$. Moreover, the zeroes of $\xi$ corresponds to the union of the zeros of $r\frac{\partial}{\partial \theta}$ and $\sigma_{\mathbb{L}}(\mathbf{ v}_1)$. Since $\bar{u}_{\mathbb{L}}$ is an embedded the zeros of $\sigma_{\mathbb{L}}(\mathbf{ v}_1)$ exactly correspond to the intersections $u_{\mathbb{L}}(\mathbf{ v}_1) \bullet u_{\mathbb{L}}$. By \eqref{relmas}, we then have 
\begin{equation*}
\label{relmas2}
\mu(\bar{u}_{\mathbb{L}}) =2 (1 + u_{\mathbb{L}}(\mathbf{ v}_1) \bullet u_{\mathbb{L}}).
\end{equation*}
As $\mu(\bar{u}_{\mathbb{L}})=2$ (as it has area $1$ by Lemma \ref{area1}, and $\mathbb{L}$ is monotone) we have $u_{\mathbb{L}}(\mathbf{ v}_1) \bullet u_{\mathbb{L}}=0$, and are done.
\end{proof}

We can deform a building $\mathbf{ G}$ as in Proposition \ref{existence3} within this same framework. As $\mathbf{ G}$ is of Type 3, it's collection of top level curves looks like  
\begin{equation*}
\label{ }
\{v_{\mathbb{L}},\underline{v},v_{L_{1,1}}, v_1, \dots,  v_{d},\frak{v}_1, \dots, \frak{v}_{d-1}\},
\end{equation*}
where 
$v_1, \dots, v_{\gamma_0}$ belong to $\mathcal{T}_0$, $v_{\gamma_0+1}, \dots, v_{d}$ belong to $\mathcal{T}_{\infty}$ and $\frak{v}_j$ belong to $\frak{s}_0 \cup \frak{s}_{\infty}$. Assuming that $\mathbf{ v}_2=(a_2,b_2)$ is sufficiently small we can deform  $\mathbf{ G}$ to obtain a new building $\mathbf{ G}(\mathbf{v}_2)$ with top level curves
 \begin{equation*}
\label{ }
\{ v_{\mathbb{L}}(\mathbf{v}_2), \underline{v}(\mathbf{v}_2),u_{L_{1,1}},v_1(\mathbf{v}_2), \dots, v_{d}(\mathbf{v}_2),\frak{v}_1, \dots, \frak{v}_{d-1}\}.
\end{equation*}
Let $\bar{\mathbf{ G}}(\mathbf{v}_2) \colon S^2 \to S^2 \times S^2$ be the compactification of $\mathbf{ G}(\mathbf{v}_2)$. Again we can deform $\bar{\mathbf{G}}(\mathbf{v}_2)$, 
arbitrarily close to $\mathbb{L}(\mathbf v_2)$, to get a smooth map $G=G(\mathbf{ v}_2) \colon S^2 \to S^2 \times S^2$ such that 
\begin{equation*}
\label{sameE}
G \bullet \mathbb{E} = \bar{\mathbf{ G}}(\mathbf{v}_2) \bullet \mathbb{E} = \mathbf{ G}(\mathbf{v}_2)\bullet u_{\mathbb{L}}^{\mathbf{V} }
\end{equation*}
and
\begin{equation*}
\label{sameT}
G \bullet \bar{\mathcal{T}_*} = \bar{\mathbf{ G}}(\mathbf{v}_2) \bullet \bar{\mathcal{T}_*} = \mathbf{ G}(\mathbf{v}_2)\bullet {\mathcal{T}_*},\quad \text{ for $*=0,\infty$}.
\end{equation*}

 Arguing as in the proof of Lemma \ref{FE+-} we get the following.

\begin{lemma}\label{GE+-}
Consider
$\mathbf{V} = \{\mathbf{ 0},\mathbf{ v}_1,\mathbf{ v}_2\}=\{(0,0),(a_1,b_1),(a_2,b_2)\}$ such that $a_2$ is negative, and $b_1$ and $b_2$ are nonzero. Suppose that  $|a_2|$ is sufficiently small with respect to $|b_2|$.\\

\noindent If $b_2>0$, then
$$
G \bullet \mathbb{E} = \gamma_0 + v_{\mathbb{L}}(\mathbf{ v}_2) \bullet u_{\mathbb{L}}^{\mathbf{V} },
$$
$
G \bullet \bar{\mathcal{T}}_0 =0,
$
and 
$G \bullet \bar{\mathcal{T}}_{\infty} =1.$\\

\noindent If $b_2<0$, then
$$
G \bullet \mathbb{E} = d-\gamma_0 +v_{\mathbb{L}}(\mathbf{ v}_2) \bullet u_{\mathbb{L}}^{\mathbf{V} },
$$
$
G \bullet \bar{\mathcal{T}}_0 =1,
$
and 
$G \bullet \bar{\mathcal{T}}_{\infty} =0.$
\end{lemma}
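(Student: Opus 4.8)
The plan is to run the argument of Lemma~\ref{FE+-} essentially verbatim, with the data $\mathbf{F}$, $\mathbf{v}_1$, $\alpha_0$ replaced throughout by $\mathbf{G}$, $\mathbf{v}_2$, $\gamma_0$, while keeping in mind the structural differences: $\mathbf{G}$ is of Type~3 with $d$ planes in $\mathcal{T}_0\cup\mathcal{T}_\infty$ (of which $v_1,\dots,v_{\gamma_0}$ lie in $\mathcal{T}_0$) and $d-1$ planes in $\frak{s}_0\cup\frak{s}_\infty$, and the essential curve $v_{\mathbb{L}}(\mathbf{v}_2)$ of $\mathbf{G}(\mathbf{v}_2)$ has $p$-image $A_0$, just like $u_{\mathbb{L}}^{\mathbf{V}}$. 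In Lemma~\ref{FE+-} the two curves playing these roles were both derived from the single plane $u_{\mathbb{L}}$ of $\mathbf{F}$, hence homotopic rel $\mathbb{L}$, and a Maslov-index computation forced their intersection number to vanish; here $v_{\mathbb{L}}$ comes from a different building, no such homotopy is available, and $v_{\mathbb{L}}(\mathbf{v}_2)\bullet u_{\mathbb{L}}^{\mathbf{V}}$ cannot be evaluated --- which is precisely why it is left in the statement. Since $\mathbf{v}_2\neq\mathbf{0}$ (as $b_2\neq0$), the curves $v_{\mathbb{L}}(\mathbf{v}_2)$ and $u_{\mathbb{L}}^{\mathbf{V}}$ are asymptotic to the disjoint tori $\mathbb{L}(\mathbf{v}_2)$ and $\mathbb{L}$, so that quantity is at any rate a well-defined nonnegative integer.

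First I would compute $G\bullet\bar{\mathcal{T}}_0$ and $G\bullet\bar{\mathcal{T}}_\infty$. As in Lemma~\ref{FE+-}, each disc of $\bar{\mathcal{T}}_0$ has a companion disc of $\bar{\mathcal{T}}_\infty$ with which it glues along $\mathbb{L}$ to a sphere in class $(0,1)$, and $G$ represents $(1,d)$, so $G\bullet\bar{\mathcal{T}}_0+G\bullet\bar{\mathcal{T}}_\infty=1$; by positivity of intersection it suffices to see which summand is positive, and via the identity $G\bullet\bar{\mathcal{T}}_*=\mathbf{G}(\mathbf{v}_2)\bullet\mathcal{T}_*$ recorded just before the statement this reduces to showing $\underline{v}(\mathbf{v}_2)\bullet\mathcal{T}_\infty\geq1$ when $b_2>0$, resp.\ $\underline{v}(\mathbf{v}_2)\bullet\mathcal{T}_0\geq1$ when $b_2<0$. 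The local picture is the one already used in Lemma~\ref{FE+-}: $\underline{v}(\mathbf{v}_2)$ is essential and meets $\mathcal{U}(\mathbb{L})$ in $\{P_1>a_2\}$ with an end asymptotic to $\mathbb{L}(\mathbf{v}_2)=\{P_1=a_2,\,P_2=b_2\}$, so it crosses the annuli $\{P_1=a,\,Q_1=\theta,\,P_2>0\}$ (if $b_2>0$), resp.\ $\{P_1=a,\,Q_1=\theta,\,P_2<0\}$ (if $b_2<0$), for every $a$ sufficiently close to $a_2$; since $|a_2|$ is small with respect to $|b_2|$ this includes $a=0$, where these annuli are the traces in $\mathcal{U}(\mathbb{L})$ of the planes of $\mathcal{T}_\infty$, resp.\ $\mathcal{T}_0$. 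This gives $G\bullet\bar{\mathcal{T}}_\infty=1$, $G\bullet\bar{\mathcal{T}}_0=0$ for $b_2>0$, and symmetrically for $b_2<0$.

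Then, for $G\bullet\mathbb{E}$ I would use $G\bullet\mathbb{E}=\mathbf{G}(\mathbf{v}_2)\bullet u_{\mathbb{L}}^{\mathbf{V}}$ and add up the contributions of the top-level curves of $\mathbf{G}(\mathbf{v}_2)$ one at a time, as in Lemma~\ref{FE+-}. The curves with $p$-image contained in $A_\infty$ --- the essential curve of $\mathbf{G}$ lying over $A_\infty$ together with all the $\frak{v}_j$ --- are disjoint from $u_{\mathbb{L}}^{\mathbf{V}}$, whose $p$-image is $A_0$. The curve $\underline{v}(\mathbf{v}_2)$ is disjoint from $u_{\mathbb{L}}^{\mathbf{V}}$ by the same continuity argument: at $a_2=0$ the parts of the two curves inside $\mathcal{U}(\mathbb{L})$ lie in $\{P_1\geq0\}$ and $\{P_1\leq0\}$ and touch the wall $\{P_1=0\}$ at $P_2=b_2$ and $P_2=0$, hence are separated because $b_2\neq0$, and this persists for $|a_2|$ small with respect to $|b_2|$; outside $\mathcal{U}(\mathbb{L})$ their $p$-images stay in the interiors of $B$ and $A_0$. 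Because $u_{\mathbb{L}}^{\mathbf{V}}$ is essential it meets each broken leaf of $\mathcal{F}(\mathbf{V})$ through $\{P_1=a_2,\,Q_1=\theta\}$ exactly once, at a point whose $P_2$-coordinate is close to $0$ (it is close to the asymptotic limit of $u_{\mathbb{L}}^{\mathbf{V}}$ on $\mathbb{L}$), so that intersection lands on the plane $v_i(\mathbf{v}_2)$ precisely when $v_i$ lies on the side of $\mathbb{L}(\mathbf{v}_2)$ facing $P_2=0$, i.e.\ when $v_i\in\mathcal{T}_0$ and $b_2>0$, or $v_i\in\mathcal{T}_\infty$ and $b_2<0$. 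Thus the planes $v_i(\mathbf{v}_2)$ contribute $\gamma_0$ when $b_2>0$ and $d-\gamma_0$ when $b_2<0$, each via a single intersection, and the one remaining curve $v_{\mathbb{L}}(\mathbf{v}_2)$, which also lies over $A_0$, contributes the term $v_{\mathbb{L}}(\mathbf{v}_2)\bullet u_{\mathbb{L}}^{\mathbf{V}}$. Summing yields the two displayed formulas.

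The step I expect to require the most care --- though it is bookkeeping rather than a real obstacle --- is establishing that $v_{\mathbb{L}}(\mathbf{v}_2)$ is the only top-level curve of $\mathbf{G}(\mathbf{v}_2)$, besides the appropriate family of $\mathcal{T}$-planes, that can meet $u_{\mathbb{L}}^{\mathbf{V}}$: this means pinning down the disjointness of $\underline{v}(\mathbf{v}_2)$ from $u_{\mathbb{L}}^{\mathbf{V}}$ (where the smallness of $|a_2|$ relative to $|b_2|$ is genuinely used) and the precise, $P_2\approx0$, position of the unique intersection of $u_{\mathbb{L}}^{\mathbf{V}}$ with each broken leaf through $\{P_1=a_2\}$. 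No new analysis beyond Lemma~\ref{FE+-} is needed; the single place where that proof does not transfer is the Maslov-index argument yielding $u_{\mathbb{L}}(\mathbf{v}_1)\bullet u_{\mathbb{L}}^{\mathbf{V}}=0$, and this is exactly the reason the analogous intersection number is retained here.
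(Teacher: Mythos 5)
Your proposal is correct and takes the same approach as the paper, which simply says "Arguing as in the proof of Lemma~\ref{FE+-} we get the following." You correctly transfer the intersection and disjointness arguments, account for the structural difference ($d$ rather than $d-1$ planes in $\mathcal{T}_0\cup\mathcal{T}_\infty$ for $\mathbf{G}$), and identify the Maslov-index step as the one that fails to transfer, which is exactly why the term $v_{\mathbb{L}}(\mathbf{v}_2)\bullet u_{\mathbb{L}}^{\mathbf{V}}$ is retained in the statement.
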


The term $v_{\mathbb{L}}(\mathbf{ v}_2) \bullet u_{\mathbb{L}}^{\mathbf{V} }$ is not necessarily equal to zero. Instead we have the following identity.

\begin{lemma}\label{noleftg}
For
$\mathbf{V} = \{\mathbf{ 0},\mathbf{ v}_1,\mathbf{ v}_2\}=\{(0,0),(a_1,b_1),(a_2,b_2)\} $ where  $b_1$ and $b_2$ have opposite sign, and $a_1$ and $a_2$ sufficiently small relative to $b_1$ and $b_2$ we have
\begin{equation*}
\label{ }
v_{\mathbb{L}}(\mathbf{ v}_2) \bullet u_{\mathbb{L}}^{\mathbf{V} }=v_{\mathbb{L}}(\mathbf{ v}_2) \bullet u_{\mathbb{L}}(\mathbf{ v}_1).
\end{equation*}

\end{lemma}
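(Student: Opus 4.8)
The plan is to realize both intersection numbers as the value of a single invariant: the algebraic intersection number of the fixed disk $v_{\mathbb{L}}(\mathbf{v}_2)$ with a continuously varying family of disks that interpolates between the compactifications of $u_{\mathbb{L}}^{\mathbf{V}}$ and $u_{\mathbb{L}}(\mathbf{v}_1)$. Recall that both of these planes are deformations of the essential plane $u_{\mathbb{L}}$ of $\mathbf{F}$: the curve $u_{\mathbb{L}}^{\mathbf{V}}$ is produced by moving $J$ to $J_{\mathbf{V}}$ as in Corollary \ref{up} while its boundary stays on $\mathbb{L}=\mathbb{L}(\mathbf{0})$, whereas $u_{\mathbb{L}}(\mathbf{v}_1)$ is produced by Fukaya's trick (Lemma \ref{fukaya}) with translation vector $\mathbf{v}_1$, so that its boundary lies on $\mathbb{L}(\mathbf{v}_1)$. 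Each side of the asserted identity is a well-defined count of intersection points, all positive, since the asymptotic torus of $v_{\mathbb{L}}(\mathbf{v}_2)$, namely $\mathbb{L}(\mathbf{v}_2)$, is disjoint from both $\mathbb{L}(\mathbf{0})$ and $\mathbb{L}(\mathbf{v}_1)$; equivalently, one may pass to compactified disks and count there.

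First I would build the interpolating family in two stages. For $s\in[0,1]$, applying the diffeomorphisms $f_{s,\mathbf{v}_1}$ from the proof of Lemma \ref{fukaya} to the compactified disk $\overline{u_{\mathbb{L}}^{\mathbf{V}}}$ gives a family of smooth disks whose boundaries run along $\mathbb{L}(s\mathbf{v}_1)$, from $\overline{u_{\mathbb{L}}^{\mathbf{V}}}$ at $s=0$ to $f_{1,\mathbf{v}_1}\circ\overline{u_{\mathbb{L}}^{\mathbf{V}}}$ at $s=1$. To close the gap I would then homotope $f_{1,\mathbf{v}_1}\circ\overline{u_{\mathbb{L}}^{\mathbf{V}}}$ to $\overline{u_{\mathbb{L}}(\mathbf{v}_1)}$ through disks with boundary fixed on $\mathbb{L}(\mathbf{v}_1)$; this is possible because the two represent the same class in $\pi_2(S^2\times S^2,\mathbb{L}(\mathbf{v}_1))$. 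Indeed $\overline{u_{\mathbb{L}}^{\mathbf{V}}}$ and $\overline{u_{\mathbb{L}}}$ are homotopic rel $\mathbb{L}$ (this was already used in the proof of Lemma \ref{FE+-}), hence so are their images under $f_{1,\mathbf{v}_1}$ rel $\mathbb{L}(\mathbf{v}_1)$, while $\overline{u_{\mathbb{L}}(\mathbf{v}_1)}$ is by construction the image under $f_{1,\mathbf{v}_1}$ of a small perturbation of $u_{\mathbb{L}}$ and so is relatively homotopic to $f_{1,\mathbf{v}_1}\circ\overline{u_{\mathbb{L}}}$.

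Next I would verify disjointness of the boundary tori along the whole family and conclude. During the first stage the boundary lies on $\mathbb{L}(s\mathbf{v}_1)=\{P_1=sa_1,\,P_2=sb_1\}$, and during the second stage on $\mathbb{L}(\mathbf{v}_1)$. Since $b_1$ and $b_2$ have opposite signs and are nonzero, the coordinate $sb_1$ never equals $b_2$ for $s\in[0,1]$, so every $\mathbb{L}(s\mathbf{v}_1)$, and also $\mathbb{L}(\mathbf{v}_1)$, is disjoint from $\mathbb{L}(\mathbf{v}_2)$. Hence every member $w_t$ of the family and the disk $v_{\mathbb{L}}(\mathbf{v}_2)$ have their boundaries on disjoint tori, so $v_{\mathbb{L}}(\mathbf{v}_2)\bullet w_t$ is defined; as $t$ varies the intersection points (all in the interiors, since the boundaries are disjoint) can only be created or destroyed in cancelling pairs and none can escape through the boundary, so the algebraic count is constant, and at the endpoints it equals the geometric count. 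Evaluating at the two endpoints gives $v_{\mathbb{L}}(\mathbf{v}_2)\bullet u_{\mathbb{L}}^{\mathbf{V}}=v_{\mathbb{L}}(\mathbf{v}_2)\bullet u_{\mathbb{L}}(\mathbf{v}_1)$.

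The step I expect to be the main obstacle is precisely the disjointness bookkeeping in the construction of the family: the argument hinges on being able to drag the boundary of $u_{\mathbb{L}}^{\mathbf{V}}$ from $\mathbb{L}$ all the way to $\mathbb{L}(\mathbf{v}_1)$ along a path of tori that never crosses $\mathbb{L}(\mathbf{v}_2)$, and it is exactly the opposite-sign hypothesis on $b_1,b_2$ that makes the straight-line path $s\mapsto s\mathbf{v}_1$ admissible. If $b_1$ and $b_2$ had the same sign this path would (for $|a_i|$ small) meet $\mathbb{L}(\mathbf{v}_2)$ and the intersection number could jump, so a genuinely different comparison would be needed. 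A secondary, routine point is to confirm that near their punctures the various planes lie in disjoint Weinstein neighborhoods, so that all the intersection numbers in play are finite and stable under the homotopy.
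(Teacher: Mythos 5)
Your proposal follows essentially the same strategy the paper uses: realize both sides as endpoints of a one-parameter family of relative intersection numbers, and argue that the opposite-sign hypothesis on $b_1,b_2$ keeps the moving boundary torus $\mathbb{L}(s\mathbf{v}_1)$ from ever meeting $\mathbb{L}(\mathbf{v}_2)$, so the count does not jump. The paper organizes the computation a little differently: it first reduces to $a_1=a_2=0$, establishes $v_{\mathbb{L}}(\mathbf{v}_2)\bullet u_{\mathbb{L}}=v_{\mathbb{L}}(\mathbf{v}_2)\bullet u_{\mathbb{L}}(\mathbf{v}_1)$ by the Fukaya path $u_t$ alone, tacitly uses that $\overline{u_{\mathbb{L}}^{\mathbf{V}}}$ and $\overline{u_{\mathbb{L}}}$ are homotopic rel $\mathbb{L}$ (already invoked in the proof of Lemma \ref{FE+-}) to convert the left side, and recovers general small $a_1,a_2$ by continuity; you build a single two-stage interpolation from $\overline{u_{\mathbb{L}}^{\mathbf{V}}}$ to $\overline{u_{\mathbb{L}}(\mathbf{v}_1)}$ directly. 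These are cosmetic differences.

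One point deserves more weight than you give it. Disjointness of the boundary tori is not by itself enough to make the intersection number a homotopy invariant: a family of disks can still gain or lose intersection points by dragging its boundary circle through the \emph{interior} of the fixed disk, even while the two boundary tori remain disjoint. What the cobordism argument actually requires is that throughout the homotopy the boundary circle of $w_t$ avoids the entire compactified image of $v_{\mathbb{L}}(\mathbf{v}_2)$ and, symmetrically, the boundary circle of $v_{\mathbb{L}}(\mathbf{v}_2)$ avoids the image of $w_t$. The paper secures the first half explicitly by observing that $v_{\mathbb{L}}(\mathbf{v}_2)$ projects onto $A_0$, so inside $\mathcal{U}(\mathbb{L})$ its compactification lies in $\{P_1\le 0\}$ and approaches $\{P_1=0\}$ only at $P_2=b_2$, which by the sign hypothesis stays off every $\mathbb{L}(t\mathbf{v}_1)$; the symmetric half uses the analogous structure of $u_t$. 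You flagged this verification as a \emph{secondary, routine point}, but it is really the content of the disjointness bookkeeping: it is exactly where the $A_0$-projection and positivity-of-intersection properties of the essential curves enter, and without it the ``no escape through the boundary'' claim does not follow from disjointness of the tori alone.
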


\begin{proof}
First we consider the case when $a_1 = a_2 =0$. The image of the map $v_{\mathbb{L}}(\mathbf{ v}_2)$ projects to $A_0$ and its boundary lies in $\mathbb{L}(\mathbf{ v}_2)$. Hence, using our assumption  on sign, the family of Lagrangians $\mathbb{L}(t \mathbf{ v}_1))$ for $0 \le t \le 1$ are disjoint from the compactification of $v_{\mathbb{L}}(\mathbf{ v}_2)$. It then follows from the proof of Lemma \ref{fukaya}, that the compactification of $u_{\mathbb{L}}$ is connected to that of $u_{\mathbb{L}}(\mathbf{ v}_1)$
by a path of smooth maps $u_t \colon (D^2, S^1) \to (S^2 \times S^2, \mathbb{L}(t \mathbf{ v}_1))$. Therefore we have $$v_{\mathbb{L}}(\mathbf{ v}_2) \bullet u_{\mathbb{L}}=v_{\mathbb{L}}(\mathbf{ v}_2) \bullet u_{\mathbb{L}}(\mathbf{ v}_1),$$ as required.

For the general case we use the fact that the maps vary continuously with the parameters and so the intersection numbers remain unchanged for $a_1$ and $a_2$ sufficiently small.


\end{proof}


Since $\mathbf{ v}_1$ and $\mathbf{ v}_2$ are distinct, the intersection numbers of some of the top level curves of $\mathbf{ F}(\mathbf{v}_1)$ and $\mathbf{ G}(\mathbf{v}_2)$ are well-defined. The following results concerning these intersections, will be useful.

\begin{lemma}\label{}
For $\mathbf{ v}_1 =(a_1,b_1)$ and $\mathbf{ v}_2 =(a_2,b_2)$, suppose that $a_1< a_2<0$, $|a_1|$ is sufficiently small with repsect to $|b_1|$, and $|a_2|$ is sufficiently small with repsect to $|b_2|$.\\

\noindent If  $b_1>b_2$, then 
\begin{equation*}
\label{ }
u_i(\mathbf{ v}_1) \bullet v_{\mathbb{L}}(\mathbf{ v}_2) =1 \text{  for } i=1, \dots, \alpha_0
\end{equation*}
and 
\begin{equation*}
\label{ }
v_i(\mathbf{ v}_2) \bullet \underline{u}(\mathbf{ v}_1)=1 \text{ for } i=\gamma_0+1, \dots, d.
\end{equation*}
If  $b_1<b_2$, then 
\begin{equation*}
\label{ }
u_i(\mathbf{ v}_1) \bullet v_{\mathbb{L}}(\mathbf{ v}_2) =1 \text{  for } i=\alpha_0+1, \dots, d-1
\end{equation*}
and 
\begin{equation*}
\label{ }
v_i(\mathbf{ v}_2) \bullet \underline{u}(\mathbf{ v}_1)=1 \text{ for } i=1, \dots, \gamma_0.
\end{equation*}
Moreover, all the intersection points here project to $A_0$.

\end{lemma}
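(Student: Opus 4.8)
The plan is to run the argument of Lemma \ref{FE+-} with the straightened essential curve $u_{\mathbb{L}}^{\mathbf V}$ there replaced by the two essential curves $v_{\mathbb{L}}(\mathbf v_2)$ of $\mathbf G(\mathbf v_2)$ (projecting onto $A_0$, asymptotic to $\mathbb{L}(\mathbf v_2)$) and $\underline u(\mathbf v_1)$ of $\mathbf F(\mathbf v_1)$ (projecting onto $B$, asymptotic to $\mathbb{L}(\mathbf v_1)$ and to $L_{1,1}$). First I would show that every intersection point counted by $u_i(\mathbf v_1)\bullet v_{\mathbb{L}}(\mathbf v_2)$ or by $v_i(\mathbf v_2)\bullet\underline u(\mathbf v_1)$ lies inside the Weinstein neighborhood $\mathcal U(\mathbb{L})$, and then evaluate these intersections there from the normal form of the curves.

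For the localization step, note that outside $\mathcal U(\mathbb{L})$ the deformed planes $u_i(\mathbf v_1)$ and $v_i(\mathbf v_2)$ are $C^1$-close to the undeformed planes of $\mathcal T_0\cup\mathcal T_\infty$ from which they were built, and each of the latter is mapped by $p$ to a single point of $p(\mathbb{L})=\partial A_0$. On the other hand $p\circ v_{\mathbb{L}}$ is a homeomorphism onto $A_0$ and the part of $v_{\mathbb{L}}$ lying in $\mathcal U(\mathbb{L})$ is contained in $\{P_1\le 0\}$, which $p$ sends onto a collar of $\partial A_0$ in $A_0$; hence $p$ maps $v_{\mathbb{L}}\smallsetminus\mathcal U(\mathbb{L})$ into the interior of $A_0$, away from $\partial A_0$, and the analogous statement holds for $\underline u$ with $A_0$ replaced by $B$ and $\partial A_0$ by the boundary circle $p(\mathbb{L})$. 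Consequently the undeformed curves in each pair are disjoint outside $\mathcal U(\mathbb{L})$, and since $\mathbf v_1$ and $\mathbf v_2$ are small, distinct and nonzero, the deformed curves stay disjoint outside $\mathcal U(\mathbb{L})$.

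Inside $\mathcal U(\mathbb{L})$ the normal form gives the following: $u_i(\mathbf v_1)$ meets $\mathcal U(\mathbb{L})$ in an annulus $\{P_1=a_1,\ Q_1=\theta_i,\ P_2<b_1\}$ when $1\le i\le\alpha_0$ and in $\{P_1=a_1,\ Q_1=\theta_i,\ P_2>b_1\}$ when $\alpha_0<i\le d-1$, and symmetrically $v_i(\mathbf v_2)$ sits at $\{P_1=a_2,\ Q_1=\vartheta_i,\ P_2<b_2\}$ for $1\le i\le\gamma_0$ and at $\{P_1=a_2,\ Q_1=\vartheta_i,\ P_2>b_2\}$ for $\gamma_0<i\le d$; while $v_{\mathbb{L}}(\mathbf v_2)\cap\mathcal U(\mathbb{L})$ lies in $\{P_1\le a_2\}$ and, being essential (so covering no leaf), meets every slice $\{P_1=c,\ Q_1=\theta\}$ with $c<a_2$ at most once by positivity of intersection, and exactly once — inside $\mathcal U(\mathbb{L})$ when $|c|$ is small — whenever that slice projects into the interior of $A_0$; similarly $\underline u(\mathbf v_1)\cap\mathcal U(\mathbb{L})$ lies in $\{P_1\ge a_1\}$ and meets each slice $\{P_1=c,\ Q_1=\theta\}$ with $c>a_1$ at most once. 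Because $a_1<a_2<0$, the curve $v_{\mathbb{L}}(\mathbf v_2)$ reaches the slice $P_1=a_1$ and $\underline u(\mathbf v_1)$ reaches the slice $P_1=a_2$, so each meets the relevant family of annuli once per annulus provided we know in which half, $\{P_2<b_1\}$ or $\{P_2>b_1\}$ (respectively $\{P_2<b_2\}$ or $\{P_2>b_2\}$), the crossing occurs. This is settled by the exponential convergence theorem of \cite{hofa} (as used for Lemma \ref{ends}): near its puncture on $\mathbb{L}(\mathbf v_2)$ the curve $v_{\mathbb{L}}(\mathbf v_2)$ sits at $P_2$ arbitrarily close to $b_2$, uniformly in $Q_1$, once $P_1$ is close enough to $a_2$; since $a_1<a_2$ with $|a_1|,|a_2|$ small, $v_{\mathbb{L}}(\mathbf v_2)$ meets the slice $P_1=a_1$ at height $P_2\approx b_2$, and likewise $\underline u(\mathbf v_1)$ meets $P_1=a_2$ at height $P_2\approx b_1$. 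When $b_1>b_2$ the first height is $<b_1$, so $v_{\mathbb{L}}(\mathbf v_2)$ crosses precisely the $\mathcal T_0$-annuli $u_i(\mathbf v_1)$, $1\le i\le\alpha_0$, once each; and the second height is $>b_2$, so $\underline u(\mathbf v_1)$ crosses precisely the $\mathcal T_\infty$-annuli $v_i(\mathbf v_2)$, $\gamma_0<i\le d$, once each. The case $b_1<b_2$ is identical with the roles of $\mathcal T_0$ and $\mathcal T_\infty$ interchanged. Finally, all the intersection points obtained lie on the slices $P_1=a_1<0$ or $P_1=a_2<0$ inside $\mathcal U(\mathbb{L})$, and $p$ maps $\{P_1<0\}\subset\mathcal U(\mathbb{L})$ into the interior of $A_0$; hence they all project into $A_0$, which is the final assertion.

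I expect the height-tracking step — pinning down the $P_2$-coordinate of $v_{\mathbb{L}}(\mathbf v_2)$ along $\{P_1=a_1\}$ and of $\underline u(\mathbf v_1)$ along $\{P_1=a_2\}$ — to be the only real difficulty, and the one that forces the quantitative hypotheses on the $a_i$ (in particular that $a_2-a_1$ be small compared with $|b_1-b_2|$, so that the exponential convergence estimate places each crossing strictly on the correct side of $b_1$, respectively $b_2$); this is exactly the mechanism already used in Lemma \ref{FE+-}, and if one first straightens $v_{\mathbb{L}}$ and $\underline u$ as in Lemma \ref{away} and Corollary \ref{up}, it reduces to the uniform statement that the straightened essential curves approach their zero sections along the $P_1$-slices. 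The remaining points — transversality of each crossing and that it counts $+1$ — are immediate from positivity of intersection together with the normal-form description.
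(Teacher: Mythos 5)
Your proposal is correct and follows essentially the same strategy as the paper: bound the intersection count by one using the essential property of $v_{\mathbb{L}}(\mathbf v_2)$ and $\underline u(\mathbf v_1)$ with respect to $\mathcal{F}$, then detect a single intersection inside $\mathcal{U}(\mathbb{L})$ from the straightened normal form by tracking the $P_2$-height of the essential curve near its asymptotic end, noting the dependence on the smallness of $|a_1|,|a_2|$. The paper is slightly more economical: rather than first arguing that all intersections must occur inside $\mathcal{U}(\mathbb{L})$ (your localization step), it observes directly that each plane $u_i(\mathbf v_1)$ (resp.\ $v_i(\mathbf v_2)$) sits inside a single leaf of $\mathcal{F}$, so the essential curve can meet the whole plane at most once; then it exhibits the intersection by first taking $a_2=0$, detecting a crossing via the asymptotic circle of $v_{\mathbb{L}}((0,b_2))$ against the annuli $\{P_1=a_1,Q_1=\theta,P_2<b_1\}$, and letting the intersection persist under the small perturbation in $a_2$. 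Your localization argument, while plausible, requires a little more care (the projections $p(u_i(\mathbf v_1)\smallsetminus\mathcal U(\mathbb{L}))$ and $p(v_{\mathbb{L}}(\mathbf v_2)\smallsetminus\mathcal U(\mathbb{L}))$ must be kept apart uniformly as $\mathbf v_1,\mathbf v_2\to 0$) and is in any case unnecessary once one uses the leaf-containment observation; the rest — positivity of intersection, exponential convergence for the height-tracking, and the resulting case split on the sign of $b_1-b_2$ — matches the paper's argument.
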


\begin{proof}
Since the curves $\underline{u}(\mathbf{ v}_1)$ and $v_{\mathbb{L}}(\mathbf{ v}_2)$ are essential with respect to $\mathcal{F}$, they intersect a leaf of the foliation either once or not at all. Hence it suffices to detect a single intersection of the relevant pairs of curves listed. We detect an intersection for the first type of pair above and leave the other cases to the reader. For $1 \le i \le \alpha_0$ the planes $u_i(\mathbf{ v}_1)$ intersect $\mathcal{U}(\mathbb{L})$ in annuli $\{P_1 = a_1, Q_1 = \theta, P_2 < b_1\}$. As $v_{\mathbb{L}}((0,b_2))$ is asymptotic to $\mathbb{L}((0,b_2)) = \{P_1 = 0, P_2 = b_2\}$ it intersects $u_i(\mathbf{ v}_1)$ provided $a_1$ is sufficiently small (since the boundary of $v_{\mathbb{L}}((0,b_2))$ intersects all annuli $\{P_1 = 0, Q_1 = \theta, P_2 < b_1\}$). For $a_2$ sufficiently small, the plane $v_{\mathbb{L}}(\mathbf{ v}_2)$ is a deformation of $v_{\mathbb{L}}((0,b_2))$ and so the intersection persists. As $v_{\mathbb{L}}(\mathbf{ v}_2)$ intersects fibers at most once, the intersection number is equal to $1$. Since $a_1<0$, the intersection point projects to $A_0$.
\end{proof}

\begin{corollary}\label{FcapG}
For $\mathbf{ v}_1 =(a_1,b_1)$ and $\mathbf{ v}_2 =(a_2,b_2)$, suppose that $a_1< a_2<0$, $|a_1|$ is sufficiently small with repsect to $|b_1|$, and $|a_2|$ is sufficiently small with repsect to $|b_2|$.\\

\noindent If  $b_1>b_2$, then 
$F \cap G$ contains at least $\alpha_0 + d- \gamma_0$ points in $\mathcal{U}(\mathbb{L})$
that project to $A_0$.\\
If  $b_1<b_2$, then 
$F \cap G$ contains at least $d-1-\alpha_0  +  \gamma_0$ points in $\mathcal{U}(\mathbb{L})$
that project to $A_0$.
\end{corollary}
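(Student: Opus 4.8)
The plan is to read off the bound by combining the two families of intersection points supplied by the preceding lemma, after checking that they are pairwise distinct and that they survive the passage from the buildings to the smooth spheres $F$ and $G$. Consider first the case $b_1 > b_2$. The preceding lemma furnishes $\alpha_0$ points of the form $u_i(\mathbf{v}_1)\cap v_{\mathbb{L}}(\mathbf{v}_2)$, one for each $i=1,\dots,\alpha_0$, together with $d-\gamma_0$ points of the form $v_i(\mathbf{v}_2)\cap\underline{u}(\mathbf{v}_1)$, one for each $i=\gamma_0+1,\dots,d$, all of which lie in $\mathcal{U}(\mathbb{L})$ and project to $A_0$. Since $u_i(\mathbf{v}_1)$ and $\underline{u}(\mathbf{v}_1)$ are top level curves of $\mathbf{F}(\mathbf{v}_1)$ while $v_i(\mathbf{v}_2)$ and $v_{\mathbb{L}}(\mathbf{v}_2)$ are top level curves of $\mathbf{G}(\mathbf{v}_2)$, every one of these $\alpha_0+d-\gamma_0$ points lies on both compactifications $\bar{\mathbf{F}}(\mathbf{v}_1)$ and $\bar{\mathbf{G}}(\mathbf{v}_2)$.

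I would then verify that these points are pairwise distinct. Within the first family, the point indexed by $i$ lies on the plane $u_i(\mathbf{v}_1)$; for $1\le i\le\alpha_0$ these planes all belong to $\mathcal{T}_0$, they lie in pairwise distinct broken leaves (near $\mathbb{L}(\mathbf{v}_1)$ they are the annuli $\{P_1=a_1,Q_1=\theta_i,P_2<b_1\}$ for distinct $\theta_i$), hence they are pairwise disjoint and the $\alpha_0$ points are distinct. The same reasoning applied to the planes $v_i(\mathbf{v}_2)\in\mathcal{T}_\infty$ shows the second family consists of distinct points. For points lying in different families I would use the straightened local model: a point of the first family lies on some $u_i(\mathbf{v}_1)$, which meets $\mathcal{U}(\mathbb{L})$ only in an annulus inside $\{P_1=a_1\}$, whereas a point of the second family lies on some $v_i(\mathbf{v}_2)$, which meets $\mathcal{U}(\mathbb{L})$ only in an annulus inside $\{P_1=a_2\}$; since $a_1<a_2$ these slices are disjoint, so the two families are disjoint.

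Finally I would transfer these intersections to $F\cap G$. Recall that $F$ agrees with $\bar{\mathbf{F}}(\mathbf{v}_1)$ outside an arbitrarily small neighbourhood of $\mathbb{L}(\mathbf{v}_1)$, and $G$ agrees with $\bar{\mathbf{G}}(\mathbf{v}_2)$ outside an arbitrarily small neighbourhood of $\mathbb{L}(\mathbf{v}_2)$. With $\mathbf{v}_1,\mathbf{v}_2$ fixed, each chosen point sits at a positive distance from $\mathbb{L}(\mathbf{v}_1)$, since its $P_2$-coordinate is close to $b_2$ and hence bounded away from $b_1$ (the gap $b_1\ne b_2$ being fixed), and at a positive distance from $\mathbb{L}(\mathbf{v}_2)$, since its $P_1$-coordinate is $a_1$ or $a_2$ and hence bounded away from the $P_1$-coordinate of $\mathbb{L}(\mathbf{v}_2)$ in the other case. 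So, choosing the two deformations fine enough, all $\alpha_0+d-\gamma_0$ points persist as honest intersection points of $F$ and $G$, still in $\mathcal{U}(\mathbb{L})$ and still projecting to $A_0$. The case $b_1<b_2$ is handled identically, now using the $d-1-\alpha_0$ points $u_i(\mathbf{v}_1)\cap v_{\mathbb{L}}(\mathbf{v}_2)$ with $i=\alpha_0+1,\dots,d-1$ and the $\gamma_0$ points $v_i(\mathbf{v}_2)\cap\underline{u}(\mathbf{v}_1)$ with $i=1,\dots,\gamma_0$.

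The step I expect to cost the most care is this distinctness and survival bookkeeping: confirming that the second family genuinely meets $\mathcal{U}(\mathbb{L})$ on the slice $\{P_1=a_2\}$, so that the $a_1\ne a_2$ separation applies, and that no chosen point drifts into the shrinking neighbourhoods of $\mathbb{L}(\mathbf{v}_1)$ and $\mathbb{L}(\mathbf{v}_2)$ on which $F$ and $G$ are modified. Everything else is just adding the two counts coming from the preceding lemma.
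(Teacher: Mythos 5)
Your proposal is correct and supplies precisely the derivation the paper leaves implicit (the paper states the Corollary with no written proof, as an immediate consequence of the preceding lemma). You correctly identify the two families of intersection points and verify the two things that must be checked: pairwise distinctness via the straightened local model (distinct broken leaves within each family, distinct $P_1$-slices $\{P_1=a_1\}$ and $\{P_1=a_2\}$ between the families), and persistence under the passage from the compactified buildings to the smoothed spheres $F$ and $G$ because, with $\mathbf{v}_1,\mathbf{v}_2$ fixed, all the points stay a definite distance from $\mathbb{L}(\mathbf{v}_1)$ and $\mathbb{L}(\mathbf{v}_2)$ while the smoothing is confined to arbitrarily small neighbourhoods of those tori. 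One sentence worth tightening: when you say ``its $P_2$-coordinate is close to $b_2$'' this is only the first family; for the second family (the points on $\underline{u}(\mathbf{v}_1)\cap v_i(\mathbf{v}_2)$) the $P_2$-coordinate is close to $b_1$, and it is the separation $P_1=a_2\neq a_1$ that keeps them off $\mathbb{L}(\mathbf{v}_1)$ while $P_2\approx b_1\neq b_2$ keeps them off $\mathbb{L}(\mathbf{v}_2)$. This is cosmetic; the underlying argument is sound and matches the paper's intended reasoning.
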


\begin{remark}
It follows from Lemma \ref{noleftg} that any {\it excess} intersection points between $F$ and $G$ in $\mathcal{U}(\mathbb{L})$ are in bijection with intersection points between $G$ and $\mathbb{E}$, at least if the $b_i$ have opposite sign and the $a_i$ are sufficiently small.
\end{remark}

\subsubsection{Adding deformations near $L_{1,1}$.} To completely resolve the intersections of $F$ and $G$ we must also apply deformations in the Weinstein neighborhood 
$$
\mathcal{U}(L_{1,1}) = \{|p_1|<\epsilon,\, |p_2|<\epsilon\}.
$$
Here we consider nearby Lagrangian tori of the form  $$L_{1,1}(\mathbf{w}) := \{p_1 =c, p_2 =d\},$$ for $\mathbf{w} =(c,d) \in (-\epsilon, \epsilon) \times (-\epsilon, \epsilon).$ The space of 
almost complex structures $\mathcal{J}_{\mathcal{U}(L_{1,1})}$  is defined following the definition of $\mathcal{J}_{\mathcal{U}(\mathbb{L})}$. 

Given  collections 
$$
\mathbf{V} =\{\mathbf{v}_1, \dots, \mathbf{v}_k \} =\{(a_1,b_1), \dots, (a_k,b_k)\},
$$
and 
$$
\mathbf{W} =\{\mathbf{w}_1, \dots, \mathbf{w}_l \} =\{(c_1,d_1), \dots, (c_l,d_l)\}
$$
set $\mathbf{X} =\{\mathbf{V} ,\mathbf{W} \}$. Denote the corresponding almost-complex structure in
$\mathcal{J}_{\mathcal{U}(\mathbb{L})} \cap \mathcal{J}_{\mathcal{U}(L_{1,1})}$
by  $J_{\mathbf{X}}$.  

Lemma \ref{fukaya} generalizes to this setting as follows.

\begin{lemma}\label{fukaya2} Let $u$ be a regular $J$-holomorphic curve with $k\geq0$ ends on $\mathbb{L}$ and $l\geq 0$ ends on $L_{1,1}$.
For all $\mathbf{x}=\{\mathbf{v}, \mathbf{w}\}=\{(a,b), (c,d)\}$  with  $\|\mathbf{x}\|$ sufficiently small, there is a $J_{\mathbf{x}}$-holomorphic curve $u(\mathbf{x})$ that represents the class in $\pi_2(S^2 \times S^2, \mathbb{L}(\mathbf{v}) \cup L_{1,1}(\mathbf{w}) )$ that corresponds to the class  $[u] \in \pi_2(S^2 \times S^2, \mathbb{L}\cup L_{1,1} )$ under the obvious identification. The curve $u(\mathbf{x})$ has $k$ ends on $\mathbb{L}(\mathbf{v})$ and these represent the identical classes in $H_1^{\psi(\mathbf{v})}(\mathbb{L};\ZZ)$ as do those of $u$ in $H_1^{\psi}(\mathbb{L};\ZZ)$. The curve also has  $l$ ends on $L_{1,1}(\mathbf{w})$ which  represent the identical classes in $H_1^{\psi_{1,1}(\mathbf{w})}(L_{1,1};\ZZ)$ as do those of $u$ in $H_1^{\psi_{1,1}}(L_{1,1};\ZZ)$.
\end{lemma}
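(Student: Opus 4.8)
The plan is to rerun the proof of Lemma \ref{fukaya}, performing the deformation simultaneously and independently in the two Weinstein neighborhoods $\mathcal{U}(\mathbb{L})$ and $\mathcal{U}(L_{1,1})$. Since $\mathbb{L}$ and $L_{1,1}$ are disjoint we may take $\mathcal{U}(\mathbb{L})$ and $\mathcal{U}(L_{1,1})$ to be disjoint as well, so the two local modifications never interfere and the argument is essentially a product of two copies of the earlier one.

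First I would fix $\|\mathbf{x}\|=\|(\mathbf{v},\mathbf{w})\|$ small enough that the Lagrangian isotopies $t\mapsto\mathbb{L}(t\mathbf{v})$ and $t\mapsto L_{1,1}(t\mathbf{w})$, for $t\in[0,1]$, remain inside $\mathcal{U}(\mathbb{L})$ and $\mathcal{U}(L_{1,1})$ respectively. As in Lemma \ref{fukaya}, choose a family of diffeomorphisms $f_{t,\mathbf{v}}$ of $S^2\times S^2$ supported in $\mathcal{U}(\mathbb{L})$ with $f_{0,\mathbf{v}}=\mathrm{id}$, $f_{t,\mathbf{v}}(\mathbb{L})=\mathbb{L}(t\mathbf{v})$ and $\|f_{t,\mathbf{v}}\|_{C^1}$ of order $\|\mathbf{v}\|$; choose an analogous family $g_{t,\mathbf{w}}$ supported in $\mathcal{U}(L_{1,1})$ with $g_{t,\mathbf{w}}(L_{1,1})=L_{1,1}(t\mathbf{w})$ and $\|g_{t,\mathbf{w}}\|_{C^1}$ of order $\|\mathbf{w}\|$. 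Because the supports are disjoint, the composition $h_t=f_{t,\mathbf{v}}\circ g_{t,\mathbf{w}}$ is a family of diffeomorphisms with $h_0=\mathrm{id}$, $h_t(\mathbb{L})=\mathbb{L}(t\mathbf{v})$, $h_t(L_{1,1})=L_{1,1}(t\mathbf{w})$ and $\|h_t\|_{C^1}$ of order $\|\mathbf{x}\|$.

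Next, let $J_t$ be a family of tame almost complex structures in $\mathcal{J}_{\mathcal{U}(\mathbb{L})}\cap\mathcal{J}_{\mathcal{U}(L_{1,1})}$ adapted to both $\psi(t\mathbf{v})$ and $\psi_{1,1}(t\mathbf{w})$, so that $J_t$ is stretched along $\mathbb{L}(t\mathbf{v})$ in $\mathcal{U}(\mathbb{L})$ and along $L_{1,1}(t\mathbf{w})$ in $\mathcal{U}(L_{1,1})$, and agrees with $J$ outside these neighborhoods. Set $\tilde J_t=(h_t^{-1})_*J_t$. For $\|\mathbf{x}\|$ sufficiently small this is a tame almost complex structure on $S^2\times S^2\smallsetminus(\mathbb{L}\cup L_{1,1})$ for every $t\in[0,1]$, with $\tilde J_0=J$. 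Since $u$ is a regular $J$-holomorphic curve it persists under this small deformation to a regular $\tilde J_1$-holomorphic curve $\tilde u(\mathbf{x})$ with the same asymptotics as $u$; this persistence --- that the relevant moduli space stays transversally cut out for $\|\mathbf{x}\|$ small --- is the only analytic input, and is identical to the one used for Lemma \ref{fukaya} (alternatively one may invoke Theorem 1 of \cite{we}). Finally $u(\mathbf{x})=h_1\circ\tilde u(\mathbf{x})$ is a $J_{\mathbf{x}}$-holomorphic curve with $k$ ends on $\mathbb{L}(\mathbf{v})$ and $l$ ends on $L_{1,1}(\mathbf{w})$. Because $h_1$ restricts on $\mathcal{U}(\mathbb{L})$ (resp.\ $\mathcal{U}(L_{1,1})$) to a diffeomorphism carrying $\mathbb{L}$ to $\mathbb{L}(\mathbf{v})$ (resp.\ $L_{1,1}$ to $L_{1,1}(\mathbf{w})$) that is isotopic to the identity through such maps, it induces the obvious identification of $\pi_2(S^2\times S^2,\mathbb{L}\cup L_{1,1})$ with $\pi_2(S^2\times S^2,\mathbb{L}(\mathbf{v})\cup L_{1,1}(\mathbf{w}))$ under which $[u]$ is sent to $[u(\mathbf{x})]$, and on the boundary it carries the classes of the ends of $u$ to the corresponding classes in $H_1^{\psi(\mathbf{v})}(\mathbb{L};\ZZ)$ and $H_1^{\psi_{1,1}(\mathbf{w})}(L_{1,1};\ZZ)$ via the canonical isomorphisms. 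This gives all the asserted properties.

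I do not expect a genuine obstacle here: the only subtlety, exactly as for Lemma \ref{fukaya}, is the asymptotic and homotopy bookkeeping, together with choosing ``$\|\mathbf{x}\|$ sufficiently small'' uniformly in $t\in[0,1]$ so that both tameness and regularity persist along the entire deformation. The disjointness of $\mathcal{U}(\mathbb{L})$ and $\mathcal{U}(L_{1,1})$ is precisely what makes the two-Lagrangian statement no harder than its one-Lagrangian predecessor.
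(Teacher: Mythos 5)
Your proposal is correct and is precisely the argument the paper has in mind: the paper states Lemma \ref{fukaya2} with no separate proof, merely noting that Lemma \ref{fukaya} "generalizes to this setting," and your argument supplies exactly the expected justification by running Fukaya's trick simultaneously in the two disjoint Weinstein neighborhoods with $h_t = f_{t,\mathbf{v}}\circ g_{t,\mathbf{w}}$ and $\tilde J_t = (h_t^{-1})_* J_t$. No gap.
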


Corollary \ref{up}  generalizes as follows.

\begin{lemma}
\label{up2}
Let $u_{\mathbb{L}}$ and $u_{L_{1,1}}$ be the essential curves of a building $\mathbf{ F}$ as in Proposition \ref{existence2}. Let $\mathbf{X} =\{\mathbf{V} ,\mathbf{W} \}$ where $$\mathbf{V} =\{(0,0),(a_1,b_1), (a_2,b_2)\}$$ and 
$$\mathbf{W} =\{(0,0),(c_1,d_1), (c_2,d_2)\}.$$
If  $b_1,b_2, d_1, \, d_2$  are in $(-\epsilon, \epsilon)$ and  $a_1,a_2, c_1, c_2$ are in $(-\delta, \delta)$, then for all sufficiently small $\delta$
there is a  $J_{\mathbf{X} }$-holomorphic curve $$u_{\mathbb{L}}^{\mathbf{\mathbf{X} }}\colon \CC \to S^2 \times S^2 \smallsetminus (\mathbb{L}(\mathbf{V} ) \cup L_{1,1}(\mathbf{W} ))$$ in the class of $u_{\mathbb{L}}$ such that $u_{\mathbb{L}}^{\mathbf{\mathbf{X} }}$ is disjoint from the region $\{P_1>0\}$,  the closure of the image of $p\circ u_{\mathbb{L}}^{\mathbf{X} }$ is $ A_0$, and $u_{\mathbb{L}}^{\mathbf{X} }$ intersects the leaves of $\mathcal{F}(\mathbf{X} )$, that pass through the planes $\{P_1 =c<0, Q_1=\theta\}$, exactly once.

There is also a  $J_{\mathbf{X} }$-holomorphic curve $$u_{L_{1,1}}^{\mathbf{X} }\colon \CC \to S^2 \times S^2 \smallsetminus (L(\mathbf{V} ) \cup L_{1,1}(\mathbf{W} ))$$ in the class of $u_{L_{1,1}}$ such that $u_{L_{1,1}}^{\mathbf{X} }$ is disjoint from the region $\{p_1<0\}$,  the closure of the image of $p\circ u_{L_{1,1}}^{\mathbf{X} }$ is $ A_{\infty}$, and $u_{L_{1,1}}^{\mathbf{X} }$ intersects the leaves of $\mathcal{F}(\mathbf{X} )$, that pass through the planes $\{p_1 =c>0, q_1=\theta\}$, exactly once.
\end{lemma}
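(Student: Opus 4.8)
The plan is to obtain $u_{\mathbb{L}}^{\mathbf{X}}$ and $u_{L_{1,1}}^{\mathbf{X}}$ by combining the single‐torus deformation already carried out in Lemma~\ref{away} and Corollary~\ref{up} with a parallel stretching in $\mathcal{U}(L_{1,1})$, applied independently to the two essential curves $u_{\mathbb{L}}$ and $u_{L_{1,1}}$ of the Type~3 building $\mathbf{F}$. I would first recall that, by Lemma~\ref{area1}, both $u_{\mathbb{L}}$ and $u_{L_{1,1}}$ have symplectic area $1$, are embedded, and (using Theorem~1 of \cite{we}) are regular; moreover the closure of $p\circ u_{\mathbb{L}}$ is $A_0$ and the closure of $p\circ u_{L_{1,1}}$ is $A_{\infty}$. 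Since $A_0$ and $A_{\infty}$ are disjoint, $u_{\mathbb{L}}$ is disjoint from $\mathcal{U}(L_{1,1})$ near its image where the stretching in $\mathbf{W}$ takes place except through the parts of its domain projecting near $A_{\infty}$—which is empty. Thus the stretching of $J$ along $L_{1,1}(\mathbf{W})$ is essentially invisible to $u_{\mathbb{L}}$: its behaviour is governed only by the $\mathbf{V}$-part of $\mathbf{X}$, and Corollary~\ref{up} applies verbatim (with $J_{\mathbf{V}}$ replaced by $J_{\mathbf{X}}$, which differs only in $\mathcal{U}(L_{1,1})$) to produce $u_{\mathbb{L}}^{\mathbf{X}}$ with the stated properties. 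Symmetrically, the $P_1<0$/$p_1>0$ analysis proving the analogue of Lemma~\ref{away} for $u_{L_{1,1}}$ (with $S_0$, $T_0$ replaced by the appropriate spheres on the $L_{1,1}$ side, and using that the planes of $\frak{s}_0\cup\frak{s}_\infty$ and $\mathcal{T}_0\cup\mathcal{T}_\infty$ all have area $1$ so that no further degeneration is possible before $s=1$) produces $u_{L_{1,1}}^{\mathbf{X}}$.

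Concretely, I would argue as follows. Set $\mathbf{W}=\{(0,0),(c_1,d_1),(c_2,d_2)\}$ and choose a family $J_s$, $s\in[0,1]$, in $\mathcal{J}_{\mathcal{U}(\mathbb{L})}\cap\mathcal{J}_{\mathcal{U}(L_{1,1})}$ connecting $J=J_{\{\mathbf V,(0,0)\}}$ to $J_{\mathbf X}$ which manifests the stretching of $J$ along $L_{1,1}((c_i,d_i))$ while the $\mathbf{V}$-data is held fixed at $\{(0,0),(a_1,b_1),(a_2,b_2)\}$. Because $u_{L_{1,1}}$ has area $1$ and $L_{1,1}$ is monotone, the family $u_{L_{1,1}}(s)$ exists for $s\in[0,1)$, and the same three-step argument as in Lemma~\ref{away} (Claim~1 on the sign of the $p_1$-component of the limiting geodesics, forced by $p\circ u_{L_{1,1}}(s)$ missing the leaves through $\{p_1<0\}$; Claim~2 identifying foliation-type ends with covers of broken leaves; and the area/monotonicity bookkeeping showing the limit building would be disjoint from $T_{\infty}$, contradicting that $u_{L_{1,1}}$ meets $T_{\infty}$) shows the family extends to $s=1$, giving $u_{L_{1,1}}(1)$ disjoint from $\{p_1<0\}$, hence by positivity of intersection disjoint from $\{p_1=0\}$, with $p$-image closure $A_{\infty}$ and meeting each leaf through $\{p_1=c>0\}$ once. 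Then, exactly as in the passage from Lemma~\ref{away} to Corollary~\ref{up}, translating the tori $L_{1,1}((0,d_i))$ slightly in the $p_1$-direction to $(c_i,d_i)$ with $|c_i|$ small relative to $|d_i|$ preserves all of these properties by continuity, yielding $u_{L_{1,1}}^{\mathbf X}$. The curve $u_{\mathbb{L}}^{\mathbf X}$ is produced by the identical argument with the roles of $\mathbb{L}$ and $L_{1,1}$, $A_0$ and $A_{\infty}$, $\{P_1<0\}$ and $\{p_1>0\}$ swapped; here the $\mathbf W$-stretching plays no role since the relevant curves avoid $\mathcal{U}(L_{1,1})$.

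The main obstacle I anticipate is not any new phenomenon but rather checking that the two deformations genuinely decouple: one must verify that, throughout the stretching family $J_s$, the curve $u_{\mathbb{L}}(s)$ stays away from $\mathcal{U}(L_{1,1})$ (so that the $\mathbf W$-stretching cannot spawn ends on the tori $L_{1,1}(\mathbf w_j)$), and dually that $u_{L_{1,1}}(s)$ stays away from $\mathcal{U}(\mathbb{L})$. This follows from the invariance of the $p$-image closures $A_0$ and $A_{\infty}$ along the deformation—each step of the family is pseudo-holomorphic for an almost complex structure for which the foliation $\mathcal F(\mathbf X)$ of $S^2\times S^2\smallsetminus(\mathbb{L}(\mathbf V)\cup L_{1,1}(\mathbf W))$ is defined (Lemma~\ref{folcor} and its $L_{1,1}$-analogue guarantee the relevant straight annuli remain holomorphic), and an essential curve of area $1$ disjoint from $\{P_1>0\}$ cannot move its $p$-projection off $A_0$ without acquiring extra area. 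Once this decoupling is in place, everything reduces to two applications of the single-torus results already established, and the only bookkeeping is the elementary "$|c_i|$ small relative to $|d_i|$, and $\delta$ small" quantifier chasing, which I would not spell out in detail.
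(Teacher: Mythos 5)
Your proposal is correct and in substance agrees with the paper's approach: the paper simply states that Corollary~\ref{up} ``generalizes as follows'' without giving a new argument, and you have correctly identified that the generalization works because the two essential curves and the two Weinstein neighborhoods decouple through the disjointness of $A_0$ and $A_\infty$, so each curve can be treated by the single-torus machinery of Lemma~\ref{away} and Corollary~\ref{up}.

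One simplification worth noting: for $u_{\mathbb{L}}^{\mathbf X}$ you do not actually need to re-run a Lemma~\ref{away}-type stretching argument in $\mathcal{U}(L_{1,1})$, nor worry about whether the $p$-image could drift during a second deformation. The curve $u_{\mathbb{L}}^{\mathbf V}$ furnished by Corollary~\ref{up} already has $p$-image closure $A_0$ and is therefore disjoint from $\mathcal{U}(L_{1,1})$; since $J_{\mathbf X}$ and $J_{\mathbf V}$ coincide outside $\mathcal{U}(L_{1,1})$, the very same map $u_{\mathbb{L}}^{\mathbf V}$ is automatically $J_{\mathbf X}$-holomorphic, and you can take $u_{\mathbb{L}}^{\mathbf X}=u_{\mathbb{L}}^{\mathbf V}$. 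Symmetrically for $u_{L_{1,1}}^{\mathbf X}$, where the deformation actually does take place in $\mathcal{U}(L_{1,1})$ and your translation of Lemma~\ref{away} (with $T_{\infty}$ replacing $T_0$ as the target of the final contradiction) is what is needed; the $\mathbf{V}$-stretching is then invisible because $u_{L_{1,1}}$ and its deformations stay out of $\mathcal{U}(\mathbb{L})$. This removes the slightly hand-wavy ``cannot move its $p$-projection off $A_0$ without acquiring extra area'' step from your last paragraph and makes the decoupling an outright triviality.
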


\medskip

\subsubsection{Completion of the proof of Proposition \ref{intcount}} Let $\mathbf{ F}$ be a building as in  Proposition \ref{existence2} and let $\mathbf{ G}$ be a building as in Proposition \ref{existence3}.  Set
$$\mathbf{x_1} =\{\mathbf{ v}_1,\mathbf{w}_1\}= \{(a_1,b_1),(c_1,d_1)\},$$
$$\mathbf{x_2} =\{\mathbf{ v}_2,\mathbf{w}_2\}= \{(a_2,b_2),(c_2,d_2)\},$$
$$\mathbf{V} =\{\mathbf{ 0},\mathbf{ v}_1,\mathbf{ v}_2\}= \{(0,0),(a_1,b_1),(a_2,b_2)\},$$
$$\mathbf{W} =\{\mathbf{ 0},\mathbf{ w}_1,\mathbf{ w}_2\}= \{(0,0),(c_1,d_1),(c_2,d_2)\},$$
and 
$$ \mathbf{ X} =\{\mathbf{ V},\mathbf{ W}\}.$$
We assume that $\|\mathbf{ x}_1\|$ and $\|\mathbf{x}_2\|$ are small enough for  Lemma \ref{fukaya2} to yield the deformed buildings  $\mathbf{ F}(\mathbf{ x}_1)$ and $\mathbf{ G}(\mathbf{ x}_2)$. 
We also assume that  $|a_1|^2+ |a_2|^2+|c_1|^2 + |c_2|^2$ is small enough with respect to  $|b_1|^2+ |b_2|^2+|d_1|^2 + |d_2|^2$ for  Lemma \ref{up2} to yield the deformations $u_{\mathbb{L}}^{\mathbf{X} }$ and $u_{L_{1,1}}^{\mathbf{X} }$.

Let $\mathbb{E} \colon (D^2,S^1) \to (S^2 \times S^2, \mathbb{L})$ be the 
compactifiction of  $u_{\mathbb{L}}^{\mathbf{X} }$, and $E_{1,1}\colon (D^2,S^1) \to (S^2 \times S^2, L_{1,1})$ be the compactifiction of $u_{L_{1,1}}^{\mathbf{X} }$.  Since the homology classes represented by the ends of $u_{\mathbb{L}}^{\mathbf{X} }$ and $u_{L_{1,1}}^{\mathbf{X} }$ are identical to those of the essential curves $u_{\mathbb{L}}$ and $u_{L_{1,1}}$, the maps $\mathbb{E}$ and $E_{1,1}$ satisfy conditions (2) and (3) of Proposition \ref{intcount}.

Consider compactifications $\bar{\mathbf{ F}}(\mathbf{ x}_1) \colon S^2 \to S^2 \times S^2$ of 
$\mathbf{ F}(\mathbf{ x}_1)$, and $\bar{\mathbf{ G}}(\mathbf{ x}_2) \colon S^2 \to S^2 \times S^2$ of $\mathbf{ G}(\mathbf{ x}_2)$. Arguing as before, we can perturb these maps, arbitrarily close to the Lagrangians $\mathbb{L}(\mathbf{ v}_1)$, $L_{1,1}(\mathbf{ w}_1)$, $\mathbb{L}(\mathbf{ v}_2)$, and $L_{1,1}(\mathbf{ w}_2)$, to obtain smooth spheres $F$ and $G$ such that condition (1) of Proposition \ref{intcount} holds. 

It remains to verify the conditions (4) through (9) of Proposition \ref{intcount} that involve intersections. 

In the current setting, Lemma \ref{FE+-} holds as stated and the proof is unchanged.
\begin{lemma}\label{FE}
Suppose $a_1$ is negative, and $b_1$ and $b_2$ are nonzero. Suppose that  $|a_1|$ is sufficiently small with respect to $|b_1|$.\\

\noindent If $b_1>0$, then 
$
F \bullet \bar{\mathcal{T}}_0 =0,
$ 
$F \bullet \bar{\mathcal{T}}_{\infty} =1,$
and
$
F \bullet \mathbb{E} =\alpha_0.\\
$

\noindent If $b_1<0$, then 
$
F \bullet \bar{\mathcal{T}}_0 =1,
$ 
$
F \bullet \bar{\mathcal{T}}_{\infty} =0,
$
and
$
F \bullet \mathbb{E} =d-1-\alpha_0.
$
\end{lemma}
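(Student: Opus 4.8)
The plan is to verify that the proof of Lemma \ref{FE+-} goes through verbatim once the curve $u_{\mathbb{L}}^{\mathbf{V}}$ is replaced by the curve $u_{\mathbb{L}}^{\mathbf{X}}$ produced by Lemma \ref{up2} and the deformed building $\mathbf{F}(\mathbf{v}_1)$ by $\mathbf{F}(\mathbf{x}_1)$ from Lemma \ref{fukaya2}. I will treat the case $b_1>0$; the case $b_1<0$ is symmetric and can be left to the reader. First I would record the homological input: for each plane of $\bar{\mathcal{T}}_0$ there is a companion plane of $\bar{\mathcal{T}}_{\infty}$ with which it glues, along $\mathbb{L}(\mathbf{v}_1)$, to a sphere in the class $(0,1)$, so $F \bullet \bar{\mathcal{T}}_0 + F \bullet \bar{\mathcal{T}}_{\infty} =1$. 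Since $F$ is $J_{\mathbf{X}}$-holomorphic away from small neighborhoods of the translated tori, and all intersections with the disjoint holomorphic tori $\bar{\mathcal{T}}_0$ and $\bar{\mathcal{T}}_{\infty}$ are positive, the two torus counts follow once I exhibit a single intersection point of $F$ with $\bar{\mathcal{T}}_{\infty}$; by the analogue of \eqref{sameT} this reduces to showing $\underline{u}(\mathbf{x}_1) \bullet \mathcal{T}_{\infty} \geq 1$.

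For that single intersection I would work entirely inside the straightened Weinstein neighborhood $\mathcal{U}(\mathbb{L})$, where the planes of $\mathcal{T}_{\infty}$ meet $\mathcal{U}(\mathbb{L})$ in the annuli $\{P_1=0,\,Q_1=\theta,\,P_2>0\}$, while $\underline{u}(\mathbf{x}_1)$ meets $\mathcal{U}(\mathbb{L})$ in the region $\{P_1>a_1\}$ and, being essential, meets every cylinder $\{P_1=a,\,Q_1=\theta\}$ with $a>a_1$. Since $\underline{u}(\mathbf{x}_1)$ has an end asymptotic to $\mathbb{L}(\mathbf{v}_1)=\{P_1=a_1,\,P_2=b_1\}$ with $b_1>0$, for $|a_1|$ small relative to $|b_1|$ this forces an intersection with some annulus $\{P_1=a,\,Q_1=\theta,\,P_2>0\}$, i.e. with a plane of $\mathcal{T}_{\infty}$. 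The extra stretching data $\mathbf{W}$ near $L_{1,1}$ plays no role here, since $\mathcal{T}_0$, $\mathcal{T}_{\infty}$, and $\underline{u}(\mathbf{x}_1)|_{\mathcal{U}(\mathbb{L})}$ are untouched by it.

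To compute $F \bullet \mathbb{E}$, I would use the analogue of \eqref{sameEs}, so that $F \bullet \mathbb{E}$ equals the sum, over the top level curves of $\mathbf{F}(\mathbf{x}_1)$, of their intersection numbers with $u_{\mathbb{L}}^{\mathbf{X}}$. Being essential with $p$-image having closure $A_0$, the curve $u_{\mathbb{L}}^{\mathbf{X}}$ is disjoint from $u_{L_{1,1}}(\mathbf{w}_1)$ and from all the $\frak{u}_j$, which project into $A_{\infty}$; it is disjoint from $\underline{u}(\mathbf{x}_1)$ by continuity from the case $a_1=0$, where the two curves lie on opposite sides of $\{P_1=0\}$ inside $\mathcal{U}(\mathbb{L})$; and, being essential, it meets the broken fibre carrying each $u_i(\mathbf{x}_1)$ at most once — on $u_i(\mathbf{x}_1)$ itself exactly when $u_i(\mathbf{x}_1)$ meets $\mathcal{U}(\mathbb{L})$ in an annulus $\{P_1=a_1,\,Q_1=\theta,\,P_2<b_1\}$, i.e. for $1\le i\le\alpha_0$, and off $u_i(\mathbf{x}_1)$ when $i>\alpha_0$. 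Finally $u_{\mathbb{L}}(\mathbf{x}_1)\bullet u_{\mathbb{L}}^{\mathbf{X}}=u_{\mathbb{L}}(\mathbf{x}_1)\bullet u_{\mathbb{L}}=0$: the two are $C^1$-close embedded discs with boundary on isotopic tori, so via $\mu(\bar{u}_{\mathbb{L}})=2c_1(\bar{u}_{\mathbb{L}})$ and the normal-bundle section computation (which is local near $\mathbb{L}$) the intersection number equals $\tfrac12\mu(\bar{u}_{\mathbb{L}})-1=0$, using that $u_{\mathbb{L}}$ has area $1$ and $\mathbb{L}$ is monotone by Lemma \ref{area1}. Summing the contributions gives $F \bullet \mathbb{E}=\alpha_0$.

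The one point requiring care — and the reason the statement is re-proved rather than simply quoted — is that the smallness requirement "$|a_1|$ small relative to $|b_1|$" must be compatible with the smallness hypotheses already imposed by Lemma \ref{fukaya2} and Lemma \ref{up2}; since those only ask that $|a_1|^2+|a_2|^2+|c_1|^2+|c_2|^2$ be small relative to $|b_1|^2+|b_2|^2+|d_1|^2+|d_2|^2$, there is no circularity, and this bookkeeping is the only thing to check. No genuinely new analysis is needed beyond that used for Lemma \ref{FE+-}.
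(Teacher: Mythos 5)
Your proposal is correct and takes essentially the same route as the paper: the paper's own "proof" of Lemma \ref{FE} is the one-line remark that the proof of Lemma \ref{FE+-} is unchanged in the current setting, and what you have done is verify that remark by walking through the argument with $u_{\mathbb{L}}^{\mathbf V}$ replaced by $u_{\mathbb{L}}^{\mathbf X}$ and $\mathbf F(\mathbf v_1)$ by $\mathbf F(\mathbf x_1)$, correctly noting that the extra stretching data $\mathbf W$ near $L_{1,1}$ does not affect any of the local arguments in $\mathcal U(\mathbb{L})$ and that the smallness hypotheses of Lemma \ref{fukaya2} and Lemma \ref{up2} are compatible with the requirement that $|a_1|$ be small relative to $|b_1|$.
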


Lemmas \ref{GE+-} and \ref{noleftg} and Corollary \ref{FcapG} change only in notation and yield the following.

\begin{lemma}\label{GE}
Suppose that $a_2$ is negative, $b_1$ and $b_2$ are nonzero, and $|a_2|$ is sufficiently small with respect to $|b_2|$.\\

\noindent If $b_2>0$, then
$
G \bullet \bar{\mathcal{T}}_0 =0,
$
$G \bullet \bar{\mathcal{T}}_{\infty} =1,$
and
$$
G \bullet \mathbb{E} = \gamma_0 + v_{\mathbb{L}}(\mathbf{x}_2) \bullet u_{\mathbb{L}}^{\mathbf{X} }.\\
$$

\noindent If $b_2<0$, then
$
G \bullet \bar{\mathcal{T}}_0 =1,
$
and 
$G \bullet \bar{\mathcal{T}}_{\infty} =0,$
and 
$$
G \bullet \mathbb{E} = d-\gamma_0 +v_{\mathbb{L}}(\mathbf{x}_2) \bullet u_{\mathbb{L}}^{\mathbf{X} }.
$$
\end{lemma}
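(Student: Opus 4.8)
The plan is to transplant the proof of Lemma~\ref{GE+-} (itself a variant of the proof of Lemma~\ref{FE+-}) into the enlarged deformation set-up, where the extra translation data $\mathbf{W}$ near $L_{1,1}$ enters only as bookkeeping. Recall that here $G$ is obtained from the building $\mathbf{G}$ of Proposition~\ref{existence3} by first deforming all of its top level curves by $\mathbf{x}_2=\{\mathbf{v}_2,\mathbf{w}_2\}$ via Lemma~\ref{fukaya2}, then compactifying, then perturbing near the four tori $\mathbb{L}(\mathbf{v}_1),L_{1,1}(\mathbf{w}_1),\mathbb{L}(\mathbf{v}_2),L_{1,1}(\mathbf{w}_2)$; its top level curves are the deformed essential curves $v_{\mathbb{L}}(\mathbf{x}_2)$, $\underline{v}(\mathbf{x}_2)$ and the deformed copy of $u_{L_{1,1}}$, together with $d$ planes $v_i(\mathbf{x}_2)$ over $\mathcal{T}_0\cup\mathcal{T}_\infty$ (of which $v_1(\mathbf{x}_2),\dots,v_{\gamma_0}(\mathbf{x}_2)$ lie over $\mathcal{T}_0$) and $d-1$ planes over $\mathfrak{s}_0\cup\mathfrak{s}_\infty$. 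Exactly as in the identities surrounding \eqref{sameEs}, after this perturbation $G\bullet\bar{\mathcal{T}}_*$ and $G\bullet\mathbb{E}$ equal the sums over these top level curves of their intersection numbers with a fixed plane of $\mathcal{T}_*$, respectively with the plane $u_{\mathbb{L}}^{\mathbf{X}}$ furnished by Lemma~\ref{up2} (whose compactification is $\mathbb{E}$); all such intersections are positive and well defined, since $\mathbf{v}_1\ne\mathbf{v}_2$, $\mathbf{w}_1\ne\mathbf{w}_2$, the plane $u_{\mathbb{L}}^{\mathbf{X}}$ is asymptotic to $\mathbb{L}\cup L_{1,1}$, and $v_{\mathbb{L}}(\mathbf{x}_2)$ to $\mathbb{L}(\mathbf{v}_2)\cup L_{1,1}(\mathbf{w}_2)$.

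For the two $\bar{\mathcal{T}}_*$ counts I would argue verbatim as in Lemma~\ref{FE+-}: the disks of $\bar{\mathcal{T}}_0$ and $\bar{\mathcal{T}}_\infty$ glue in pairs along $\mathbb{L}$ into spheres of class $(0,1)$, so $G\bullet\bar{\mathcal{T}}_0+G\bullet\bar{\mathcal{T}}_\infty=(1,d)\bullet(0,1)=1$, and by positivity it suffices, when $b_2>0$, to produce a single intersection of $G$ with a disk of $\bar{\mathcal{T}}_\infty$. This is forced by $\underline{v}(\mathbf{x}_2)$: inside $\mathcal{U}(\mathbb{L})$ this curve lies in $\{P_1>a_2\}$, it is essential and hence meets every slice $\{P_1=a,\,Q_1=\theta\}$ with $a>a_2$, and it is asymptotic to $\mathbb{L}(\mathbf{v}_2)=\{P_1=a_2,\,P_2=b_2\}$ with $b_2>0$; so for $|a_2|$ small relative to $|b_2|$ it must cross the annuli $\{P_1=a,\,Q_1=\theta,\,P_2>0\}$, which is exactly where the planes of $\mathcal{T}_\infty$ sit. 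The case $b_2<0$ is symmetric. For $G\bullet\mathbb{E}$ I would inspect the top level curves of $\mathbf{G}(\mathbf{x}_2)$ one at a time: the deformed copy of $u_{L_{1,1}}$ and the planes over $\mathfrak{s}_0\cup\mathfrak{s}_\infty$ project into $A_\infty$, hence are disjoint from $u_{\mathbb{L}}^{\mathbf{X}}$, whose image under $p$ is $A_0$; $\underline{v}(\mathbf{x}_2)$ projects into $B$ outside $\mathcal{U}(\mathbb{L})$ and lies in $\{P_1>a_2\}$ inside it, while $u_{\mathbb{L}}^{\mathbf{X}}\subset\{P_1<0\}$, so they are disjoint by continuity from the $a_i=0$ configuration; when $b_2>0$ and $|a_2|$ is small relative to $|b_2|$, each of the $\gamma_0$ planes over $\mathcal{T}_0$ meets $u_{\mathbb{L}}^{\mathbf{X}}$ exactly once (both occupy the region $\{P_1=a_2,\,P_2<b_2\}$ of $\mathcal{U}(\mathbb{L})$ and $u_{\mathbb{L}}^{\mathbf{X}}$, being essential, meets each leaf at most once), whereas the planes over $\mathcal{T}_\infty$ contribute nothing; and $v_{\mathbb{L}}(\mathbf{x}_2)$ contributes the residual term $v_{\mathbb{L}}(\mathbf{x}_2)\bullet u_{\mathbb{L}}^{\mathbf{X}}$, which, unlike $u_{\mathbb{L}}(\mathbf{v}_1)\bullet u_{\mathbb{L}}^{\mathbf{X}}$ in Lemma~\ref{FE+-}, need not vanish. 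Summing gives $G\bullet\mathbb{E}=\gamma_0+v_{\mathbb{L}}(\mathbf{x}_2)\bullet u_{\mathbb{L}}^{\mathbf{X}}$ when $b_2>0$; when $b_2<0$ the roles of the planes over $\mathcal{T}_0$ and over $\mathcal{T}_\infty$ are exchanged, the $d-\gamma_0$ planes over $\mathcal{T}_\infty$ each contribute one, and $G\bullet\mathbb{E}=d-\gamma_0+v_{\mathbb{L}}(\mathbf{x}_2)\bullet u_{\mathbb{L}}^{\mathbf{X}}$.

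The step I expect to require the most care is checking that turning on the $L_{1,1}$ deformations $\mathbf{W}$ neither creates nor destroys any of these intersections. Concretely one needs: that $u_{\mathbb{L}}^{\mathbf{X}}$ still avoids $\{P_1>0\}$ and still has image exactly $A_0$ under $p$ (the content of Lemma~\ref{up2}); that $\underline{v}(\mathbf{x}_2)$ and $v_{\mathbb{L}}(\mathbf{x}_2)$ keep their $p$-images $B$ and $A_0$ under the combined $\mathbf{X}$-deformation (Lemma~\ref{fukaya2}); and that all the intersection points involved genuinely occur inside $\mathcal{U}(\mathbb{L})$ and depend continuously on the $\mathbf{w}_i$ as these are moved away from $0$. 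Granting these, the proofs of Lemma~\ref{GE+-} and Lemma~\ref{noleftg} carry over with the purely notational replacements $\mathbf{V}\to\mathbf{X}$ and $\mathbf{v}_i\to\mathbf{x}_i$; in particular the residual term can then be pinned down, as in Lemma~\ref{noleftg}, by identifying $v_{\mathbb{L}}(\mathbf{x}_2)\bullet u_{\mathbb{L}}^{\mathbf{X}}$ with $v_{\mathbb{L}}(\mathbf{x}_2)\bullet u_{\mathbb{L}}(\mathbf{x}_1)$.
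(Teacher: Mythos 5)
Your argument is correct and is essentially the paper's: the paper proves Lemma~\ref{GE} by declaring that Lemma~\ref{GE+-} (itself established by invoking the template of Lemma~\ref{FE+-}) ``changes only in notation'' once the $L_{1,1}$-side translations $\mathbf{W}$ are switched on, and you have spelled out curve by curve exactly what that translation amounts to. The points you flag as requiring care --- that $p\circ u_{\mathbb{L}}^{\mathbf{X}}$, $p\circ\underline{v}(\mathbf{x}_2)$ and $p\circ v_{\mathbb{L}}(\mathbf{x}_2)$ keep their images $A_0$, $B$ and $A_0$ after the $\mathbf{W}$-deformation, and that the relevant intersections occur inside $\mathcal{U}(\mathbb{L})$ --- are precisely what Lemmas~\ref{fukaya2} and \ref{up2} are stated to supply, so the transplant is indeed purely notational.
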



\begin{lemma}\label{nog}
If $b_1$ and $b_2$ have opposite sign, and $a_1$ and $a_2$ are sufficiently small, then
\begin{equation*}
\label{ }
v_{\mathbb{L}}(\mathbf{ x}_2) \bullet u_{\mathbb{L}}^{\mathbf{X} }=v_{\mathbb{L}}(\mathbf{ x}_2) \bullet u_{\mathbb{L}}(\mathbf{ x}_1).
\end{equation*}
\end{lemma}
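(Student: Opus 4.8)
The plan is to reduce the statement to Lemma~\ref{noleftg} by observing that the additional stretching data $\mathbf{W}$ near $L_{1,1}$ is invisible to all three curves occurring in the claimed identity.

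First I would record that the essential curves $u_{\mathbb{L}}$ of $\mathbf{F}$ and $v_{\mathbb{L}}$ of $\mathbf{G}$ are planes asymptotic only to $\mathbb{L}$: they have no punctures on $L_{1,1}$, and the closures of the images of $p\circ u_{\mathbb{L}}$ and $p\circ v_{\mathbb{L}}$ are both equal to $A_0$, which is disjoint from $A_{\infty}\supset p(\mathcal{U}(L_{1,1}))$. Hence, after shrinking $\epsilon$ if necessary, the deformed curves $u_{\mathbb{L}}^{\mathbf{X}}$, $u_{\mathbb{L}}(\mathbf{x}_1)$ and $v_{\mathbb{L}}(\mathbf{x}_2)$ all remain disjoint from $\mathcal{U}(L_{1,1})$. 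Since $J_{\mathbf{X}}$ agrees with $J_{\mathbf{V}}$ outside $\mathcal{U}(L_{1,1})$, and Fukaya's trick near $L_{1,1}$ (Lemma~\ref{fukaya2}) leaves curves without ends on $L_{1,1}$ unchanged, these three curves may be identified with the curves $u_{\mathbb{L}}^{\mathbf{V}}$, $u_{\mathbb{L}}(\mathbf{v}_1)$ and $v_{\mathbb{L}}(\mathbf{v}_2)$ built from the data $\mathbf{V}$ alone in Corollary~\ref{up} and Lemma~\ref{fukaya}. Under this identification the asserted equality becomes precisely the conclusion of Lemma~\ref{noleftg}, valid under exactly the stated sign and smallness hypotheses on the $b_i$ and $a_i$; translating back to the $\mathbf{X}$-notation gives the lemma.

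The only point requiring care is bookkeeping: one must check that the entire $t$-family of auxiliary data used inside the proof of Lemma~\ref{noleftg} --- the Lagrangian isotopy $\mathbb{L}(t\mathbf{v}_1)$, $0\le t\le 1$, together with the associated path of smooth disks $u_t\colon(D^2,S^1)\to(S^2\times S^2,\mathbb{L}(t\mathbf{v}_1))$ joining the compactifications of $u_{\mathbb{L}}$ and $u_{\mathbb{L}}(\mathbf{v}_1)$ --- can be arranged inside the complement of $\mathcal{U}(L_{1,1})$, so that the homotopy invariance of the intersection number with $v_{\mathbb{L}}(\mathbf{x}_2)$ is unaffected by the tori $L_{1,1}(\mathbf{W})$. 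This holds because the disks $u_t$ stay $C^0$-close to $u_{\mathbb{L}}$ and hence project near $A_0$, and $v_{\mathbb{L}}(\mathbf{x}_2)$ likewise projects near $A_0$; both are therefore disjoint from $\mathcal{U}(L_{1,1})$. Granting this, the continuity-in-parameters argument that closes the proof of Lemma~\ref{noleftg} applies verbatim. I expect this verification to be the main, though essentially routine, obstacle.
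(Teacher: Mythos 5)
Your reduction is correct and matches the paper's own approach, which simply asserts that Lemma~\ref{nog} follows from Lemma~\ref{noleftg} with ``changes only in notation''---precisely because, as you explain, all three curves project to $A_0$ and so never meet $\mathcal{U}(L_{1,1})$, hence are insensitive to the $\mathbf{W}$-stretching data and coincide with the corresponding $\mathbf{V}$-curves. The only minor imprecision is that $p(\mathcal{U}(L_{1,1}))$ lies in the interior of $A_\infty\cup B$ rather than of $A_\infty$ alone, but since this set is still disjoint from $A_0$ the disjointness you need is unaffected.
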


\begin{lemma}\label{FGLL}
Suppose that $a_1< a_2<0$, $|a_1|$ is sufficiently small with respect to $|b_1|$, and $|a_2|$ is sufficiently small with respect to $|b_2|$.\\

\noindent If  $b_1>b_2$, then 
$F \cap G$ contains at least  $\alpha_0 + d- \gamma_0 $ points in $\mathcal{U}(\mathbb{L})$ that project to $A_0$.\\

\noindent If  $b_1<b_2$, then 
$F \cap G \cap \mathcal{U}(\mathbb{L})$ contains at least $d-1-\alpha_0  +  \gamma_0$ points in  $\mathcal{U}(\mathbb{L})$ that project to $A_0$.
\end{lemma}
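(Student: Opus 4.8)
The plan is to mirror, essentially verbatim, the proof of Corollary~\ref{FcapG}. Recall that Corollary~\ref{FcapG} was deduced from the (unlabeled) intersection-count lemma immediately preceding it, which, under the hypotheses $a_1<a_2<0$ with $|a_i|$ small relative to $|b_i|$, detects $u_i(\mathbf{v}_1)\bullet v_{\mathbb{L}}(\mathbf{v}_2)=1$ for the appropriate range of $i$ and $v_i(\mathbf{v}_2)\bullet\underline{u}(\mathbf{v}_1)=1$ for the complementary range, with all the resulting intersection points lying in $\mathcal{U}(\mathbb{L})$ and projecting into $A_0$. So the first and only substantive step is to observe that this count holds unchanged after the additional deformations near $L_{1,1}$ are switched on, i.e.\ for the curves $\underline{u}(\mathbf{x}_1)$, $v_{\mathbb{L}}(\mathbf{x}_2)$, $u_i(\mathbf{x}_1)$, $v_i(\mathbf{x}_2)$ produced by Lemma~\ref{fukaya2} and Lemma~\ref{up2}.

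To see this, I would note three things. First, the almost complex structure $J_{\mathbf{X}}$ agrees with $J_{\mathbf{V}}$ on $\mathcal{U}(\mathbb{L})$, so the foliation $\mathcal{F}(\mathbf{X})$ restricted to $\mathcal{U}(\mathbb{L})$ and the straightened local pictures of all curves there (the annuli $\{P_1=\delta,Q_1=\theta,|P_2|<\epsilon\}$, the planes of $\mathcal{T}_0,\mathcal{T}_\infty$ along $\{P_1=0,\mp P_2>0\}$, etc.) are exactly as in the $\mathbf{V}$-only setting; the extra stretching along $L_{1,1}(\mathbf{W})$ is simply invisible near $\mathbb{L}$. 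Second, $\underline{u}(\mathbf{x}_1)$, $v_{\mathbb{L}}(\mathbf{x}_2)$ and $u_{\mathbb{L}}^{\mathbf{X}}$ are still essential with respect to $\mathcal{F}(\mathbf{X})$, hence meet each leaf at most once, so a single detected intersection point pins down the intersection number. Third, the asymptotics of the relevant curves at $\mathbb{L}(\mathbf{v}_1)$ and $\mathbb{L}(\mathbf{v}_2)$ are the same homology classes as before. Consequently the intersection detection carried out in $\mathcal{U}(\mathbb{L})$ in the proof of the unlabeled lemma goes through word for word, under the same sign conventions and smallness hypotheses on $a_1,a_2,b_1,b_2$ (the smallness of $c_1,c_2,d_1,d_2$ playing no role).

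Having this, I would assemble the count exactly as for Corollary~\ref{FcapG}: when $b_1>b_2$, the planes $u_1(\mathbf{x}_1),\dots,u_{\alpha_0}(\mathbf{x}_1)$ contribute $\alpha_0$ points lying on $v_{\mathbb{L}}(\mathbf{x}_2)$, and the planes $v_{\gamma_0+1}(\mathbf{x}_2),\dots,v_d(\mathbf{x}_2)$ contribute $d-\gamma_0$ points lying on $\underline{u}(\mathbf{x}_1)$; these are distinct interior points of $\mathcal{U}(\mathbb{L})$ on $\bar{\mathbf{F}}(\mathbf{x}_1)\cap\bar{\mathbf{G}}(\mathbf{x}_2)$ projecting into $A_0$ (distinctness following from the fact that $v_{\mathbb{L}}(\mathbf{x}_2)$ and $\underline{u}(\mathbf{x}_1)$ carry them on disjoint essential curves). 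Since the final perturbations producing $F$ and $G$ are supported in arbitrarily small neighborhoods of the deformed Lagrangians, these points persist (intersections can only be created, not destroyed), so $F\cap G$ contains at least $\alpha_0+d-\gamma_0$ points in $\mathcal{U}(\mathbb{L})$ projecting to $A_0$. The case $b_1<b_2$ is identical with the complementary ranges of indices, giving at least $d-1-\alpha_0+\gamma_0$ such points.

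\textbf{Main obstacle.} There is essentially no new analytic difficulty here; as the text preceding the statement already indicates, everything "changes only in notation." The one thing that genuinely must be checked — and the place where a careless argument could go wrong — is that introducing the stretching near $L_{1,1}$ does not perturb any of the local geometry near $\mathbb{L}$ that the intersection count relies on, together with the bookkeeping that the listed intersection points are genuinely distinct and survive the passage from the buildings $\bar{\mathbf{F}}(\mathbf{x}_1),\bar{\mathbf{G}}(\mathbf{x}_2)$ to the smoothed spheres $F,G$.
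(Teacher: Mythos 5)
The proposal is correct and takes essentially the same approach as the paper: the paper simply records that Corollary~\ref{FcapG} ``changes only in notation,'' and you fill in the verification that the added stretching near $L_{1,1}$ leaves the local picture in $\mathcal{U}(\mathbb{L})$ unchanged, so the earlier intersection count carries over verbatim.
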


The following analogous results follow from similar arguments.

\begin{lemma}\label{FE2}
Suppose $c_1$ is positive, $d_1$ and $d_2$ are nonzero, and $|c_1|$ is sufficiently small with respect to $|d_1|$.\\

\noindent If $d_1>0$, then
$
F \bullet \bar{\frak{s}}_0 =0,
$
$
F \bullet \bar{\frak{s}}_{\infty} =1,
$
and 
$
F \bullet E_{1,1} =\beta_0.
$\\

\noindent If $d_1<0$, then
$
F \bullet \bar{\frak{s}}_0 =1,
$
and 
$F \bullet \bar{\frak{s}}_{\infty} =0,$
and
$
F \bullet E_{1,1} =d-\beta_0.
$
\end{lemma}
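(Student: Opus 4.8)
The plan is to mirror, essentially verbatim, the proof of Lemma~\ref{FE} (equivalently Lemma~\ref{FE+-}), exchanging the roles of $\mathbb{L}$ and $L_{1,1}$, of the solid tori $\bar{\mathcal{T}}_*$ and $\bar{\frak{s}}_*$, of the disks $\mathbb{E}$ and $E_{1,1}$, and of the essential curves $u_{\mathbb{L}}$ and $u_{L_{1,1}}$, while $\underline{u}$ keeps its central role. The one genuine change is in the sign bookkeeping: in $\mathcal{U}(L_{1,1})$ it is the region $\{p_1>0\}$ that $p$ maps into the interior of $A_{\infty}$, so the hypothesis $c_1>0$ here plays the part that $a_1<0$ played near $\mathbb{L}$; and the planes of $\frak{s}_0$ (resp.\ $\frak{s}_{\infty}$) lie on the $p_2<0$ (resp.\ $p_2>0$) side of $L_{1,1}$, so after Fukaya's trick displaces $L_{1,1}$ to $L_{1,1}(\mathbf{w}_1)=\{p_1=c_1,\ p_2=d_1\}$ the curves $\frak{u}_j(\mathbf{x}_1)$ with $\frak{u}_j\in\frak{s}_0$ (resp.\ $\frak{s}_{\infty}$) occupy the annuli $\{p_1=c_1,\ q_1=\theta,\ p_2<d_1\}$ (resp.\ $\{p_1=c_1,\ q_1=\theta,\ p_2>d_1\}$). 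The ambient smallness conditions fixed in the completion of the proof of Proposition~\ref{intcount} are assumed throughout, and I treat only $d_1>0$; the case $d_1<0$ is identical after interchanging $\frak{s}_0\leftrightarrow\frak{s}_{\infty}$ and $\beta_0\leftrightarrow d-\beta_0$ (here $\beta_0$ is the number of the $\frak{u}_j$ lying in $\frak{s}_0$).

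For the solid tori: $F$ represents $(1,d)$, and gluing a plane of $\frak{s}_0$ to its companion in $\frak{s}_{\infty}$ along $L_{1,1}$ gives a sphere in class $(0,1)$, so $F\bullet\bar{\frak{s}}_0+F\bullet\bar{\frak{s}}_{\infty}=(1,d)\bullet(0,1)=1$. All intersections being positive, it suffices --- via the analogue (with $\frak{s}_*$ in place of $\mathcal{T}_*$) of the identity $F\bullet\bar{\mathcal{T}}_*=\mathbf{F}(\mathbf{x}_1)\bullet\mathcal{T}_*$ used in Lemma~\ref{FE} --- to exhibit one intersection of $\underline{u}(\mathbf{x}_1)$ with a plane of $\frak{s}_{\infty}$. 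This I would do as there: in $\mathcal{U}(L_{1,1})$ the essential curve $\underline{u}(\mathbf{x}_1)$ occupies $\{p_1<c_1\}$ and is asymptotic to $L_{1,1}(\mathbf{w}_1)$ along a circle at $\{p_1=c_1,\ p_2=d_1\}$; since $c_1>0$ it must cross $\{p_1=0\}$, and if $|c_1|$ is small relative to $|d_1|$ it does so while $p_2$ is still close to $d_1>0$, hence inside the annuli $\{p_1=0,\ q_1=\theta,\ p_2>0\}$ swept out by $\frak{s}_{\infty}$; essentialness makes this a single transverse intersection. Thus $F\bullet\bar{\frak{s}}_{\infty}\ge1$, whence $F\bullet\bar{\frak{s}}_{\infty}=1$ and $F\bullet\bar{\frak{s}}_0=0$.

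For $F\bullet E_{1,1}$ I would use $F\bullet E_{1,1}=\mathbf{F}(\mathbf{x}_1)\bullet u_{L_{1,1}}^{\mathbf{X}}$ and add up the contributions of the top level curves of $\mathbf{F}(\mathbf{x}_1)$. Since $p\circ u_{L_{1,1}}^{\mathbf{X}}$ has image $A_{\infty}$, the curve $u_{L_{1,1}}^{\mathbf{X}}$ is disjoint from $u_{\mathbb{L}}(\mathbf{x}_1)$ and from the planes $u_i(\mathbf{x}_1)$ of $\mathcal{T}_0\cup\mathcal{T}_{\infty}$ (which all project into $A_0$); it is disjoint from $\underline{u}(\mathbf{x}_1)$ by the continuity argument of Lemma~\ref{FE+-} (at $c_1=0$ the two lie on opposite sides of $\{p_1=0\}$ in $\mathcal{U}(L_{1,1})$, and their projections are otherwise disjoint, which persists for small $c_1>0$); and $u_{L_{1,1}}^{\mathbf{X}}\bullet u_{L_{1,1}}(\mathbf{x}_1)=u_{L_{1,1}}^{\mathbf{X}}\bullet u_{L_{1,1}}=0$ by the relative Chern computation $\mu(\bar{u}_{L_{1,1}})=2\bigl(1+u_{L_{1,1}}(\mathbf{x}_1)\bullet u_{L_{1,1}}\bigr)$ together with $\mu(\bar{u}_{L_{1,1}})=2$ (the essential curve has area $1$ by Lemma~\ref{area1} and $L_{1,1}$ is monotone). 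Finally, $u_{L_{1,1}}^{\mathbf{X}}$ lies in $\{p_1>0\}$ in $\mathcal{U}(L_{1,1})$ and is asymptotic to $L_{1,1}$ at $\{p_1=p_2=0\}$, so by Lemma~\ref{up2} it meets the leaf of $\mathcal{F}(\mathbf{X})$ through each $\{p_1=c_1,\ q_1=\theta\}$ exactly once; for $d_1>0$ and $|c_1|$ small relative to $|d_1|$ that intersection has $p_2$ close to $0$, hence $p_2<d_1$, so it falls on the $\frak{s}_0$ part of that (broken) leaf. Hence $\frak{u}_j(\mathbf{x}_1)\bullet u_{L_{1,1}}^{\mathbf{X}}=1$ for the $\beta_0$ indices $j$ with $\frak{u}_j\in\frak{s}_0$ and $=0$ for the others, and summing gives $F\bullet E_{1,1}=\beta_0$.

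As in the companion lemmas, the difficulty is not any single estimate but the simultaneous sign bookkeeping: one must keep straight that the conventions near $L_{1,1}$ are mirror-reversed relative to those near $\mathbb{L}$ (so $c_1>0$, not $c_1<0$), that $\frak{s}_0$ and $\frak{s}_{\infty}$ sit on opposite $p_2$-sides of $L_{1,1}$, and that the nested ``sufficiently small'' hypotheses --- $\|\mathbf{x}_1\|$ small enough for Lemma~\ref{fukaya2}, the $a$- and $c$-coordinates small relative to the $b$- and $d$-coordinates for Lemma~\ref{up2}, and $|c_1|$ small relative to $|d_1|$ here --- can all be arranged at once, which they can by choosing them in this order.
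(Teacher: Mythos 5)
Your proof is correct and is precisely the mirroring of the proof of Lemma~\ref{FE+-} (i.e.\ Lemma~\ref{FE}) that the paper invokes when it states that Lemma~\ref{FE2} ``follows from similar arguments.'' You have handled the one subtlety correctly: near $L_{1,1}$ it is $\{p_1>0\}$ that projects into $A_\infty$ (so $c_1>0$ plays the role $a_1<0$ played near $\mathbb{L}$), while $\frak{s}_0$ and $\frak{s}_\infty$ occupy the same $p_2$-sides as $\mathcal{T}_0$ and $\mathcal{T}_\infty$, and your reading of $\beta_0$ as the number of $\frak{u}_j$ lying in $\frak{s}_0$ is the intended one.
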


\begin{lemma}\label{GE2}
Suppose $c_2$ is positive, $d_1$ and $d_2$ are nonzero, and $|c_2|$ is sufficiently small with respect to $|d_2|$.\\

\noindent If $d_2>0$, then
$
G \bullet \bar{\frak{s}}_0 =0,
$
$
G \bullet \bar{\frak{s}}_{\infty} =1
$ 
and
$$
G \bullet E_{1,1} = \delta_0 + v_{L_{1,1}}(\mathbf{x}_2) \bullet u_{L_{1,1}}^{\mathbf{X} }.\\
$$

\noindent If $d_2<0$, then
$
G \bullet \bar{\frak{s}}_0 =1,
$
$
G \bullet \bar{\frak{s}}_{\infty} =0
$ 
and
$$
G \bullet E_{1,1} = d-1-\delta_0 + v_{L_{1,1}}(\mathbf{x}_2) \bullet u_{L_{1,1}}^{\mathbf{X} }.\\
$$

\end{lemma}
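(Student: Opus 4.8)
The plan is to run the proof of Lemma~\ref{GE} (equivalently of Lemmas~\ref{FE+-} and~\ref{GE+-}) essentially verbatim, interchanging the roles of $\mathbb{L}$ and $L_{1,1}$ throughout. Concretely I would replace the solid tori $\mathcal{T}_0,\mathcal{T}_\infty$ and their companions by $\frak{s}_0,\frak{s}_\infty$; the disk $\mathbb{E}$ and its underlying curve $u_{\mathbb{L}}^{\mathbf{X}}$ by $E_{1,1}$ and $u_{L_{1,1}}^{\mathbf{X}}$, supplied by the second half of Lemma~\ref{up2}; the Weinstein coordinates $(P_1,P_2,Q_1,Q_2)$ by $(p_1,p_2,q_1,q_2)$; the disk $A_0$ by $A_\infty$; and the counting label $\gamma_0$ (resp.\ $\alpha_0$) by the number $\delta_0$ (resp.\ $\beta_0$) of planes $\frak{v}_j$ (resp.\ $\frak{u}_j$) lying in $\frak{s}_0$. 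The one genuine change of convention is that in $\mathcal{U}(\mathbb{L})$ it is the half-space $\{P_1<0\}$ that $p$ maps into $A_0$, whereas in $\mathcal{U}(L_{1,1})$ it is $\{p_1>0\}$ that $p$ maps into $A_\infty$; this sign flip is exactly why the hypothesis ``$a_2<0$'' of Lemma~\ref{GE} becomes ``$c_2>0$'' here.

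For the assertions about $G\bullet\bar{\frak{s}}_0$ and $G\bullet\bar{\frak{s}}_\infty$ I would argue as in the first part of Lemma~\ref{FE+-}: gluing each plane of $\frak{s}_0$ to its companion in $\frak{s}_\infty$ along $L_{1,1}$ produces a sphere in class $(0,1)$, so $G\bullet\bar{\frak{s}}_0+G\bullet\bar{\frak{s}}_\infty=G\bullet(0,1)=1$, and positivity of intersection forces one of these numbers to be $0$ and the other $1$. To decide which, it suffices---by the analogue for $\mathbf{G}(\mathbf{x}_2)$ and the $\frak{s}$-family of the identity $G\bullet\bar{\mathcal{T}}_*=\mathbf{G}(\mathbf{x}_2)\bullet\mathcal{T}_*$ used in Lemma~\ref{GE}---to estimate $\underline{v}(\mathbf{x}_2)\bullet\frak{s}_\infty$, where $\underline{v}$ is the essential curve of $\mathbf{G}$ whose $p$-image closure is $B$. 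Near $L_{1,1}$ the curve $\underline{v}(\mathbf{x}_2)$ lies in $\{p_1<c_2\}$ and is asymptotic to $L_{1,1}(\mathbf{w}_2)=\{p_1=c_2,\ p_2=d_2\}$, so if $d_2>0$ and $|c_2|$ is small relative to $|d_2|$ it meets, at $p_1=0$, the annuli $\{p_1=0,\ q_1=\theta,\ p_2>0\}$ along which the planes of $\frak{s}_\infty$ sit; hence $\underline{v}(\mathbf{x}_2)\bullet\frak{s}_\infty\ge 1$ and therefore $G\bullet\bar{\frak{s}}_\infty=1$, $G\bullet\bar{\frak{s}}_0=0$. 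When $d_2<0$ the mirror argument gives $G\bullet\bar{\frak{s}}_0=1$, $G\bullet\bar{\frak{s}}_\infty=0$.

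For the value of $G\bullet E_{1,1}$ I would follow the last part of the proof of Lemma~\ref{FE+-}: by the analogue of \eqref{sameEs} for $\mathbf{G}(\mathbf{x}_2)$, $G\bullet E_{1,1}=\mathbf{G}(\mathbf{x}_2)\bullet u_{L_{1,1}}^{\mathbf{X}}$, the sum over the top level curves of $\mathbf{G}(\mathbf{x}_2)$ of their intersection numbers with $u_{L_{1,1}}^{\mathbf{X}}$. Since $u_{L_{1,1}}^{\mathbf{X}}$ is essential with $p$-image $A_\infty$, it is disjoint from every top level curve whose $p$-image misses the interior of $A_\infty$; this removes $v_{\mathbb{L}}(\mathbf{x}_2)$ and all the $v_i(\mathbf{x}_2)$ (their $p$-images lie in $A_0$, resp.\ near $\partial A_0$), and a continuity argument from the case $c_2=0$---where the relevant parts of $\underline{v}(\mathbf{x}_2)$ and $u_{L_{1,1}}^{\mathbf{X}}$ sit in the disjoint half-spaces $\{p_1<0\}$ and $\{p_1>0\}$ of $\mathcal{U}(L_{1,1})$---removes $\underline{v}(\mathbf{x}_2)$ as well. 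Among the remaining curves, $u_{L_{1,1}}^{\mathbf{X}}$ meets each leaf of the foliation at most once, and near $L_{1,1}$ it lies in $\{p_1>0\}$ and is asymptotic to $L_{1,1}=\{p_1=p_2=0\}$; thus, for $|c_2|$ small relative to $|d_2|$ and $d_2>0$, it meets precisely the annuli $\{p_1=c_2,\ q_1=\theta,\ p_2<d_2\}$ in which the deformed planes $\frak{v}_1(\mathbf{x}_2),\dots,\frak{v}_{\delta_0}(\mathbf{x}_2)$ of $\frak{s}_0$ sit, while it meets the leaves through the $\frak{s}_\infty$-planes at points lying on their companion planes rather than on the $\frak{v}_j(\mathbf{x}_2)$ themselves. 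This contributes exactly $\delta_0$ intersection points, and the only remaining contribution is $v_{L_{1,1}}(\mathbf{x}_2)\bullet u_{L_{1,1}}^{\mathbf{X}}$, which---unlike the analogous term in Lemma~\ref{FE+-}---need not vanish, since $v_{L_{1,1}}$ and $u_{L_{1,1}}^{\mathbf{X}}$ come from different buildings. Hence $G\bullet E_{1,1}=\delta_0+v_{L_{1,1}}(\mathbf{x}_2)\bullet u_{L_{1,1}}^{\mathbf{X}}$; when $d_2<0$ the relevant planes are the $d-1-\delta_0$ planes of $\frak{s}_\infty$, which yields the second formula.

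The main obstacle is bookkeeping rather than any new geometric input. One must keep careful track of the orientation conventions in $\mathcal{U}(L_{1,1})$---which half-space $p$ maps into $A_\infty$, and on which side of $\{p_2=d_2\}$ the deformed planes of $\frak{s}_0$ and $\frak{s}_\infty$ lie---so that $c_2>0$ (rather than $c_2<0$) is the correct hypothesis; and one must verify that $u_{L_{1,1}}^{\mathbf{X}}$ really hits each relevant broken leaf on the plane $\frak{v}_j(\mathbf{x}_2)$ itself and not on its companion, which is precisely the role of the relative smallness hypothesis $|c_2|\ll|d_2|$, and that $v_{L_{1,1}}(\mathbf{x}_2)\bullet u_{L_{1,1}}^{\mathbf{X}}$ is the only residual ambiguity.
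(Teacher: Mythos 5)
Your proposal is correct and follows exactly the route the paper intends: the paper does not spell out a separate proof of Lemma~\ref{GE2} but merely states that the $L_{1,1}$-side lemmas ``follow from similar arguments'' to Lemmas~\ref{FE+-} and \ref{GE+-}. You have carried out that analogy faithfully, including the two points that actually require care---the sign flip $\{P_1<0\}\leftrightarrow\{p_1>0\}$ forcing $c_2>0$ in place of $a_2<0$, and the residual term $v_{L_{1,1}}(\mathbf{x}_2)\bullet u_{L_{1,1}}^{\mathbf{X}}$ surviving (in contrast to Lemma~\ref{FE+-}) because $\mathbf{G}$ and the deformation $u_{L_{1,1}}^{\mathbf{X}}$ of the essential curve of $\mathbf{F}$ come from different buildings---as well as the count $d-1$ (rather than $d$) of planes $\frak{v}_j$ asymptotic to $L_{1,1}$ in $\mathbf{G}$, which is what produces $d-1-\delta_0$ in the $d_2<0$ case.
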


\begin{lemma}\label{nog2}
If $d_1$ and $d_2$ have opposite sign, and $c_1$ and $c_2$ are sufficiently small, then
\begin{equation*}
\label{ }
v_{L_{1,1}}(\mathbf{x}_2) \bullet u_{L_{1,1}}^{\mathbf{X} }=v_{L_{1,1}}(\mathbf{ x}_2) \bullet u_{L_{1,1}}(\mathbf{ x}_1).
\end{equation*}
\end{lemma}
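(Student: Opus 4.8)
The plan is to run the proof of Lemma~\ref{nog} (equivalently Lemma~\ref{noleftg}) essentially verbatim, with the roles of $\mathbb{L}$ and $L_{1,1}$ interchanged. Concretely, I would substitute $L_{1,1}$ for $\mathbb{L}$, the disk $A_\infty$ for $A_0$, the Weinstein coordinates $(p_1,p_2,q_1,q_2)$ for $(P_1,P_2,Q_1,Q_2)$, the half-neighbourhood $\{p_1<0\}$ (which $p$ sends into $B$) for $\{P_1>0\}$, the translation data $\mathbf{w}_i=(c_i,d_i)$ for $\mathbf{v}_i=(a_i,b_i)$, and the essential curves $u_{L_{1,1}}$, $v_{L_{1,1}}$, $u_{L_{1,1}}^{\mathbf{X}}$ for $u_{\mathbb{L}}$, $v_{\mathbb{L}}$, $u_{\mathbb{L}}^{\mathbf{X}}$. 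Here $u_{L_{1,1}}^{\mathbf{X}}$ and $u_{L_{1,1}}(\mathbf{x}_1)$ both arise from the single essential curve $u_{L_{1,1}}$ of the building $\mathbf{F}$ (via Lemma~\ref{up2} and Lemma~\ref{fukaya2}, respectively), and they are compactified by disks lying in the same class in $\pi_2(S^2\times S^2, L_{1,1})$, while $v_{L_{1,1}}$ is the essential curve of $\mathbf{G}$ that $p$ maps onto $A_\infty$.

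First I would treat the case $c_1=c_2=0$, so $\mathbf{w}_1=(0,d_1)$ and $\mathbf{w}_2=(0,d_2)$. The compactification $\overline{v_{L_{1,1}}(\mathbf{x}_2)}$ is a disk whose interior is sent by $p$ into $A_\infty$ and whose boundary lies on $L_{1,1}(\mathbf{w}_2)=\{p_1=0,\,p_2=d_2\}$; the straightening of $\mathcal{F}$ near $L_{1,1}$ pins down its local model inside $\mathcal{U}(L_{1,1})$. Because $d_1$ and $d_2$ have opposite sign, the Lagrangian isotopy $L_{1,1}(t\mathbf{w}_1)=\{p_1=0,\,p_2=td_1\}$, $t\in[0,1]$, is disjoint from $\overline{v_{L_{1,1}}(\mathbf{x}_2)}$: for $t>0$ the slice $p_2=td_1$ lies strictly on the opposite side of $0$ from $d_2$, and the $t=0$ torus $L_{1,1}$ misses the curve since the latter is asymptotic to $L_{1,1}(\mathbf{w}_2)$ and holomorphic in the complement of $L_{1,1}(\mathbf{w}_2)$ with interior projecting into $A_\infty$. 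Then, exactly as in the proof of Lemma~\ref{fukaya} (used through Lemma~\ref{fukaya2}), I would connect the compactification of $u_{L_{1,1}}^{\mathbf{X}}$ (equivalently of $u_{L_{1,1}}$) to that of $u_{L_{1,1}}(\mathbf{x}_1)$ by a path of smooth maps $u_t\colon(D^2,S^1)\to(S^2\times S^2,L_{1,1}(t\mathbf{w}_1))$ coming from diffeomorphisms supported in $\mathcal{U}(L_{1,1})$. Along this path $\partial u_t\subset L_{1,1}(t\mathbf{w}_1)$ never meets $\overline{v_{L_{1,1}}(\mathbf{x}_2)}$, so the (non-negative, transverse) intersection number $v_{L_{1,1}}(\mathbf{x}_2)\bullet u_t$ is well defined and locally constant in $t$, yielding the equality when $c_1=c_2=0$. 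For general $c_1,c_2$ I would invoke continuity of all the curves in these parameters: once $c_1,c_2$ are small relative to $d_1,d_2$ the boundary tori on $L_{1,1}$ stay suitably separated, so the intersection numbers cannot jump.

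The step I expect to be the main obstacle --- as it already is in Lemma~\ref{noleftg} --- is verifying that the homotopy $u_t$ can be chosen with boundary disjoint from $\overline{v_{L_{1,1}}(\mathbf{x}_2)}$ for every $t\in[0,1]$. This is precisely where the opposite-sign hypothesis on $d_1,d_2$ is genuinely used, and it relies on knowing $\beta_{L_{1,1}}=(0,-1)$ in $H_1^{\psi_{1,1}}(L_{1,1};\ZZ)$ together with the straightened local picture of $\mathcal{F}$ near $L_{1,1}$ in order to locate $v_{L_{1,1}}(\mathbf{x}_2)$ inside $\mathcal{U}(L_{1,1})$; everything else is a formal transcription of the $\mathbb{L}$-case.
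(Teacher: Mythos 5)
Your proposal is correct and takes essentially the same approach as the paper: the paper disposes of Lemma~\ref{nog2} (together with Lemmas~\ref{FE2}, \ref{GE2}, \ref{FGL11}) with the remark that they ``follow from similar arguments,'' pointing back to Lemma~\ref{noleftg}, and your write-up is exactly the required transcription --- reduce to $c_1=c_2=0$, use the opposite-sign hypothesis on $d_1,d_2$ to keep the isotopy $L_{1,1}(t\mathbf{w}_1)$ away from the compactification of $v_{L_{1,1}}(\mathbf{x}_2)$, apply Fukaya's trick (Lemma~\ref{fukaya}/\ref{fukaya2}) to connect $u_{L_{1,1}}$ to $u_{L_{1,1}}(\mathbf{x}_1)$ through disks with boundary on $L_{1,1}(t\mathbf{w}_1)$, and finish by continuity in the small parameters $c_1,c_2$.
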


\begin{lemma}\label{FGL11}
Suppose that $c_1>c_2>0$, $|c_1|$ is sufficiently small with respect to $|d_1|$, and $|c_2|$ is sufficiently small with respect to $|d_2|$.\\

\noindent If  $d_1>d_2$, then 
$F \cap G $ contains at least $\beta_0 + d -1- \delta_0$ points in $\mathcal{U}(L_{1,1})$ that project to $A_{\infty}$.\\

\noindent If  $d_1<d_2$, then 
$F \cap G $ contains at least $d-\beta_0  +  \delta_0$ points in $\mathcal{U}(L_{1,1})$ that project to $A_{\infty}$.
\end{lemma}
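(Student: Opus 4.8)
The plan is to prove Lemma~\ref{FGL11} by running the argument of Lemma~\ref{FGLL} verbatim inside the Weinstein neighbourhood $\mathcal{U}(L_{1,1})=\{|p_1|<\epsilon,\,|p_2|<\epsilon\}$ rather than $\mathcal{U}(\mathbb{L})$. Under this dictionary the families $\mathcal{T}_0,\mathcal{T}_{\infty}$ of broken leaves asymptotic to $\mathbb{L}$ are replaced by the families $\frak{s}_0,\frak{s}_{\infty}$ of broken leaves asymptotic to $L_{1,1}$; the essential curves $\underline{u},u_{\mathbb{L}}$ of $\mathbf{F}$ and $v_{\mathbb{L}}$ of $\mathbf{G}$ that entered the proof of Lemma~\ref{FGLL} are replaced by $\underline{u},u_{L_{1,1}}$ of $\mathbf{F}$ and $v_{L_{1,1}}$ of $\mathbf{G}$; and, since $p$ now carries $\{p_1>0\}\subset\mathcal{U}(L_{1,1})$ into $A_{\infty}$ (whereas it was $\{P_1<0\}\subset\mathcal{U}(\mathbb{L})$ that mapped into $A_0$), the hypothesis $a_1<a_2<0$ of Lemma~\ref{FGLL} is replaced by $c_1>c_2>0$, with $d_1$ versus $d_2$ playing the role of $b_1$ versus $b_2$. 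First I would record the bookkeeping: write the $d$ broken planes of $\mathbf{F}$ in $\frak{s}_0\cup\frak{s}_{\infty}$ as $\frak{u}_1,\dots,\frak{u}_{\beta_0}$ in $\frak{s}_0$ and $\frak{u}_{\beta_0+1},\dots,\frak{u}_d$ in $\frak{s}_{\infty}$ (the $\beta_0$ of Lemma~\ref{FE2}), and the $d-1$ broken planes of $\mathbf{G}$ in $\frak{s}_0\cup\frak{s}_{\infty}$ as $\frak{v}_1,\dots,\frak{v}_{\delta_0}$ in $\frak{s}_0$ and $\frak{v}_{\delta_0+1},\dots,\frak{v}_{d-1}$ in $\frak{s}_{\infty}$ (the $\delta_0$ of Lemma~\ref{GE2}).

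After applying the generalised Fukaya trick of Lemma~\ref{fukaya2} with translation data $\mathbf{w}_1=(c_1,d_1)$ and $\mathbf{w}_2=(c_2,d_2)$, the plane $\frak{u}_j(\mathbf{x}_1)$ is asymptotic to $L_{1,1}(\mathbf{w}_1)$ and meets $\mathcal{U}(L_{1,1})$ along the annulus $\{p_1=c_1,\,q_1=\theta_j,\,p_2<d_1\}$ for $j\le\beta_0$ and along $\{p_1=c_1,\,q_1=\theta_j,\,p_2>d_1\}$ for $j>\beta_0$; likewise $\frak{v}_i(\mathbf{x}_2)$ meets $\mathcal{U}(L_{1,1})$ along $\{p_1=c_2,\,q_1=\theta_i,\,p_2<d_2\}$ for $i\le\delta_0$ and along $\{p_1=c_2,\,q_1=\theta_i,\,p_2>d_2\}$ for $i>\delta_0$. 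The heart of the argument is the local intersection-detection step, the exact analogue of the (unlabelled) lemma preceding Corollary~\ref{FcapG}: using Lemma~\ref{up2} the relevant essential curves meet each leaf of the foliation $\mathcal{F}(\mathbf{X})$ at most once, so it suffices to find one intersection in each claimed pair. When $d_1>d_2$, the essential curve $v_{L_{1,1}}(\mathbf{x}_2)$ projects onto $A_{\infty}$, lies in $\{p_1>0\}$, and is asymptotic to $L_{1,1}(\mathbf{w}_2)$ along a geodesic in a class complementary to $\beta_{L_{1,1}}$, so its slice $v_{L_{1,1}}(\mathbf{x}_2)\cap\{p_1=c_1\}$ is a loop winding once around $q_1\in S^1$ and hovering near $p_2=d_2$; since $d_2<d_1$ and $|c_1|$ is small relative to $|d_1|$, the unique point of this loop with $q_1=\theta_j$ lies in $\{p_2<d_1\}$, yielding one transverse intersection with $\frak{u}_j(\mathbf{x}_1)$ for each $j\le\beta_0$. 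Symmetrically, $\underline{u}(\mathbf{x}_1)$ projects onto $B$, is asymptotic to $L_{1,1}(\mathbf{w}_1)$, and after the translation by $\mathbf{w}_1$ reaches into $\{0<p_1<c_1\}$, so its slice at $\{p_1=c_2\}$ (with $0<c_2<c_1$) is a loop winding once around $q_1$ and hovering near $p_2=d_1$, whence it meets each $\frak{v}_i(\mathbf{x}_2)$ with $i>\delta_0$ once. All these intersection points have $p_1\in\{c_1,c_2\}>0$, hence project into $A_{\infty}$. The case $d_1<d_2$ is obtained by interchanging the roles of $\frak{s}_0$ and $\frak{s}_{\infty}$ throughout.

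To conclude, I would note that, just as in the identifications of $F\bullet\mathbb{E}$ and $F\bullet\bar{\mathcal{T}_*}$ with building-level intersection numbers (cf.\ \eqref{sameEs}), the spheres $F$ and $G$ arise from $\bar{\mathbf{F}}(\mathbf{x}_1)$ and $\bar{\mathbf{G}}(\mathbf{x}_2)$ by perturbations supported in arbitrarily small neighbourhoods of the four translated Lagrangians, and every intersection point located above sits at a definite distance from those tori (the points of the first group have $p_1=c_1$ but $p_2\approx d_2\neq d_1$, those of the second have $p_1=c_2\neq c_1$ and $p_2\approx d_1\neq d_2$); hence they persist as genuine points of $F\cap G$ inside $\mathcal{U}(L_{1,1})$. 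They are pairwise distinct, since the $\beta_0$ points lie on $v_{L_{1,1}}(\mathbf{x}_2)$ at $p_1=c_1$ and on distinct leaves $\frak{u}_j(\mathbf{x}_1)$, while the $d-1-\delta_0$ points lie on $\underline{u}(\mathbf{x}_1)$ at $p_1=c_2\neq c_1$ and on distinct leaves $\frak{v}_i(\mathbf{x}_2)$. Summing gives at least $\beta_0+(d-1-\delta_0)$ points when $d_1>d_2$ and at least $(d-\beta_0)+\delta_0$ points when $d_1<d_2$, which is the assertion of Lemma~\ref{FGL11}. The step I expect to require the most care is the local detection computation in the middle paragraph: one must track the precise $(p_1,p_2,q_1)$-position of the asymptotic ends of $v_{L_{1,1}}(\mathbf{x}_2)$ and $\underline{u}(\mathbf{x}_1)$ after the Fukaya-trick translations, and verify that the size hierarchy ``$|c_1|,|c_2|$ small relative to $|d_1|,|d_2|$, and both small enough for Lemmas~\ref{fukaya2} and \ref{up2}'' is compatible with the analogous hierarchy already imposed near $\mathbb{L}$ in Lemmas~\ref{FE} and \ref{FGLL}, so that all of conditions (4)--(9) of Proposition~\ref{intcount} can be arranged simultaneously for a single choice of $\mathbf{x}_1,\mathbf{x}_2$.
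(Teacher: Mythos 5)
Your proposal is correct and takes exactly the approach the paper intends: the paper gives no separate proof of Lemma \ref{FGL11}, stating only that it ``follows from similar arguments'' to the chain of lemmas near $\mathbb{L}$, and your proof makes this dictionary explicit (with the sign flips $a_1<a_2<0 \leftrightarrow c_1>c_2>0$ and $b_i\leftrightarrow d_i$ correctly accounting for the fact that $\{p_1>0\}\subset\mathcal{U}(L_{1,1})$ projects to $A_\infty$, whereas $\{P_1<0\}\subset\mathcal{U}(\mathbb{L})$ projects to $A_0$). Your bookkeeping of $\beta_0$ and $\delta_0$ as the $\frak{s}_0$-counts of $\mathbf{F}$ and $\mathbf{G}$, the identification of which essential-curve/broken-plane pairs contribute, the positivity-and-once remark from essentiality that upgrades detection of one intersection to an exact count, and the final persistence observation for passing from the building to the smoothed spheres $F$, $G$ all track the unlabelled lemma preceding Corollary \ref{FcapG} faithfully.
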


With $F$ and $G$ fixed as above, the remaining analysis can be organized using the following two alternatives.\\

\noindent Alternative 1: either $\alpha_0 \geq \gamma_0$ or $\gamma_0 \geq \alpha_0+1.$\\ \\
\noindent Alternative 2: either $\beta_0 \geq \delta_0+1$ or $\delta_0 \geq \beta_0.$\\

\noindent{\bf Case 1.}  $\alpha_0 \geq \gamma_0$ and $\beta_0 \geq \delta_0+1$. \\

\noindent In this case, we choose our translations so that 
\begin{equation*}
\label{case1}
a_1<a_2<0,\, b_2<0<b_1,\, 0< c_2<c_1,\,\text{and }\, d_2<0 < d_1.
\end{equation*}

For these conditions on $b_1$ and $b_2$, Lemmas \ref{FE} and \ref{GE} yield
$F \bullet \mathcal{T}_0 =0$, $F \bullet \mathcal{T}_{\infty} =1$
$G \bullet \mathcal{T}_0 =1$, and $G \bullet \mathcal{T}_{\infty} =0$. This implies condition (4) of Proposition \ref{intcount}. 

Similarly, for these conditions on $d_1$ and $d_2$, Lemmas \ref{FE2} and \ref{GE2} imply that
$F \bullet \frak{s}_0 =1$, $F \bullet \frak{s}_{\infty} =0$,
$G \bullet \frak{s}_0 =1$, and $G \bullet \frak{s}_0 =1$. This gives condition (5) of Proposition \ref{intcount}.

Since the maps  $F$ and $G$ both represent the class $(1,d)$ in $H_2(S^2 \times S^2;\ZZ))$ we have $F \bullet G = (1,d) \bullet (1,d) = 2d$. On the other hand, for the choices above, Lemmas \ref{FGLL} and \ref{FGL11} imply that 
 \begin{equation*}
\label{in3}
F \bullet G \geq (\alpha_0 + d -\gamma_0) +(\beta_0 + d-1-\delta_0).
\end{equation*}
In the current case, with $\alpha_0 \geq \gamma_0$ and $\beta_0 \geq \delta_0+1$, these two summands are each at least $d$, and so must we have
\begin{equation}
\label{e1}
\alpha_0 = \gamma_0, 
\end{equation}
and
\begin{equation}
\label{e2}
\beta_0 =1+\delta_0.
\end{equation} 
It follows that  $F\cap G$ consists of exactly $2d$ points, $d$ of which are contained in $\mathcal{U}(\mathbb{L})$ and project to $A_0$ and $d$ of which are contained in $\mathcal{U}(L_{1,1})$ and project to $A_{\infty}$. This yields conditions (8) and (9) of Proposition \ref{intcount}.

Since $F \bullet G = \mathbf{ F}(\mathbf{ x}_1) \bullet \mathbf{ G}(\mathbf{ x}_2)$, it follows from the equalities above that there can be no intersections between the essential curves of 
$\mathbf{ F}(\mathbf{ x}_1)$ and those of $\mathbf{ G}(\mathbf{ x}_2)$. In particular, we must have 
\begin{equation}
\label{e3}
v_{\mathbb{L}}(\mathbf{x}_2) \bullet u_{\mathbb{L}}(\mathbf{x}_1)=0,
\end{equation} 
and 
\begin{equation}
\label{e4}
v_{L_{1,1}}(\mathbf{ x}_2) \bullet u_{L_{1,1}}(\mathbf{ x}_1)=0.
\end{equation} 
Equation \eqref{e3} and Lemma \ref{nog} imply that 
\begin{equation*}
\label{ }
v_{\mathbb{L}}(\mathbf{ x}_2) \bullet u_{\mathbb{L}}^{\mathbf{X} }=0.
\end{equation*}
By Lemmas \ref{FE} and \ref{GE} and equation \eqref{e1},  we then have
\begin{equation*}
\label{ }
F \bullet \mathbb{E} + G \bullet \mathbb{E}= \alpha_0 + d -\gamma_0 =d,
\end{equation*}
which yields condition (6) of  Proposition \ref{intcount}.

Similarly, Lemmas \ref{FE2}, \ref{GE2} and \ref{nog2}, together with equations \eqref{e2}  and \eqref{e4}, imply that 
\begin{equation*}
\label{}
F \bullet E_{1,1} + G \bullet E_{1,1} = d
\end{equation*}
and hence condition (7) of Proposition \ref{intcount}. This completes the proof of Proposition \ref{intcount} in the present case.
 
\medskip

The proofs in the other cases follow along identical lines. For the sake of completeness we list the  inequalities for the components of the translations that lead to the desired intersection patterns of Proposition \ref{intcount}, in the remaining scenarios. For the case  $\alpha_0 \geq \gamma_0$ and $\delta_0\geq \beta_0$, we choose  
\begin{equation*}
a_1<a_2<0,\, b_2<0<b_1,\, 0<c_2<c_1,\, \text{ and } d_1<0< d_2.
\end{equation*}
For  $\gamma_0 \geq \alpha_0+1$ and $\beta_0 \geq \delta_0+1$, we choose  
\begin{equation*}
a_1<a_2<0,\, b_1<0<b_2,\, 0<c_2<c_1,\, \text{ and } d_2<0<d_1.
\end{equation*}
Finally for the case $\gamma_0 \geq \alpha_0+1$ and $\delta_0\geq \beta_0$, we choose  
\begin{equation*}
a_1<a_2<0,\, b_1<0<b_2,\, 0<c_2<c_1,\, \text{ and }  d_1<0<d_2.
\end{equation*}

To complete the proof of Proposition \ref{intcount} we remark that the smoothings $F$ and $G$ can be replaced by smooth symplectic spheres without changing the various intersection numbers. To do this, it is enough to replace $F$ and $G$ by symplectic spheres which coincide with $F$ and $G$ away from neighborhoods of $\mathbb{L}(\mathbf v_1)$ and $L_{1,1}(\mathbf w_1)$, respectively $\mathbb{L}(\mathbf v_2)$ and $L_{1,1}(\mathbf w_2)$, that is, the new spheres differ only away from all intersection points.

Now, we know that the asymptotic ends of the top level curves of $\mathbf{ F}(\mathbf{ x}_1)$ and $\mathbf{ G}(\mathbf{ x}_2)$ are simply covered, either because the curves are essential, or for covers of leaves by applying Lemma \ref{area1}. Generically the asymptotic limits are distinct. Then, for small perturbations, we may assume that the top level curves restricted to a neighborhood of the Lagrangians are symplectically isotopic to the corresponding top level curves of our original buildings $\mathbf{F}$ and $\mathbf{G}$. (In the case of $\mathbf{ F}(\mathbf{ x}_1)$ the isotopy maps $\mathbb{L}(\mathbf v_1)$ and $L_{1,1}(\mathbf w_1)$ to $\mathbb{L}$ and $L_{1,1}$ respectively.) Finally
recall that the buildings $\mathbf{F}$ and $\mathbf{G}$ are limits of sequences of smooth embedded holomorphic spheres as our almost complex structures are stretched along the Lagrangians. Therefore, after a small perturbation, we may assume the top level curves of these buildings restricted to a compact subset of the complement of $\mathbb{L} \cup L_{1,1}$ extend to smooth symplectic spheres in $S^2 \times S^2$. Combining the isotopies and these extensions gives our symplectic spheres as required.


\subsection{Scene change}
Consider  $(S^2 \times S^2, \pi_1^*\omega + \pi_2^*\omega)$ equipped with an almost complex structure $J$ adapted to  parameterizations $\psi$ and $\psi_{1,1}$ of $L$ and $L_{1,1}$, respectively. Compactifying the broken leaves of the corresponding foliation $\mathcal{F}$, we get a foliation  $\bar{\mathcal{F}}$ of $S^2 \times S^2$ by spheres. We denote a general sphere in  $\bar{\mathcal{F}}$ by $H$. Let
$F$, $G$, $E$ and $E_{1,1}$ be the spheres and disks  from Proposition \ref{intcount}.
We can work with an almost complex structure with respect to which these spheres and disks are holomorphic. In particular intersections with the leaves of the foliation are all positive.

We start with the following intersection pattern and area profile.
 
\bigskip

\begin{tabular}{ cc }   
Initial intersection numbers & \quad Initial symplectic areas \\  \\

\begin{tabular}{l|l|l|l|l|l|}
\cline{2-6}
                                   & $F$  & $G$   & $\mathbb{E}$ & $E_{1,1}$ & $H$ \\ \hline
\multicolumn{1}{|l|}{$F$}          & 2d    &       &              &           &     \\ \hline
\multicolumn{1}{|l|}{$G$}          & $2d$ & 2d     &              &           &     \\ \hline
\multicolumn{1}{|l|}{$\mathbb{E}$} & $k$  & $d-k$ & $*$            &           &     \\ \hline
\multicolumn{1}{|l|}{$E_{1,1}$}    & $l$  & $d-l$ & 0            & $*$         &     \\ \hline
\multicolumn{1}{|l|}{$H$}          & 1    & 1     & $*$            & $*$         & 0   \\ \hline
\end{tabular} & \quad

\begin{tabular}{l|l|}
\cline{2-2}
                                   & $\pi_1^*\omega + \pi_2^*\omega$--area   \\ \hline
\multicolumn{1}{|l|}{$F$}          & $2+2d$ \\ \hline
\multicolumn{1}{|l|}{$G$}          & $2+2d$ \\ \hline
\multicolumn{1}{|l|}{$\mathbb{E}$} & 1      \\ \hline
\multicolumn{1}{|l|}{$E_{1,1}$}    & 1      \\ \hline
\multicolumn{1}{|l|}{$H$}          & 2      \\ \hline
\end{tabular}\\
\end{tabular}

\bigskip

In this section, we use $F$ and $G$ to alter $(S^2 \times S^2, \pi_1^*\omega + \pi_2^*\omega)$, away from $\mathbb{L}$ and $L_{1,1}$, to derive a scenario in which the disjointness of these Lagrangians is a contradiction. The spheres $F$ and $G$ intersect one another in $2d$ points,  $\{p_1, \dots, p_{2d}\}$. 
By positivity of intersection, the homology classes involved imply that each of the $p_i$ in $F \cap G$ lies on a sphere, $H_i$, of $\bar{\mathcal{F}}$, and the $H_i$ are distinct. 
We may also assume that for some fixed $\epsilon>0$ there are disjoint Darboux balls $B_i$ of capacity $\epsilon$ around each $p_i$ on which $J$  is standard and the $F$, $G$ and $H_i$ restrict to planes through the origin. 

\medskip

\noindent {\em Step 1:} Blow up the balls $B_i$ of capacity $\epsilon$ around each of the $p_i$.

\medskip

Denote the new manifold by $(W, \Omega_1)$. It follows from the analysis of the blowup procedure from \cite{mcp}, see also Proposition 9.3.3 of \cite{mcs}, that  $(W, \Omega_1)$ contains $2d$ exceptional divisors $\mathcal{E}_i$ each of area $\epsilon$. Since the $H_i$ are $J$-holomorphic in each $B_i$, $(W, \Omega_1)$ also contains the proper transforms of the $H_i$. These are denoted here by $\hat{H}_i$ and are symplectic spheres  of area $2-\epsilon$. By property (9) of Proposition \ref{intcount}, $d$ of the $\hat{H}_i$ intersect $\mathbb{E}$ once, and the other $d$ of the $\hat{H}_i$ intersect $E_{1,1}$ once. (Note that we may assume the families of planes $\mathcal{T}_0$ and $\mathcal{T}_{\infty}$, and also the families 
$\frak s_0$ and $\frak s_{\infty}$, are still $J$-holomorphic after our perturbation of $J$ since the smoothing of $F$ and $G$ occurs away from our broken planes. Therefore $p^{-1}(A_0)$ and  $p^{-1}(A_{\infty})$ remain the same sets as in Proposition \ref{intcount} (9).)

The proper transforms of $F$ and $G$, denoted by $\hat{F}$ and $\hat{G}$, are also well-defined. These are spheres of area $2d+2-2d\epsilon$ which are now disjoint. Every sphere $H$ of $\bar{\mathcal{F}}$, other than the $H_i$, has a proper transform $\hat{H}$ of area two. 

This information is collected in the following tables.
%
%
%

\medskip
\begin{tabular}{ cc }   
Intersection numbers after Step 1& \quad Areas after Step 1\\  \\
\begin{tabular}{l|l|l|l|l|l|l|l|}
\cline{2-8}
                                      & $\hat{F}$ & $\hat{G}$ & $\mathbb{E}$ & $E_{1,1}$ & $\hat{H}$ & $\mathcal{E}_i$ & $\hat{H}_i$ \\ \hline
\multicolumn{1}{|l|}{$\hat{F}$}       & 0         &           &              &           &                 &             &   \\ \hline
\multicolumn{1}{|l|}{$\hat{G}$}       & 0         & 0         &              &           &                 &             &   \\ \hline
\multicolumn{1}{|l|}{$\mathbb{E}$}    & $k$       & $d-k$     & $*$           &           &                 &             &   \\ \hline
\multicolumn{1}{|l|}{$E_{1,1}$}       & $l$       & $d-l$     & 0            & $*$         &                 &             &   \\ \hline
\multicolumn{1}{|l|}{$\hat{H}$}       & $1$         & $1$         & $*$            & $*$         & 0               &             &   \\ \hline
\multicolumn{1}{|l|}{$\{\mathcal{E}_i\}$} & $2d$         & $2d$         &    0          &     0      & 0              & -1           &   \\ \hline
\multicolumn{1}{|l|}{$\{\hat{H}_i\}$}     &  0         &  0         &   $ d$          &       $d$     &    0             &     1        & -1 \\ \hline
\end{tabular} & \quad
\begin{tabular}{l|l|}
\cline{2-2}
                                      & $\Omega_1$--Area                \\ \hline
\multicolumn{1}{|l|}{$\hat{F}$}       & $2+2d -2d\epsilon$      \\ \hline
\multicolumn{1}{|l|}{$\hat{G}$}       & $2+2d-2d\epsilon$       \\ \hline
\multicolumn{1}{|l|}{$\mathbb{E}$}    & $1$                     \\ \hline
\multicolumn{1}{|l|}{$E_{1,1}$}       & $1$                     \\ \hline
\multicolumn{1}{|l|}{$\hat{H}$}       & $2$                     \\ \hline
\multicolumn{1}{|l|}{$\mathcal{E}_i$} & $\epsilon$ \\ \hline
\multicolumn{1}{|l|}{$\hat{H}_i$}     & $2-\epsilon$            \\ \hline
\end{tabular}\\
\end{tabular}

\bigskip

\noindent {\em Step 2:} Inflate both $\hat{F}$ and $\hat{G}$ by adding a tubular neighborhood of capacity $d$.

\medskip

For the inflation result from \cite{lm96}, we may assume by Lemma 3.1 in \cite{mcd01} that the new form tames our original almost complex structure. In particular all of our holomorphic curves remain holomorphic and in particular symplectic.

Denoting the resulting symplectic form on $W$ by $\Omega_2$ we get the following new symplectic area profile.

\bigskip

\begin{center}
\begin{tabular}{l|l|}
\cline{2-2}
                                      & $\Omega_2$--Area                    \\ \hline
\multicolumn{1}{|l|}{$\hat{F}$}       & $2+2d -2d\epsilon$      \\ \hline
\multicolumn{1}{|l|}{$\hat{G}$}       & $2+2d-2d\epsilon$       \\ \hline
\multicolumn{1}{|l|}{$\mathbb{E}$}    & $1+d^2$                     \\ \hline
\multicolumn{1}{|l|}{$E_{1,1}$}       & $1+d^2$                     \\ \hline
\multicolumn{1}{|l|}{$\hat{H}$}       & $2+2d$                     \\ \hline
\multicolumn{1}{|l|}{$\mathcal{E}_i$} & $\epsilon+2d$ \\ \hline
\multicolumn{1}{|l|}{$\hat{H}_i$}     & $2-\epsilon$            \\ \hline
\end{tabular}
\end{center}

\bigskip

\noindent {\em Step 3:} Apply the negative inflation procedure from \cite{bu}, of size $\epsilon$, to each $\mathcal{E}_i$.

\medskip

Denoting the resulting symplectic form on $W$ by $\Omega_3$, the area profile becomes

\bigskip
\begin{center}
\begin{tabular}{l|l|}
\cline{2-2}
                                      & $\Omega_3$--Area                    \\ \hline
\multicolumn{1}{|l|}{$\hat{F}$}       & $2+2d $      \\ \hline
\multicolumn{1}{|l|}{$\hat{G}$}       & $2+2d$       \\ \hline
\multicolumn{1}{|l|}{$\mathbb{E}$}    & $1+d^2$                     \\ \hline
\multicolumn{1}{|l|}{$E_{1,1}$}       & $1+d^2$                     \\ \hline
\multicolumn{1}{|l|}{$\hat{H}$}       & $2+2d$                     \\ \hline
\multicolumn{1}{|l|}{$\mathcal{E}_i$} & $2d$ \\ \hline
\multicolumn{1}{|l|}{$\hat{H}_i$}     & $2$            \\ \hline
\end{tabular}
\end{center}

\bigskip

\noindent {\em Step 4:} Blow down each $\hat{H}_i$.
\medskip

We denote the symplectic manifold resulting from this final step by $(X, \Omega)$. Each of the exceptional divisors $\mathcal{E}_i$ in $(W,\Omega_3)$ is transformed,  by Step 4, into a sphere $\mathcal{H}_i$ in $X$ which has $\Omega$-area equal to $2d +2$ and now lies in the same class as the $\hat{H}$. The disks $\mathbb{E}$ and $E_{1,1}$ each intersect $d$ of the $\hat{H}_i$ and so are transformed by Step $4$ into disks $\mathbb{E}^X$ and $E_{1,1}^X$, whose symplectic areas have each been increased by $2d$.

\bigskip

\begin{tabular}{ cc }   
Intersection numbers after Step 4& \quad Areas after Step 4\\  \\
\begin{tabular}{l|l|l|l|l|l|l|l}
\cline{2-7}
                                      & $\hat{F}$ & $\hat{G}$ & $\mathbb{E}^X$ & $E_{1,1}^X$ & $\hat{H}$ & $\mathcal{H}_i$ \\ \hline
\multicolumn{1}{|l|}{$\hat{F}$}       & 0         &           &              &           &                 &       \\ \hline
\multicolumn{1}{|l|}{$\hat{G}$}       & 0         & 0         &              &           &                 &         \\ \hline
\multicolumn{1}{|l|}{$\mathbb{E}^X$}    & $k$       & $d-k$     & $*$           &           &           &          \\ \hline
\multicolumn{1}{|l|}{$E_{1,1}^X$}       & $l$       & $d-l$     & 0            & $*$         &                 &            \\ \hline
\multicolumn{1}{|l|}{$\hat{H}$}       & $1$         & $1$         & $*$            & $*$         & 0               &             \\ \hline
\multicolumn{1}{|l|}{$\{\mathcal{H}_i\}$} & $2d$         & $2d$         &    $d$          &     $d$      & 0              &  0  \\ \hline
\end{tabular} & \quad
\begin{tabular}{l|l|}
\cline{2-2}
                                      & $\Omega$--Area                \\ \hline
\multicolumn{1}{|l|}{$\hat{F}$}       & $2+2d$      \\ \hline
\multicolumn{1}{|l|}{$\hat{G}$}       & $2+2d$       \\ \hline
\multicolumn{1}{|l|}{$\mathbb{E}^X$}    & $1+d^2+2d$                     \\ \hline
\multicolumn{1}{|l|}{$E_{1,1}^X$}       & $1+d^2+2d$                     \\ \hline
\multicolumn{1}{|l|}{$\hat{H}$}       & $2+2d$                     \\ \hline
\multicolumn{1}{|l|}{$\mathcal{H}_i$} & $2+2d$ \\ \hline
\end{tabular}\\
\end{tabular}

\bigskip

\begin{lemma}\label{s2} $(X, \Omega)$ is symplectomorphic to $$(S^2 \times S^2, (d+1)\omega \oplus (d+1)\omega).$$
\end{lemma}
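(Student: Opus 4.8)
The plan is to identify $(X,\Omega)$ using the classification of symplectic $4$-manifolds together with the explicit homological and area data recorded in the tables. First I would compute the homology and intersection form of $(X,\Omega)$. Starting from $S^2\times S^2$, Step 1 blows up $2d$ points, enlarging $H_2$ by the $2d$ exceptional classes $[\mathcal E_i]$; inflation in Steps 2 and 3 does not change the smooth manifold; Step 4 blows down the $2d$ spheres $\hat H_i$, each of which has self-intersection $-1$ (as recorded in the ``after Step 1'' table), so the rank of $H_2$ returns to $2$. Hence $X$ is a minimal symplectic $4$-manifold with $b_2=2$ and, since each blow-up/blow-down preserves the parity of the intersection form and $S^2\times S^2$ is even, $X$ has even intersection form $\left(\begin{smallmatrix}0&1\\1&0\end{smallmatrix}\right)$. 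By the classification of minimal symplectic $4$-manifolds with $b^+=1$ (or simply by Lalonde--McDuff / Gromov's theorem on ruled surfaces), $X$ is diffeomorphic to $S^2\times S^2$ and carries a symplectic form determined up to symplectomorphism and scaling by the areas of the two $S^2$ factors; more precisely, any symplectic form on $S^2\times S^2$ is, after a diffeomorphism, of the form $a\,\pi_1^*\omega_0+b\,\pi_2^*\omega_0$ with $a,b>0$, and two such are symplectomorphic iff they have the same unordered pair $\{a,b\}$.

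Next I would pin down the cohomology class $[\Omega]$. The class $[\hat H]$ is the proper transform of the fibre class $(0,1)$ of the foliation $\bar{\mathcal F}$, and it has $\Omega$-area $2+2d$ by the ``after Step 4'' table; I would produce a second sphere class, transverse to $[\hat H]$, of the same area. A natural candidate is the proper transform of a leaf of the $(1,0)$-foliation (Remark \ref{both}), or one can use $\hat F$ (class $(1,d)$ minus $2d$ exceptional contributions, with $[\hat F]\bullet[\hat H]=1$ and $[\hat F]\bullet[\hat F]=0$, of $\Omega$-area $2+2d$). Thus in the basis $\{[\hat H],[\hat F']\}$ of classes with intersection matrix $\left(\begin{smallmatrix}0&1\\1&0\end{smallmatrix}\right)$, the form $\Omega$ evaluates to $2+2d$ on each, so $[\Omega]=(d+1)\big(\mathrm{PD}[\hat H]+\mathrm{PD}[\hat F']\big)$, which is exactly the class of $(d+1)\omega\oplus(d+1)\omega$ on $S^2\times S^2$. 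Since on $S^2\times S^2$ a symplectic form is determined up to symplectomorphism by its cohomology class (again by Lalonde--McDuff / Taubes--Li--Liu uniqueness in dimension four for rational ruled surfaces), this gives the claimed symplectomorphism.

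Concretely, the cleanest route avoiding a separate diffeomorphism argument is to build, inside each step, an almost complex structure taming the form for which there is a foliation by spheres in a primitive class $A$ with $A\bullet A=0$: the compactified foliation $\bar{\mathcal F}$ already provides this in $S^2\times S^2$, and it persists (with the $H_i$ replaced by $\mathcal E_i$ and then by $\mathcal H_i$) through the blow-ups, inflations, and blow-downs because all these operations were performed along, or compatibly with, the foliation. By McDuff's structure theorem for symplectic $4$-manifolds admitting such a sphere foliation, $(X,\Omega)$ is a symplectic ruled surface over $S^2$; since $b_2(X)=2$ it is $S^2\times S^2$ with a product-type form, and the areas of the base $\mathcal H_i$ and a fibre-transverse sphere are both $2+2d=2(d+1)$, forcing $(X,\Omega)\cong(S^2\times S^2,(d+1)\omega\oplus(d+1)\omega)$.

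The main obstacle I anticipate is bookkeeping rather than conceptual: one must verify that the proper transforms $\hat H_i$ really are symplectically embedded $(-1)$-spheres that can be simultaneously blown down (they are disjoint since distinct leaves of $\bar{\mathcal F}$ are disjoint, and the area $2$ recorded after Step 3 is what makes the blow-down legitimate), and that after blow-down the resulting classes $\mathcal H_i$ all coincide with $[\hat H]$ in $H_2(X)$ with the stated self-intersection $0$ — this is the content of the ``after Step 4'' intersection table and follows from $[\mathcal E_i]=[\hat H_i] + (\text{stuff})$ type relations in $H_2(W)$ together with $[\hat H_i]\bullet[\hat H]=0$, $[\mathcal E_i]\bullet[\hat H]=0$. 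Once the foliation survives to $X$, invoking McDuff's classification is immediate, and matching areas is the short computation above.
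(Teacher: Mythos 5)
Your proposal takes essentially the same route as the paper: both invoke the McDuff--Salamon structure/uniqueness theorem (Theorem 9.4.7 of \cite{mcs}) applied to the pair of embedded symplectic spheres $\hat F$ and $\hat H$ with $\hat F\bullet\hat F=\hat H\bullet\hat H=0$, $\hat F\bullet\hat H=1$, and equal $\Omega$-area $2(d+1)$, and both rule out the possibility that $X$ is a nontrivial blow-up by computing $\mathrm{rank}\,H_2(X;\ZZ)=2+2d-2d=2$. Your extra remarks about pinning down $[\Omega]$ on the basis $\{[\hat H],[\hat F]\}$ and about the persistence of the sphere foliation are consistent elaborations of what the paper leaves implicit (modulo a stray factor of $2$ in your $\mathrm{PD}$ formula, since here $\int_{S^2}\omega=2$), not a different method.
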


\begin{proof}
The presence of the embedded symplectic spheres $\hat{F}$ and $\hat{H}$, which have the same $\Omega$-area and satisfy
$$  
\hat{F} \bullet \hat{F} = \hat{H} \bullet \hat{H} =0, \text{  and  } \hat{F} \bullet \hat{H} =1,
$$
implies that either $(X, \omega)$ is symplectomorphic to $$(S^2 \times S^2, (d+1)\omega \oplus (d+1)\omega)$$ or there are finitely many symplectically embedded spheres with self-intersersection number $-1$ in the complement of $\hat{F}$ and $\hat{H}$ in $X$, and $X$ can be blown down to a copy of $S^2 \times S^2$. This follows from the proof of Theorem 9.4.7 of \cite{mcs}. As a consequence if $H_2(X;\ZZ)$ has rank $2$ then $X$ is symplectomorphic to $S^2 \times S^2$.

A simple analysis of the construction of $(X,\Omega)$ from $(S^2 \times S^2, \pi_1^*\omega + \pi_2^*\omega)$ allows us to compute this rank. The $2d$ blow ups in Step 1 imply that the rank of $H_2(W;\ZZ)$ is $2+2d$. The subsequent $2d$ blow down operations  in Step 4 imply that the rank of $H_2(X;\ZZ)$ is $2$ as required.
\end{proof}

Henceforth, we may  identify $(X, \Omega)$ with $(S^2 \times S^2, (d+1)\omega \oplus (d+1)\omega)$.
The Lagrangian tori $\mathbb{L}$ and $L_{1,1}$ are untouched, as submanifolds, by the four steps above.  They remain Lagrangian and disjoint in $(X,\Omega)$. Note that $L_{1,1}$ is not equal to the Clifford torus in $(X, \Omega)$ with respect to the identification above. In what follows we denote the Clifford torus in $(X, \Omega)$ by $L_X$.

The manifold $(X,\Omega)$ also inherits an almost complex structure,  denoted here by $\hat{J}$,  which equals $J$ away from the collection $\{\hat{\mathcal{H}}_i\}$. In particular, $\hat{J}$ is adapted to the original parameterizations $\psi$ and $\psi_{1,1}$ of $L$ and $L_{1,1}$. As in Section \ref{double}, $\hat{J}$ determines a straightened foliation $\hat{\mathcal{F}}= \mathcal{F}(\mathbb{L},L_{1,1}, \psi,\psi_{1,1}, \hat{J})$ of $X \smallsetminus (\mathbb{L} \cup L_{1,1})$. The original collections of planes 
$\frak{s}_0$, $\frak{s}_{\infty}$, $\mathcal{T}_0$ and $\mathcal{T}_{\infty}$  still comprise the broken leaves of this new foliation. The symplectic spheres $\hat{F}$ and $\hat{G}$ now represent the class  $(1,0) \in H_2(X;\ZZ) = H_2(S^2 \times S^2; \ZZ)$. As in Proposition \ref{intcount}, it is still true that exactly one of $\hat{F}$ and $\hat{G}$ intersects  the planes of $\frak{s}_{0}$ and the other intersects the planes of $\frak{s}_{\infty}$, and 
 exactly one of $\hat{F}$ and $\hat{G}$ intersects  the planes of  $\mathcal{T}_{0}$ and the other intersects  the planes of $\mathcal{T}_{\infty}$. 
%
%

\begin{lemma}
The Lagrangian tori $\mathbb{L}$ and $L_{1,1}$ are both monotone in $(X, \Omega)$.
\end{lemma}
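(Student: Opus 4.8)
The plan is to verify the hypotheses of Lemma~\ref{pair}. By Lemma~\ref{s2}, $(X,\Omega)$ is symplectomorphic to $(S^2\times S^2,(d+1)\omega\oplus(d+1)\omega)$, which is monotone with $c_1=\tfrac{1}{d+1}\Omega$ on $\pi_2$; hence it is monotone with constant $\tfrac{\lambda}{2}=\tfrac{1}{d+1}$, i.e.\ $\lambda=\tfrac{2}{d+1}$. So it suffices, for each of $\mathbb{L}$ and $L_{1,1}$, to exhibit two smooth disks with boundary on the torus whose boundary classes form an integral basis of the relevant $H_1$ and which each satisfy $\mu=\tfrac{2}{d+1}\Omega$. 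The two basic computational tools are: (a) a blow-up at a point disjoint from a disk, and an inflation or negative inflation along a sphere, do not change the Maslov index of the disk, while an inflation of capacity $c$ (resp.\ negative inflation of size $c$) along a sphere $Z$ changes the $\Omega$-area of the disk by $c\,(\mathrm{disk}\bullet Z)$ (resp.\ $-c\,(\mathrm{disk}\bullet Z)$); (b) blowing down an exceptional sphere meeting a disk transversally in $m$ points increases the Maslov index of the disk by $2m$ and its $\Omega$-area by $m$ times the $\Omega$-area of that exceptional sphere.

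For $\mathbb{L}$ the two disks are the transform $\mathbb{E}^X$ of the disk $\mathbb{E}$ of Proposition~\ref{intcount} and the transform $D^X$ of the compactification $D$ of one of the planes of the family $\mathcal{T}_0$; their boundary classes are $[\mathbb{E}|_{S^1}]$ and $\pm\beta_{\mathbb{L}}$, which form an integral basis of $H_1(\mathbb{L};\ZZ)$ by condition~(2) of Proposition~\ref{intcount}. I would track these disks through Steps~1--4 of the Scene change. In the original manifold $\mathbb{E}$ has Maslov index $2$ (Proposition~\ref{intcount}), and, since $\mathbb{L}$ is monotone with constant $2$ in $(S^2\times S^2,\pi_1^*\omega+\pi_2^*\omega)$ and $D$ has area $1$ by Lemma~\ref{area1}, $D$ also has Maslov index $2$. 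Now $\mathbb{E}$ is disjoint from the points $p_i$ (it meets none of the $\mathcal{E}_i$, by the tables) and meets exactly $d$ of the $\hat{H}_i$, once each; so Steps~1--3 leave $\mu(\mathbb{E})=2$ unchanged and Step~4 adds $2d$, giving $\mu(\mathbb{E}^X)=2(d+1)$, while the area tables give $\Omega(\mathbb{E}^X)=1+d^2+2d=(d+1)^2$, so $\mu(\mathbb{E}^X)=\tfrac{2}{d+1}\Omega(\mathbb{E}^X)$. The plane $D$ is one half of a leaf of $\bar{\mathcal{F}}$: in the original manifold it is disjoint from at least one of $F,G$ by condition~(4) of Proposition~\ref{intcount}, hence disjoint from $\{p_1,\dots,p_{2d}\}$, and since its leaf is distinct from each $H_i$ it is also disjoint from each $\hat{H}_i$; moreover by condition~(4) it meets exactly one of $\hat F,\hat G$, once. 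Thus Steps~1, 3 and 4 leave $D$ untouched and Step~2 adds $d$ to its area, giving $\mu(D^X)=2$ and $\Omega(D^X)=1+d$, hence again $\mu(D^X)=\tfrac{2}{d+1}\Omega(D^X)$. Lemma~\ref{pair} then shows $\mathbb{L}$ is monotone in $(X,\Omega)$, with constant $\tfrac{2}{d+1}$.

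The argument for $L_{1,1}$ is identical after replacing $\mathbb{E}$ by $E_{1,1}$ — which is likewise disjoint from the $p_i$ and meets the remaining $d$ of the $\hat{H}_i$, once each, so $\mu(E_{1,1}^X)=2(d+1)$ and $\Omega(E_{1,1}^X)=(d+1)^2$ — and replacing $D$ by the compactification of a plane of $\frak{s}_0$ — disjoint from the $p_i$ and the $\hat{H}_i$ and meeting one of $\hat F,\hat G$ once, so $\mu=2$ and $\Omega=1+d$ — with condition~(3) of Proposition~\ref{intcount} supplying the integral basis of $H_1(L_{1,1};\ZZ)$. The step I expect to need the most care is the Maslov (relative first Chern number) bookkeeping through the blow-ups and blow-downs: writing the transform of a disk through Steps~1 and~4 as a proper transform, $[\,\mathrm{disk}\,]\mapsto\pi^!\big([\,\mathrm{disk}^X\,]\big)-\sum[\hat{H}_i]$, and checking that $c_1$ transforms so as to yield (a)--(b) above; together with the verification that the planes of $\mathcal{T}_0$ and $\frak{s}_0$ genuinely remain disjoint from all the exceptional spheres $\mathcal{E}_i$ and $\hat{H}_i$ at every stage, which relies on these planes being pieces of leaves of $\bar{\mathcal{F}}$ distinct from the $H_i$.
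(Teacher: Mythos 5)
Your proposal is correct and follows essentially the same approach as the paper: apply Lemma~\ref{pair}, track a Maslov-$2$ area-$1$ disk bounded by a plane of one of the families $\mathcal{T}_0$/$\mathcal{T}_\infty$ (whose area gains $d$ from Step~2 and is otherwise untouched) together with the disk $\mathbb{E}^X$ (whose Maslov index gains $2d$ from the blow-downs in Step~4), verify proportionality $\mu=\tfrac{2}{d+1}\Omega$ for both, and repeat for $L_{1,1}$. The only cosmetic difference is that the paper uses a plane of $\mathcal{T}_\infty$ where you use one of $\mathcal{T}_0$, which works equally well since by Proposition~\ref{intcount}(4) exactly one of $\hat F,\hat G$ meets it.
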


\begin{proof}
Let $D_{\infty} \colon (D^2, S^1) \to (S^2 \times S^2, \mathbb{L})$ be a compactification of one of the planes of $\mathcal{T}_\infty$. The disk $D_{\infty}$ has Maslov index equal to $2$ and 
symplectic area equal to $1$ with respect to $\pi_1^*\omega + \pi_2^*\omega$. The map $D_{\infty}|_{S^1}$ represents the foliation class $\beta_{\mathbb{L}}$. The image of the map $D_{\infty}$ is unaffected by the four steps defining the passage from  $(S^2 \times S^2, \pi_1^*\omega + \pi_2^*\omega)$ to $(X, \Omega)$. Viewed  as a map from $(D^2, S^1)$ to $(X, \mathbb{L})$, $D_{\infty}$ still has Maslov index 2, and $D_{\infty}|_{S^1}$ still represents $\beta_{\mathbb{L}}$. The $\Omega$--area of $D_{\infty}$, as a map into $(X, \Omega)$, is $d+1$. This follows from the fact that exactly one of $F$ and $G$ intersect $D_{\infty}$ and so the inflations in Step 2 increase the symplectic area by $d$.

By assertion (4) of Proposition \ref{intcount}, the boundary $\mathbb{E}|_{S^1}$ represents a class which together with $\beta_{\mathbb{L}}$ forms an integral basis of $H_1(\mathbb{L};\ZZ).$ The same holds for 
$\mathbb{E}^X|_{S^1}$. To prove that $\mathbb{L}$ is a monotone Lagrangian torus in $(X, \Omega)$ it then suffices, by Lemma \ref{pair}, to prove that the Maslov index of $\mathbb{E}^X \colon (D^2, S^1) \to (X, \mathbb{L})$ is equal to $$\frac{2}{d+1}(1+d^2 +2d) = 2d+2$$ where $1+d^2+2d$ is the area of  $\mathbb{E}^X$.  This follows from the fact that, in $(W, \Omega_3)$, $\mathbb{E}$ has Maslov index $2$,  intersects exactly $d$ of the $\hat{H}_i$, and each of the corresponding intersection numbers is $1$. In blowing down the $\hat{H}_i$, and passing from $\mathbb{E}$ to $\mathbb{E}^X$, each of these intersection points yields an increase of $2$ in the Maslov index.

The proof  that $L_{1,1}$ is monotone in $(Y, \Omega)$ is identical.
\end{proof}

%

\begin{lemma}
The Lagrangians $\mathbb{L}$ and $L_{1,1}$ are both Hamiltonian isotopic to the Clifford torus $L_X$ in $(X, \Omega)$. 
\end{lemma}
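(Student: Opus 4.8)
The plan is to invoke the uniqueness result for Lagrangian tori in $(S^2\times S^2, (d+1)\omega\oplus(d+1)\omega)$ that follows from the presence of the symplectic spheres $\hat F$ and $\hat G$. Recall that Cieliebak and Schwingenheuer (cited as \cite{cisc} in the overview) characterize the Clifford torus in $S^2\times S^2$ among monotone Lagrangian tori by the existence of an appropriate configuration of holomorphic spheres of bidegree $(1,0)$ and $(0,1)$ disjoint from the torus. So the strategy is: (i) assemble, for each of $\mathbb L$ and $L_{1,1}$, the required configuration of symplectic spheres in the new manifold $(X,\Omega)$, using $\hat F$, $\hat G$, and the leaves of the straightened foliation $\hat{\mathcal F}$; (ii) check that these spheres can be made $\hat J$-holomorphic (or at least that the relevant curve counts / intersection conditions match the hypotheses of \cite{cisc}); (iii) conclude that each of $\mathbb L$, $L_{1,1}$ is Hamiltonian isotopic to $L_X$.

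First I would record the homological data: after Step 4 both $\hat F$ and $\hat G$ lie in the class $(1,0)\in H_2(X;\ZZ)$, they are disjoint and embedded, and (by the last paragraph before the lemma) one of them meets the planes of $\mathfrak s_0$ and the other those of $\mathfrak s_\infty$, and similarly for $\mathcal T_0,\mathcal T_\infty$. The compactified broken leaves $\bar{\mathcal F}$ give a foliation of $X$ by spheres in class $(0,1)$, each meeting $\hat F$ (and $\hat G$) once; after the blow-up/blow-down the leaves near $\mathbb L$ and $L_{1,1}$ are the solid tori $\mathcal T_0\cup\mathcal T_\infty$ and $\mathfrak s_0\cup\mathfrak s_\infty$. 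So relative to $\mathbb L$ we have: a foliation by $(0,1)$-spheres with $\mathbb L$ appearing as a ``visible" Lagrangian with respect to this foliation (its projection $p(\mathbb L)$ is an embedded circle), and disjoint $(1,0)$-spheres $\hat F,\hat G$ sitting on opposite sides; monotonicity of $\mathbb L$ in $(X,\Omega)$ was just established in the previous lemma. This is exactly the configuration from which \cite{cisc} deduces that a monotone Lagrangian torus is Hamiltonian isotopic to the Clifford torus. I would then spell out the same picture for $L_{1,1}$, which is symmetric (swap the roles of $\mathcal T_*$ and $\mathfrak s_*$, and of $\mathbb E^X$ and $E_{1,1}^X$).

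I expect the main obstacle to be verifying, carefully, that the hypotheses of the Cieliebak--Schwingenheuer characterization are met in the form we need — in particular that the spheres and the foliation are simultaneously holomorphic for a single tame almost complex structure (here $\hat J$, which is $J$ away from the blown-down $\hat H_i$), and that the needed non-degeneracy / genericity is preserved under the four surgery steps. One has to check that inflation and blow-down do not destroy the relevant $\hat J$-holomorphic curves: the leaves of $\hat{\mathcal F}$ and the transforms of $F,G$ remain $\hat J$-holomorphic because all surgeries were done away from $\mathbb L\cup L_{1,1}$ and tameness was preserved (Lemma 3.1 of \cite{mcd01} and the negative-inflation results of \cite{bu}), so this should go through, but it is the step requiring the most care. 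A secondary point is to confirm that the monotonicity constants match so that $\mathbb L$ and $L_{1,1}$ are monotone \emph{with the Clifford normalization} in $(X,\Omega)$, which is exactly what the preceding lemma provides.

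Once both tori are identified (up to Hamiltonian isotopy) with $L_X$, the proof of Theorem \ref{one} is immediate: by standard monotone Lagrangian Floer theory \cite{oh} the Clifford torus is not displaceable from itself, so $\mathbb L$ and $L_{1,1}$ cannot be disjoint, contradicting Assumption 2. Thus I would end the proof of this lemma by simply citing \cite{cisc} for each torus, after having laid out the configuration of spheres as above, and leave the Floer-theoretic conclusion to the (short) wrap-up that follows.
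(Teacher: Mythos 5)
Your proposal is correct and follows essentially the same route as the paper: the compactified straightened foliation $\hat{\mathcal F}$ gives a fibering of each torus by $(0,1)$-spheres, $\hat F$ and $\hat G$ serve as disjoint sections in class $(1,0)$ meeting the two sides ($\mathcal T_0$ versus $\mathcal T_\infty$, respectively $\frak s_0$ versus $\frak s_\infty$), and the main theorem of Cieliebak--Schwingenheuer then identifies each of $\mathbb L$, $L_{1,1}$ with the Clifford torus $L_X$ up to Hamiltonian isotopy. The paper's proof is terser but relies on exactly this configuration and this citation.
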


\begin{proof}
This follows from the main result of Cieliebak and  Schwingenheuer in \cite{cisc}. In the language of that paper the compactification of the straightened foliation $\hat{\mathcal{F}}= \mathcal{F}(\mathbb{L},L_{1,1}, \psi,\psi_{1,1}, \hat{J})$ yields a fibering of $\mathbb{L}$ and  a fibering of $L_{1,1}$.
For the fibering of $\mathbb{L}$, the spheres $\hat{F}$ and $\hat{G}$ are disjoint sections in the class $(1,0)$ and exactly one of them  intersects  the (compactification of the ) planes of  $\mathcal{T}_{0}$ and the other intersects  the those of $\mathcal{T}_{\infty}$. The main theorem of \cite{cisc}, then implies that $L$ is Hamiltonian isotopic to the Clifford torus $L_X$ in $(X, \Omega)$. An identical argument holds for $L_{1,1}.$
\end{proof}

With this, the contradiction to Assumption 2 becomes apparent. The first fundamental intersection result implied by the Floer theory for monotone Lagrangian submanifolds implies that any Lagrangian tori Hamiltonian isotopic to $L_X$ must intersect nontrivially, \cite{oh}. Hence, $\mathbb{L}$ and $L_{1,1}$ can not be disjoint in $(X, \Omega)$.

\begin{remark}
The assumption that $\mathbb{L}$ and $L_{1,1}$ are disjoint is used twice in the proof of Theorem \ref{one}. At the very end, and  in the proof of Refinement 3 in Section \ref{claim3}
\end{remark}

\begin{remark}
The fact that $L_{1,1}$ is the Clifford torus (and not just another monotone Lagrangian torus) is crucial (only) in the proof of the existence results Proposition \ref{existence2} and Proposition \ref{existence3}. 
\end{remark}

\begin{remark}
There is an alternative to the argument used at the end of the proof of Theorem \ref{one} that avoids appealing to Lagrangian Floer homology. Instead one can use the fact that the symplectomorphism in Lemma \ref{s2} can be chosen to map $\hat{F}$, $\hat{G}$ and the transforms $\hat{T}_0$ and $\hat{T}_{\infty}$ to the axes $S^2 \times \{0\}$, $S^2 \times \{\infty\}$, $\{0\} \times S^2$ and $\{ \infty\} \times S^2$ respectively. The complement of these axes in $S^2 \times S^2$ can be identified with a domain in $T^* T^2$, in which the Clifford torus is identified with the zero section. We can check  that $\mathbb{L}$ and $L_{1,1}$ are homologically nontrivial in this copy of $T^* T^2$ and so, by Theorem \ref{hom}, are Hamiltonian isotopic to constant sections. The monotonicity condition then implies the constant section must be the zero section. Finally Gromov's intersection theorem for exact Lagrangians in cotangent bundles, from Section $2.3.B''_4$ of \cite{gr}, implies that they must intersect. 
\end{remark}

\section{Proof of Theorem \ref{two}.}

It suffices to prove the following.

\begin{theorem}\label{eps}
For any $\epsilon>0$ there is a $\delta >0$ and a symplectic embedding of the  polydisk $P(1+ \delta,1 + \delta)$ into $P(2+ \epsilon, 2+ \epsilon)$ whose image is disjoint from the product Lagrangians $L_{k,l}$ for $k,l \in \{1,2\}$. 
\end{theorem}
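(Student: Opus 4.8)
The plan is to construct the embedding explicitly by a time-dependent Hamiltonian flow, rather than by any abstract packing argument. The key observation is that the four Lagrangian tori $L_{k,l}$, $k,l\in\{1,2\}$, are all products of circles centered at the origin in each factor, so they are preserved (as sets) by the standard torus action, and in particular by any flow that acts separately and $S^1$-equivariantly on each $\CC$-factor. So I would first look for a Hamiltonian flow on $\CC^2$ of the form $\phi_t=\phi^1_t\times\phi^2_t$ where each $\phi^j_t$ is generated by a radially symmetric autonomous (or slowly time-dependent) Hamiltonian on the $z_j$-plane; such a flow moves the disk $\{\pi|z_j|^2<r\}$ to another disk of the same area about the origin, and the obstruction ``do not cross $L_{k,l}$'' becomes the one-dimensional statement ``the moving circle of area-coordinate $s$ never has area-coordinate exactly $1$ or $2$.'' That is clearly impossible if we insist the flow be a product of radial flows — the area coordinate of the boundary circle of the image of $P(1+\delta,1+\delta)$ must start near $1$ and the obstruction circles sit at $1$ and $2$ — so the real content is that we must use a genuinely $2$-dimensional, non-split Hamiltonian isotopy of $\RR^4$.

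Concretely, here is the approach I would carry out. Identify $P(2+\epsilon,2+\epsilon)$ with a neighborhood of the monotone fiber and recall that the four tori $L_{1,1},L_{1,2},L_{2,1},L_{2,2}$ lie on the ``integer lattice'' in the moment image $\{(x_1,x_2):0<x_i<2+\epsilon\}$, where $x_i=\pi|z_i|^2$. I want to embed a small polydisk whose moment image is a square of side $1+\delta$ anchored near the corner of the quadrant, and push it through the lattice. The trick — this is the standard symplectic-folding/camel-type idea — is to use a Hamiltonian supported in a region that first ``lifts'' the small polydisk out of the $x_2$-range where it would meet the tori, by using the $q_2$ angular coordinate, move it past in the $x_1$-direction, and set it back down. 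More precisely I would build $H_t$ so that its time-$1$ map takes the closure of $P(1+\delta,1+\delta)$ into the open set $P(2+\epsilon,2+\epsilon)\smallsetminus\bigcup L_{k,l}$; one checks disjointness from each $L_{k,l}$ by following the image of the boundary torus of the small polydisk and verifying it never meets the four codimension-two tori, which after a suitable choice of generating Hamiltonian reduces to a finite list of transversality/area inequalities that hold for $\delta$ small relative to $\epsilon$. The Lagrangian $L^+$ asked for in Theorem \ref{two} is then the image of $\partial\overline{P(1,1)}$ (the boundary of the unit polydisk), which is an integral Lagrangian torus because the embedding is symplectic and $\partial\overline{P(1,1)}$ is integral in the standard polydisk.

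The main obstacle I expect is precisely the non-split nature of the required isotopy: a product of radial rotations cannot work (as noted above), so one must exhibit an explicit Hamiltonian on $\RR^4$ whose flow ``threads'' the small polydisk between the lattice tori while staying inside $P(2+\epsilon,2+\epsilon)$, and then verify that the flowed polydisk avoids all four tori $L_{k,l}$ simultaneously. This is a concrete but delicate construction — one has to choose cutoff functions carefully so that (i) the flow is complete and stays in $P(2+\epsilon,2+\epsilon)$, (ii) the image of $\overline{P(1+\delta,1+\delta)}$ stays embedded, and (iii) none of the four tori is hit. I would organize the verification by tracking the moment map image of the flowed polydisk as a ``thickened path'' in the $(x_1,x_2)$-square and arguing that this path can be made to miss the four lattice points by a margin controlled by $\delta$; the symplectic (as opposed to merely Lagrangian) obstructions — e.g. Gromov width / non-squeezing type constraints on how a polydisk of capacity $1+\delta$ can move inside one of capacity $2+\epsilon$ — are what force $\delta$ to be small and are the real quantitative heart of the estimate. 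Once the embedding $\Phi\colon P(1+\delta,1+\delta)\hookrightarrow P(2+\epsilon,2+\epsilon)\smallsetminus\bigcup_{k,l\in\{1,2\}}L_{k,l}$ is in hand, everything else (that $L^+=\Phi(\partial\overline{P(1,1)})$ is an integral Lagrangian torus avoiding the $L_{k,l}$) is immediate, which is why I would invest essentially all the effort into the explicit Hamiltonian.
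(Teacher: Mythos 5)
Your overall strategy---construct the embedding explicitly via a time-dependent Hamiltonian isotopy, and observe that a split product of radial flows cannot work so the Hamiltonian must genuinely couple the two factors---is the same route the paper takes. But the write-up stops at the level of a plan: you never exhibit the Hamiltonian, never specify the cutoff functions, and never carry out the disjointness verification, and you acknowledge as much (``I would invest essentially all the effort into the explicit Hamiltonian''). Since this theorem has essentially no content beyond that explicit construction, the missing construction is the proof, and what you have is a proposal rather than an argument.

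Two of your guiding intuitions would also send you in a less efficient direction than the paper's. First, you place $P(1+\delta,1+\delta)$ ``anchored near the corner'' and plan to thread it through the lattice. The paper instead realizes $P(1+\delta,1+\delta)$ as a box $S\times T$ in the rescaled polar coordinates $(R_i,\theta_i)$ (an angular sector of an annulus in each $\CC$-factor), chosen inside $P(2+\epsilon,2+\epsilon)$ so that it already misses $L_{1,2}$ and $L_{2,2}$; this reduces the problem to displacing only $L_{1,1}$ and $L_{2,1}$, and it is then cleaner to move the tori off the fixed box using Hamiltonians of the form $F(\theta_1,\theta_2)$ (which fix the angles and translate $R_i$ by an angle-dependent amount) than to push the polydisk through the lattice. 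Second, your claim that the ``real quantitative heart'' is a Gromov-width or non-squeezing constraint is off base: $P(1+\delta,1+\delta)$ embeds into $P(2+\epsilon,2+\epsilon)$ trivially, no capacity is being saturated, and nothing of non-squeezing type appears. The actual constraints in the paper are elementary arithmetic inequalities on the parameters $\ell,w,\sigma,\delta$ and on the slopes $c(f),c(g),c(h)$ of the cutoff functions (for instance $\tfrac{1}{\ell}+\tfrac{1}{w}<1$, $\ell+\sigma<2+\epsilon$, $w+\sigma<2$, and bounds making each flow stay inside the large polydisk and carry the relevant torus off $S\times T$). Until you write down a specific family of Hamiltonians and check a list of that kind, the argument is incomplete.
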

The desired additional integral Lagrangian torus $L^+$ is the one on (the image of) the boundary of $P(1,1) \subset P(1+ \delta, 1+ \delta)$.

\subsection{Proof of Theorem \ref{eps}}
We will use rescaled polar coordinates $\theta_i, R_i$ on $\RR^4=\CC^2$ where $R_i = \pi |z_i|^2$ and $\theta_i \in \RR \slash \ZZ$.  In these coordinates the standard symplectic form is $$\omega = \sum_{i=1}^2 dR_i \wedge d \theta_i$$ and $L_{k,l} = \{(\theta_1,k,\theta_2,l)\}$.

\subsubsection{A polydisk.} For $\epsilon>0$ fixed, choose positive numbers $\ell$, $w$ such that \begin{itemize}
  \item $2< \ell < 2+\epsilon$
  \item $ w < 2$
   \item $\frac{1}{\ell} + \frac{1}{w} <1$.
\end{itemize}
Then choose positive constants $\sigma$ and $\delta$ such that 
\begin{itemize}
  \item $\ell + \sigma < 2+\epsilon$
  \item $w +\sigma <2$
  \item $\frac{1+ \delta}{\ell} + \frac{1+ \delta}{w} <1$
\end{itemize}
Set $$S = \{ \sigma < R_1 < \ell + \sigma, \sigma < R_2 < w + \sigma\}$$ 
 and$$T=\left\{ 0 < \theta_1 < \frac{1+ \delta}{\ell}, 0 < \theta_2 < \frac{1+ \delta}{w}\right\}.$$ 
Note that $S \times T$ is a subset of  $P(2+ \epsilon, 2+ \epsilon)$ and is symplectomorphic to $P(1+\delta, 1+ \delta)$. Both $L_{1,1}$ and $L_{2,1}$ intersect $S \times T$, while $L_{1,2}$ and $L_{2,2}$ do not.

\subsubsection{The plan} To prove Theorem \ref{eps} it suffices to find a Hamiltonian diffeomorphism of $P(2+ \epsilon, 2+ \epsilon)$ that displaces 
$S\times T$ from the $L_{k,l}$. Equivalently, we construct a Hamiltonian diffeomorphism $\Psi$ of $P(2+ \epsilon, 2+ \epsilon)$ such that each of the images  $\Psi(L_{k,l})$ is disjoint from $S \times T$.

To construct $\Psi$ we use Hamiltonian functions which are of the form $F(\theta_1, \theta_2)$. The Hamiltonian flow, $\phi^t_F$, of such a function preserves $\theta_1$ and $\theta_2$ and generates a Hamiltonian vector field parallel to the $R_1R_2$--plane. In particular,  the only points of $\phi^t_F(L_{k,l})$ which could possibly intersect $S \times T$ are those whose $(\theta_1, \theta_2)$ coordinates lie in $T$. 

Since we only need to control the images of the $L_{k,l}$,  we can cut off autonomous functions like $F$ in (moving) neighborhoods of $\phi^t_F(L_{k,l})$ for specific values of $k$ and $l$. After this cutting off,  the new Hamiltonian will depend on all variables and be time dependent. In general, for a closed subset $V$, we denote  the function obtained by cutting of $F$ along $\phi^t_F(V)$ by $F_{[V]}$. Note that $$\phi^t_{F_{[V]}}(v) =\phi^t_F(v),\,\,\text{for all }\,\, v \in V \,\,\text{and}\,\, t \in [0,1].$$
As well, each map $\phi^t_{F_{[V]}}$ is equal to the identity away from an arbitrarily small neighborhood of $$\bigcup_{t \in [0,1]}\phi^t_F(V).$$

\subsubsection{A diagonal move}
Let $g \colon \RR/\ZZ \to \RR$ be a smooth function such that for some positive real number $c(g)>0$ we have $$g'(s) =c(g),\, \text{for} \,\,s \in \left[0, \frac{1+ \delta}{\ell} + \frac{1+ \delta}{w}\right],$$ $\max (g') =c(g)$, and $\min (g')$ is less than and arbitrarily close to
$$-c(g)\left(\frac{\frac{1+ \delta}{\ell} + \frac{1+ \delta}{w}}{1- \frac{1+ \delta}{\ell} -\frac{1+ \delta}{w}}\right).$$
Letting  $G(\theta_1, \theta_2) =g(\theta_1+\theta_2)$, we have 
\begin{equation}
\label{gflow}
\phi^t_G(\theta_1,R_1, \theta_2, R_2) = (\theta_1,R_1+t g'(\theta_1+\theta_2), \theta_2 , R_2+t g'(\theta_1+\theta_2))
\end{equation}
The image $\phi^1_G(L_{1,2})$ is well defined as long as 
\begin{equation}
\label{constraint}
c(g) < \frac{1- \frac{1+ \delta}{\ell} -\frac{1+ \delta}{w}}{\frac{1+ \delta}{\ell} + \frac{1+ \delta}{w}}
\end{equation}
and is contained in $P(2+ \epsilon, 2+ \epsilon)$ as long as $c(g) < \epsilon$. 
Henceforth, we will assume that $\ell$, $w$ and $\delta$ have been chosen  such that the first constraint on $c(g)$, implies the second.

It follows from \eqref{gflow} and \eqref{constraint} that $\phi^t_G(L_{1,2})$ is contained in $$\{R_1 \leq 1+ c(g)\} \cap \{R_2 > 1\}$$ for all $t \in [0,1]$. Hence, each image $\phi^t_G(L_{1,2})$ is disjoint from the other $L_{k,l}$. Since $g' =c(g)>0$ on $T$, 
each $\phi^t_G(L_{1,2})$ is also disjoint from $S \times T$.

\subsubsection{A vertical move.}

Let $h \colon \RR/\ZZ \to \RR$ be a smooth function such that for some positive real number $0< c(h) < \sigma$ we have $$h'(s) =-(1-c(h)),\, \text{for} \,\,s \in \left[0, \frac{1+ \delta}{w}\right],$$ $\min(h') =-(1-c(h))$,  and $\max(h')$ is greater than and arbitrarily close to
$$\frac{(1-c(h))\left(\frac{1+ \delta}{w}\right)}{1 -\frac{1+ \delta}{w}} =\frac{1-c(h)}{\frac{w}{1+ \delta}-1}$$
which is greater than one since $w+\sigma<2$ and $c(h)< \sigma$. 

Letting $H(\theta_1, \theta_2) =h(\theta_2)$, we have
\begin{equation}
\label{hflow}
\phi^t_H(\theta_1,R_1, \theta_2, R_2) = (\theta_1,R_1, \theta_2 , R_2+th'(\theta_2)).
\end{equation}
Clearly, $L_{2,1}$ and $L_{2,2}$ are disjoint from $\phi^t_H(L_{1,1})$ for all $t \in [0,1]$.
Moreover, for $\theta_2$ in $\left[0, \frac{1+ \delta}{w}\right]$ we have 
\begin{equation*}
\label{ }
\phi^1_H(\theta_1,1, \theta_2, 1) = (\theta_1,1, \theta_2 ,c(h)).
\end{equation*}
So  $\phi^1_H(L_{1,1})$ is disjoint from $T \times S$ by our choice of $c(h)$.

Some points of $L_{1,1}$, with values of $\theta_2$ in $\left(\frac{1+ \delta}{w},1\right)$, are mapped by $\phi^1_H$ to points having $R_2$ coordinate greater than and arbitrarily close to 
$$1 + \frac{1-c(h)}{\frac{w}{1+ \delta}-1}>2.$$
Choosing $w$ sufficiently close to $2$, and $\delta$ sufficiently small we can ensure that $\phi^1_H(L_{1,1})$ lies in $P(2+ \epsilon, 2+ \epsilon)$. 

\subsubsection{A time delay}

The Hamiltonian diffeomorphism $\phi^{1}_{H_{[L_{1,1}]}}$ can not be used to move  $L_{1,1}$  off of $S \times T$ while leaving  $L_{1,2}$ undisturbed. For, as described in the discussion above, $\phi^{1}_{H_{[L_{1,1}]}}(L_{1,2})$ will intersect $S \times T$.

The Hamiltonian diffeomorphism $$\phi^{1}_{H_{[L_{1,1}]}} \circ \phi^1_{G_{[L_{1,2}]}}$$ has the same problem.  By \eqref{gflow} and \eqref{hflow}, the image of $(\theta_1,1, \theta_2, 1) \in L_{1,1}$ under $\phi^t_H$ belongs to $\phi^1_G(L_{1,2})$ if and only if $g'(\theta_1+\theta_2)=0$ and $th'(\theta_2)=1$. Since $\max(h')>1$, these intersections occur and so the map above will again push $L_{1,2}$ into $S \times T.$

We can fix this by adding a time delay.
The first intersection  between $\phi^t_H(L_{1,1})$ and $\phi^1_G(L_{1,2})$ occurs at $t = (\max(h') )^{-1}$. Let $\tau$ be  less than and arbitrarily close to $(\max(h') )^{-1}$. Hence, $\tau$ is also  less than and arbitrarily close to 
\begin{equation*}
\label{ }
\frac{\frac{w}{1+ \delta}-1}{1-c(h)}.
\end{equation*}
Consider the Hamiltonian diffeomorphism  $$\tilde{\Psi}= \phi^{1-\tau}_{H_{\left[\phi^{\tau}_{H}(L_{1,1}) \cup \phi^1_{G}(L_{1,2})\right]} }\circ \phi^{\tau}_{H_{[L_{1,1}]}} \circ \phi^1_{G_{[L_{1,2}]}}.$$
It follows from the analysis above, that the map $\tilde{\Psi}$ is compactly supported in $P(2+ \epsilon, 2+ \epsilon)$. In fact, it is supported in an arbitrarily small neighborhood of the subset $\{R_1 \leq 1+ c(g)\}.$ Hence,  $\tilde{\Psi}(L_{2,1}) = L_{2,1}$ and  $\tilde{\Psi}(L_{2,2}) = L_{2,2}$.  By the definitions of $\tau$ and the cut-off operation we have  $\tilde{\Psi}(L_{1,1}) = \phi^1_H(L_{1,1})$ and thus $\tilde{\Psi}(L_{1,1})$ is disjoint from $S \times T$. In addition, we now have the following.

\begin{lemma}\label{off} The image $\tilde{\Psi}(L_{1,2})$ is disjoint from $S \times T$ when $c(h)$ is sufficiently close to $\sigma$ and $\delta$ is sufficiently small.
\end{lemma}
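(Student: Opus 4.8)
The plan is to identify the set $\tilde{\Psi}(L_{1,2})$ explicitly and then to show that, over the $(\theta_1,\theta_2)$--rectangle underlying $T$, its coordinate $R_2$ is forced above $w+\sigma$. First I would unwind, factor by factor, the action of $\tilde{\Psi}$ on $L_{1,2}$, just as was done above for $L_{1,1}$. Applying the three factors in order: $\phi^1_{G_{[L_{1,2}]}}$ moves $L_{1,2}$ by the honest flow $\phi^1_G$, giving $\phi^1_G(L_{1,2})$; since $\tau<(\max h')^{-1}$ and, by the discussion preceding the lemma, $\phi^t_H(L_{1,1})$ first meets $\phi^1_G(L_{1,2})$ at $t=(\max h')^{-1}$, the trajectory $\bigcup_{t\in[0,\tau]}\phi^t_H(L_{1,1})$ is disjoint from $\phi^1_G(L_{1,2})$, so the factor $\phi^{\tau}_{H_{[L_{1,1}]}}$, being equal to the identity off a small neighborhood of that trajectory, fixes $\phi^1_G(L_{1,2})$; and $\phi^1_G(L_{1,2})$ is contained in the cut-off locus $V=\phi^{\tau}_H(L_{1,1})\cup\phi^1_G(L_{1,2})$ of the last factor, so $\phi^{1-\tau}_{H_{[V]}}$ acts on it as the honest flow $\phi^{1-\tau}_H$. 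Hence $\tilde{\Psi}(L_{1,2})=\phi^{1-\tau}_H\bigl(\phi^1_G(L_{1,2})\bigr)$.

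Because $\phi^1_G$ and $\phi^{1-\tau}_H$ both preserve $\theta_1$ and $\theta_2$, any point of $\tilde{\Psi}(L_{1,2})$ lying in $S\times T$ lies over some $(\theta_1,\theta_2)$ with $0<\theta_1<\frac{1+\delta}{\ell}$ and $0<\theta_2<\frac{1+\delta}{w}$, and is the image of $(\theta_1,1,\theta_2,2)\in L_{1,2}$. For such $(\theta_1,\theta_2)$ we have $0<\theta_1+\theta_2<\frac{1+\delta}{\ell}+\frac{1+\delta}{w}$, on which $g'\equiv c(g)$, and $0<\theta_2<\frac{1+\delta}{w}$, on which $h'\equiv-(1-c(h))$; so by \eqref{gflow} and \eqref{hflow} the image point has
\[
R_2 \;=\; 2+c(g)-(1-\tau)\bigl(1-c(h)\bigr).
\]
It is therefore enough to choose the parameters so that $R_2\geq w+\sigma$: then this point fails the condition $R_2<w+\sigma$ that defines $S$, and $\tilde{\Psi}(L_{1,2})$ misses $S\times T$ entirely.

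For the estimate, recall that $\max h'$ may be taken greater than and arbitrarily close to $\frac{1-c(h)}{\frac{w}{1+\delta}-1}$, and $\tau$ less than and arbitrarily close to $(\max h')^{-1}$; hence $(1-\tau)(1-c(h))$ may be taken greater than and arbitrarily close to $2-c(h)-\frac{w}{1+\delta}$, and so $R_2$ less than and arbitrarily close to $c(g)+c(h)+\frac{w}{1+\delta}$. Since
\[
c(g)+c(h)+\frac{w}{1+\delta}-(w+\sigma)\;=\;\bigl(c(g)+c(h)-\sigma\bigr)-\frac{w\delta}{1+\delta},
\]
whose right-hand side converges to $c(g)>0$ as $c(h)\to\sigma$ and $\delta\to0$, for $c(h)$ sufficiently close to $\sigma$ and $\delta$ sufficiently small we have $c(g)+c(h)+\frac{w}{1+\delta}>w+\sigma$; then choosing $\max h'$ and $\tau$ close enough to their limiting values forces $R_2>w+\sigma$, as required.

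I expect the real work to be in the first step rather than in the elementary estimate: one must keep careful account of the supports of the cut-off Hamiltonians $G_{[L_{1,2}]}$, $H_{[L_{1,1}]}$ and $H_{[V]}$ and of the disjointness of the relevant moving Lagrangians up to the appropriate times, in order to be sure that $\tilde{\Psi}$ really restricts to $\phi^{1-\tau}_H\circ\phi^1_G$ on $L_{1,2}$. One must also fix the parameters in a compatible order --- $\epsilon$; then $\ell,\,w$; then $\sigma$; then $c(g)$ small enough that \eqref{constraint} persists as $\delta$ shrinks; then $c(h)$ close to $\sigma$ and $\delta$ small; and only then $\max h'$ and $\tau$ near their limiting values.
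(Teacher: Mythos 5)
Your proof is correct and takes essentially the same route as the paper: compute $\tilde{\Psi}(\theta_1,1,\theta_2,2)$ for $(\theta_1,\theta_2)\in T$, observe that $R_2 = 2+c(g)-(1-\tau)(1-c(h))$, and show this can be pushed above $w+\sigma$ by taking $c(h)$ close to $\sigma$ and $\delta$ small (your inequality $c(g)+c(h)+\tfrac{w}{1+\delta}>w+\sigma$ is algebraically identical to the paper's $c(g)>w(1-\tfrac{1}{1+\delta})+(\sigma-c(h))$). The only difference is that you spell out, factor by factor, why $\tilde{\Psi}$ restricts to $\phi^{1-\tau}_H\circ\phi^1_G$ on $L_{1,2}$, which the paper compresses into the phrase ``by construction.''
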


\begin{proof}
By construction, for $(\theta_1,\theta_2) \in T$ we have 
\begin{eqnarray*}
\tilde{\Psi}(\theta_1,1, \theta_2, 2) & = &(\theta_1,1+ g'(\theta_1+\theta_2), \theta_2 , 2+ g'(\theta_1+\theta_2)+ (1-\tau)h'(\theta_2)) \\
{} & = & (\theta_1,1+ c(g), \theta_2 , 2+ c(g)-(1-\tau)(1-c(h))). 
\end{eqnarray*}
It suffices to show that we can choose $c(g)$ and $c(h)$ so that 
\begin{equation}
\label{need}
2+ c(g)-(1-\tau)(1-c(h)) > w + \sigma.
\end{equation}
Since $\tau$ is less than and arbitrarily close to
\begin{equation*}
\label{ }
\frac{\frac{w}{1+ \delta}-1}{1-c(h)},
\end{equation*}
is also suffices to show that we can choose $c(g)$ and $c(h)$ so that 
\begin{equation*}
\label{ }
c(g) > w\left(1- \frac{1}{1+\delta}\right) +(\sigma-c(h)).
\end{equation*}
The righthand side can be made arbitrarily small by taking $c(h)$ to be close to $\sigma$ and $\delta$ to be small. Since the choice of $c(g)$ is independent of the choice of $c(h)$ and the constraint \eqref{constraint} on $c(g)$ relaxes as $\delta$ goes to zero, we are done.
\end{proof}

Henceforth, we will assume that the conditions of Claim \ref{off}  hold.

\subsubsection{A final (horizontal) adjustment.}

The images $\tilde{\Psi}(L_{1,1})$, $\tilde{\Psi}(L_{1,2})$ and $\tilde{\Psi}
(L_{2,2})$ are disjoint from $S \times T$ but $\tilde{\Psi}$ still fixes $L_{1,2}$ which intersects $S \times T$. Since $L_{1,2}$ is close to the boundary of $S \times T$, we can make a simple adjustment to obtain the desired map $\Psi$ which  moves $L_{1,2}$ off of $S \times T$ as well.

Let $f \colon \RR/\ZZ \to \RR$ be a smooth function such that for some positive real number $c(f)$ greater than and arbitrarily close to $\ell+\sigma -2$ we have $$f'(s) = c(f),\, \text{for} \,\,s \in \left[0, \frac{1+ \delta}{\ell}\right],$$ $\max(f') = c(f)$,  and $\min(f')$ is less than and arbitrarily close to
$$-\frac{c(f)}{\frac{\ell}{1+ \delta}-1}.$$
Setting $F(\theta_1, \theta_2) =f(\theta_1)$, we have
\begin{equation*}
\label{fflow}
\phi^t_F(\theta_1,R_1, \theta_2, R_2) = (\theta_1,R_1+tf'(\theta_1), \theta_2 , R_2).
\end{equation*}

Our lower bound for $c(f)$ implies that $\phi^1_F(L_{2,1})$ is disjoint from $S \times T$. Looking at the $R_2$-component, it is clear that $\phi^1_F(L_{2,1})$ is disjoint from $L_{2,2}= \tilde{\Psi}(L_{2,2})$. To prove that  $\phi^1_F(L_{2,1})$ is also disjoint from $\tilde{\Psi}(L_{1,1})$ and $\tilde{\Psi}(L_{1,2})$ it suffices to prove the following.

\begin{lemma}
The sets $\{R_1 \leq 1+ c(g)\}$ and $\phi^1_F(L_{2,1})$ are disjoint.
\end{lemma}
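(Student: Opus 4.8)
The plan is to turn the claim into one elementary inequality about $f'$ and then check it is compatible with the earlier choices. First I would write down $\phi^1_F(L_{2,1})$ explicitly: since $F(\theta_1,\theta_2)=f(\theta_1)$, the flow is $\phi^t_F(\theta_1,R_1,\theta_2,R_2)=(\theta_1,R_1+tf'(\theta_1),\theta_2,R_2)$, so
$$
\phi^1_F(L_{2,1})=\{(\theta_1,\,2+f'(\theta_1),\,\theta_2,\,1)\ :\ \theta_1,\theta_2\in\RR/\ZZ\}.
$$
Such a point lies in $\{R_1\le 1+c(g)\}$ if and only if $f'(\theta_1)\le c(g)-1$. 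As $f'$ is continuous on the circle it attains its minimum, so the lemma is equivalent to the single inequality $2+\min f' > 1+c(g)$.

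Next I would estimate $2+\min f'$ from below. By construction $\min f'$ is less than and arbitrarily close to $-c(f)/(\tfrac{\ell}{1+\delta}-1)$, while $c(f)$ is greater than and arbitrarily close to $\ell+\sigma-2$ and $\tfrac{\ell}{1+\delta}-1\to\ell-1$ as $\delta\to0$. Chaining these estimates, $2+\min f'$ can be made to lie as close as desired (from below) to
$$
2-\frac{\ell+\sigma-2}{\ell-1}=1+\frac{1-\sigma}{\ell-1}.
$$
Since the constraints $\tfrac1\ell+\tfrac1w<1$ and $w+\sigma<2$ force $w>\tfrac{\ell}{\ell-1}>1$ and hence $\sigma<2-w<1$, the slack $\tfrac{1-\sigma}{\ell-1}$ is strictly positive.

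The last thing to verify is that the inequality $c(g)<\tfrac{1-\sigma}{\ell-1}$ needed here is consistent with the conditions already imposed on $c(g)$, namely \eqref{constraint} and the arbitrarily weak lower bound appearing in the proof of Lemma \ref{off}. This is where I expect to have to be most careful, since it amounts to retroactively shrinking $c(g)$: because the upper bound in \eqref{constraint} is positive and the lower bound on $c(g)$ can be taken as small as we like, one can indeed choose $c(g)$ (and then $\delta$ and $c(f)$) so that $2+\min f'>1+c(g)$. With that in place, every point of $\phi^1_F(L_{2,1})$ has $R_1=2+f'(\theta_1)\ge 2+\min f'>1+c(g)$, so $\phi^1_F(L_{2,1})$ misses $\{R_1\le 1+c(g)\}$, which is the claim.
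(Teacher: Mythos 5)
Your proof is correct and follows essentially the same route as the paper's: you reduce disjointness to the single inequality $2 + \min f' > 1 + c(g)$, substitute the near-optimal values $\min f' \approx -c(f)/(\tfrac{\ell}{1+\delta}-1)$ and $c(f) \approx \ell+\sigma-2$, and check compatibility with the earlier constraints on $c(g)$. The paper's proof is terser: it states it suffices to show $2 - \tfrac{c(f)}{\frac{\ell}{1+\delta}-1} > 1+c(g)$, relaxes this to $1 > c(g) + \tfrac{\ell+\sigma-2}{\frac{\ell}{1+\delta}-1}$, and then declares this holds when $c(g)$ and $\ell+\sigma-2$ are sufficiently small. Your version adds two useful touches the paper glosses over: you note explicitly that the reduction hinges on $R_1 = 2 + f'(\theta_1) \ge 2 + \min f'$ on $\phi^1_F(L_{2,1})$, and you observe that the slack $\tfrac{1-\sigma}{\ell-1}$ is positive for \emph{any} admissible $(\ell,w,\sigma)$ (since $\tfrac1\ell+\tfrac1w<1$ with $\ell>2$ forces $w>1$, hence $\sigma<2-w<1$), rather than relying on taking $\ell+\sigma-2$ small. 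This makes the constraint-compatibility discussion a little more robust, but the underlying inequality and logic are the same.
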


\begin{proof}
It suffices to prove that 
\begin{equation*}
\label{ }
2-\frac{c(f)}{\frac{\ell}{1+ \delta}-1} > 1+ c(g)
\end{equation*}
or, even more, that
\begin{equation*}
\label{ }
1> c(g) + \frac{\ell+\sigma -2}{\frac{\ell}{1+ \delta}-1}.
\end{equation*}
The latter inequality clearly holds for all sufficiently small values of $c(g)$ and $\ell+\sigma -2$.
\end{proof}

The Hamiltonian diffeomorphism 
$$\Psi= \phi^1_{F_{[L_{2,1}]}} \circ \phi^{1-\tau}_{H_{\left[\phi^{\tau}_{H}(L_{1,1}) \cup \phi^1_{G}(L_{1,2})\right]} }\circ \phi^{\tau}_{H_{[L_{1,1}]}} \circ \phi^1_{G_{[L_{1,2}]}}$$
now has all the desired properties. With its construction, the proof of Theorem \ref{eps} is complete. 

\begin{question}
Can $\Psi$, or any other Hamiltonian diffeomorphism which displaces the $L_{k,l}$ from $S \times T$, be generated by an autonomous Hamiltonian?
\end{question}


\begin{thebibliography} {99}



\bibitem{behwz}
F. Bourgeois, Y. Eliashberg, H. Hofer, K. Wysocki, and E. Zehnder,
 Compactness results in symplectic field theory,
{\em Geom. Topol.}, 7 (2003), 799--888.


\bibitem{bu} O. Buse, negative inflation and stability in symplectomorphism groups of ruled surfaces, {\em J. Symplectic Geometry}, {\bf 9} (2011), 147--160.


\bibitem{cm} K. Cieliebak and K. Mohnke, Punctured holomorphic curves and Lagrangian embeddings, {\em Inventiones Mathematicae}, 212 (2018), 213--295.

\bibitem{cisc} K. Cieliebak and M. Schwingenheuer, Hamiltonian unknottedness of certain monotone Lagrangian tori in $S^2\times S^2$, {\em Pacific Journal of Mathematics}, {\bf 299} (2019 ), 427--468. 






\bibitem{evle} J. Evans and Y. Lekili,  Generating the Fukaya categories of Hamiltonian $ G$-manifolds, {\em J. Amer. Math. Soc.} \textbf{32} (2019), 119-162 .





\bibitem{rgi} G. Dimitroglou-Rizell, E. Goodman and A. Ivrii, Lagrangian isotopy of tori in $S^2 \times S^2$ and $\CC P^2$, {\it Geometric and Functional Analysis} 26 (2016), 1297--1358.

\bibitem{gr} M. Gromov, Pseudo-holomorphic curves in symplectic manifolds, {\em Inventiones Mathematicae}, 82 (1985), 307--347.







\bibitem{hl} R. Hind and S. Lisi, Symplectic embeddings of polydisks, {\it Selecta Math.}, 21 (2015), 1099--1120.

\bibitem{hl-err} R. Hind and S. Lisi, Erratum to: Symplectic embeddings of polydisks, {\it Selecta Math.}, 23 (2017), 813--815.

\bibitem{ho} R. Hind and E. Opshtein, Squeezing Lagrangian tori in dimension $4$, {\it Comment. Math. Helv.}, 95 (2020), 535--567.




\bibitem{hofa} H. Hofer, K. Wysocki and E. Zehnder, Properties of pseudoholomorphic curves in symplectisations I: Asymptotics, {\it Ann. Inst. H. Poincar\'{e} Anal. Non Lineaire}, 13 (1996) , 337--379.











\bibitem{ivriit} A. Ivrii, {\it Lagrangian unknottedness of tori in certain symplectic 4-manifolds}, PhD thesis, Stanford University, 2003.

\bibitem{lm96} F. Lalonde and D. McDuff, J-curves and the classification of rational and ruled symplectic 4-manifolds, {\it Contact and Symplectic Geometry}, Cambridge 1994, 3--42. 


\bibitem{maksmi} C. Mak and I. Smith, Non-displaceable Lagrangian links in four manifolds, {\it Geometric and Functional Analysis}, 31 (2021), 438--481.


\bibitem{mcd01} D. McDuff, Symplectomorphism groups and almost complex structures, {\it Enseignement Math}, (2001), 1--30.

\bibitem{mcp} D. McDuff and L. Polterovich, Symplectic packings and algebraic geometry, {\it Inventiones Math.}, \textbf{115} (1994), 405--429.

\bibitem{mcs} D. McDuff and D. Salamon, $J$-holomorphic curves and
symplectic topology. American Mathematical Society Colloquium
Publications, 52. American Mathematical Society, Providence, RI,
2004.






\bibitem{polshe} L. Polterovich and E. Shelukhin, Lagrangian configurations and Hamiltonian maps, arXiv:2102.06118v3.

\bibitem{oh} Y.-G. Oh, Addendum to Floer Cohomology of Lagrangian Intersections and Pseudo-Holomorphic Discs, I, {\it Comm. Pure Appl. Math.}, {\bf48} (1995), 1299--1302. 













\bibitem{vianna} R. Vianna, Infinitely many monotone Lagrangian tori in del Pezzo surfaces, {\em Selecta Mathematica}, (2016).


\bibitem{we} C. Wendl, Automatic transversality and orbifolds
of punctured holomorphic curves in dimension four,  {\it Comment. Math. Helv.}, 85 (2010), 347--407.




\end{thebibliography}
\end{document}